\pdfoutput=1
\newif\ifpersonal
\personaltrue 
\RequirePackage[l2tabu,orthodox]{nag} 
\documentclass[10pt,a4paper,reqno]{amsart} 
\linespread{1.1}
\usepackage{amsmath,amsthm,amssymb,mathrsfs,mathtools,bm,eucal,tensor} 
\usepackage{microtype,fixltx2e,lmodern} 
\usepackage[utf8]{inputenc} 
\usepackage[T1]{fontenc} 
\usepackage{enumerate,comment,braket,xspace,tikz-cd,csquotes} 
\usepackage[all,cmtip]{xy} 
\usepackage[centering,vscale=0.7,hscale=0.8]{geometry}
\usepackage[hidelinks]{hyperref}
\usepackage[capitalize]{cleveref}

\theoremstyle{plain}
\newtheorem{thm-intro}{Theorem}
\newtheorem{thm}{Theorem}[section]
\newtheorem*{thm*}{Theorem}

\newtheorem{lem}[thm]{Lemma}
\newtheorem{prop}[thm]{Proposition}

\newtheorem{cor}[thm]{Corollary}

\theoremstyle{definition}
\newtheorem{defin}[thm]{Definition}
\newtheorem{notation}[thm]{Notation}
\newtheorem{eg}[thm]{Example}

\theoremstyle{remark}
\newtheorem{rem}[thm]{Remark}
\numberwithin{equation}{section}
\newtheorem{construction}[thm]{Construction}


\ifpersonal
\newcommand{\personal}[1]{\textcolor[rgb]{0,0,1}{(Personal: #1)}}
\newcommand{\todo}[1]{\textcolor{red}{(Todo: #1)}}
\else
\newcommand*{\personal}[1]{\ignorespaces}
\newcommand*{\todo}[1]{\ignorespaces}
\fi


\newcommand{\rB}{\mathrm B}

\newcommand{\rR}{\mathrm R}
\newcommand{\rT}{\mathrm T}

\newcommand{\rt}{\mathrm t}

\newcommand{\cC}{\mathcal C}
\newcommand{\cD}{\mathcal D}

\newcommand{\cF}{\mathcal F}

\newcommand{\cG}{\mathcal G}
\newcommand{\cI}{\mathcal I}
\newcommand{\cJ}{\mathcal J}
\newcommand{\cK}{\mathcal K}

\newcommand{\cO}{\mathcal O}

\newcommand{\cS}{\mathcal S}
\newcommand{\cT}{\mathcal T}

\newcommand{\cX}{\mathcal X}

\newcommand{\cZ}{\mathcal Z}
\DeclareFontFamily{U}{BOONDOX-calo}{\skewchar\font=45 }
\DeclareFontShape{U}{BOONDOX-calo}{m}{n}{<-> s*[1.05] BOONDOX-r-calo}{}
\DeclareFontShape{U}{BOONDOX-calo}{b}{n}{<-> s*[1.05] BOONDOX-b-calo}{}
\DeclareMathAlphabet{\mathcalboondox}{U}{BOONDOX-calo}{m}{n}

\newcommand{\bbA}{\mathbb A}

\newcommand{\bbG}{\mathbb G}
\newcommand{\bbL}{\mathbb L}

\newcommand{\bbT}{\mathbb T}
\newcommand{\bbZ}{\mathbb Z}

\newcommand{\bA}{\mathbf A}
\newcommand{\bB}{\mathbf B}
\newcommand{\bD}{\mathbf D}

\newcommand{\bT}{\mathbf T}

\newcommand{\bDelta}{\bm{\Delta}}


\makeatletter
\let\save@mathaccent\mathaccent
\newcommand*\if@single[3]{%
	\setbox0\hbox{${\mathaccent"0362{#1}}^H$}%
	\setbox2\hbox{${\mathaccent"0362{\kern0pt#1}}^H$}%
	\ifdim\ht0=\ht2 #3\else #2\fi
}
\newcommand*\rel@kern[1]{\kern#1\dimexpr\macc@kerna}
\newcommand*\widebar[1]{\@ifnextchar^{{\wide@bar{#1}{0}}}{\wide@bar{#1}{1}}}
\newcommand*\wide@bar[2]{\if@single{#1}{\wide@bar@{#1}{#2}{1}}{\wide@bar@{#1}{#2}{2}}}
\newcommand*\wide@bar@[3]{%
	\begingroup
	\def\mathaccent##1##2{%
		\let\mathaccent\save@mathaccent
		\if#32 \let\macc@nucleus\first@char \fi
		\setbox\z@\hbox{$\macc@style{\macc@nucleus}_{}$}%
		\setbox\tw@\hbox{$\macc@style{\macc@nucleus}{}_{}$}%
		\dimen@\wd\tw@
		\advance\dimen@-\wd\z@
		\divide\dimen@ 3
		\@tempdima\wd\tw@
		\advance\@tempdima-\scriptspace
		\divide\@tempdima 10
		\advance\dimen@-\@tempdima
		\ifdim\dimen@>\z@ \dimen@0pt\fi
		\rel@kern{0.6}\kern-\dimen@
		\if#31
		\overline{\rel@kern{-0.6}\kern\dimen@\macc@nucleus\rel@kern{0.4}\kern\dimen@}%
		\advance\dimen@0.4\dimexpr\macc@kerna
		\let\final@kern#2%
		\ifdim\dimen@<\z@ \let\final@kern1\fi
		\if\final@kern1 \kern-\dimen@\fi
		\else
		\overline{\rel@kern{-0.6}\kern\dimen@#1}%
		\fi
	}%
	\macc@depth\@ne
	\let\math@bgroup\@empty \let\math@egroup\macc@set@skewchar
	\mathsurround\z@ \frozen@everymath{\mathgroup\macc@group\relax}%
	\macc@set@skewchar\relax
	\let\mathaccentV\macc@nested@a
	\if#31
	\macc@nested@a\relax111{#1}%
	\else
	\def\gobble@till@marker##1\endmarker{}%
	\futurelet\first@char\gobble@till@marker#1\endmarker
	\ifcat\noexpand\first@char A\else
	\def\first@char{}%
	\fi
	\macc@nested@a\relax111{\first@char}%
	\fi
	\endgroup
}
\makeatother

\newcommand{\oS}{\widebar S}

\newcommand{\tS}{\widetilde S}

\newcommand{\tZ}{\widetilde Z}




\newcommand{\infcat}{$\infty$-category\xspace}
\newcommand{\infcats}{$\infty$-categories\xspace}

\newcommand{\inftopos}{$\infty$-topos\xspace}
\newcommand{\inftopoi}{$\infty$-topoi}

\newcommand{\Mod}{\mathrm{Mod}}

\newcommand{\Coh}{\mathrm{Coh}}

\newcommand{\dSt}{\mathrm{dSt}}
\newcommand{\Afd}{\mathrm{Afd}}
\newcommand{\Top}{\mathcal T\mathrm{op}}

\newcommand{\dR}{\mathrm{dR}}


\newcommand{\loc}{\mathrm{loc}}


\newcommand{\dAn}{\mathrm{dAn}}

\newcommand{\dAnk}{\mathrm{dAn}_k}
\newcommand{\Ank}{\mathrm{An}_k}

\newcommand{\cTank}{\cT_{\mathrm{an}}(k)}

\newcommand{\Str}{\mathrm{Str}}
\newcommand{\Strloc}{\mathrm{Str}^\mathrm{loc}}
\newcommand{\RTop}{\tensor*[^\rR]{\Top}{}}

\newcommand{\dAfd}{\mathrm{dAfd}}
\newcommand{\dAfdk}{\mathrm{dAfd}_k}

\newcommand{\trunc}{\mathrm{t}_0}

\newcommand{\CAlg}{\mathrm{CAlg}}

\newcommand{\Catinf}{\mathrm{Cat}_\infty}
\newcommand{\Catst}{\mathrm{Cat}_{\infty}^{\mathrm{st}}}

\newcommand{\anPreStk}{\mathrm{AnPreStk}}


\newcommand{\fib}{\mathrm{fib}}

\newcommand{\dAff}{\mathrm{dAff}}

\newcommand{\bfMap}{\mathbf{Map}}

\newcommand{\dAnSt}{\mathrm{dAnSt}}

\newcommand{\anNil}{\mathrm{AnNil}}
\newcommand{\anFMP}{\mathrm{AnFMP}}
\newcommand{\anFGrpd}{\mathrm{AnFGrpd}}


\newcommand{\Ind}{\mathrm{Ind}}



\newcommand{\laft}{\mathrm{laft}}


\newcommand{\pro}{\mathrm{Pro}}

\newcommand{\QCoh}{\mathrm{QCoh}}


\newcommand{\an}{^\mathrm{an}}
\newcommand{\alg}{^\mathrm{alg}}

\newcommand{\id}{\mathrm{id}}

\newcommand{\red}{\mathrm{red}}

\newcommand{\op}{^\mathrm{op}}

\newcommand{\cl}{\mathrm{cl}}


\usetikzlibrary{decorations.markings} 
\tikzset{
  closed/.style = {decoration = {markings, mark = at position 0.5 with { \node[transform shape, xscale = .8, yscale=.4] {/}; } }, postaction = {decorate} },
  open/.style = {decoration = {markings, mark = at position 0.5 with { \node[transform shape, scale = .7] {$\circ$}; } }, postaction = {decorate} }
}


\DeclareMathOperator{\Fun}{Fun}

\DeclareMathOperator{\Map}{Map}

\DeclareMathOperator{\Spec}{Spec}

\DeclareMathOperator{\Sym}{Sym}

\DeclareMathOperator*{\colim}{colim}

\begin{document}

\title{Spreading out the Hodge filtration in non-archimedean geometry}

\author{Jorge ANT\'ONIO}
\address{Jorge ANT\'ONIO, IRMA, UMR 7501
7 rue René-Descartes
67084 Strasbourg Cedex}
\email{jorgeantonio@unistra.fr}
\maketitle
\begin{abstract}
    The goal of the current text is to study non-archimedean analytic derived de Rham cohomology 
    by means of formal completions. Our approach is inspired by the deformation to the normal cone provided in \cite{Gaitsgory_Study_II}.
    More specifically, given a morphism $f \colon X \to Y$ of (derived) $k$-analytic spaces we construct the \emph{non-archimedean deformation to the normal cone}
    associated to $f$. The latter can be thought as an $\bA^1_k$-parametrized deformation whose fiber at $1 \in \bA^1_k$ coincides with the formal completion of $f$
    and the fiber at $0 \in \bA^1_k$ with the (derived) normal cone associated to $f$.
    We further show that such deformation can be endowed with a natural filtration which spreads out the usual Hodge filtration on the (completed shifted) analytic
    tangent bundle to the formal completion. Such filtration agrees with the $I$-adic filtration in the case where $f$ is a locally complete intersection morphism
    between (derived) $k$-affinoid spaces.
    Along the way we develop the theory of (ind-inf)-$k$-analytic spaces or in other words $k$-analytic formal moduli problems.
\end{abstract}

\tableofcontents

\section{Introduction}
\subsection{Background} Let $k$ be a field of characteristic zero and $X$ a variety over $k$. Following Illusie
we can associate to $X$ its \emph{derived de Rham
cohomology} as follows: we consider the
(Hodge completed) derived de Rham algebra
    \[
        \dR_{X/ k} \coloneqq  \Sym(\bbL_{X/ k}[-1])^\wedge 
    \]
where $(-)^\wedge$ stands for the completion of the usual symmetric algebra 
    \[\Sym(\bbL_{X/ Y}[-1]) \in \CAlg_k,\]
with respect to its natural Hodge filtration. Furthermore, we can identify
the graded pieces of the latter with exterior powers
    \[
        \wedge^i \bbL_{X/ Y} [-i], \quad \mathrm{for} \ i \ge 0,
    \]
c.f. \cite[Vol. II, \S VIII]{Illusie_Complexe_1971}.
The Hodge completed derived de Rham algebra is thus a natural generalization of the usual de Rham complex for singular varieties over $k$,
where we replace the sheaf of differentials
    \[
        \Omega^1_{X/ k},  
    \]
by the (algebraic) cotangent complex $\bbL_{X/ k}$ and force a spectral sequence of the form Hodge-to-de Rham to converge.

On the other hand, Hartshorne introduced in \cite{hartshorne1975rham} the \emph{algebraic de Rham cohomology} for singular varieties
$X/ k$ in terms
of formal completions: suppose we are given a closed immersion
$i \colon X \hookrightarrow Y$, where $Y$ is a smooth algebraic variety over $k$. Then one can form the \emph{formal completion}, $Y^\wedge_X$, of $Y$ at $X$ along $i$. Inspired by the theory of tubular neighborhoods, Hartshorne defined an algebraic cohomology theory
by taking global sections of $Y_X^\wedge$
    \[
        \Gamma(Y^\wedge_X, \cO_{Y^\wedge_X}).    
    \]
Moreover, the cohomology groups so obtained are independent of the choice of the closed embedding $i \colon  X\to Y$.
In \cite{Bhatt_Derived_Completions},
Bhatt proved a general comparison between the two mentioned notions. More precisely, the author proves a natural multiplicative
equivalence of complexes
    \[
        \dR_{X/k} \simeq  \Gamma(Y^\wedge_X, \cO_{Y^\wedge_X}),
    \]
c.f. \cite[Proposition 4.16]{Bhatt_Derived_Completions}.
In particular, the results of Bhatt imply that
the algebraic de Rham cohomology is equipped with a \emph{Hodge filtration}, which is finer than the standard infinitesimal Hodge filtration
and the Deligne-Hodge filtration.
Futhermore, when $f \colon \Spec A \to \Spec B$ is a closed immersion of affine (derived) schemes, the Hodge filtration
is finer than the $I$-adic filtration on the completion 
    \[
        Y^\wedge_X \simeq \mathrm{Spf} (B^\wedge_I),
    \]
where $I \coloneqq \fib(B \to A)$.
In the particular case where $f$ is a locally complete intersection morphism both the $I$-adic filtration
and the Hodge filtration coincide (c.f. \cite[Remark 4.13]{Bhatt_Derived_Completions}).

\subsection{Main results}
The main goal of the present text is to prove a non-archimedean analogue of the above results. Namely, let $k$ be a non-archimedean field of characteristic $0$,
we prove the following:

\begin{thm} \label{intro-thm1}
    Let $f \colon X \to Y$ be a morphism of (locally geometric) derived $k$-analytic stacks. Consider
    the \emph{Hodge completed derived de Rham complex associated to $f$}, defined as 
        \[\dR_{X/ Y} \an \coloneqq \Sym\an(\bbL_{X/ Y} \an[-1])^\wedge,\]
    in $\CAlg_k$.
    Then one has a natural multiplicative equivalence
        \[
            \dR_{X/ Y} \an \simeq \Gamma(Y^\wedge_X, \cO_{Y^\wedge_X}),
        \]
    of derived $k$-algebras,
    where $Y^\wedge_X$ denotes the derived $k$-analytic stack corresponding to the formal completion of $X$ in $Y$ along the morphism $f$.
\end{thm}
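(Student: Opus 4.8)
The plan is to deduce the equivalence from the non-archimedean deformation to the normal cone $\cD_{X/Y}\an$ attached to $f$, regarded as a $\Gmk$-equivariant family of derived $k$-analytic stacks over $\bbA^1_k$. Applying global sections of the structure sheaf yields a $\Gmk$-equivariant, hence filtered, commutative $k$-algebra $\Gamma(\cD_{X/Y}\an, \cO)$; by the defining properties of the deformation its fiber over $1 \in \bbA^1_k$ is $\Gamma(Y^\wedge_X, \cO_{Y^\wedge_X})$, whereas its fiber over $0$ is $\Gamma(\cC_{X/Y}\an, \cO)$, the global sections of the analytic normal cone. Under the Rees correspondence between $\Gmk$-equivariant $\bbA^1_k$-families and filtered objects, the theorem is thereby reduced to a statement about filtered $k$-algebras: that $\Gamma(Y^\wedge_X, \cO_{Y^\wedge_X})$, endowed with the filtration coming from $\cD_{X/Y}\an$, is equivalent to $\dR_{X/Y}\an$ with its Hodge filtration. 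Since every term in this comparison is a commutative algebra object, multiplicativity of the resulting equivalence is automatic.

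First I would reduce to the local situation. Both constructions are compatible with the descent presenting the locally geometric stacks $X$ and $Y$, so one may assume $X$ and $Y$ derived $k$-affinoid, classified by algebras $A$ and $B$, with $f$ corresponding to a map $B \to A$ and $I \coloneqq \fib(B \to A)$. In this local model the analytic normal cone is presented by the analytic symmetric algebra $\Sym\an(\bbL_{X/Y}\an[-1])$, so that $\Gamma(\cC_{X/Y}\an,\cO)$ recovers $\Sym\an(\bbL_{X/Y}\an[-1])$ together with its weight grading; this identifies the associated graded of $\Gamma(\cD_{X/Y}\an,\cO)$ with the associated graded of $\dR_{X/Y}\an$. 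It then remains to check that the filtration on $\Gamma(Y^\wedge_X, \cO_{Y^\wedge_X})$ induced by the deformation is \emph{complete} with precisely this associated graded, so that the induced filtered comparison map is an equivalence after completion. The locally complete intersection case supplies the base case: there $\bbL_{X/Y}\an[-1]$ is (analytically) a vector bundle, both filtrations coincide with the $I$-adic one, and $\Gamma(Y^\wedge_X,\cO_{Y^\wedge_X}) \simeq B^\wedge_I$, matching the algebraic picture.

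Next I would globalize. The formation of $\cD_{X/Y}\an$ and of $\Gamma(Y^\wedge_X,-)$ is functorial in $f$ and compatible with the descent used above, so the local filtered equivalences glue to the global one; since each is realized by commutative-algebra maps compatible with the filtration, the glued equivalence is multiplicative, as required.

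I expect the genuine obstacle to be analytic rather than formal. One must control the cohomology of the structure sheaf of the ind-inf-$k$-analytic space $Y^\wedge_X$ and of the deformation $\cD_{X/Y}\an$, and verify that the analytic symmetric algebra $\Sym\an$ and the analytic cotangent complex $\bbL\an$ interact with completion and with $\Gamma(-,\cO)$ exactly as in the algebraic theory. The crux is to show that $\Gamma(Y^\wedge_X,\cO_{Y^\wedge_X})$ is the completion of the Rees family along the unit fiber, carrying the expected complete filtration without higher-cohomology obstructions; it is precisely here that the theory of $k$-analytic formal moduli problems developed earlier in the text is indispensable, since it is what guarantees that $Y^\wedge_X$ and its global sections behave like a classical formal completion.
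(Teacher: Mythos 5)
Your skeleton is the paper's own: the theorem is extracted from the deformation $\cD_{X/Y}\an$ with its left-lax $\bbA^1_k$-action, comparing the fiber at $\lambda \neq 0$ (the formal completion $Y^\wedge_X$) with the fiber at $0$ (the completed shifted tangent bundle, i.e.\ the derived normal cone), after reduction to the affinoid case and gluing over $(Y^\wedge_X)^{\mathrm{afd}}$. But your central step is a non sequitur. From ``the associated graded of $\Gamma(\cD_{X/Y}\an,\cO)$ agrees with the associated graded of $\dR_{X/Y}\an$'' plus completeness you cannot conclude a filtered equivalence: you never produce a comparison \emph{map} between the two filtered algebras, and two complete filtered commutative algebras with abstractly equivalent associated gradeds need not be equivalent --- the extension data binding the graded pieces is exactly what is at stake. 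The paper's mechanism produces the map and the graded identification simultaneously: in \cref{lem:identification_of_Hodge_and_adic_filtrations} one shows, by induction along the tower $X^{(n)}$ of \cref{prop:algebraic_properties_of_deformation}, that each stage has \emph{constant} fibers over $\bbA^1_k$, because the derivation classifying $X^{(n)} \to X^{(n+1)}$ is identified with the tautological derivation classifying the extension $\Sym^{n+1}(\bbL_{A/B}[-1]) \to \Sym^{\le n+1}(\bbL_{A/B}[-1]) \to \Sym^{\le n}(\bbL_{A/B}[-1])$. This yields $B/\mathrm{Fil}^n_H \simeq \dR_{A/B}/\mathrm{Fil}^n_H$ compatibly in $n$, whence $B^\wedge_I \simeq \lim_{n} \dR_{A/B}/\mathrm{Fil}^n_H$. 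Likewise, the completeness you correctly flag as the crux is not handed to you by the formal moduli formalism in the abstract: in the paper it is the identification $\cO_{Y^\wedge_X} \simeq \lim_{n} \cO_{(X^{(n)})_\lambda}$ imported from \cite[Corollary 9.5.2.5]{Gaitsgory_Study_II} via Serre duality on $\Ind\Coh$, and then transported to the analytic setting by relative analytification and Noetherian approximation (\cref{cor:commutation_between_limit_and_colimit_on_Hodge_filtration}); your proposal would have to reprove exactly this, and merely invoking ``the theory of $k$-analytic formal moduli problems'' does not substitute for it.

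A second, related misdirection is the role you assign to the locally complete intersection case. There is no induction of which it is the ``base case'': in the paper the comparison is proved directly for arbitrary nil-embeddings (nil-isomorphisms reduce to these by the retract $X \to X \sqcup_{X_\red} Y$, and general $f$ by passing to the formal completion), and the identification of the Hodge filtration with the $I$-adic one in the lci case is a \emph{separate consequence} (\cref{lem:identification_of_Hodge_and_adic_filtrations}(1), \cref{thm:main_thm}(5)), not an input. If what you intend is to left Kan extend from the lci case in the style of Bhatt, that is precisely the route the introduction declares unavailable non-archimedeanly: there is no workable analytic analogue of polynomial/regular-ideal resolutions interacting with $\Sym\an$, $\bbL\an$ and analytification, and avoiding that extension --- by making the Gaitsgory--Rozenblyum construction canonical and analytifying it --- is the point of the paper's argument. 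Also note a small slip: for a general lci morphism $\bbL_{X/Y}\an[-1]$ is perfect of Tor-amplitude $[0,1]$, and it is only for a (derived) regular closed immersion that it is a vector bundle $I/I^2$.
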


Unfortunately, the methods of B. Bhatt are algebraic in nature and cannot be directly extrapolated to the non-archimedean setting. Instead, we follow closely the
approach of Gaitsgory-Rozemblyum given in \cite[\S 9]{Gaitsgory_Study_II}. The main idea consists of defining the \emph{non-archimedean deformation
to the normal cone} as a derived $k$-analytic stack
    \[
        \cD\an_{X/ Y} \in \dAnSt_k,  
    \]
mapping naturally to the $k$-analytic affine line $\bA^1_k$. Denote by $p \colon \cD_{X/ Y}\an \to \bA^1_k$ the natural morphism. Then
\cref{intro-thm1} follows immediately from the following more precise result (see \cref{thm:main_thm}):

\begin{thm} \label{intro-thm2}
    Let $f \colon X \to Y$ be a morphism of locally geometric derived $k$-analytic stacks. Then the non-archimedean deformation to the normal cone satisfies the following
    assertions:
    \begin{enumerate}
        \item The fiber of $p \colon \cD_{X/ Y}\an \to \bA^1_k$ at $\{ 0\} \subseteq \bA^1_k$ identifies naturally with the completion of the shifted analytic
        tangent bundle
            \[
                \bT\an_{X/ Y}[1]^\wedge,  
            \]
        completed along the zero section $s_0 \colon X \to \bT\an_{X/ Y}[1]$. Moreover for $\lambda \neq 0$ we have a natural identification
            \[
                (\cD_{X/ Y}\an)_\lambda  \simeq Y^\wedge_X,
            \]
        where $Y^\wedge_X$ denotes the formal completion of $Y$ at $X$ along $f$.
        \item There exists a natural sequence of morphisms admitting a deformation theory
            \[
                X \times \bA^1_k = X^{(0)} \to X^{(1)} \hookrightarrow \dots \hookrightarrow X^{(n)} \hookrightarrow \dots \to Y \times \bA^1_k, 
            \]
        such that the colimit of derived $k$-analytic stacks identifies naturally
            \[
                \colim_{n \ge 0} X^{(n)} \simeq \cD_{X/ Y}\an.  
            \]
        In particular, the latter induces the usual Hodge filtration on (global sections of) $\bT\an_{X/ Y}[1]^\wedge$ and it induces a further filtration
        on (global sections of) the formal completion $Y^\wedge_X$, which we shall also refer to as the \emph{Hodge filtration} on the formal completion $Y^\wedge_X$;
        \item When $f \colon X \to Y$ is a locally complete intersection morphism of derived $k$-affinoid spaces, the Hodge filtration on $Y^\wedge_X$ identifies with the $I$-adic filtration on global sections.
        More precisely, let
            \[A \coloneqq \Gamma(X, \cO_X\alg) \quad \mathrm{and} \quad B \coloneqq \Gamma(Y, \cO_Y\alg),\]
        denote the corresponding derived global sections derived $k$-algebras. Then the global sections of
        $Y^\wedge_X$ agree with the completion
            \[
                B^\wedge_I \in \CAlg_k, \quad I \coloneqq \fib(B \to A),  
            \]
        and the Hodge filtration on $B^\wedge_I$ coincides with the usual $I$-adic filtration.
    \end{enumerate}
\end{thm}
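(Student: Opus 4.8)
The plan is to split the statement into two assertions: first, that the derived global sections of the formal completion recover the $I$-adic completion $B^\wedge_I$; and second, that under this identification the Hodge filtration produced in part (2) is carried to the $I$-adic filtration. For the first assertion I would use the presentation of $Y^\wedge_X$ as the filtered colimit of its infinitesimal neighborhoods $Y^{(n)}_X$ furnished by the theory of (ind-inf)-$k$-analytic spaces developed in the text. Applying the derived algebraic global-sections functor $\Gamma(-, \cO\alg)$ turns this colimit into a limit, identifying $\Gamma(Y^\wedge_X, \cO) \simeq \lim_n B /\!/ I^{n+1} \simeq B^\wedge_I$, where the inner terms are derived quotients and $I \coloneqq \fib(B \to A)$. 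Since $X$ and $Y$ are $k$-affinoid this is a concrete affinoid computation, and by construction the tower $\{ B /\!/ I^{n+1} \}_n$ is precisely the $I$-adic filtration on $B^\wedge_I$.

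The comparison of filtrations I would then reduce to the level of associated graded objects. Both the Hodge and the $I$-adic filtrations are exhaustive and complete filtrations on the same underlying algebra $B^\wedge_I$, so it suffices to produce a map of filtered objects inducing an isomorphism on associated graded pieces. On the Hodge side, part (1) identifies the fiber of $p$ at $\{0\}$ with the completed shifted tangent bundle $\bT\an_{X/Y}[1]^\wedge$, and part (2) exhibits the Hodge filtration with graded pieces $\Gamma(X, \wedge^i \bbL_{X/Y}\an[-i])$. The locally complete intersection hypothesis forces $\bbL_{X/Y}\an \simeq \cN^\vee[1]$, with $\cN^\vee$ the analytic conormal, a finite projective $\cO_X\an$-module placed in degree zero; hence by décalage $\wedge^i \bbL_{X/Y}\an[-i] \simeq \Sym^i \cN^\vee$, and taking global sections gives $\mathrm{gr}^i_{\mathrm{Hodge}} \simeq \Sym^i_A(I/I^2)$. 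On the $I$-adic side, the lci condition is exactly the derived Koszul-regularity of $I$, which yields $\mathrm{gr}^i_I (B^\wedge_I) = I^i / I^{i+1} \simeq \Sym^i_A(I/I^2)$ as well, so the two graded rings agree.

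It then remains to match the filtered objects themselves, and not merely their associated gradeds. For this I would compare the tower $X \times \bA^1_k = X^{(0)} \to X^{(1)} \hookrightarrow \cdots$ of part (2), after restriction to $\lambda \neq 0$, with the tower of infinitesimal neighborhoods $Y^{(n)}_X$. In the lci case both are controlled by the Koszul resolution of $\cO_X$ over $\cO_Y$, so the natural comparison map is a morphism of filtered objects realizing the two graded identifications above. Completeness of both filtrations on $B^\wedge_I$ then upgrades the graded isomorphism to a filtered equivalence, which is the desired conclusion.

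The step I expect to be the main obstacle is the analytic-to-algebraic interface. I must know that the algebraic global-sections functor $\Gamma(-, \cO\alg)$ intertwines the analytic operations --- the analytic symmetric algebra $\Sym\an$, the analytic formal completion, and the non-archimedean deformation to the normal cone $\cD_{X/Y}\an$ --- with their purely algebraic counterparts, so that the whole comparison can be carried out inside $\CAlg_k$. Concretely this demands a GAGA-type compatibility showing that $\Gamma(X, \Sym\an(\bbL_{X/Y}\an[-1]))$ agrees in each degree $i$ with the algebraic symmetric power $\Sym^i_A(I/I^2)$ of the projective conormal module, together with the derived Koszul-regularity of $I$ over the affinoid base. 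Once these inputs are in place, the filtered comparison is reduced to the algebraic assertion already understood through the work of Bhatt and of Gaitsgory-Rozenblyum.
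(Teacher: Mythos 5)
There is a genuine gap: your proposal proves only assertion (3), taking assertions (1) and (2) as inputs, whereas the bulk of the paper's proof of this theorem is precisely the construction underlying (1) and (2). Concretely, the paper must first build the simplicial deformation $\cD^{\mathrm{an},\bullet}_{X/Y}$ from the analytified scaled mapping stacks, prove the comparison $\cD\an_{X/Y} \simeq (\cD_{X\alg/Y\alg})\an_Y$ (\cref{prop:rel_analytification_preserves_the_deformation}) using Noetherian approximation of nil-embeddings (\cref{lem:Noetherian_approximation_of_nil_isomorphisms}) together with properness of $\rB^\bullet_{\mathrm{scaled}} \to \bbA^1_k$ so that analytification commutes with the relevant mapping stacks, then transport the fiber identifications (\cref{lem:identification_of_non-archimedean_fibers}) and construct the tower $X^{(n)}$ as the relative analytification of a Noetherian-approximated algebraic Hodge tower, with the colimit identification $\colim_n X^{(n)} \simeq \cD\an_{X/Y}$ established via conservativity of the (Serre) tangent functor (\cref{cor:commutation_between_limit_and_colimit_on_Hodge_filtration}, \cref{prop:conservativity_and_preservation_of_sifted_colimits_of_tangent_complex}). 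You correctly flag the analytic-to-algebraic interface as the main obstacle, but the paper does not discharge it by a direct computation with $\Gamma(-,\cO\alg)$: derived global sections alone do not remember the analytic structure, and the paper instead works with the relative analytification functor, which by Porta--Yu is an equivalence $\mathrm{FMP}_{/\Spec A} \simeq \anFMP_{/X}$ (\cref{thm:equivalence_between_analytic_and_algebraic_formal_moduli_problems_over}), plus the extension of Gaitsgory--Rozenblyum's construction beyond the almost-finite-presentation setting --- itself one of the paper's contributions, and absent from your sketch.

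Within assertion (3), your reduction to associated gradeds is also incomplete at its decisive step. An abstract isomorphism of graded pieces between two complete exhaustive filtrations on $B^\wedge_I$ does not upgrade to a filtered equivalence; one needs a map of filtered objects inducing it, and your phrase that both towers ``are controlled by the Koszul resolution'' asserts exactly what must be proven. The paper's \cref{lem:identification_of_Hodge_and_adic_filtrations} supplies this by induction: using $\bbL_{A/B} \simeq I/I^2[1]$ for a regular sequence, it identifies the derivation classifying each square-zero extension $X^{(n)} \to X^{(n+1)}$ with the derivation classifying the extension $I^n/I^{n+1} \to B/I^{n+1} \to B/I^n$, yielding $X^{(n)} \simeq \Spec(B/I^{n+1})$ on the fibers $\lambda \neq 0$ directly, rather than comparing filtered objects post hoc; the general derived lci case then follows since $\bbL_{A/B}$ is concentrated in homological degrees $[1,0]$. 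Your graded computation (d\'ecalage giving $\Sym^i$ of the conormal on both sides) is correct and matches the paper's, but without the inductive identification of the classifying derivations --- or some substitute construction of the filtered comparison map --- the argument does not close.
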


For applications, it is desirable to establish \cref{intro-thm2} in such generality. As an example, it is interesting to
have at our disposal a natural Hodge filtration for the de
Rham cohomology of the moduli of $\ell$-adic continuous representations of an \'etale fundamental group, see
\cite{antonio2017moduli} and \cite{antonio2019moduli}. The latter is a locally geometric derived $k$-analytic stack.
Notwithstanding, the reader can safely assume that $f$ is simply a morphism of (derived) $k$-analytic spaces.

In order to construct the deformation to the normal cone $\cD_{X/ Y}\an$ we shall need two main ingredients. Namely, a general theory of $k$-analytic
formal moduli problems and the relative analytification functor introduced in \cite{Holstein_Analytification_of_mapping_stacks} (see also \cite[\S 6]{Porta_Yu_NQK}).
So far, a general theory of analytic formal moduli problems was lacking in the literature. For this reason, we devote the whole \S 2
to develop such framework. Let $X \in \dAnk$ be a (derived) $k$-analytic space. Denote by $\anNil_{X/ }^\cl$ the \infcat of closed embeddings
    \[
        g\colon X \to S,  
    \]
where $S \in \dAnk$, such that $g_\red \colon X_\red \to S_\red$ is an isomorphism of $k$-analytic spaces. We shall define a $k$-analytic formal moduli problem
under $X$ (resp. over $X$) as a functor
    \[
        Y \colon (\anNil^{\mathrm{cl}}_{X/ })^ \mathrm{op} \to \cS,  \quad \textrm{(resp. $Y \colon (\anNil^\cl_{/ X}) \op \to \cS$)}
    \]
satisfying some additional requirements, see \cref{defin:analytic_formal_moduli_problems_under} (resp. \cref{defin:analytic_formal_problems_over_X}).
It turns out that analytic formal moduli problems can be described in very explicit terms as follows:
\begin{thm} \label{intro-thm3}
    Let $X \in \dAn_k$ denote a derived $k$-analytic space. Then the following assertions hold:
    \begin{enumerate}
        \item The \infcat $\anNil_{X/ /Y}^\cl$ (resp. $(\anNil_{/ X}^\cl)_{/ Y}$) is filtered and we have a natural equivalence
            \[
                \colim_{S \in \anNil^\cl_{X/ / Y}} S \simeq Y \quad \textit{resp.} \quad \colim_{X \in (\anNil_{/ X}^\cl)_{/ Y}} S \simeq Y; 
            \]
        \item The natural morphism $f \colon X \to Y$ (resp. $f \colon Y \to X$)
        admits a relative (pro-)cotangent complex $\bbL_{X/ Y} \in \pro(\Coh^+(X))$, which completely controls the deformation
        theory of the morphism $f$;
        \item In the case where $X$ is a derived $k$-affinoid space, the \infcat of analytific formal moduli problems $\anFMP_{X/ }$ is presentable and
        monadic over the usual \infcat of ind-coherent sheaves on 
            \[A \coloneqq \Gamma(X, \cO_X\alg).\]
    \end{enumerate}
\end{thm}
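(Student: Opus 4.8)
The plan is to transport Lurie's theory of formal moduli problems and the Gaitsgory--Rozenblyum theory of inf-schemes and Lie algebroids \cite{Gaitsgory_Study_II} to the derived $k$-analytic setting, replacing the algebraic cotangent complex by its analytic counterpart $\bbL_{X/ Y}\an$ and using the relative analytification functor to compare analytic with algebraic deformation theory. The three assertions are intertwined, and the common technical input is that a $k$-analytic formal moduli problem $Y$ is controlled by its deformation theory: it is \emph{infinitesimally cohesive} and \emph{nilcomplete}, meaning it sends pullbacks of analytic square-zero extensions in $\anNil^\cl_{X/ }$ to pullbacks of spaces and is determined by its values on finite-order thickenings. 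First I would record this cohesiveness, which follows from the corresponding property of $\bbL\an$ together with the structural fact that every object of $\anNil^\cl_{X/ }$ is exhausted by its finite-order infinitesimal neighbourhoods, each successive step being an analytic square-zero extension.

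Granting cohesiveness, I would prove (1) and (2) together. For filteredness of $\anNil^\cl_{X/ / Y}$, given two factorisations $X \to S_1 \to Y$ and $X \to S_2 \to Y$ I would form the pushout $S_1 \sqcup_X S_2$ under $X$ in $\dAnk$; cohesiveness guarantees that this pushout exists, that $X \to S_1 \sqcup_X S_2$ is again a nil-closed immersion, and that the two maps to $Y$ glue to a single map $S_1 \sqcup_X S_2 \to Y$, furnishing a common upper bound, while parallel arrows are coequalised by a further pushout. The colimit formula $\colim_S S \simeq Y$ is then the assertion that $Y$ coincides with its own formal completion along $X$: both sides are formal moduli problems with the same restriction to $\anNil^\cl_{X/ }$, so they agree. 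Up to cofinality the resulting filtered system is the tower $X^{(n)}$ of \cref{intro-thm2}. For (2), I would corepresent the linearised deformation functor $M \mapsto \Map_{X/ }(Y, X \oplus M)$ by the relative cotangent complex; writing $Y$ as the filtered colimit of the $X^{(n)}$, and noting each $X \to X^{(n)}$ has a cotangent complex in $\Coh^+(X)$, exhibits $\bbL_{X/ Y}\an$ as the associated object of $\pro(\Coh^+(X))$, which by cohesiveness controls the full deformation theory.

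The heart of the matter is (3), where I expect the main obstacle to lie. For a derived $k$-affinoid $X = \Sp A$ with $A = \Gamma(X, \cO_X\alg)$, I would define the tangent-fibre functor $T \colon \anFMP_{X/ } \to \Ind\Coh(A)$, dual to the cotangent fibre of (2), and construct its left adjoint as the analytic analogue of the free Lie-algebroid functor of \cite{Gaitsgory_Study_II}. Presentability follows by realising $\anFMP_{X/ }$ as an accessible localisation of a functor category cut out by the cohesiveness conditions. Monadicity is then the Barr--Beck--Lurie theorem: its hypotheses demand conservativity of $T$ --- the statement that a $k$-analytic formal moduli problem with vanishing tangent complex is trivial, which is the genuinely formal-geometric input and relies on both the characteristic-zero hypothesis and the nilcomplete nature of the objects of $\anNil^\cl_{X/ }$ --- together with preservation by $T$ of geometric realisations of $T$-split simplicial objects, which follows from the compatibility of the analytic square-zero and cotangent formalism (governed by $\Sym\an$) with such colimits.

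The principal difficulty I anticipate is arranging that the tangent-fibre functor lands in, and the resulting monad acts on, the \emph{algebraically} defined category $\Ind\Coh(A)$, and that the whole adjunction is compatible with analytification. Concretely, one must show that analytic square-zero extensions of $X$ and the analytic cotangent complex $\bbL_X\an$ are computed by their algebraic counterparts under $\Gamma(-, \cO_X\alg)$, so that the comparison between analytic and algebraic deformation theory is an equivalence on the infinitesimal objects that matter. This is exactly where the relative analytification functor of \cite{Holstein_Analytification_of_mapping_stacks, Porta_Yu_NQK} enters, providing the dictionary that translates the algebraic Lie-algebroid monadicity of \cite{Gaitsgory_Study_II} into its analytic form; once this comparison is secured, conservativity of $T$ and the construction of its left adjoint reduce to their algebraic counterparts and monadicity follows.
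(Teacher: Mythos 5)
Your treatment of assertions (1) and (2) tracks the paper's own route: filteredness of $\anNil^\cl_{X//Y}$ is obtained from pushouts along nil-embeddings factored into square-zero extensions, cofinality of the closed thickenings inside all thickenings gives the colimit formula, and the pro-cotangent complex is the pro-object $\{\bbL\an_{X/Z}\}_{Z \in \anNil^\cl_{X//Y}}$ with the universal property proved in \cref{lem:pro_cot_complex_classifies_nil_extensions_for_analytic_moduli_problems}. (One inaccuracy: for a general $Y \in \anFMP_{X/}$ the filtered system $\anNil^\cl_{X//Y}$ is not, even up to cofinality, the countable tower $X^{(n)}$ of \cref{intro-thm2} --- that tower is specific to the deformation $\cD\an_{X/Y}$ --- but nothing in your argument for (2) really depends on this.)

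The genuine gap is in (3), exactly where you locate the difficulty but do not resolve it. The analytification comparison you invoke exists in the literature only for formal moduli problems \emph{over} a base: \cite[Theorem 6.12]{Porta_Yu_NQK} gives $\mathrm{FMP}_{/\Spec A} \simeq \anFMP_{/X}$ (\cref{thm:equivalence_between_analytic_and_algebraic_formal_moduli_problems_over}). There is no off-the-shelf equivalence for problems \emph{under} $X$, and your plan of matching analytic and algebraic square-zero extensions of $X$ directly under $\Gamma(-,\cO_X\alg)$ is precisely the statement whose proof constitutes the hard part of the paper; "once this comparison is secured" begs the question. The paper's mechanism is the equivalence $\anFMP_{X/} \simeq \anFGrpd(X)$ between moduli problems under $X$ and formal groupoids over $X$ (\cref{thm:Phi_is_an_equivalence}), built from the classifying-stack construction $\rB_X(\cG)$; showing $\rB_X(\cG)$ admits a deformation theory (\cref{lem:B_X(G)_is_formal}) in turn rests on pseudo-nil-descent for $\pro(\Coh^+)$ along nil-isomorphisms (\cref{prop:nil_descent_for_Coh^+}, \cref{cor:pseudo_nil-descent_for_pro_Coh^+}, following \cite{preygel_Leistner_Mapping_stacks_properness}), none of which appears in your sketch. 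Granting that equivalence, evaluation at $[1]$ lands in pointed problems $\anFMP_{X//X}$ and is conservative and sifted-colimit-preserving (\cref{lem:Fgroupoids_to_pointed_objects_is_conservative_and_commutes_with_sifted_colimits}); only there does the Porta--Yu equivalence apply, after which the algebraic input is \cite[Corollary 4.31]{Antonio_Koszul} --- not a "free Lie-algebroid" left adjoint, which the paper never constructs; monadicity comes formally from Barr--Beck--Lurie plus presentability, the latter proved via $\anFMP_{X/} \simeq \Ind(\anNil^\cl_{X/})$ rather than your localisation argument. Two further points your sketch elides: landing in $\Ind\Coh(A)$ at all requires the Serre duality functor $\bD^{\mathrm{Serre}}_X$, hence existence of dualizing modules for derived affinoid algebras, which the paper must establish; and the commutativity of the comparison square in \cref{prop:conservativity_and_preservation_of_sifted_colimits_of_tangent_complex} uses \cite[Lemma 6.9]{Porta_Yu_NQK} together with the equivalence $\Ind\Coh(A) \simeq \Ind\Coh(X)$.
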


We refer the reader to \cref{cor:formal_moduli_problems_over_X_are_ind_inf_objects}, \cref{prop:analytic_FMP_under_X_are_ind_inf_schemes},
\cref{cor:universal_property_of_relative_cotangent_complex_for_morphisms_between_analytic_FMP} and \cref{prop:conservativity_and_preservation_of_sifted_colimits_of_tangent_complex},
for a detailed treatment of \cref{intro-thm3}. 

Granting a general deformation theory we are able to construct the deformation $\cD_{X/ Y}\an$
as a $k$-analytic formal moduli problem
under $X$. Furthermore, we are able to extrapolate the main formal properties of the derived deformation to the normal cone in the algebraic setting, proved in \cite[\S 9]{Gaitsgory_Study_II}.
The latter is done via a careful analysis of the behaviour of the relative analytification functor, denoted $(-)\an_Y$, and by means of
Noetherian approximation.

In particular, when $f \colon X \to Y$ is a morphism of derived $k$-affinoid spaces, we have a natural identification
    \[
        \cD_{X/ Y}\an \simeq (\cD_{X\alg/ Y\alg} )\an_Y,  
    \]
where $X\alg \coloneqq \Spec A$ and $Y\alg \coloneqq \Spec B$, with $A$ and $B$ are as in \cref{intro-thm2}. 

Along the way, we establish a precise comparison between the works of \cite{Bhatt_Derived_Completions} and \cite[\S 9]{Gaitsgory_Study_II},
which we consider to be of relevance for experts in derived algebraic geometry. Moreover, we shall also explicit mention that the techniques employed in the current
text, namely Noetherian approximation, allows to construct the deformation to the normal cone and its Hodge filtration
as in \cite[\S 9]{Gaitsgory_Study_II} outside the the almost of finite presentation scope, provided that we work under Noetherian assumptions. The latter
might be helpful even in purely algebraic contexts.

\subsection{Relation with other works and future questions} As previously explained, the content of this text is build upon two major prior works,
namely \cite[\S 9]{Gaitsgory_Study_II} and \cite{Bhatt_Derived_Completions}. It follows essentially from \cref{lem:identification_of_Hodge_and_adic_filtrations}
that
Bhatt's construction of the Hodge filtration on algebraic de Rham cohomology with the approach of Gaitsgory-Rozemblyum (c.f. \cite[\S 9]{Gaitsgory_Study_II}).
Such comparison result is not completely obvious. Indeed, in \cite{Bhatt_Derived_Completions} the author produces the Hodge filtration in the general case, by left Kan extending the adic
filtration from the case of local complete intersection morphisms, whereas the construction of \cite[\S 9]{Gaitsgory_Study_II} is completely canonical. Therefore,
there are two main advantages provided by our comparison result: first it proves the canonicity of Bhatt's approach and secondly it enlightens many
important aspects of \cite[\S 9]{Gaitsgory_Study_II}, which were not made completely explicitly.

In \cite{grosse2002finiteness}, Grosse-Kl\"onne introduces an \emph{analytic} de Rham cohomology
theory for a very large class of rigid $k$-analytic spaces. Let $X$ be a rigid $k$-analytic space,
Grosse-Kl\"onne's construction is related to \emph{over-convergent} global sections of formal completions
along closed immersion $i \colon X \hookrightarrow Y$, where $Y$ is a smooth rigid $k$-analytic space.
Furthermore, the author proves finiteness of over-convergent analytic de Rham cohomology under certain finiteness conditions on $X$.

We expect that our derived de Rham cohomology theory, $\dR_{X/k}\an$, can be compared to the construction in \cite{grosse2002finiteness}. In particular,
this would provide a proof of finiteness for derived (over-convergent) analytic de Rham cohomology.
Such comparison result would equip us with a Hodge-to-de Rham
spectral sequence for over-convergent de Rham cohomology, in great generality. 
In order to achieve such a comparison one needs to extend
the results in this text to the over-convergent setting. We hope to pursue this venue in a future project.

Ideally, the deformation to the normal bundle would provide an important tool to study the relationship between de Rham cohomology and \'etale cohomology
of $k$-analytic spaces, similar to the algebraic case (see \cite[\S 1, Corollary]{Bhatt_Derived_Completions}). When $k$ is a field of equi-characteristic zero, we expect that a non-smooth extension of the main construction of \cite{achinger2019betti}, in the non-archimedean setting, could be related to the formal
completion $Y^\wedge_X$. More interestingly, would be to study the $p$-adic setting. Even though the author currently does not have an idea of how to accomplish
this, it might be useful to extrapolate certain results in \cite{bras2018overconvergent} to the singular case, via derived geometry.

Another future goal closely related to this work is the study of a rigid cohomology theory for singular algebraic varieties and possibly algebraic stacks over perfect fields of
positive characteristic. Our hope is that techniques from derived geometry can be useful to reduce questions about the rigid cohomology of singular algebraic varieties to the free case, via simplicial polynomial resolutions
of general $k$-algebras.

Finally, we expect that the results in this text provide a first step in a consistent study of $\cD_X$-modules and curved modules outside the smooth case, in the non-archimedean setting.
In order to develop such a framework, one has to set the main ingredients of \cite{ben2012loop} in non-archimedean geometry. One such ingredient is the deformation to the normal cone done
in the current text. Another important ingredient is a $k$-analytic HKR theorem, which is a work in progress with M. Porta and F. Petit (see the author's thesis \cite{ferreira2019modeles}
for a sketch of proof of the latter statement).

\subsection{Organization of the paper}
In \S 2, we develop a theory of formal moduli problems in the non-archimedean setting. We devote 2.1 to a brief review of the main notions in non-archimedean
geometry that will be important for us. We then proceed to \S 2.2 where a careful study of the structure of \emph{nil-isomorphisms} is performed.
The notion of nil-isomorphism is basic building block of the theory of analytic formal moduli problems \emph{under} (resp. \emph{over}) a given base.
The latter notions are introduced and study at length in \S 2.3 (resp. \S 2.4). In \S 2.4, we establish \emph{pseudo}-nil-descent for pro-almost perfect complexes
or in other words pro-coherent complexes. Given a morphism $X \to Y$ which exhibits $X$ as an analytic formal moduli problem, we prove that
the \infcat of $\pro(\Coh^+(Y))$ does satisfy descent along the usual $*$-upper functoriality, that it is, it can be realized as the totalization of
    \[\pro(\Coh^+(X^\bullet)),\]
where $X^\bullet$ denotes the \v{C}ech nerve associated to $X \to Y$. The main novelty compared with the main result in \cite[\S 7.2]{Gaitsgory_Study_I}
is that we use the entire $\Coh^+$ and the $*$-upper functoriality. Our nil-descent statement is based in the work of Halpern-Leistner and Preygel, c.f.
see \cite[Theorem 3.3.1]{preygel_Leistner_Mapping_stacks_properness}. Section 2.6 is devoted to the study of the notion of formal groupoids over a base $X$.
We will prove the existence of a natural equivalence between the
\infcats of analytic formal moduli problems under $X$ and analytic formal groupoids over $X$ (see \cref{thm:Phi_is_an_equivalence}).
The content of this equivalence can be interpreted as: every analytic formal moduli problem under $X$ arises as a \emph{classifying formal derived $k$-analytic stack}
associated to an analytic formal groupoid (which corresponds to the associated \v{C}ech nerve).
This equivalence of \infcats is based on the pseudo-nil-descent results, proved earlier. Notice that \cref{thm:Phi_is_an_equivalence},
is a direct non-archimedean analogue
of \cite[\S 5, Theorem 2.3.2]{Gaitsgory_Study_II}. In \S 2.6, we study the tangent complex associate to an analytic formal moduli problem
under $X$. In particular, we prove that such a functor is monadic, see \cref{prop:conservativity_and_preservation_of_sifted_colimits_of_tangent_complex}.

The main bulk of the text lies in \S 3, where we introduce the deformation to the normal cone, $\cD_{X/ Y}$, in the non-archimedean setting. In
\S 3.1 we review the algebraic situation. We shall summarize the main algebraic formal properties of $\cD_{X/ Y}$ and then proceed to
give a simplification of the formalism developped in \cite[\S 9]{Gaitsgory_Study_II}
in the case of closed immersions of derived schemes, see \cref{lem:identification_of_Hodge_and_adic_filtrations}. This will enable us to prove a comparison statement between Bhatt's construction of the Hodge filtration in algebraic de Rham
cohomology, c.f. \cite{Bhatt_Derived_Completions} and the work of Gaitsgory-Rozemblyum.

In \S 3.2 we introduce the non-archimedean deformation in the local case, via the relative analyticification functor and Noetherian approximation. We
devote \S 3.3 to gluing the deformation to the class of morphisms between locally geometric derived $k$-analytic stacks. In particular, our results hold for any derived $k$-analytic space.
We then introduce the construction of the \emph{non-archimedean Hodge filtration} in the local, in \S 3.4. This is possible by extending the usual Hodge filtration of
Gaitsgory-Rozemblyum to the non-archimedean setting as follows: we first extrapolate the main results in \cite[\S 9.5]{Gaitsgory_Study_II} to the Noetherian setting
not necessarily of almost of finite presentation (which was the only case treated in \cite[\S 9]{Gaitsgory_Study_II}) and then to the non-archimedean setting via the relative analytification functor.
Finally, in \S 3.4 we glue the non-archimedean Hodge filtration for general morphisms between locally geometric derived $k$-analytic stacks.

\subsection{Notations and Conventions} In this text, $k$ denotes a non-archimedean field of characteristic zero. We let $\dAff_k$ and $\dAff_k^\laft$ denote
the \infcats of affine derived schemes (resp. affine derived schemes almost of finite presentation).
We shall denote by $\dSt_k$ the \infcat of derived $k$-stacks. 
We shall further denote by $\dSt^\laft_k \subseteq \dSt_k$ the full subcategory spanned by derived $k$-stacks locally almost of finite presentation.
Given derived $k$-stacks $X, Y \in \dSt_k$ we shall denote by
    \[
        \Map(X, Y) \in \dSt_k,    
    \]
the corresponding mapping stack. If $X$ and $Y$ live over $\bbA_k^1$, we then shall denote by
    \[
        \Map_{/ \bbA^1_k}(X, Y) \in (\dSt_k)_{/ \bbA^1_k}.  
    \]
We denote $\dAnSt_k$ the \infcat of derived $k$-analytic stacks introduced in \cite{Porta_Yu_Derived_non-archimedean_analytic_spaces}.
We shall denote by $\dAnk$ and $\dAfd_k$ the \infcats of derived $k$-analytic spaces (resp., derived $k$-affinoid spaces). We will
further denote by $\Ank$ the usual ordinary category of (discrete) $k$-analytic spaces as in \cite{Berkovich_Etale_1993}.
We shall instead use bold letters for the corresponding notions in the non-archimedean setting. Therefore, we shall denote by
    \[
        \bfMap(X, Y) \in \dAnSt_k,
    \]
the corresponding $k$-analytic mapping stack, whenever $X, Y \in \dAnSt_k$.
Let $\cC$ be a stable \infcat. We shall denote by $\cC^{\mathrm{fil}}$ the associated \infcat of \emph{filtered objects in $\cC$}.
Similarly, we denote by $\cC^{\mathrm{gr}}$ the \infcat of graded objects in $\cC$. We shall employ the notations
    \[
        \cC^{\mathrm{gr}, \ge 0} \quad \mathrm{and} \quad \cC^{\mathrm{gr} , =n}, 
    \]
the respective full subcategories of positively indexed graded objects in $\cC$ and objects concentrated in a single degree $n$. Notice that the latter
\infcat is equivalent to $\cC$ itself.

\subsection{Acknowledgments} The author would like to express his deep gratitude to Mauro Porta for the encouragement to pursue this project and
for many commentaries and suggestions during the realization of the present work. The author would also like to thank Marco Robalo for a clarifying
remark.

\section{Analytic formal moduli problems}

\subsection{Preliminaries}
Recall the notions of $k$-affinoid/analytic spaces introduced
in \cite[Definition 7.3 and Definition 2.5.]{Porta_Yu_Derived_non-archimedean_analytic_spaces}, respectively.


\begin{defin}
    Let $f \colon X \to Y$ be a morphism in the \infcat $\dAnk$. We shall say that $f$ is an \emph{affine morphism} if
    for every morphism $Z \to Y$ in $\dAnk$ such that $Z$ is a derived $k$-affinoid space, the pullback
        \[
            Z' \coloneqq Z \times_X Y \in \dAnk,  
        \]
    is a derived $k$-affinoid space.
\end{defin}

\begin{defin}
    Let $f \colon X \to Y$ be a morphism in $\dAnk$. We shall say that $f$ is an \emph{admissible open immersion} if the induced morphism on $0$-th truncations
        \[
            \trunc(f) \colon \trunc(X) \to \trunc(Y),  
        \]
    is an admissible open immersion in the sense of \cite[\S 1.3]{Berkovich_Etale_1993}.
\end{defin}

\begin{defin}
    Let $f \colon X \to Y$ be a morphism in $\Ank$. We shall say that $f$ is a \emph{finite morphism} if $f$ is affine and for every admissible open covering
        \[
            \coprod_{j \in J} V_j \to Y,  
        \]
    the base change morphism
        \[
            U_j \coloneqq V_j \times_Y Y \to V_j,  
        \]
    exihibits the $k$-affinoid algebra $B_j \coloneqq \Gamma(U_j, \cO_{U_j})$ as a finite $A = \Gamma(V_j, \cO_{V_j})$-module.
    Given $f \colon X \to Y$ in the \infcat $\dAnk$, we say that $f$ is \emph{finite} if its truncation $\trunc(f)$ is a finite
    morphism.
\end{defin}

\begin{defin} Let $X \in \Ank$ denote an (ordinary) $k$-analytic space.
    We denote by $\cJ_X \subseteq \cO_X$, the \emph{nil-radical ideal} of $\cO_X$, that is the sheaf ideal spanned by nilpotent
    sections on $X$. We denote by $X_\red$ the $k$-analytic space obtained as the pair $(X, \cO_{X}/ \cJ_X)$, which we shall refer to as the \emph{underlying reduced $k$-analytic space
    associated to $X$}.       
\end{defin}

\begin{rem}
    Let $X \in \Ank$, then the underlying reduced $X_\red \in \Ank$ is a reduced $k$-analytic space, by construction.
\end{rem}

\begin{notation}
    We denote by $\Ank^\mathrm{red} \subseteq \Ank$ the full subcategory spanned by reduced $k$-analytic spaces. We further denote by
    \[(-)_\red \colon \dAnk \to \Ank^\red,\]
    the functor obtained by the association
        \[
            Z \in \Ank \mapsto Z_\red \in \Ank^\mathrm{red}.
        \]
\end{notation}

We now extend the construction $(-)_\red \colon \Ank \to \Ank^\mathrm{red}$ to the \infcat of derived $k$-analytic spaces:

\begin{defin}
    Consider the functor $\dAnk \to \Ank^\mathrm{red}$ defined as the composite
        \[
            (-)_\red \colon \dAnk \xrightarrow{\trunc(-)} \Ank \xrightarrow{(-)_\red} \Ank^\mathrm{red}.
        \]
    Given $X \in \dAnk$ a derived $k$-analytic space we denote by $X_\red \in \Ank^\red$ the \emph{underlying reduced $k$-analytic space}
    associated with $X$. 
\end{defin}

\begin{lem}
    Let $f \colon X \to Y$ be an admissible open immersion of derived $k$-analytic spaces. Then $f_\red \colon X_\red \to Y_\red$ is
    an admissible open immersion, as well.
\end{lem}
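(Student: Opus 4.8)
The plan is to first reduce the assertion to the setting of ordinary (discrete) $k$-analytic spaces, and then to exploit the fact that passing to the reduction affects neither the underlying topological space nor the net of analytic domains, but only quotients the structure sheaf by its nil-radical.

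First I would unwind the definitions: the functor $(-)_\red$ on $\dAnk$ is by construction the composite of $\trunc(-)$ with the ordinary reduction functor on $\Ank$, so that $f_\red = (\trunc f)_\red$. Since both $X_\red$ and $Y_\red$ already lie in $\Ank^\red$ (so that $\trunc$ acts as the identity on them), checking that $f_\red$ is an admissible open immersion of derived $k$-analytic spaces amounts to checking that it is one in the sense of Berkovich. Hence it suffices to prove the following statement about ordinary spaces: \emph{if $g \colon X' \to Y'$ is an admissible open immersion in $\Ank$, then $g_\red \colon X'_\red \to Y'_\red$ is again an admissible open immersion.} In particular the derived hypotheses play no role beyond this reduction.

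Next I would recall that $g$, being an admissible open immersion, identifies $X'$ with an admissible open analytic subdomain $U \subseteq Y'$, and that under this identification $\cO_{X'} \simeq \cO_{Y'}|_U$ as sheaves of $k$-algebras on the common underlying space. The reduction functor leaves the underlying topological space and the net of affinoid domains untouched and merely replaces $\cO$ by $\cO/\cJ$; thus the content of the lemma is the compatibility
    \[
        \cJ_{X'} \simeq \cJ_{Y'}|_U
    \]
under the identification $\cO_{X'} \simeq \cO_{Y'}|_U$. This is a local statement on the net: for an affinoid domain $V \subseteq U$ the rings $\Gamma(V, \cO_{X'})$ and $\Gamma(V, \cO_{Y'})$ are the same $k$-affinoid algebra, so a section is nilpotent in the one description if and only if it is nilpotent in the other; since nilpotence of sections is detected on a generating family of affinoid domains, the sheaf of nilpotent sections of $\cO_{X'}$ is exactly the restriction to $U$ of that of $\cO_{Y'}$. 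Granting this, $X'_\red$ is identified with the open subspace of $Y'_\red$ corresponding to $U$ and equipped with $(\cO_{Y'}/\cJ_{Y'})|_U$, so $g_\red$ is the inclusion of an admissible open subspace; and because admissibility of $U$ is a condition on the net of analytic domains, which is preserved by $(-)_\red$, the morphism $g_\red$ is indeed an admissible open immersion.

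The main obstacle is precisely the compatibility $\cJ_{X'} \simeq \cJ_{Y'}|_U$, i.e.\ that forming the nil-radical commutes with restriction to admissible opens; once this local computation is in hand, everything else is formal, resting only on the fact that reduction preserves both the underlying topology and the net of analytic domains.
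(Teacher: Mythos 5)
Your proof is correct and follows essentially the same route as the paper: you first reduce to ordinary $k$-analytic spaces via $\trunc(-)$ (immediate from how admissible open immersions are defined in $\dAnk$), and then verify locally on the net of affinoid domains that forming the nil-radical ideal commutes with restriction to the admissible open. The paper compresses this last step into the remark that localization in $\Afd_k$ commutes with quotients (being defined via a colimit), which is just a ring-level phrasing of your sheaf-level observation that nilpotence of a section is intrinsic to the common affinoid algebra $\Gamma(V,\cO_{X'})\simeq\Gamma(V,\cO_{Y'})$.
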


\begin{proof}
    By the definitions, it is clear that the truncation
        \[
            \trunc(f) \colon \trunc(X) \to \trunc(Y),  
        \]
    is an admissible open immersion of ordinary $k$-analytic spaces. In the case of ordinary $k$-analytic spaces the question is local and we reduce ourselves to the affinoid case.
    Then it is clear that localization in the ordinary category $\Afd_k$ commutes with quotients (as the former is defined via a colimit).
\end{proof}

\begin{defin}
    In \cite[Definition 5.41]{Porta_Yu_Representability} the authors introduced the notion of a square-zero extension between derived
    $k$-analytic spaces. In particular, given a morphism $f \colon Z \to Z'$ in $\dAnk$, we shall say that $f$ \emph{has the structure of
    a square-zero extension} if $f$ exhibits $Z'$ as a square-zero extension of $Z$.
\end{defin}

\begin{rem} \label{rem:construction_of_nilpotent_extensions_as_square_zero_extensions}
    Let $X \in \Ank$. Let $\cJ \subseteq \cO_X$ be an ideal satisfying $\cJ^2 = 0$. Consider the fiber sequence
        \[
            \cJ \to \cO_X \to \cO_X/\cJ,  
        \]
    in the \infcat $\Coh^+(X)$. We have an induced natural fiber sequence of the form
        \[
            \bbL_{\cO_X}\an \to \bbL_{\cO_X / \cJ}\an \to \bbL_{(\cO_X / \cJ) / \cO_X}\an,
        \]
    and we have a further identification $\tau_{\le 1} (\bbL_{(\cO_X/ \cJ) / \cO_X}) \simeq \cJ[1]$. For this reason, we obtain a well defined morphism
        \[
            d \colon \bbL_{\cO_X/ \cJ} \to \cJ[1],
        \]
    in the derived \infcat $\Mod_{\cO_X/ \cJ}$. This derivation classifies a square-zero extension of $\cO_X/ \cJ$ by $\cJ[1]$ which can be identified with the
    object $\cO_X$ itself. In particular, we deduce that $X$ is a square-zero extension of the ordinary $k$-analytic space $(X, \cO_{X}/\cJ)$.
\end{rem}

\begin{rem}
    Recall the \infcat $\RTop(\cTank)$ of $\cTank$-structured \inftopoi defined in
    \cite[Definition 2.4]{Porta_Yu_Derived_non-archimedean_analytic_spaces}, where $\cTank$ denotes the $k$-analytic pre-geometry (see for instance \cite[Construction 2.2]{Porta_Yu_Derived_non-archimedean_analytic_spaces}).
    Let  $\cO \in \Strloc_{\cTank}(\cX)$ be a local $\cTank$-structure on $\cX$ (c.f. \cite[Definition 2.4]{Porta_Yu_Derived_non-archimedean_analytic_spaces}). Since the pregeometry $\cTank$ is compatible
    with $n$-truncations, c.f. \cite[Theorem 3.23]{Porta_Yu_Derived_non-archimedean_analytic_spaces}, it follows that
    $\pi_0(\cO) \in \Strloc_{\cTank}(\cX)$, as well.
    
    Moreover, if $(\cX, \cO) \in \RTop(\cTank)$ is a $\cTank$-structured \inftopos, we define its \emph{underlying reduced $\cTank$-structured \inftopos} as the $\cTank$-structured space
        \[  
            (\cX, \pi_0(\cO)/ \cJ) \in \RTop(\cTank),  
        \]
    where $\cJ \subseteq \pi_0(\cO)$ denotes the ideal sheaf spanned by nilpotent sections on $\pi_0(\cO)$. Moreover, the quotient $\pi_0(\cO)/ \cJ$
    is considered in the \infcat of local structures on $\cX$
        \[
            \pi_0(\cO) / \cJ \in \Strloc_{\cTank}(\cX).
        \]
\end{rem}

Recall the notion of the underlying algebra functor $(-)\alg \colon \Strloc_{\cTank}(\cX) \to \CAlg_k(\cX)$ introduced in
\cite[Lemma 3.13]{Porta_Yu_Derived_non-archimedean_analytic_spaces}.

\begin{lem} \label{lem:derived_k_analytic_space_whose_reduction_is_affinoid_is_also_affinoid}
    Let $Z \coloneqq (\cZ, \cO_Z) \in \RTop(\cTank)$ denote a $\cTank$-structure \inftopos such that $\pi_0(\cO_Z\alg) $ is a Noetherian derived $k$-algebra on $\cZ$.
    Suppose that the reduction
    $Z_\red$ is equivalent to a derived $k$-affinoid space. Then the truncation $\trunc(Z)$ is isomorphic to an ordinary $k$-affinoid space.
    If we assume further that for every $i>0$, the homotopy sheaves $\pi_i(\cO_Z)$ are
    coherent $\pi_0(\cO_Z)$-modules, then $Z$ itself is equivalent to a derived $k$-affinoid space.
\end{lem}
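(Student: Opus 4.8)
The plan is to reduce both assertions to a single stability principle, which I will then apply twice. The principle is the following: inside $\RTop(\cTank)$, the class of derived $k$-affinoid spaces is closed under square-zero extensions by (possibly shifted) coherent modules, and moreover such an extension agrees with $\Spec\an$ of the corresponding square-zero extension of derived $k$-affinoid algebras. Granting this, I reach $\trunc(Z)$ from the affinoid space $Z_\red$ by finitely many square-zero extensions (using Noetherianity to bound the process), and I reach $Z$ itself from $\trunc(Z)$ along its Postnikov tower.

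For the first assertion, set $R \coloneqq \pi_0(\cO_Z\alg)$ and let $\cJ \subseteq R$ be its nilradical, so that $Z_\red = (\cZ, R/\cJ)$. Here the Noetherian hypothesis is essential: it guarantees that $\cJ$ is nilpotent, say $\cJ^N = 0$, and that each successive quotient $\cJ^i/\cJ^{i+1}$ is a coherent module. The descending filtration
\[
    R = \cJ^0 \supseteq \cJ^1 \supseteq \dots \supseteq \cJ^N = 0
\]
then exhibits each transition $(\cZ, R/\cJ^{i+1}) \to (\cZ, R/\cJ^i)$ as a square-zero extension by the coherent module $\cJ^i/\cJ^{i+1}$, exactly as in \cref{rem:construction_of_nilpotent_extensions_as_square_zero_extensions}. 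Starting from $Z_\red$, which is derived $k$-affinoid by hypothesis, and applying the stability principle $N-1$ times, I conclude that $\trunc(Z) = (\cZ, R)$ is derived $k$-affinoid; being $0$-truncated, it is an ordinary $k$-affinoid space.

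For the second assertion I would run the Postnikov tower $\{\tau_{\le n}(Z)\}_{n \ge 0}$ of $Z$, whose base is $\tau_{\le 0}(Z) = \trunc(Z)$, now known to be affinoid. Since $\cTank$ is compatible with $n$-truncations \cite[Theorem 3.23]{Porta_Yu_Derived_non-archimedean_analytic_spaces}, each transition $\tau_{\le n}(Z) \to \tau_{\le n-1}(Z)$ is a square-zero extension in the sense of \cite[Definition 5.41]{Porta_Yu_Representability} by the shifted module $\pi_n(\cO_Z\alg)[n]$, which is coherent precisely by the additional hypothesis. By induction and the stability principle, every $\tau_{\le n}(Z)$ is derived $k$-affinoid. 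It then remains to pass to the limit $Z \simeq \lim_n \tau_{\le n}(Z)$: the global-sections algebra $A \coloneqq \Gamma(Z, \cO_Z\alg)$ has $\pi_0(A)$ an ordinary $k$-affinoid algebra and each $\pi_i(A)$ a finite $\pi_0(A)$-module, hence $A$ is a derived $k$-affinoid algebra, and the comparison map identifies $Z$ with $\Spec\an(A)$.

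The main obstacle is the stability principle itself, together with its compatibility with $\Spec\an$. I expect to prove it through the representability theorem of \cite{Porta_Yu_Representability} and the unramifiedness of $\cTank$: a square-zero extension leaves the underlying \inftopos $\cZ$ unchanged, adjoins only a finite nilpotent ideal to $\pi_0$ (so the truncation stays ordinary affinoid), and keeps all homotopy groups finite over $\pi_0$, whence the unit of the $\Gamma \dashv \Spec\an$ adjunction is an equivalence. A secondary subtlety is the Postnikov convergence $Z \simeq \lim_n \tau_{\le n}(Z)$; I expect this to follow from completeness of the structure sheaf together with the finiteness of the $\pi_i(\cO_Z\alg)$, but if it is not automatic it must be read off from the standing hypotheses on $Z$.
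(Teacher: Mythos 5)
Your proof is correct, and for the first assertion it is essentially the paper's argument: the paper also filters the nilradical $\cJ \subseteq \pi_0(\cO_Z)$ (Noetherianity giving $\cJ^n = 0$), reduces by induction to the square-zero case via \cref{rem:construction_of_nilpotent_extensions_as_square_zero_extensions}, and then invokes stability of derived $k$-affinoid spaces under square-zero extensions --- your ``stability principle'' is not something you need to prove from scratch, since it is precisely \cite[Proposition 6.1]{Porta_Yu_Representability}, which the paper cites at exactly this point; your sketch of how to establish it (representability plus unramifiedness of $\cTank$) is the content of that result's proof. Where you genuinely diverge is the second assertion. The paper disposes of it in one line ``from the definitions'': a derived $k$-affinoid space is, by \cite[Definition 7.3]{Porta_Yu_Derived_non-archimedean_analytic_spaces}, a derived $k$-analytic space whose truncation is an ordinary $k$-affinoid space, and a derived $k$-analytic space is a $\cTank$-structured \inftopos whose truncation is an ordinary analytic space and whose higher homotopy sheaves $\pi_i(\cO_Z)$ are coherent over $\pi_0(\cO_Z)$. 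Since the first assertion gives the affinoid truncation and the extra hypothesis supplies coherence of the $\pi_i(\cO_Z)$ verbatim (the underlying \inftopos $\cZ$ being unchanged from the affinoid $Z_\red$), there is nothing left to check. Your Postnikov-tower-and-limit argument would also work, but it is strictly redundant and imports obligations the definitional route avoids: Postnikov convergence $Z \simeq \lim_n \tau_{\le n}(Z)$ and the identification $Z \simeq \Spec\an(\Gamma(Z, \cO_Z\alg))$ are exactly the kind of statements that the definition packages away (derived $k$-analytic spaces are nilcomplete, and the affinoid comparison is part of the $\Gamma \dashv \Spec\an$ machinery), and your own closing caveats correctly flag that these would otherwise have to be verified by hand. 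In short: same mechanism for claim one, an unnecessarily long but salvageable detour for claim two.
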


\begin{proof} The second claim of the Lemma follows readily from the first assertion together with the definitions.
    We are thus reduced to prove that $\trunc(Z)$ is isomorphic to an ordinary $k$-affinoid space.
    Let $\cJ \subseteq \pi_0(\cO_Z)$, denote the coherent ideal sheaf associated to the closed immersion $Z_\red \hookrightarrow Z$. Notice that the ideal $\cJ$
    agrees with the nilradical ideal of $\pi_0(\cO_Z)$. Thanks to our assumption that $\pi_0(\cO_Z)$ is a Noetherian derived $k$-algebra on $\cZ$, it follows that there exists
    a sufficiently large integer $n \ge 2$ such that
        \[
            \cJ^n = 0.  
        \]
    Arguing by induction, we can suppose that $n = 2$, that is to say that
        \[\cJ^2 = 0.\]
    In particular, \cref{rem:construction_of_nilpotent_extensions_as_square_zero_extensions} implies that the
    the natural morphism $Z_\red \to Z$ has the structure of a square-zero extension.
    The assertion now follows from \cite[Proposition 6.1]{Porta_Yu_Representability}
    and its proof.
\end{proof}

\begin{rem}
    We observe that the converse of \cref{lem:derived_k_analytic_space_whose_reduction_is_affinoid_is_also_affinoid} holds true.
    Indeed, the natural morphism $Z_\red \to Z$ is a closed immersion. In particular, if $Z \in \dAfd_k$ we deduce readily
    that $Z_\red \in \Afd_k$, as well.
\end{rem}

\begin{lem} \label{lem:affine_morphisms_are_compatible_with_Zariski_localization_on_the_base}
    Let $f \colon X \to Y$ be an affine morphism in $\dAn_k$. Suppose we are given an admissible open covering
        \[
            g \colon \coprod_{j \in J} U_j \to Y,  
        \]
    where for each $j \in J$, $U_j \in \dAfd_k$. For each $j \in J$, let 
        \[
            V_j \coloneqq U_j \times_X Y \in \dAfd_k,  
        \]
    then $\coprod_{j \in J} V_j \to Y$ is an admissible open covering by derived $k$-affinoid spaces.
\end{lem}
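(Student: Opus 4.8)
The plan is to reduce the statement to three separate claims about the base-changes $V_j$ of $X$ along the structure maps $U_j \to Y$: first, that each $V_j$ is a derived $k$-affinoid space; second, that each projection $V_j \to X$ is an admissible open immersion; and third, that the family $\{V_j \to X\}_{j \in J}$ is an admissible covering of $X$. The first claim is immediate: since $U_j \in \dAfd_k$ is equipped with a morphism $U_j \to Y$ in $\dAnk$, the affineness of $f$ applied to $Z = U_j$ yields $V_j \in \dAfd_k$ directly from the definition of affine morphism. Thus the entire content lies in the two remaining claims, both of which I would handle by passing to $0$-truncations, since admissible open immersions and admissible coverings of derived $k$-analytic spaces are defined purely in terms of the corresponding notions on truncations.

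The key technical input I would isolate is that $0$-truncation commutes with base change along admissible open immersions. An admissible open immersion $U_j \hookrightarrow Y$ is flat, being given locally by a ring of fractions of affinoid algebras; consequently the higher Tor-terms in the derived fibre product vanish and one obtains a canonical identification
\[
    \trunc(V_j) \simeq \trunc(U_j) \times_{\trunc(Y)} \trunc(X).
\]
Granting this, the second claim follows because admissible open immersions of ordinary $k$-analytic spaces are stable under base change (cf. \cite[\S 1.3]{Berkovich_Etale_1993}): the projection $\trunc(V_j) \to \trunc(X)$ is the pullback of $\trunc(U_j) \hookrightarrow \trunc(Y)$ along $\trunc(f)$, hence again an admissible open immersion, which is exactly the defining condition for $V_j \to X$ to be one.

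For the third claim I would likewise invoke stability of admissible coverings under pullback: the family $\{\trunc(U_j) \to \trunc(Y)\}_{j \in J}$ is an admissible covering by hypothesis, and its pullback along $\trunc(f)$ is the family $\{\trunc(V_j) \to \trunc(X)\}_{j \in J}$, which is therefore an admissible covering of $\trunc(X)$. Unwinding the definitions, this says precisely that $\coprod_{j \in J} V_j \to X$ is an admissible open covering by derived $k$-affinoid spaces. The main obstacle is the truncation-compatibility displayed above: derived fibre products do not commute with $\trunc$ in general, so one genuinely needs the flatness of the open immersions to control $\pi_0$, and I would spend the most care making that identification precise, reducing to the affinoid case and the localization behaviour of the structure sheaves.
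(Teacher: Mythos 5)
Your proof is correct, and you have also rightly read through the two typos in the statement (the fibre product should be $U_j \times_Y X$, and the resulting family covers $X$, not $Y$); but your route differs from the paper's in an instructive way. The paper's proof is a two-line citation argument: after the same first step (affineness of $f$ gives $V_j \in \dAfd_k$), it invokes the already-established \emph{derived-level} stability results — admissible open immersions of derived $k$-analytic spaces are stable under pullback by \cite[Corollary 5.11, Proposition 5.12]{Porta_Yu_Representability}, and the covering condition is handled topos-theoretically, via stability of effective epimorphisms of \inftopoi\ under pullback \cite[Proposition 6.2.3.15]{HTT}. You instead re-derive the derived-level stability by hand: you pass to $0$-truncations (which is legitimate, since in this paper admissible open immersions of derived spaces are \emph{defined} through $\trunc$), prove the compatibility $\trunc(V_j) \simeq \trunc(U_j) \times_{\trunc(Y)} \trunc(X)$ using flatness of rational localizations to kill the higher Tor-terms, and then quote the classical stability of admissible opens and admissible coverings under base change from \cite[\S 1.3]{Berkovich_Etale_1993}. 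Your truncation-compatibility lemma is the genuine content and is correct — though one can see it even more directly: the pullback along an open immersion is restriction to an open subtopos with the restricted structure sheaf, so $\trunc$ commutes with it for formal reasons, without any Tor computation. What your approach buys is self-containedness and an explicit classical picture; what the paper's buys is brevity and the observation that the covering condition is purely a statement about underlying \inftopoi\ (which coincide for a derived space and its truncation), so no flatness input is needed at all.
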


\begin{proof} It is clear from our assumption that $f$ is an affine morphism that for every index $j \in J$, the objects $V_j \in \dAfd_k$. The claim of the Lemma
    follows immediately from the observation that
    both the classes of effective epimorphisms of \inftopoi \ and admissible open immersions of derived $k$-analytic spaces are stable under pullbacks,
    cf. \cite[Proposition 6.2.3.15]{HTT} and \cite[Corollary 5.11, Proposition 5.12]{Porta_Yu_Representability}, respectively.
\end{proof}

\subsection{Non-archimedean differential geometry} In this \S, we introduce the notion of a nil-isomorphism between derived
$k$-analytic spaces and study certain important features of such class of morphisms. The results contained in this paragraph will be of
crucial importance to the study of analytic formal moduli problems.

\begin{defin}
    Let $f \colon X \to Y$ be a morphism in $\dAnk$. We say that $f$ is a \emph{nil-isomorphism} if $f$ is almost of finite presentation
    and furthermore
    $f_\red \colon X_\red \to
    Y_\red$ is an isomorphism of ordinary $k$-analytic spaces. 
    We will denote by $\anNil_{X/ }$ the full subcategory of $(\dAnk)_{X/}$
    spanned by nil-isomorphisms $X \to Y$.
\end{defin}

\begin{lem} \label{lem:nil-isos_are_affine_morphisms}
    Let $f \colon X \to Y$ be a nil-isomorphism of derived $k$-analytic spaces. Then:
    \begin{enumerate}
        \item Given any morphism $Z \to Y$ in $\dAnk$, the base change morphism
            \[
                Z \times_X Y \to Z,  
            \]
        is an nil-isomoprhism, as well;
        \item $f$ is an affine morphism;
        \item $f$ is a finite morphism.
    \end{enumerate}
\end{lem}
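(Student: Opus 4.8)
The plan is to prove the three assertions in sequence, deducing (2) from (1) via \cref{lem:derived_k_analytic_space_whose_reduction_is_affinoid_is_also_affinoid}, and then bootstrapping (3) from (2) by a local nilpotent-filtration argument. For (1), I would first recall that being almost of finite presentation is stable under base change, so the projection $X \times_Y Z \to Z$ remains almost of finite presentation. The crucial point is that its reduction is an isomorphism, and here I would exploit that the functor $(-)_\red \colon \dAnk \to \Ank^\red$ is right adjoint to the inclusion $\Ank^\red \hookrightarrow \dAnk$: it factors as the reduction of ordinary spaces composed with $\trunc$, both of which are right adjoint to the corresponding inclusions, since any morphism out of a $0$-truncated (resp.\ reduced) space factors uniquely through $\trunc(X)$ (resp.\ $X_\red \hookrightarrow X$). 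Being a right adjoint, $(-)_\red$ preserves limits, whence $(X \times_Y Z)_\red \simeq X_\red \times_{Y_\red} Z_\red$ computed in $\Ank^\red$. As $f_\red \colon X_\red \to Y_\red$ is an isomorphism by hypothesis, the base-changed projection $X_\red \times_{Y_\red} Z_\red \to Z_\red$ is an isomorphism, and (1) follows.

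For (2), let $Z \to Y$ be a morphism with $Z \in \dAfdk$. By part (1) the projection $W \coloneqq X \times_Y Z \to Z$ is a nil-isomorphism, so $W_\red \simeq Z_\red$, and since $Z$ is derived $k$-affinoid its reduction $Z_\red$ is an ordinary $k$-affinoid space (as $Z_\red \hookrightarrow Z$ is a closed immersion). I would then invoke \cref{lem:derived_k_analytic_space_whose_reduction_is_affinoid_is_also_affinoid} to conclude that $W$ itself is derived $k$-affinoid. To apply it I must verify its hypotheses, namely that $\pi_0(\cO_W\alg)$ is a Noetherian derived $k$-algebra and that $\pi_i(\cO_W)$ is coherent over $\pi_0(\cO_W)$ for every $i > 0$. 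Both follow from the fact that $W \to Z$ is almost of finite presentation together with the Noetherianity of the affinoid algebra $\pi_0(\cO_Z\alg)$, using the standard preservation properties of almost finite presentation morphisms in \cite{Porta_Yu_Representability}. This shows that $f$ is affine.

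Part (3) is where the genuine work lies. Since finiteness of a morphism of derived $k$-analytic spaces is tested on $\trunc(f)$, and $f$ is affine by (2), I reduce to showing that for an admissible affinoid open $V = \Sp(B) \subseteq Y$ the pullback $\Sp(A) = X \times_Y V$ exhibits $\pi_0(A)$ as a finite $\pi_0(B)$-module. By part (1) the reduction identifies $\Sp(A)_\red$ with $V_\red$, so the structure map $\pi_0(B) \to \pi_0(A)$ is an isomorphism modulo nilradicals; writing $\cJ$ for the nilradical of $\pi_0(A)$, this means $\pi_0(A)/\cJ$ is a quotient of $\pi_0(B)$, hence finite over $\pi_0(B)$. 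As $\pi_0(A)$ is Noetherian and $\cJ$ nilpotent, there is an integer $m$ with $\cJ^m = 0$, and each graded piece $\cJ^i/\cJ^{i+1}$ is a finitely generated $\pi_0(A)/\cJ$-module, hence finite over $\pi_0(B)$. Running up the finite filtration $\pi_0(A) \supseteq \cJ \supseteq \cdots \supseteq \cJ^m = 0$ then shows that $\pi_0(A)$ is finite over $\pi_0(B)$, so $\trunc(f)$, and therefore $f$, is finite.

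The main obstacle I anticipate is not the conceptual structure — the clean inputs are the adjunction of $(-)_\red$ in (1) and the nilpotent-filtration argument in (3) — but rather the bookkeeping of finiteness and coherence in the derived analytic setting. Specifically, checking that almost finite presentation over a Noetherian affinoid base propagates Noetherianity of $\pi_0$ and coherence of the higher homotopy sheaves, which is exactly what \cref{lem:derived_k_analytic_space_whose_reduction_is_affinoid_is_also_affinoid} demands, is the delicate point and relies essentially on the structural results on derived $k$-analytic spaces of \cite{Porta_Yu_Derived_non-archimedean_analytic_spaces, Porta_Yu_Representability}.
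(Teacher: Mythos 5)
Your proposal is correct and follows essentially the same route as the paper: part (1) via the fact that $(-)_\red$ preserves the relevant finite limits (your adjunction argument is just a cleaner justification of the paper's claim that reduction commutes with fiber products), part (2) by reducing to \cref{lem:derived_k_analytic_space_whose_reduction_is_affinoid_is_also_affinoid} exactly as the paper does, and part (3) by the same nilpotent d\'evissage that the paper phrases as a finite sequence of square-zero extensions from $X_\red$. Your explicit verification of the hypotheses of \cref{lem:derived_k_analytic_space_whose_reduction_is_affinoid_is_also_affinoid} and the $\pi_0$-level filtration argument are simply more detailed renderings of steps the paper leaves implicit.
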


\begin{proof} To prove (1), it suffices to prove that
    the functor $(\textrm{-})_\red \colon \dAnk \to \Ank^{\red}$ commutes with finite limits. The truncation functor
        \[
            \trunc \colon \dAnk \to \Ank,  
        \]
    commutes with finite limits, c.f. \cite[Proposition 6.2 (5)]{Porta_Yu_Derived_non-archimedean_analytic_spaces}. It suffices then to prove that the usual underlying reduced functor
        \[
            (-)_\red \colon \Ank \to \Ank^\red,
        \]
    commutes with finite limits. By construction,
    the latter assertion is equivalent to the claim that
    the usual complete tensor product of ordinary $k$-affinoid algebras commutes with the operation of taking the quotient by the Jacobson radical, which is immediate.

    We now prove (2). Let $Z \to Y$ be an admissible open immersion such that $Z$ is a derived $k$-affinoid space. We claim that the pullback
    $Z \times_X Y$ is again a derived $k$-affinoid space. Thanks to \cref{lem:derived_k_analytic_space_whose_reduction_is_affinoid_is_also_affinoid}
    we reduce ourselves to prove that $(Z \times_X Y)_\red$ is equivalent to an
    ordinary $k$-affinoid space. Thanks to (1), we deduce that the induced morphism
        \[
            (Z \times_X Y)_\red \to Z_\red,  
        \]
    is an isomorphism of ordinary $k$-analytic spaces. In particular, $(Z \times_X Y)_\red$ is a $k$-affinoid space. The result now follows by invoking
    \cref{lem:derived_k_analytic_space_whose_reduction_is_affinoid_is_also_affinoid} again.

    To prove (3), we shall show that the induced morphism on the underlying $0$-th truncations $\trunc(X) \to \trunc(Y)$ is a finite morphism of ordinary $k$-affinoid spaces.
    But this follows immediately from the fact that both $\trunc(X)$ and $\trunc(Y)$ can be obtained from the reduced $X_\red$ by means of a finite sequence of square-zero extensions
    as in \cref{rem:construction_of_nilpotent_extensions_as_square_zero_extensions}.
\end{proof}

\begin{defin}
    A morphism $X \to Y$ be a morphism in $\dAnk$ is called a \emph{nil-embedding} if the induced morphism of ordinary $k$-analytic spaces
    $\trunc(X) \to \trunc(Y)$ is a closed immersion with nilpotent ideal of definition. 
\end{defin}

\begin{prop} \label{prop:filtered_colimit_for_nil-embeddings}
    Let $f \colon X \to Y$ be a nil-embedding of derived $k$-analytic spaces. Then there exists a sequence of morphisms
        \[X = X_0^0 \hookrightarrow X_0^1 \hookrightarrow \dots \hookrightarrow X_0^n = X_0 
        \hookrightarrow X_1 \dots X_n \hookrightarrow \dots \hookrightarrow Y,\]
    such that for each $0 \le i \le n$ and $j \ge 0$, the morphisms $X_0^i \hookrightarrow X_0^{i+1}$ and $X_j \to X_{j+1}$ have the structure of square-zero extensions.
    Furthermore, the induced morphisms $\mathrm{t}_{\le j}(X_j) \to \mathrm{t}_{\le j}(Y)$ are equivalences of derived
    $k$-analytic spaces, for every $j \ge 0$.
\end{prop}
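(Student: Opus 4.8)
The plan is to build the tower from its two ends by genuinely different mechanisms. The countable segment $X_0 \hookrightarrow X_1 \hookrightarrow \dots \hookrightarrow Y$ will be a relative Postnikov tower correcting the higher homotopy, while the finite segment $X = X_0^0 \hookrightarrow \dots \hookrightarrow X_0$ will correct the $0$-truncation through the powers of the nilpotent ideal. For the countable part I would take, for each $j \ge 0$, the derived $k$-analytic space supported on the common reduction $X_\red \simeq Y_\red$ whose structure sheaf is the fibre product
\[
    \cO_{X_j} \coloneqq \cO_X \times_{\mathrm{t}_{\le j}(\cO_X)} \mathrm{t}_{\le j}(\cO_Y),
\]
formed along the truncation $\cO_X \to \mathrm{t}_{\le j}(\cO_X)$ and the $j$-truncation $\mathrm{t}_{\le j}(\cO_Y) \to \mathrm{t}_{\le j}(\cO_X)$ of $f$; informally $X_j = X \sqcup_{\mathrm{t}_{\le j}(X)} \mathrm{t}_{\le j}(Y)$. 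A Mayer--Vietoris computation gives $\pi_i(\cO_{X_j}) \simeq \pi_i(\cO_Y)$ for $i \le j$ and $\pi_i(\cO_{X_j}) \simeq \pi_i(\cO_X)$ for $i > j$, so that $\mathrm{t}_{\le j}(X_j) \simeq \mathrm{t}_{\le j}(Y)$ and the tower converges to $Y$; in particular $X_0$ satisfies $\mathrm{t}_{\le 0}(X_0) \simeq \mathrm{t}_{\le 0}(Y)$ while retaining the higher homotopy of $X$.

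Before analysing the transition maps I would check that every object produced above genuinely lies in $\dAnk$. Reducing to the affinoid case is legitimate: nil-embeddings are affine (\cref{lem:nil-isos_are_affine_morphisms}) and the whole construction is local on $Y$ for admissible open covers by \cref{lem:affine_morphisms_are_compatible_with_Zariski_localization_on_the_base}. On an affinoid chart the reduction of each $X_j$ and of each intermediate object equals $X_\red \simeq Y_\red$, and its homotopy sheaves are coherent over $\pi_0(\cO_Y)$; hence \cref{lem:derived_k_analytic_space_whose_reduction_is_affinoid_is_also_affinoid} shows it is again a derived $k$-affinoid space. For the finite segment, let $\cJ \subseteq \pi_0(\cO_Y)$ be the ideal of the closed immersion $\mathrm{t}_{\le 0}(X) \hookrightarrow \mathrm{t}_{\le 0}(Y)$; by definition of a nil-embedding $\cJ$ is nilpotent, say $\cJ^{n} = 0$, and the quotients $\cJ^{i}/\cJ^{i+1}$ of the adic filtration square to zero. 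Gluing $X$ with the classical thickening carrying the structure sheaf $\pi_0(\cO_Y)/\cJ^{i+1}$ produces the finite chain between $X_0^0 = X$ and $X_0^{n} = X_0$, each step of which is a square-zero extension by the discrete coherent module $\cJ^{i}/\cJ^{i+1}$ exactly as in \cref{rem:construction_of_nilpotent_extensions_as_square_zero_extensions}.

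The heart of the matter, and the step I expect to be the main obstacle, is upgrading the transition maps $X_j \to X_{j+1}$ of the countable segment to genuine \emph{analytic} square-zero extensions. The homotopy computation shows that the fibre of $\cO_{X_{j+1}} \to \cO_{X_j}$ is connective and concentrated in degrees $[\,j, j+1\,]$, controlled by $\pi_{j+1}(\cO_Y)$ and $\pi_{j+1}(\cO_X)$; over a purely algebraic base this is the familiar statement that relative Postnikov truncations are square-zero extensions, but non-archimedean analytically one must exhibit the classifying derivation at the level of the analytic structure sheaf, i.e.\ construct a map $\anL \to \pi_{j+1}(\cO_Y)[\,j+2\,]$ using the \emph{analytic} cotangent complex rather than its algebraic image. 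The essential inputs here are the deformation theory of \cite[\S 5]{Porta_Yu_Representability} and the compatibility of the pregeometry $\cTank$ with $n$-truncations \cite[Theorem 3.23]{Porta_Yu_Derived_non-archimedean_analytic_spaces}, which together guarantee that the analytic truncation map $\mathrm{t}_{\le j+1}(Y) \to \mathrm{t}_{\le j}(Y)$ is an analytic square-zero extension; the square-zero structure of $X_j \to X_{j+1}$ is then extracted from it through the gluing that defines the $X_j$, and this extraction is precisely the point requiring care, being the non-archimedean counterpart of the input underlying the construction of \cite[\S 9]{Gaitsgory_Study_II}.

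Finally I would settle the remaining bookkeeping: that the last object $X_0^{n}$ of the finite segment agrees with the first object $X_0$ of the Postnikov segment (both have structure sheaf $\cO_X \times_{\mathrm{t}_{\le 0}(\cO_X)} \mathrm{t}_{\le 0}(\cO_Y)$, since $\cJ^{n}=0$), and that every map in the tower is almost of finite presentation, which is immediate from the coherence of the successive square-zero modules established above. With these in place the concatenated sequence has the stated form, each step is a square-zero extension, and the identities $\mathrm{t}_{\le j}(X_j) \simeq \mathrm{t}_{\le j}(Y)$ hold by the homotopy computation of the first paragraph.
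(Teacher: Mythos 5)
Your finite segment (factoring $\trunc(X)\hookrightarrow\trunc(Y)$ through the powers of the nilpotent ideal and gluing via $X\sqcup_{\trunc(X)}(-)$) is exactly what the paper does, and your Mayer--Vietoris computation for $\cO_{X_j}\coloneqq\cO_X\times_{\rt_{\le j}(\cO_X)}\rt_{\le j}(\cO_Y)$ is correct: it gives $\pi_i(\cO_{X_j})\simeq\pi_i(\cO_Y)$ for $i\le j$ and $\pi_i(\cO_{X_j})\simeq\pi_i(\cO_X)$ for $i>j$, so your candidate tower has the right truncations and converges to $Y$. But the one claim that actually carries the proposition --- that each transition map $X_j\to X_{j+1}$ admits the structure of an \emph{analytic} square-zero extension --- is never established. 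You correctly identify it as ``the point requiring care,'' but the mechanism you gesture at (extracting the square-zero structure from the Postnikov extensions $\rt_{\le j}(Y)\hookrightarrow\rt_{\le j+1}(Y)$ ``through the gluing'') does not work formally. Concretely: pushing out the square-zero extension $\rt_{\le j}(Y)\hookrightarrow\rt_{\le j+1}(Y)$ along the nil-isomorphism $\rt_{\le j}(Y)\to X_j$ (the only formal tool available, \cref{lem:pushouts_of_square_zero_extensions_have_the_structure_of_a_square_zero_extension}) produces $X\sqcup_{\rt_{\le j}(X)}\rt_{\le j+1}(Y)$, whose $\pi_{j+1}$ is $\pi_{j+1}(\cO_X)\oplus\pi_{j+1}(\cO_Y)$ --- not $X_{j+1}$. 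There is also no two-out-of-three principle for square-zero extensions that would let you deduce the structure on $X_j\to X_{j+1}$ from the known ones on $\rt_{\le j}(Y)\to\rt_{\le j+1}(Y)$; note that the fiber of $\cO_{X_{j+1}}\to\cO_{X_j}$ sits in two degrees $[j,j+1]$ (kernel and cokernel of $\pi_{j+1}(\cO_Y)\to\pi_{j+1}(\cO_X)$), so no naive identification with a shifted homotopy sheaf is available either.

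The paper avoids this problem by reversing the logic: instead of defining $X_{j+1}$ by a formula and then verifying square-zero-ness a posteriori, it \emph{constructs} $X_{j+1}$ as a square-zero extension by producing the classifying derivation. Given $h_j\colon X_j\to Y$ inducing an equivalence on $\rt_{\le j}$, one has $\tau_{\le j}(\bbL\an_{X_j/Y})\simeq 0$ by \cite[Proposition 5.34]{Porta_Yu_Representability}; the projection $\bbL\an_{X_j/Y}\to\pi_{j+1}(\bbL\an_{X_j/Y})[j+1]$ then yields a map out of $\bbL\an_{X_j}$ whose composite with $h_j^*\bbL\an_Y$ is null-homotopic, and the universal property of the analytic cotangent complex converts this derivation into a square-zero extension $X_j\to X_{j+1}$ equipped with a factorization $h_{j+1}\colon X_{j+1}\to Y$. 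The remaining work --- the diagram chase identifying $s_j\colon\cI\to h_{j,*}(\pi_{j+1}(\bbL\an_{X_j/Y}))[j+1]$ with $\cI\to\tau_{\le j+1}(\cI)$ to conclude that $\cO_Y\to h_{j+1,*}(\cO_{X_{j+1}})$ is $(j+2)$-connective --- is precisely what replaces your unproven extraction step. To complete your proposal along your own lines you would need to supply an equivalent derivation argument for your pushout-defined $X_j$; as written, the central step of the proposition is missing.
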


\begin{proof}
    The proof follows the same scheme of reasoning as \cite[Proposition 5.5.3]{Gaitsgory_Study_II}. For the sake of completeness we present the complete argument here.
    Consider the induced morphism on the underlying truncations
        \[
            \trunc(f) \colon \trunc(X) \to \trunc(Y).   
        \]
    By construction, there exists a sufficiently large integer $n \ge 0$ such that
        \[
            \cJ^{n+1} = 0,  
        \]
    where $\cJ \subseteq \pi_0(\cO_Y)$ denotes the ideal associated to the nil-embedding $\trunc(f)$.
    Therefore, we can factor the latter as a finite sequence of square-zero extensions of ordinary $k$-analytic spaces
        \[
            \trunc(X) \hookrightarrow X_0^{\mathrm{ord}, 0} \hookrightarrow \dots \hookrightarrow X^{\mathrm{ord}, n}_0 = \trunc(Y),
        \]
    as in the proof of \cref{lem:derived_k_analytic_space_whose_reduction_is_affinoid_is_also_affinoid}. For each $0 \le i \le n$, we set
        \[
            X_0^i \coloneqq X \bigsqcup_{\trunc(X)} X_0^{\mathrm{ord}, i}.
        \]
    By construction, we have that the natural morphism $\trunc(X_0^n) \to \trunc(Y)$ is an isomorphism of ordinary $k$-analytic spaces.
    We now argue by induction on the Postnikov tower associated to the morphism $f \colon X \to Y$.
    Suppose that for a certain integer $i \ge 0$, we have constructed a derived $k$-analytic space $X_i$ together with morphisms $g_i \colon
    X \to X_i$ and $h_i \colon X_i \to Y$ such that $f \simeq h_i \circ g_i$
    and the induced morphism
        \[
            \mathrm{t}_{\le i}(X_i) \to \mathrm{t}_{\le i}(Y)
        \]
    is an equivalence of derived $k$-analytic spaces. We shall proceed as follows: by the assumption that $h_i$ is $(i+1)$-connective, we deduce from
    \cite[Proposition 5.34]{Porta_Yu_Representability} the existence of a natural equivalence
        \[
            \tau_{\le i}(\bbL_{X_i/Y}\an) \simeq 0,
        \]
    in $\Mod_{\cO_{X_i}}$. Consider the natural fiber sequence
        \[
            h_i^* \bbL\an_{Y} \to \bbL\an_{X_i} \to \bbL\an_{X_i/Y},
        \]
    in $\Mod_{\cO_{X_i}}$. The natural morphism
        \[
            \bbL\an_{X_i/ Y} \to \pi_{i+1}(\bbL\an_{X_i/Y})[i+1],  
        \]
    induces a morphism $\bbL\an_{X_i} \to \pi_{i+1}(\bbL\an_{X_i/Y})[i+1]$, such that the composite
        \begin{equation} \label{eq:fiber_sequence_of_cotangent_complexes_to_produce_the_existence_of_the_desired_square_zero_extension_approximating_Y_in_degree_i+1}
            h_i^* \bbL\an_{Y} \to \bbL\an_{X_i} \to \pi_{i+1}(\bbL\an_{X_i/Y})[i+1],  
        \end{equation}
    is null-homotopic, in $\Mod_{\cO_{X_i}}$. By the universal property of the relative analytic cotangent complex,
    \eqref{eq:fiber_sequence_of_cotangent_complexes_to_produce_the_existence_of_the_desired_square_zero_extension_approximating_Y_in_degree_i+1}
    produces a square-zero extension
        \[
            X_i  \to X_{i+1},
        \]
    together with a morphism $h_{i+1} \colon X_{i+1} \to Y$, factoring $h_i \colon X_i \to Y$. We  are reduced to show that the morphism
        \[
            \cO_Y \to h_{i+1, *}(\cO_{X_{i+1}}), 
        \]
    is $(i+2)$-connective. Consider the commutative diagram
        \begin{equation} \label{eq:commutative_diagram_of_fiber_sequences_exhibiting_O_X_i+1_as_an_approximation_of_level_i+1_of_Y}
        \begin{tikzcd}
            h_{i, *}(\pi_{i+1}(\bbL\an_{X_i/Y}))[i+1] \ar{r} & h_{i+1, *}(\cO_{X_{i+1}}) \ar{r} & h_{i, *}(\cO_{X_i}) \\
            \cI \ar{r} \ar{u}{s_i} & \cO_Y \ar{r} \ar{u} & h_*(\cO_{X_i}) \ar{u} \\
            \cJ \ar{r}{=} \ar{u} & \cJ \ar{r} \ar{u} & 0 \ar{u}
        \end{tikzcd},
        \end{equation}
    in $\mathrm{Mod}_{\cO_Y}$, where both the vertical and horizontal composites are fiber sequences. By our inductive hypothesis, $\cI$ is $(i+1)$-connective.
    Moreover, thanks to \cite[Proposition 5.34]{Porta_Yu_Representability} we can identify the natural
    morphism    
        \[
           s_i \colon \cI \to h_{i, *}(\pi_{i+1}(\bbL\an_{X_i/Y}))[i + 1]
        \]
    with the natural morphism $\cI \to \tau_{\le i + 1}(\cI)$. We deduce that the fiber of the morphism $s_i$ must be necessarily $(i+2)$-connective. The latter observation
    combined with the structure of \eqref{eq:commutative_diagram_of_fiber_sequences_exhibiting_O_X_i+1_as_an_approximation_of_level_i+1_of_Y}
    implies that $h_{i+1} \colon X_{i+1} \to Y$ induces an equivalence of derived $k$-analytic spaces
        \[
            \rt_{\le i+1}(X_{i+1}) \to \rt_{\le i+1}(Y),  
        \]
    as desired.
\end{proof}

\begin{cor}
    Let $X \in \dAnk$. Then the following assertions hold:
    \begin{enumerate}
        \item The natural morphism
            \[
                X_\red \to X,  
            \]
         in $\dAnk$, can be \emph{approximated} by successive square-zero extensions;
        \item For each $n \ge 0$, the natural morphism
            \[
                X_\red \to \rt_{\le n}(X),  
            \]
        can be \emph{approximated} by a finite number of square-zero extensions.
    \end{enumerate}
\end{cor}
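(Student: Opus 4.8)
\emph{Proof proposal.} The plan is to deduce both assertions directly from \cref{prop:filtered_colimit_for_nil-embeddings}, once we recognize that the reduction morphism is a nil-embedding. First I would verify that the natural morphism $X_\red \to X$ is a nil-embedding in the sense of the preceding definition. Applying the truncation functor yields $\trunc(X_\red) \to \trunc(X)$; since $X_\red$ is already a reduced ordinary $k$-analytic space we have $\trunc(X_\red) \simeq X_\red$, and the resulting map $X_\red \hookrightarrow \trunc(X)$ is precisely the closed immersion cut out by the nilradical ideal $\cJ_X \subseteq \pi_0(\cO_X)$. Working locally on an admissible affinoid cover, each $\pi_0(\cO_X)$ is a (Noetherian) $k$-affinoid algebra, whose nilradical is therefore nilpotent; thus $\cJ_X$ is a nilpotent ideal of definition and $X_\red \to X$ is a nil-embedding, exactly as in the argument of \cref{lem:derived_k_analytic_space_whose_reduction_is_affinoid_is_also_affinoid} via \cref{rem:construction_of_nilpotent_extensions_as_square_zero_extensions}.

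Granting this, assertion (1) is immediate. I would apply \cref{prop:filtered_colimit_for_nil-embeddings} to the nil-embedding $X_\red \to X$, with the source and target playing the roles of $X$ and $Y$ in that statement. This produces a sequence
\[
    X_\red = X_0^0 \hookrightarrow \dots \hookrightarrow X_0 \hookrightarrow X_1 \hookrightarrow \dots \hookrightarrow X,
\]
in which every morphism carries the structure of a square-zero extension and $\rt_{\le j}(X_j) \to \rt_{\le j}(X)$ is an equivalence for all $j \ge 0$. Passing to the colimit and invoking the convergence of the Postnikov tower $X \simeq \colim_j \rt_{\le j}(X)$ then exhibits $X_\red \to X$ as approximated by successive square-zero extensions.

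For assertion (2) I would run the same argument with $X$ replaced by its truncation $\rt_{\le n}(X)$. Since $\trunc \circ \rt_{\le n} \simeq \trunc$, the reduction of $\rt_{\le n}(X)$ again agrees with $X_\red$, so $X_\red \to \rt_{\le n}(X)$ is once more a nil-embedding and \cref{prop:filtered_colimit_for_nil-embeddings} applies. The key additional observation is that the target is now $n$-truncated: for every $j \ge n$ one has $\rt_{\le j}(\rt_{\le n}(X)) \simeq \rt_{\le n}(X)$, so the equivalences $\rt_{\le j}(X_j) \simeq \rt_{\le j}(\rt_{\le n}(X))$ force $X_j \simeq \rt_{\le n}(X)$ for all $j \ge n$. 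Hence the Postnikov-climbing part of the sequence terminates after finitely many steps, and combined with the finitely many square-zero extensions needed to pass from $X_\red$ to the $0$-truncation (coming from the nilpotence $\cJ_X^{m+1} = 0$), one obtains $X_\red \to \rt_{\le n}(X)$ as a finite composite of square-zero extensions.

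The only genuine point to check is the nil-embedding hypothesis, i.e. the nilpotence of the nilradical, and I expect this to be the main — if mild — obstacle: it is a local-on-the-base matter resolved by the Noetherianity of $k$-affinoid algebras together with \cref{rem:construction_of_nilpotent_extensions_as_square_zero_extensions}. Everything else is a formal unwinding of \cref{prop:filtered_colimit_for_nil-embeddings}, the finiteness in (2) reflecting precisely the termination of the Postnikov approximation against an $n$-truncated target.
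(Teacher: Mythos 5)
Your proposal is correct and follows essentially the same route as the paper: the paper's proof likewise observes that $X_\red \to X$ and $X_\red \to \rt_{\le n}(X)$ are nil-embeddings, applies \cref{prop:filtered_colimit_for_nil-embeddings}, and deduces the finiteness in assertion (2) from the finiteness of the Postnikov tower of the truncated target. Your additional verification of the nil-embedding hypothesis (nilpotence of the nilradical via Noetherianity of $k$-affinoid algebras, as in \cref{lem:derived_k_analytic_space_whose_reduction_is_affinoid_is_also_affinoid}) is a welcome explicitation of a point the paper leaves implicit.
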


\begin{proof}
    Both the assertions of the Corollary follow readily from \cref{prop:filtered_colimit_for_nil-embeddings} by observing that the
    canonical morphisms $X_\red \to X$ and $X_\red \to \rt_{\le n}(X)$ have the structure of nil-embeddings and that in the latter case the finiteness assumption on the Postkinov
    tower forces the finiteness of the approximation
    sequence.
\end{proof}

\begin{lem} \label{lem:pullbacks_of_derived_affinoid_spaces_along_finite_morphisms_are_algebraic}
    Let $f \colon X \to Y$ be a finite morphism of derived $k$-affinoid spaces. Let $Z \to Y$ be an admissible open immersion and denote by
        \[
            A \coloneqq \Gamma(X, \cO_X\alg), \quad B \coloneqq \Gamma(Y, \cO_Y\alg), \quad C \coloneqq \Gamma(Z, \cO_Z\alg),  
        \]
    the corresponding derived $k$-algebras of derived global sections. Consider the base change
        \[
            Z' \coloneqq Z \times_Y X \in \dAfd_k.
        \]
    Then one has a natural equivalence
        \[
            \Gamma(Z', \cO_{Z'}\alg) \simeq A \otimes_B C,  
        \]
    in the \infcat $\CAlg_k$.
\end{lem}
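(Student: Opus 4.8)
The plan is to exhibit the natural comparison morphism and prove it is an equivalence by analysing homotopy groups, reducing the essential content to the classical fact that a finite module over a $k$-affinoid algebra is automatically complete. First I would construct the canonical map. The Cartesian square defining $Z' \coloneqq Z \times_Y X$ provides projections $Z' \to X$ and $Z' \to Z$ which are compatible over $Y$; applying derived global sections of the underlying algebra $\Gamma(-, \cO_{(-)}\alg)$ yields morphisms $A \to \Gamma(Z', \cO_{Z'}\alg)$ and $C \to \Gamma(Z', \cO_{Z'}\alg)$ which agree after restriction along $B \to A$ and $B \to C$. By the universal property of the relative tensor product in $\CAlg_k$ these assemble into a natural map $\theta \colon A \otimes_B C \to \Gamma(Z', \cO_{Z'}\alg)$, and it remains to show $\theta$ is an equivalence.

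Next I would settle the statement on $\pi_0$. Since the truncation functor commutes with finite limits (c.f. \cite[Proposition 6.2 (5)]{Porta_Yu_Derived_non-archimedean_analytic_spaces}) we have $\trunc(Z') \simeq \trunc(Z) \times_{\trunc(Y)} \trunc(X)$, an ordinary fibre product of $k$-affinoid spaces. Tate acyclicity on affinoids makes global sections exact on coherent sheaves, so $\pi_0 \Gamma(Z', \cO_{Z'}\alg) \simeq \Gamma(\trunc(Z'), \cO_{\trunc(Z')}\alg)$, while $\pi_0(A \otimes_B C) \simeq \pi_0 A \otimes_{\pi_0 B} \pi_0 C$. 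The identification of these two groups is precisely the classical statement: for a finite morphism of ordinary $k$-affinoid spaces the fibre product is computed by the completed tensor product $\pi_0 A \widehat{\otimes}_{\pi_0 B} \pi_0 C$, and because $\pi_0 A$ is a finite $\pi_0 B$-module the completed tensor product coincides with the algebraic one, a finite module over an affinoid algebra being already complete (c.f. the classical theory, e.g. \cite{Berkovich_Etale_1993}). Hence $\theta$ is an isomorphism on $\pi_0$.

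For the higher homotopy groups I would exploit flatness. The map $Z \to Y$ is an admissible open immersion, hence flat, and its base change $Z' \to X$ is again an admissible open immersion by \cite[Corollary 5.11, Proposition 5.12]{Porta_Yu_Representability}; in particular $\pi_0 C$ is flat over $\pi_0 B$ and $\pi_0 \Gamma(Z', \cO_{Z'}\alg)$ is flat over $\pi_0 A$. On the algebraic side flatness of $B \to C$ makes $- \otimes_B C$ $t$-exact, so $\pi_i(A \otimes_B C) \simeq \pi_i(A) \otimes_{\pi_0 B} \pi_0 C$. On the geometric side, flatness of $Z' \to X$ identifies $\pi_i \cO_{Z'}\alg$ with the restriction of $\pi_i \cO_X\alg$; combining affinoid acyclicity, coherence of $\pi_i \cO_X\alg$ over $\pi_0 \cO_X\alg$, and the $\pi_0$-computation gives $\pi_i \Gamma(Z', \cO_{Z'}\alg) \simeq \pi_i(A) \otimes_{\pi_0 A} \pi_0 \Gamma(Z', \cO_{Z'}\alg) \simeq \pi_i(A) \otimes_{\pi_0 B} \pi_0 C$. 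One then checks that $\theta$ realises these identifications, so $\theta$ is an isomorphism on every $\pi_i$ and therefore an equivalence in $\CAlg_k$.

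The main obstacle is the comparison between the analytic completed tensor product and the algebraic one: the whole point of the finiteness hypothesis is that it forces a finite module over $\pi_0 B$ to be complete, so that the naive algebraic tensor product already computes the analytic pullback. The delicate part will be making this rigorous at every homotopy level rather than only on $\pi_0$, which is where the finiteness of $f$ together with the coherence of the homotopy sheaves $\pi_i \cO_X\alg$ as $\pi_0 \cO_X\alg$-modules enters, and where one must verify that flat base change for $\cO\alg$ along open immersions and the exactness of global sections on affinoids are indeed available in the derived $k$-analytic framework of Porta--Yu.
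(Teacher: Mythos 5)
Your proposal is correct in substance, but it takes a genuinely different route from the paper: the paper disposes of this lemma in a single line, citing \cite[Proposition 3.17 (iii)]{Porta_Yu_Derived_non-archimedean_analytic_spaces}, where the compatibility of pullbacks along finite morphisms with the underlying-algebra functor $(-)\alg$ is already established; you instead reprove that input from scratch. Your route — build the comparison map $\theta \colon A \otimes_B C \to \Gamma(Z', \cO_{Z'}\alg)$, identify $\pi_0$ via commutation of $\trunc$ with finite limits plus the classical fact that a finite module over an affinoid algebra is complete (so the completed tensor product of the ordinary fibre product agrees with the algebraic one), then propagate to higher homotopy via flatness of affinoid subdomain embeddings, $t$-exactness of derived global sections (derived Tate acyclicity), and coherence of the sheaves $\pi_i(\cO_X\alg)$ — is sound, and it buys self-containedness while making visible exactly where finiteness enters at every homotopy level, at the cost of length. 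Two points deserve to be nailed down rather than asserted: first, $t$-exactness of $- \otimes_B C$ does not follow from flatness of $\pi_0(B) \to \pi_0(C)$ alone; you need $C$ to be flat over $B$ in the derived sense, i.e. also $\pi_i(C) \simeq \pi_i(B) \otimes_{\pi_0(B)} \pi_0(C)$, which follows from Kiehl's theorem identifying $\Gamma(Z, \cF|_Z)$ with $\Gamma(Y, \cF) \otimes_{\pi_0(B)} \pi_0(C)$ for coherent $\cF$ (again using completeness of finite modules); second, the step ``one then checks that $\theta$ realises these identifications'' is the glue of the whole argument and should be carried out, though it is routine by naturality and the $A$- and $C$-linearity of $\theta$. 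With these two verifications supplied, your argument is a legitimate and more elementary substitute for the Porta--Yu citation.
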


\begin{proof} 
    The Lemma is a direct consequence of \cite[Proposition 3.17]{Porta_Yu_Derived_non-archimedean_analytic_spaces} (iii).
\end{proof}

\begin{lem} \label{lem:Beck_Chevalley_natural_transformation_equivalence}
    Let $f \colon X \to Y$ be a finite morphism of derived $k$-affinoid spaces and $g \colon Z \to Y$ an admissible open immersion in $\dAfd_k$. Form the pullback diagram
        \[
        \begin{tikzcd}
            Z' \ar{r}{g'}  \ar{d}{f'} & X \ar{d}{f} \\
            Z \ar{r}{g} & Y,
        \end{tikzcd}
        \]
    in the \infcat $\dAfd_k$.
    Then the commutative diagram
        \[
        \begin{tikzcd}
            \Coh^+(Y) \ar{r}{f^*} \ar{d}{g^*} & \Coh^+(X) \ar{d}{(g')^*} \\
            \Coh^+(Z) \ar{r}{(f')^*} & \Coh^+(Z'),
        \end{tikzcd}
        \]
    is right adjointable. In other words, the Beck-Chevalley natural transformation
        \[
            \alpha \colon g^* \circ f_* \to (f')_* \circ g'_* 
        \]
    is an equivalence of functors.
\end{lem}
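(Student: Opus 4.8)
The plan is to reduce the assertion to a purely algebraic base-change identity for module categories, using the comparison between coherent complexes on a derived $k$-affinoid space and almost perfect modules over its derived global sections (see \cite{Porta_Yu_Derived_non-archimedean_analytic_spaces}), together with \cref{lem:pullbacks_of_derived_affinoid_spaces_along_finite_morphisms_are_algebraic}. Write $A = \Gamma(X, \cO_X\alg)$, $B = \Gamma(Y, \cO_Y\alg)$, $C = \Gamma(Z, \cO_Z\alg)$ and $D = \Gamma(Z', \cO_{Z'}\alg)$. Since $f$ is finite and $g$ is an admissible open immersion, all four spaces in the square are derived $k$-affinoid, and \cref{lem:pullbacks_of_derived_affinoid_spaces_along_finite_morphisms_are_algebraic} supplies a natural equivalence $D \simeq A \otimes_B C$ in $\CAlg_k$. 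Under the equivalence $\Coh^+(W) \simeq \Coh^+(\Gamma(W, \cO_W\alg))$ applied to each derived $k$-affinoid $W$ in the diagram, the $*$-pullback functors $f^*, g^*, (f')^*, (g')^*$ correspond respectively to the extension-of-scalars functors $-\otimes_B A$, $-\otimes_B C$, $-\otimes_C D$ and $-\otimes_A D$.

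First I would identify the right adjoints. Because $f$ (resp.\ $f'$) is finite, the algebra $A$ (resp.\ $D$) is almost perfect as a module over $B$ (resp.\ over $C$), so restriction of scalars along $B \to A$ (resp.\ $C \to D$) preserves $\Coh^+$ and furnishes the right adjoint of $f^*$ (resp.\ of $(f')^*$). Since $f$ and $f'$ are affine, derived global sections commute with pushforward, so these restriction functors match the analytic pushforwards $f_*$ and $(f')_*$ on $\Coh^+$. Granting these identifications, the Beck-Chevalley transformation $\alpha \colon g^* \circ f_* \to (f')_* \circ (g')^*$ becomes, for $M \in \Coh^+(A)$, the canonical comparison
\[ (\mathrm{res}_{B \to A} M) \otimes_B C \to \mathrm{res}_{C \to D}\bigl(M \otimes_A D\bigr). \]

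The remaining step is to observe that this algebraic transformation is an equivalence, which is formal: the equivalence $D \simeq A \otimes_B C$ induces a natural equivalence $M \otimes_A D \simeq M \otimes_B C$ of $D$-modules, and restricting along $\phi' \colon C \to D$ records exactly the $C$-module $M \otimes_B C$ carried by the right tensor factor, which agrees with $(\mathrm{res}_{B \to A} M) \otimes_B C$ on the source. Thus $\alpha$ is the canonical equivalence witnessing base change for extension and restriction of scalars along the pushout square $B \to A$, $C \to D$ of commutative algebras, and this holds with no flatness hypothesis whatsoever.

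I expect the main obstacle to be the bookkeeping in the comparison step rather than anything deep. Concretely, one must verify that the equivalence $\Coh^+(W) \simeq \Coh^+(\Gamma(W, \cO_W\alg))$ is natural in $W$ and intertwines the analytic $*$-pullbacks and pushforwards with algebraic extension and restriction of scalars, and that this naturality is compatible with the identification $D \simeq A \otimes_B C$ provided by \cref{lem:pullbacks_of_derived_affinoid_spaces_along_finite_morphisms_are_algebraic}. Once the square of coherent-complex categories is recognized as arising from the pushout square of derived $k$-algebras, right adjointability is immediate from the standard base-change formula for modules.
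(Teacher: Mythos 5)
Your proposal is correct and follows essentially the same route as the paper: both reduce to algebra via derived Tate acyclicity and \cref{lem:pullbacks_of_derived_affinoid_spaces_along_finite_morphisms_are_algebraic} (giving $\Gamma(Z',\cO_{Z'}\alg) \simeq A \otimes_B C$), identify $f_*$ and $(f')_*$ with restriction of scalars, and conclude by the standard base-change identity for module categories, which the paper cites as \cite[Proposition 2.5.4.5]{Lurie_SAG} and which you verify directly via $M \otimes_A D \simeq M \otimes_B C$. The only difference is that you spell out by hand the formal adjointability step the paper delegates to Lurie.
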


\begin{proof}
    Since $f$ is assumed to be a finite morphism of derived $k$-affinoid spaces, it follows from the derived Tate aciclicity theorem, c.f. \cite[Theorem 3.1]{Porta_Yu_Derived_Hom_spaces}
    that the right adjoint $f_* \colon \Coh^+(X) \to \Coh^+(Y)$ is well defined. The assertion of the Lemma is now an immediate consequence of \cref{lem:pullbacks_of_derived_affinoid_spaces_along_finite_morphisms_are_algebraic}
    together with \cite[Proposition 2.5.4.5]{Lurie_SAG} and the derived Tate aciclicity theorem.
\end{proof}

\begin{prop} \label{lem:f^*_admits_a_right_adjoint_whenever_f_is_nil-iso}
    Let $f \colon S \to S'$ be a nil-isomorphism between derived $k$-analytic spaces. Then the pullback functor
        \[
            f^* \colon \Coh^+(S') \to \Coh^+(S),  
        \]
    admits a well defined right adjoint, $f_*$.
\end{prop}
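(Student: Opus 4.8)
The plan is to construct the right adjoint locally on $S'$, where $f$ restricts to a finite morphism of derived $k$-affinoid spaces and \cref{lem:Beck_Chevalley_natural_transformation_equivalence} becomes available, and then to glue the local pushforwards by descent for coherent sheaves. By \cref{lem:nil-isos_are_affine_morphisms} the nil-isomorphism $f$ is both affine and finite. Since $f$ is affine, for every affinoid open $U \subseteq S'$ the pullback $V_U \coloneqq U \times_{S'} S$ is a derived $k$-affinoid space, and $f_U \colon V_U \to U$ is a finite morphism of derived $k$-affinoid spaces, being a base change of $f$ (cf.\ parts (1) and (3) of \cref{lem:nil-isos_are_affine_morphisms}). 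Hence, by the derived Tate acyclicity theorem, exactly as in the proof of \cref{lem:Beck_Chevalley_natural_transformation_equivalence}, each $f_U$ admits a right adjoint $f_{U,*} \colon \Coh^+(V_U) \to \Coh^+(U)$.

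Given $\cG \in \Coh^+(S)$, I would assemble the family $U \mapsto f_{U,*}(\cG|_{V_U})$, indexed by the affinoid opens $U \subseteq S'$, into a single object of $\Coh^+(S')$. The essential input is compatibility with restriction: for an admissible open immersion $U' \hookrightarrow U$ of affinoid opens, forming the pullback of $f_U$ along $U' \hookrightarrow U$ places us precisely in the situation of \cref{lem:Beck_Chevalley_natural_transformation_equivalence}, whose Beck-Chevalley equivalence furnishes a canonical identification $(f_{U,*}(\cG|_{V_U}))|_{U'} \simeq f_{U',*}(\cG|_{V_{U'}})$. Crucially, every comparison of this type involves only affinoids related by an open immersion, so \cref{lem:Beck_Chevalley_natural_transformation_equivalence} applies verbatim and no non-affinoid intersections of opens are ever required to be affinoid. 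As $\Coh^+$ satisfies descent along admissible open coverings of derived $k$-analytic spaces, these identifications organize the family into a descent datum and thus glue to a well-defined $f_*\cG \in \Coh^+(S')$; that $f_*\cG$ is bounded-below coherent is a local condition checked on each $U$.

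Finally, I would verify the adjunction $\Map_{\Coh^+(S)}(f^*\cF, \cG) \simeq \Map_{\Coh^+(S')}(\cF, f_*\cG)$ by computing both mapping spaces as limits over an affinoid covering of $S'$ of the local adjunction equivalences $\Map_{\Coh^+(V_U)}(f_U^*(\cF|_U), \cG|_{V_U}) \simeq \Map_{\Coh^+(U)}(\cF|_U, f_{U,*}(\cG|_{V_U}))$, using that $f^*$ commutes with restriction and that $(f_*\cG)|_U \simeq f_{U,*}(\cG|_{V_U})$ by construction. I expect the main obstacle to be the gluing in the second paragraph: one must promote the pointwise equivalences of \cref{lem:Beck_Chevalley_natural_transformation_equivalence} to a coherent descent datum, carrying all the higher homotopies, i.e.\ to a genuine functor on the opposite of the category of affinoid opens of $S'$. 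This is an instance of the general principle that a degreewise left adjoint between limit diagrams of $\infty$-categories satisfying the Beck-Chevalley condition itself admits a right adjoint, computed degreewise; the requisite Beck-Chevalley input is supplied entirely by \cref{lem:Beck_Chevalley_natural_transformation_equivalence}.
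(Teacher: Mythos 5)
Your proposal is correct and follows essentially the same route as the paper: reduce to the affinoid case via admissible descent for $\Coh^+$, using that a nil-isomorphism is affine and finite (\cref{lem:nil-isos_are_affine_morphisms}, \cref{lem:affine_morphisms_are_compatible_with_Zariski_localization_on_the_base}) and that \cref{lem:Beck_Chevalley_natural_transformation_equivalence} supplies the Beck--Chevalley compatibility needed to glue the locally defined pushforwards, with the affinoid case handled by finiteness and derived Tate acyclicity. Your write-up is merely more explicit than the paper's about the coherence of the descent datum, correctly identifying the standard principle that a degreewise left adjoint between limit diagrams of \infcats satisfying the Beck--Chevalley condition admits a right adjoint computed degreewise.
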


\begin{proof}
    Since $f \colon S \to S'$ is a nil-isomorphism, we conclude from \cref{lem:nil-isos_are_affine_morphisms} that $f$ is an affine morphism
    between derived $k$-analytic spaces. By admissible descent of $\Coh^+$, cf. \cite[Theorem 3.7]{Antonio_Porta_Nonarchimedean_Hilbert},
    together with \cref{lem:affine_morphisms_are_compatible_with_Zariski_localization_on_the_base} and \cref{lem:Beck_Chevalley_natural_transformation_equivalence} we reduce the statement of the Lemma to the case
    where both $S$ and $S'$ are equivalent to derived $k$-affinoid spaces. In this case, the result follows by our assumptions on $f$ and \cref{lem:nil-isos_are_affine_morphisms}.
\end{proof}

\begin{prop} \label{prop:existence_of_pushouts_along_closed_nil-isomorphisms}
    Let $f \colon X \to Y$ be a nil-embedding of derived $k$-analytic spaces. Let
    $g \colon X \to Z$ be a finite morphism in $\dAnk$. The the diagram
        \[
        \begin{tikzcd}
            X \ar{r}{f} \ar{d}{g} & Y \\
            Z
        \end{tikzcd}  
        \]
    admits a pushout in $\dAnk$, denoted $Z'$. Moreover, the natural morphism
    $Z \to Z'$ is also a nil-embedding.
\end{prop}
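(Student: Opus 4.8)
The plan is to reduce to the case of a single square-zero extension and then to transport the classifying derivation along $g$ by means of the right adjoint $g_*$. First I would apply \cref{prop:filtered_colimit_for_nil-embeddings} to factor the nil-embedding $f$ as a sequential composite of square-zero extensions
\[
    X = X_0 \hookrightarrow X_1 \hookrightarrow \cdots \hookrightarrow Y,
\]
with $\rt_{\le j}(X_j) \to \rt_{\le j}(Y)$ an equivalence. Setting $Z_0 \coloneqq Z$, I would construct the pushouts $Z_{j+1} \coloneqq X_{j+1} \sqcup_{X_j} Z_j$ inductively, one square-zero step at a time, and finally put $Z' \coloneqq \colim_j Z_j$. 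Two bookkeeping points must be kept in mind: each intermediate morphism $X_j \to Z_j$ stays finite, since its reduction coincides with $g_\red$ (square-zero extensions do not alter the underlying reduced spaces) and finiteness is detected on truncations; and the sequential colimit $\colim_j Z_j$ exists in $\dAnk$ with convergent Postnikov tower, by the same convergence argument as in \cref{prop:filtered_colimit_for_nil-embeddings}. This reduces us to the case where $f$ is a single square-zero extension.

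In that case $f \colon X \to Y$ exhibits $Y$ as a square-zero extension of $X$ by the connective module $M \coloneqq \fib(\cO_Y \to \cO_X) \in \Coh^+(X)$, classified by a derivation $\eta \colon \bbL_X\an \to M[1]$ as in \cref{rem:construction_of_nilpotent_extensions_as_square_zero_extensions}. Since $g \colon X \to Z$ is finite, the right adjoint $g_* \colon \Coh^+(X) \to \Coh^+(Z)$ is available (by derived Tate acyclicity, exactly as in the proof of \cref{lem:Beck_Chevalley_natural_transformation_equivalence}, and globally by admissible descent) and is $t$-exact, being a restriction of scalars. I would transport $\eta$ along $g$ by composing the canonical map $g^*\bbL_Z\an \to \bbL_X\an$ with $\eta$ and taking the $(g^*, g_*)$-adjoint, obtaining $\tilde{\eta} \colon \bbL_Z\an \to (g_*M)[1]$. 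Let $Z \to Z'$ be the square-zero extension classified by $\tilde{\eta}$; as $g_*M$ is connective this is a genuine nil-embedding, which already yields the final assertion of the proposition. The counit $g^* g_* M \to M$ then produces a map $Y \to Z'$ over $g$, compatible with $X \to Z \to Z'$ and $X \to Y \to Z'$.

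It remains to verify that the triple $(Z', Y \to Z', Z \to Z')$ corepresents the pushout functor $W \mapsto \Map(Y, W) \times_{\Map(X, W)} \Map(Z, W)$. Fixing $u \colon Z \to W$, the space of extensions of $u$ to $Z' \to W$ is governed, through the universal property of the analytic cotangent complex, by the composite $u^*\bbL_W\an \to \bbL_Z\an \xrightarrow{\tilde{\eta}} (g_*M)[1]$, and a parallel description holds for extensions across $X \hookrightarrow Y$. Matching the two through the adjunction $(g^*, g_*)$ and the Beck–Chevalley compatibility of \cref{lem:Beck_Chevalley_natural_transformation_equivalence} identifies the two extension problems and exhibits $Z'$ as the sought pushout. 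I expect this last identification to be the main obstacle: one must check that transporting the square-zero structure along $g_*$ commutes with restriction to an admissible affinoid cover (so that the local constructions glue), and that the adjunction genuinely matches the deformation spaces. It is precisely the finiteness of $g$ that makes $g_*$ available and $t$-exact, and hence what makes the whole argument go through.
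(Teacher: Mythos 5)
Your argument is correct, but it takes a genuinely different route from the paper's. The paper constructs $Z'$ in one shot: it forms the pushout in the presentable \infcat $\RTop(\cTank)$, observes that $f$ induces an equivalence of underlying \inftopoi, so that $Z'$ lives on the topos of $Z$ with structure sheaf the fiber product $g_*(\cO_Y) \times_{g_*(\cO_X)} \cO_Z$, and then verifies by hand that this structured topos is a derived $k$-analytic space: the ideal $\cJ' = \fib(\cO_{Z'} \to \cO_Z)$ is identified with $\fib(g_*(\cO_Y) \to g_*(\cO_X))$, so $\pi_0(\cJ')$ is a finitely generated nilpotent ideal (finiteness of $g$ enters exactly here), whence \cref{lem:derived_k_analytic_space_whose_reduction_is_affinoid_is_also_affinoid} applies, and coherence of the higher homotopy sheaves follows from the fiber sequence $\cO_{Z'} \to g_*(\cO_Y) \oplus \cO_Z \to g_*(\cO_X)$; since $\dAnk \subseteq \RTop(\cTank)$ is full, the universal property comes for free. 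You instead run the derivation-transport argument that the paper deploys only afterwards, in \cref{lem:pushouts_of_square_zero_extensions_have_the_structure_of_a_square_zero_extension} (and there only for nil-isomorphic $g$), extending it to finite $g$ via Tate acyclicity and \cref{lem:Beck_Chevalley_natural_transformation_equivalence}, and bootstrapping through the factorization of \cref{prop:filtered_colimit_for_nil-embeddings}. What your route buys: you never need to check that a topos-level pushout is again analytic, since square-zero extensions of derived $k$-analytic spaces by connective coherent modules stay in $\dAnk$, and the nil-embedding claim (the content of the paper's subsequent corollary) is built in. What it costs: the universal property must be verified against every $W \in \dAnk$, and you must manage an infinite tower. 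Two points there deserve more than the gesture you give them: (i) that $Y \simeq \colim_j X_j$ and that $\colim_j Z_j$ exists in $\dAnk$ with the expected mapping spaces is not contained in \cref{prop:filtered_colimit_for_nil-embeddings} itself, which only gives truncation-level equivalences; it requires nilcompleteness of derived $k$-analytic spaces (\cite[Lemma 7.7]{Porta_Yu_Representability}) together with the eventual stabilization of the homotopy sheaves along the tower; (ii) your reduction of finiteness of $X_j \to Z_j$ to finiteness of its reduction needs the Noetherian argument that the nilradical filtration has coherent graded pieces, as in the proof of \cref{lem:nil-isos_are_affine_morphisms}~(3). Both are fixable, so these are presentation gaps rather than mathematical ones.
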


\begin{proof} The \infcat of $\cTank$-structured \inftopoi \ $\RTop(\cTank)$ is a presentable \infcat. Consider the pushout diagram
        \[
        \begin{tikzcd}
            X \ar{r}{f} \ar{d}{g} & Y \ar{d} \\
            Z \ar{r} & Z',
        \end{tikzcd}
        \]
    in the \infcat $\RTop(\cTank)$. By construction, the underlying \inftopos of $Z'$ can be computed as the pushout in the \infcat $\RTop$ of
    the induced diagram on the underlying \inftopoi \ of $X$, $Z$ and $Y$. Moreover, since $g$ is a nil-isomorphism it induces an equivalence on underlying \inftopoi \
    of both $X$ and $Y$. It follows that the induced morphism $Z \to Z'$ in $\RTop(\cTank)$ induces an equivalence on the underlying \inftopoi.
    Moreover, it follows essentially by construction that we have a natural equivalence
        \begin{align*}
            \cO_{Z'}  & \simeq g_*(\cO_Y) \times_{g_*(\cO_X)} \cO_Z \\
                      & \in \Str_{\cTank}^\loc(Z).
        \end{align*}
    As the morphism $g_*(\cO_Y) \to g_*(\cO_X)$ is an effective epimorphism and effective epimorphisms are preserved under fiber products in an \inftopos, c.f. \cite[Proposition 6.2.3.15]{HTT},
    it follows that the natural morphism
        \[
            \cO_{Z'} \to \cO_Z,  
        \]
    is an effective epimorphism, as well.
    Consider now the commutative diagram of fiber sequences
        \[
        \begin{tikzcd}
            \cJ ' \ar{r} \ar{d} & \cO_{Z'} \ar{r} \ar{d}  & \cO_Z \ar{d} \\
            \cJ \ar{r} & g_*(\cO_Y) \ar{r} & g_*(\cO_X),
        \end{tikzcd}
        \]
    in the stable \infcat $\Mod_{\cO_Z'}$. Since the right commutative square is a pullback square it follows that the morphism
        \[
            \cJ' \to \cJ,  
        \]
    is an equivalence. In particular, $\pi_0(\cJ')$ is a finitely generated
    nilpotent ideal of $\pi_0(\cO\alg_{Z'})$. Indeed, finitely generation follows from our assumption that $g$ is a finite morphism.
    Thanks to \cref{lem:derived_k_analytic_space_whose_reduction_is_affinoid_is_also_affinoid},
    it follows that $\trunc(Z')$ is an ordinary $k$-analytic space and the morphism $\trunc(Z') \to \trunc(Z)$ is a nil-embedding. We are thus reduced to show that
    for every $i>0$, the homotopy sheaf $\pi_i(\cO_{Z'}) \in \Coh^+(\trunc(Z'))$. But this follows immediately from the existence of a fiber sequence
        \[
            \cO_{Z'} \to g_*(\cO_Y) \oplus \cO_Z \to g_*(\cO_X),  
        \]
    in the \infcat $\Mod_{\cO_{Z'}}$ together with the fact that $g_*(\cO_Y)$ and $g_*(\cO_Z)$ have coherent homotopy sheaves, by our assumption that $g$ is a
    finite morphism combined with \cref{lem:nil-isos_are_affine_morphisms}.
\end{proof}

\begin{cor} \label{lem:pushouts_of_square_zero_extensions_have_the_structure_of_a_square_zero_extension}
    Let $f \colon S \to S'$ be a square-zero extension and $g \colon S \to T$ a nil-isomorphism in $\dAnk$. Suppose we are given a pushout diagram
        \[
        \begin{tikzcd}
            S \ar{r}{f} \ar{d} & S' \ar{d} \\
            T \ar{r} & T'  
        \end{tikzcd},
        \]
    in $\dAnk$. Then the induced morphism $T \to T'$ is a square-zero extension.
\end{cor}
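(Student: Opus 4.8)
The plan is to reduce the statement to an explicit computation of the structure sheaf of the pushout and then to recognise the outcome as a square-zero extension classified by a transported derivation. First I would check that the hypotheses of \cref{prop:existence_of_pushouts_along_closed_nil-isomorphisms} are in force: a square-zero extension is in particular a nil-embedding, so $f$ qualifies, while the nil-isomorphism $g$ is finite by \cref{lem:nil-isos_are_affine_morphisms}~(3). Hence the pushout $T'$ exists, the induced morphism $T \to T'$ is already known to be a nil-embedding, and the proof of that proposition supplies the natural identification
\[
    \cO_{T'} \simeq g_*(\cO_{S'}) \times_{g_*(\cO_S)} \cO_T,
\]
where $g_*$ is the right adjoint to $g^*$ provided by \cref{lem:f^*_admits_a_right_adjoint_whenever_f_is_nil-iso}. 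The whole problem thus reduces to upgrading this nil-embedding to a square-zero extension.

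Next I would unwind the square-zero structure on $f$. By definition of a square-zero extension in the sense of \cite[Definition 5.41]{Porta_Yu_Representability}, there are a connective module $M \in \Coh^+(S)$ and a derivation $d \colon \bbL_S\an \to M[1]$ realising $\cO_{S'}$ as the pullback $\cO_{S'} \simeq \cO_S \times_{\cO_S \oplus M[1]} \cO_S$ taken along the trivial derivation and $d$. Since $g_*$ is a right adjoint between stable \infcats it preserves finite limits and finite direct sums, and being lax symmetric monoidal (as the right adjoint of the monoidal functor $g^*$) it carries trivial square-zero extensions to trivial square-zero extensions. Applying $g_*$ therefore yields
\[
    g_*(\cO_{S'}) \simeq g_*(\cO_S) \times_{g_*(\cO_S) \oplus N[1]} g_*(\cO_S), \qquad N \coloneqq g_*(M) \in \Coh^+(T),
\]
exhibiting $g_*(\cO_{S'})$ as the square-zero extension of $g_*(\cO_S)$ by $N$ associated to $g_*(d)$.

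It then remains to transport the derivation to $T$ itself and to collapse the nested fibre products. Using the functoriality $g^*\bbL_T\an \to \bbL_S\an$ of the analytic cotangent complex together with the adjunction $(g^*, g_*)$, the composite $g^*\bbL_T\an \to \bbL_S\an \xrightarrow{d} M[1]$ is adjoint to a derivation $\delta \colon \bbL_T\an \to N[1]$. Feeding the presentation of $g_*(\cO_{S'})$ from the previous step into the identification of $\cO_{T'}$ and rewriting the two nested pullbacks as a single fibre along the unit $\cO_T \to g_*(\cO_S)$, I would obtain a natural equivalence
\[
    \cO_{T'} \simeq \cO_T \times_{\cO_T \oplus N[1]} \cO_T,
\]
the right-hand side being exactly the square-zero extension of $\cO_T$ by $N$ classified by $\delta$. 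This exhibits $T \to T'$ as a square-zero extension and completes the argument.

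The main obstacle I anticipate is carrying out the last two steps at the level of local $\cTank$-structures rather than of plain $\cO$-modules: one must know that the pushforward $g_*$ respects the analytic ring structures and the trivial extensions $\cO_S \oplus M[1]$, so that $g_*$ of a square-zero extension is again one in the sense of \cite[Definition 5.41]{Porta_Yu_Representability}, and that $\delta$ is a bona fide analytic derivation. This is precisely where the universal property of $\bbL_T\an$ and the lax monoidality of $g_*$ enter; once these structural compatibilities are secured, the fibre-product manipulation itself is purely formal.
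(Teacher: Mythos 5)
Your opening moves are sound, and your key construction coincides with the paper's: the transported derivation $\delta \colon \bbL\an_T \to g_*(\bbL\an_S) \xrightarrow{g_*(d)} g_*(\cF)[1]$ is exactly the derivation $d'$ that the paper's proof of \cref{lem:pushouts_of_square_zero_extensions_have_the_structure_of_a_square_zero_extension} builds, and your appeal to \cref{prop:existence_of_pushouts_along_closed_nil-isomorphisms} and \cref{lem:f^*_admits_a_right_adjoint_whenever_f_is_nil-iso} is legitimate. The gap lies in your verification route, and it sits precisely at the point you yourself flag and then defer. Square-zero extensions of derived $k$-analytic spaces in the sense of \cite[Definition 5.41]{Porta_Yu_Representability} are defined through analytic derivations and the universal property of the analytic cotangent complex; to conclude from a nested fibre-product presentation of $\cO_{T'}$ you need two unproved compatibilities: (a) that such a pullback presentation in $\Strloc_{\cTank}$ actually characterizes square-zero extensions in the analytic sense, and (b) that $g_*$ carries the analytic split square-zero extension $S[\cF]$ --- whose $\cTank$-structure is \emph{not} determined by the underlying $E_\infty$-ring $\cO_S\alg \oplus \cF$ --- to $T[g_*\cF]$. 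Lax symmetric monoidality of $g_*$ controls only underlying algebras and cannot produce or recognize the $\cTank$-structure; this is exactly where the analytic setting departs from the algebraic one, and neither (a) nor (b) is established in the paper or in the references you invoke. So as written the argument is incomplete at the one step where it must be analytic rather than purely formal, and "once these structural compatibilities are secured" is doing all the remaining work.

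The paper's own proof sidesteps this entirely, and it is worth seeing why: rather than computing $\cO_{T'}$ of the given pushout, it uses the universal property of the relative analytic cotangent complex to construct from $d'$ a square-zero extension $T \to T'$, and then verifies that the resulting square is a pushout by a mapping-space argument. Concretely, for a test object $X \in \dAnk$ with compatible maps $S' \to X$ and $T \to X$, the map $S' \to X$ yields a factorization $\bbL\an_S \to \bbL\an_{S/X} \to \cF[1]$ of $d$; applying $g_*$ and using the transitivity diagram for cotangent complexes produces the corresponding factorization of $d'$ through $\bbL\an_{T/X}$, hence a unique extension $T' \to X$. Uniqueness of pushouts then identifies the given pushout with the constructed square-zero extension. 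If you want to rescue your computational route, you would need to prove (a) and (b) first, which in practice amounts to re-deriving this universal-property argument; the paper's approach is therefore not just a stylistic alternative but the mechanism that makes the statement provable with the tools actually available in the analytic theory.
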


\begin{proof}
    Since $g$ is a nil-isomorphism of derived $k$-analytic spaces, \cref{lem:f^*_admits_a_right_adjoint_whenever_f_is_nil-iso}
    implies that the pullback functor $g^* \colon \Coh^+(T) \to \Coh^+(S)$ admits a well defined right adjoint
        \[
            g_* \colon \Coh^+(S ) \to \Coh^+(T) .
        \]
    Let $\cF \in \Coh^+(S)^{\ge 0}$ and $d \colon \bbL\an_S \to \cF[1]$ be a derivation
    associated with the morphism $f \colon S \to S'$. Consider now the natural composite
        \[
            d ' \colon \bbL\an_T \to g_* (\bbL\an_S) \xrightarrow{g_*(d)} g_* (\cF)[1],  
        \]
    in the \infcat $\Coh^+(T)$. By the universal property of the relative analytic cotangent complex, we deduce the existence of a square-zero extension
        \[
            T \to T',  
        \]
    in the \infcat $\dAnk$. Let $X \in \dAnk$ together with morphisms $S' \to X$ and $T \to X$ compatible with both $f$ and $g$. By the universal property of
    the relative analytic cotangent complex, the morphism $S' \to X$ induces a uniquely defined (up to a contractible indeterminacy space) morphism
        \[
            \bbL\an_{S/X} \to \cF[1],
        \]
    in $\Coh^+(S)$, such that the compositve $\bbL\an_S \to \bbL\an_{S/X} \to \cF[1]$ agrees with $d$. By applying the right adjoint $g_*$ above we obtain a
    commutative diagram
        \[
        \begin{tikzcd}
            \bbL\an_T \ar{r}{\mathrm{can}} \ar{d} & \bbL\an_{T/X} \ar{d} \ar{rd}{d''} & \\
            g_*(\bbL\an_S) \ar{r}{g_*(\mathrm{can})} & g_*(\bbL\an_{S/X}) \ar{r} & g_*(\cF)[1],
        \end{tikzcd}
        \]
    in the \infcat $\Coh^+(T)$. From this, we conclude again by the universal property of the relative analytic cotangent complex the existence
    of a uniquely defined natural morphism $T' \to X$ extending both $T \to X$ and $S' \to X$ and compatible with the restriction to $S$. The latter assertion is equivalent to state
    that the commutative square
        \[
        \begin{tikzcd}
            S \ar{r} \ar{d} & S' \ar{d} \\
            T \ar{r} & T',
        \end{tikzcd}
        \]
    is a pushout diagram in $\dAnk$. The proof is thus concluded.
\end{proof}

\begin{construction} \label{const:leq_n_construction}
    Let $n \ge 0$. We have a well defined functor
        \[
            \anNil_{\mathrm{t}_{\le n}(X)/ } \to \anNil_{X/ },  
        \]
    given by the formula
        \[
            (\rt_{\le n}(X) \to S) \in \anNil_{\rt_{\le n}(X)/ } \mapsto ( X \to S \bigsqcup_{\rt_{\le n}(X)} X ).
        \]
    Given any functor $F \colon \anNil_{X/ }\to \cS$,
    we define $F^{\le n} \colon \anNil_{\rt_{\le n}(X)/ } \to \cS$ as the functor given on objects by the association
        \[
            (\rt_{\le n}(X) \to S) \in \anNil_{\rt_{\le n}(X)/ } \mapsto  F^{\le n}(S \bigsqcup_{\rt_{\le n}(X)} X) \in \cS.
        \]
    It follows from the construction that, for each $n \ge 0$ and $S \in \anNil_{X/ }$, we have a natural morphism
        \[
            F(S) \to F^{\le n}(\rt_{\le n}(S)),  
        \]
    in the \infcat $\cS$.
\end{construction}

\subsection{Analytic formal moduli problems under a base} We study the \infcat of $k$-analytic formal moduli problems
under a base $X \in \dAnk$ and explore certain important features of such. The
results presented here will prove to be crucial for the study of the deformation to the normal cone in the $k$-analytic
setting, which we treat in the next section.
We start with the following central definition:

\begin{defin} \label{defin:analytic_formal_moduli_problems_under}
    An \emph{analytic formal moduli problem under $X$} corresponds to the datum of a functor
        \[F \colon (\anNil_{X/})\op \to \cS,\]
    satisfying the following two conditions:
    \begin{enumerate}
        \item $F(X) \simeq *$ in $\cS$;
        \item Given any $S \in \anNil\op_{X/ }$, the natural morphism
            \[
                F(S) \to \lim_{n \ge 0} F^{\le n}(\rt_{\le n}(S)),  
            \]
        is an equivalence in $\cS$ (see \cref{const:leq_n_construction}). In other words, $Y$ is \emph{convergent};
        \item Given any pushout diagram
            \[\begin{tikzcd}
                S \ar{r}{f} \ar{d} & S' \ar{d} \\
                T \ar{r} & T',
            \end{tikzcd}\]
        in the \infcat $\anNil_{X/}$, such that $f$ has the structure of a square-zero extension, the induced morphism
            \[F(T') \to F(T) \times_{F(S)}F(S),\]
        is an equivalence in $\cS$.
    \end{enumerate}
    We shall denote by $\anFMP_{X/}$ the full subcategory of $\Fun((\anNil_{X/ })\op, \cS)$ spanned by analytic formal moduli problems
    under $X$.
\end{defin}

\begin{rem}
    In the previous definition we remark that the functor
        \[
            F^{\le n} \colon \anNil\op_{\rt_{\le n}(X)/ } \to \cS,  
        \]
    is itself an analytic formal moduli problem under $\rt_{\le n}(X)$.
\end{rem}

We shall give some important examples of formal moduli problems under $X$:

\begin{eg}
    \begin{enumerate}
        \item Let $X \in \dAnk$. As in the algebraic case, we can consider the \emph{de Rham pre-stack associated to $X$}, $X_\mathrm{dR} \colon \dAfd_k\op \to \cS$,
        determined by the formula
            \[
                X_{\mathrm{dR}}(Z) \coloneqq X(Z_\red), \quad Z \in \dAfd_k.  
            \]
        We have a natural morphism $X \to X_\mathrm{dR}$ induced from the natural morphism $Z_\red \to Z$.
        The restriction of $X_\dR$ to $\Fun(\anNil\op_{X/ }, \cS)$ via the functor
            \[
                h_* \colon \dAnSt_k \to \anFMP_{X/ },
            \]
        induced by the inclusion $h \colon \anNil_{X/ } \subseteq \dAnk$, is an analytic formal moduli problem under $X$.
        Moreover, it follows from the construction of $X_\dR$ that the object $h_* (X_\dR)$ is equivalent to a final object in $\anFMP_{X/ }$.
        \item Let $f \colon X \to Y$ be a morphism in the \infcat $\dAnk$. We define the \emph{formal completion of $X$ in $Y$ along $f$} as the pullback 
            \begin{align*}
                Y^\wedge_X  & \coloneqq Y \times_{Y_\mathrm{dR}} X_\mathrm{dR} \\
                            & \in \dAnSt_k.
            \end{align*}
        By construction we have a natural factorization $X \to Y^\wedge_X \to Y$ in $\dAnSt_k$, and moreover the restriction of $X \to Y^\wedge_X$ to the \infcat
        $\Fun(\anNil\op_{X/}, \cS)$ along the natural functor
            \[
                h_* \colon  (\dAnSt_k)_{X/ } \to \anFMP_{X/ },  
            \]
        exhibits $Y^\wedge_X$ as a formal moduli problem under $X$.
        \item Let $f \colon X \to Y$ be a closed immersion in the \infcat $\dAnk$. Consider the shifted tangent bundle associated to $f$ together with the zero section
            \[
            \begin{tikzcd}
                X \ar{r}{s_0} \arrow[rd, equal] & \bT\an_{X/ Y}[1] \ar{d}{p} \\
                    & X.
            \end{tikzcd}
            \]
        The completion $\bT\an_{X/Y} [1]^\wedge \in \anFMP_{X/ }$ along $s_0$ will play an important role in what follows.
    \end{enumerate}
\end{eg}

\begin{notation}
    We set $\anNil_{X/}^\cl \subseteq \anNil_{X/}$ to be the full subcategory spanned by those objects corresponding to nil-embeddings
        \[
            X \to S,  
        \]
    in $\dAnk$.
\end{notation}

\begin{prop} \label{prop:analytic_FMP_under_X_are_ind_inf_schemes}
    Let $Y \in \anNil_{X/}$. The following assertions hold:
    \begin{enumerate}
        \item The inclusion functor
            \[
              \anNil^{\cl}_{X//Y} \hookrightarrow \anNil_{X//Y} , 
            \]
        is cofinal.
        \item The natural morphism
            \[
               \colim_{Z \in \anNil_{X//Y}^\cl} Z \to Y,  
            \]
        is an equivalence in $\Fun(\anNil_{X/}\op, \cS)$.
        \item The \infcat $\anNil_{X//Y}^\cl$ is filtered.
    \end{enumerate}
\end{prop}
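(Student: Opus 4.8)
The plan is to prove (1) first, since both (2) and (3) will then follow formally. For cofinality I will use Quillen's Theorem A \cite[Theorem 4.1.3.1]{HTT}: writing $\iota$ for the inclusion, it suffices to show that for each object $(X \to W \to Y)$ of $\anNil_{X//Y}$ the comma $\infty$-category $\cC_W \coloneqq \anNil^\cl_{X//Y} \times_{\anNil_{X//Y}} (\anNil_{X//Y})_{W/}$ is weakly contractible. I will in fact exhibit an \emph{initial} object of $\cC_W$; since an initial object forces weak contractibility, this proves (1), and moreover it shows that $\iota$ admits a left adjoint $r$, so that $\anNil^\cl_{X//Y}$ is a reflective full subcategory of $\anNil_{X//Y}$.

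The heart of the argument, and the step I expect to be the main obstacle, is the construction of this initial object, which is where the geometric input is essential. Given $W$ with $X \to W$ a nil-isomorphism, I propose to set
\[
    r(W) \coloneqq X \sqcup_{X_\red} W,
\]
the pushout of the span $X \leftarrow X_\red \to W$ along the reduction. The map $X_\red \to X$ is a nil-embedding and $X_\red \to W$ is finite (being the composite of the closed immersion $X_\red \to X$ with the nil-isomorphism $X \to W$, which is finite by \cref{lem:nil-isos_are_affine_morphisms}), so \cref{prop:existence_of_pushouts_along_closed_nil-isomorphisms} guarantees that the pushout exists in $\dAnk$ and that $W \to r(W)$ is a nil-embedding. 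Since $\trunc(W)$ has nilpotent nilradical (the local models being Noetherian $k$-affinoid algebras), the map $X_\red \to W$ is itself a nil-embedding, so a symmetric application of \cref{prop:existence_of_pushouts_along_closed_nil-isomorphisms} shows that $X \to r(W)$ is a nil-embedding as well; hence $r(W) \in \anNil^\cl_{X/}$. The two structure maps $X \to Y$ and $W \to Y$ agree after restriction to $X_\red$, so the universal property of the pushout supplies a canonical map $r(W) \to Y$, placing $r(W)$ in $\anNil^\cl_{X//Y}$ and equipping it with a map $W \to r(W)$ in $\cC_W$.

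Initiality of $r(W)$ in $\cC_W$ is then immediate from the pushout universal property: for any $Z \in \cC_W$ the space of maps $r(W) \to Z$ compatible with the maps from $X$ and $W$ and with the maps to $Y$ is the fibre of $\Map_{\dAnk}(X, Z) \times_{\Map_{\dAnk}(X_\red, Z)} \Map_{\dAnk}(W, Z)$ over the prescribed boundary datum, and this fibre is contractible because the maps $X \to Z$ and $W \to Z$ are already given and agree on $X_\red$. This completes (1). For (3), observe that $\anNil_{X//Y} = (\anNil_{X/})_{/Y}$ has the terminal object $(Y, \id_Y)$ and is therefore filtered; since $\iota$ is fully faithful with left adjoint $r$, the counit $r\iota \simeq \id$ is an equivalence, and applying $r$ to a cocone in $\anNil_{X//Y}$ over the image of any finite diagram from $\anNil^\cl_{X//Y}$ produces a cocone on that diagram in $\anNil^\cl_{X//Y}$; hence the latter is filtered. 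Finally (2) is purely formal: by the co-Yoneda (density) lemma the representable presheaf attached to $Y$ is the colimit of the representables indexed by its slice $\anNil_{X//Y}$, and the cofinality established in (1) identifies this with the colimit over $\anNil^\cl_{X//Y}$, giving $\colim_{Z \in \anNil^\cl_{X//Y}} Z \simeq Y$ in $\Fun(\anNil_{X/}\op, \cS)$.
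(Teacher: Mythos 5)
Your strategy founders at its key step: the object $r(W) \coloneqq X \sqcup_{X_\red} W$ does not receive a morphism from $W$ in $\anNil_{X/}$, so it is not an object of $\cC_W$ at all. Writing $a \colon X_\red \to X$, $f \colon X \to W$, $i \colon X \to r(W)$ for the $X$-leg of the pushout (which is indeed the nil-embedding you produce) and $j \colon W \to r(W)$ for the $W$-leg, the pushout square only yields a homotopy $i \circ a \simeq j \circ f \circ a$; the two maps $i$ and $j \circ f$ out of $X$ differ in general, so $j$ is not a morphism under $X$ once $r(W)$ carries its closed structure $i$. What the universal property canonically provides is the counit $r(W) \to W$ of the adjunction between pushout along $a$ and restriction --- the wrong direction for Quillen's Theorem A. In fact no repair is possible at this level: take $X$ with $\Gamma(X, \cO_X) = k[\epsilon]$, $\epsilon^2 = 0$, and $W$ with $\Gamma(W, \cO_W) = k$, with $f$ dual to $k \hookrightarrow k[\epsilon]$ (a nil-isomorphism that is not a nil-embedding). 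Then $r(W) \simeq X$, and there is \emph{no} morphism $u \colon W \to Z$ in $\anNil_{X/}$ with $X \to Z$ closed whatsoever, since on $\pi_0$ the surjection $\pi_0(\cO_Z) \twoheadrightarrow k[\epsilon]$ would factor through $k$. So $\cC_W$ can fail to have an initial object and the inclusion admits no left adjoint; this is exactly why the paper does not argue by a reflection inside $\anNil_{X//Y}$, but instead compares both sides with categories over the reduction and the truncations, using the adjunction between $\mathbf{push}^{\le n}$ and $\mathbf{res}^{\le n}$ --- where your pushout formula does appear, but as base change along $X_\red \to \rt_{\le n}(X)$, with the under-structure living over the \emph{other} base --- together with the left Kan extension $\mathbf{res}^{\le n}_!(Y)$, nilcompleteness of derived $k$-analytic spaces, and a cancellation property of cofinal functors. (A smaller coherence point: even with $(u,v)$ given, the fiber you compute is the space of homotopies $u|_{X_\red} \simeq v|_{X_\red}$, which is not automatically contractible; the contraction has to come from the under-$X$ structure, which is precisely what is missing.)

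Two further steps silently treat $Y$ as representable, which it is not in the intended generality: $Y$ ranges over $\anFMP_{X/}$ (this is how the proposition is used in \cref{const:anFMP_as_ind_inf_schemes} and afterwards; the ``$Y \in \anNil_{X/}$'' in the statement is a slip). First, the span $X \leftarrow X_\red \to W$ does not even live in $\anNil_{X/}$, and a formal moduli problem is only required to carry pushouts along \emph{square-zero} extensions to pullbacks; so ``the universal property of the pushout supplies a map $r(W) \to Y$'' is not available as stated. One must factor the nil-embedding into square-zero extensions via \cref{prop:filtered_colimit_for_nil-embeddings} and use the convergence condition (2) of \cref{defin:analytic_formal_moduli_problems_under} --- precisely the maneuver the paper performs in its proof of part (3). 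Second, your filteredness argument rests on $(Y, \id_Y)$ being a terminal object of $\anNil_{X//Y}$, which does not exist for non-representable $Y$; the paper instead proves (3) directly, forming pushouts $S_n \sqcup_X S_{n-1}$ inside $\anNil^{\cl}_{X/}$ via \cref{prop:existence_of_pushouts_along_closed_nil-isomorphisms} and extending the cocone to $Y$ by square-zero factorization plus convergence.
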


\begin{proof}
    We start by proving claim (1). Let $n \ge 0$, and consider the usual restriction along the natural morphism $X_\red \to \rt_{\le n}(X)$ functor 
        \[\mathbf{res}^{\le n} \colon \anNil_{\rt_{\le n}(X)/} \to \anNil_{X_\red/}.\]
    Such functor admits a well defined left adjoint
        \[\mathbf{push}^{\le n} \colon \anNil_{X_\red/} \to \anNil_{\rt_{\le n}(X)/},\]
    which is determined by the formula
        \[
            (X_\red \to T) \in \anNil_{X_\red /} \mapsto (\rt_{\le n}(X) \to T') \in \anNil_{X/},   
        \]
    where we have set
        \begin{equation} \label{eq:set_definition_of_T'_as_pushout_of_T_along_the_inclusion_X_red_to_X}
            T' \coloneqq \rt_{\le n}(X) \bigsqcup_{X_\red} T \in \anNil_{X/}.  
        \end{equation}
    We claim that $T' \in \anNil_{\rt_{\le n}(X)/}$ belongs to the full subcategory $\anNil^\cl_{\rt_{\le n}(X)/} \subseteq \anNil_{\rt_{\le n}(X)/}$.
    Indeed, since the structural morphism
        $X_\red \to T$,
    is necessarily a nil-embedding we deduce the claim from \cref{prop:existence_of_pushouts_along_closed_nil-isomorphisms}.
    We shall denote by
        \[
            \mathbf{res}^{\le n}_!(Y) \colon \anNil\op_{X_\red /} \to \cS,
        \]
    the left Kan extension of $Y$ along the functor $\mathbf{res}^{\le n}$ above. By the colimit formula
    for left Kan extensions, c.f. \cite[Lemma 4.3.2.13]{HTT}, it follows that $\mathbf{res}^{\le n}_!(Y)$ is given by the formula
        \[
            (X_\red \to T) \in  \anNil_{X_\red /} \mapsto Y^{\le n}(T') \in \cS,
        \]
    where $T'$ is as in \eqref{eq:set_definition_of_T'_as_pushout_of_T_along_the_inclusion_X_red_to_X}. 
    We thus have a diagram of functors
        \[
            \mathbf{res}^{\le n} \colon \anNil_{\rt_{\le n}(X)/ /Y^{\le n}} \rightleftarrows \anNil_{X_\red // \mathbf{res}^{\le n}_!(Y)} \colon \mathbf{push}^{\le n},
        \]
    where $\mathbf{res}^{\le n}$ is given on objects by the formula
        \[
            (\rt_{\le n}(X) \to S \to Y^{\le n}) \in \anNil_{\rt_{\le n}(X)/ /Y^{\le n}} \mapsto (X_\red \to S \to \mathbf{res}_!^{\le n}(Y) )  \in \anNil_{X_\red // \mathbf{res}^{\le n}_!(Y)} 
        \]
    and the functor $\mathbf{push}^{\le n}$ is given by the association
        \[
            (X_\red \to T \to Y^{\le n}) \in \anNil_{X_\red/ /Y^{\le n}} \mapsto (\rt_{\le n}(X) \to T \sqcup_{X_\red} \rt_{\le n}(X) \to Y^{\le n}) \in \anNil_{\rt_{\le n}(X)/ /Y^{\le n}}.  
        \]
    We claim that the pair $(\mathbf{res}^{\le n}, \mathbf{push}^{\le n})$ forms an adjunction. Indeed, a morphism
        \[
            (X_\red \to S \to \mathbf{res}^{\le n}_!(Y)) \to \mathbf{res}_!^{\le n}(\rt_{\le n}(X) \to T \to Y^{\le n}),  
        \]
    corresponds to a commutative diagram
        \[
        \begin{tikzcd}
            X_\red \ar{r} \ar{d}{=} & S \ar{r} \ar{d} & \mathbf{res}_!^{\le n}(Y) \ar{d}{=} \\
            X_\red \ar{r} & T \ar{r} & \mathbf{res}_!^{\le n}(Y),
        \end{tikzcd}
        \]
    in the \infcat $\Fun(\anNil_{X_\red/ }\op, \cS)$. The latter datum is equivalent to the datum of a commutative diagram
        \begin{equation} \label{eq:com_diagram_expliciting_datum_of_res_push_adjunctions}
        \begin{tikzcd}
                                 &                 & \rt_{\le n}(X) \ar{d}   \ar{rd}    &   \\
            X_\red \ar{r} \ar{d} & S \ar{r} \ar{d} & S' \ar{r} \ar{d}                   & Y^{\le n} \ar{d}{=} \\
            X_\red \ar{r}        & T \ar{r}        & T' \ar{r}                          & Y^{\le n}  
        \end{tikzcd}.
        \end{equation}
    Since the morphism $\rt_{\le n}(X) \to T'$ factors through the structural map
        \[
            \rt_{\le n}(X) \to T,  
        \]
    we deduce that the datum of \eqref{eq:com_diagram_expliciting_datum_of_res_push_adjunctions} is equivalent to the datum of a commutative diagram
        \[
        \begin{tikzcd}
            \rt_{\le n}(X) \ar{r} \ar{d} & S' \ar{r} \ar{d} & Y^{\le n} \ar{d}{=} \\
            \rt_{\le n}(X) \ar{r}        & T \ar{r}        & Y^{\le n}, 
        \end{tikzcd}
        \]
    which corresponds to a uniquelly well defined morphism
        \[
            \mathbf{push}^{\le n}(X_\red \to S \to \mathbf{res}^{\le n}_!(Y)) \to (\rt_{\le n}(Y) \to T \to Y^{\le n}),
        \]
    in the \infcat $\anNil_{\rt_{\le n}(X)/ /Y^{\le n}}$. We further observe that for every $n \ge m \ge 0$, the objects
        \[
            \mathbf{res}^{\le n}_!(Y) \quad \mathrm{and} \quad \mathbf{res}^{\le m}_!(Y),  
        \]
    are equivalent as functors $\anNil_{X_\red/ } \op \to \cS$, we shall denote this functor simply by $\mathbf{res}_!(Y)$.
    
    Passing to the limit over $n \ge 0$ we obtain a commutative diagram of the form
        \[
        \begin{tikzcd}
            \anNil_{X_\red/ /\mathbf{res}_!(Y)} \ar{rr} \ar{rd} & & \lim_{n \ge 0} \anNil_{\rt_{\le n}(X)/ /Y^{\le n}} \\
                                                & \anNil_{X/ /Y} \ar{ur}.
        \end{tikzcd}
        \]  
    The horizontal morphism is cofinal since it fits into an adjunction, by our previous considerations. Thanks to \cite[
        Corollary 4.1.1.9]{HTT} in order to show that the natural morphism
        \[
            \anNil_{X_\red/ /\mathbf{res}_!(Y)} \to \anNil_{X/ /Y},
        \]
    is cofinal, it suffices to prove that $\anNil_{X/ /Y} \to \lim_{n \ge 0} \anNil_{\rt_{\le n}(X)/ /Y^{\le n}}$ is itself cofinal. But the latter is an immediate consequence of
    the fact that derived $k$-analytic spaces are nilcomplete, c.f. \cite[Lemma 7.7]{Porta_Yu_Representability}, combined with assumption (3) in \cref{defin:analytic_formal_moduli_problems_under}.
    Assertion (1) of the Proposition now follows from the observation that the functor
        \[
            \anNil_{X_\red/ /\mathbf{res}_!(Y)} \to \anNil_{X/ /Y},
        \]
    factors through the full subcategory $\anNil_{X/ /Y}^{\mathrm{cl}} \subseteq \anNil_{X/ /Y}$.
    
    Claim (2) follows immediately from (1) combined with Yoneda Lemma. To prove (3) we shall make use of \cite[Lemma 5.3.1.12]{HTT}. Let
        \[
            F \colon \partial \Delta^n \to \anNil^\cl_{X//Y}.  
        \]
    For each $[m] \in \Delta^{n}$, denote by $S_m \coloneqq F([m]) $ in $\anNil^\cl_{X//Y}$. The pushout
        \[
            S_n \bigsqcup_X S_{n-1},  
        \]
    exists in $\anNil^\cl_{X/}$. We wish to show that $S_n \bigsqcup_X S_{n-1}$ admits a morphism
        \[S_n \bigsqcup_X S_{n-1} \to Y,\]
    compatible with the diagram $F$. In order to prove the latter assertion, we observe that \cref{prop:filtered_colimit_for_nil-embeddings} can filter the diagram $F$ by diagrams $F_i \to F$ such that $X \to F_0$ is formed by square-zero
    extensions and so are each transition morphisms $F_i \to F_{i+1}$. This implies that for every $j \ge 0$, we can find morphisms
        \[
            \rt_{\le j}(S_n) \bigsqcup_{\rt_{\le j}(X)} \rt_{\le j}(S_n-1) \to Y^{\le j}, 
        \]
    which are compatible for varying $j \ge 0$.
    Moreover, the fact that $Y$ satisfies condition (2) in \cref{defin:analytic_formal_moduli_problems_under}
    implies that we can find a well defined morphism
        \[
            S_n \bigsqcup_X S_{n-1} \to Y,  
        \]
    which is compatible with $F$, as desired.
\end{proof}

As a Corollary we deduce the following important result:

\begin{cor}
    Let $X \in \dAnk$, then the \infcat $\anFMP_{X/ }$ is presentable. In particular, the latter admits all small colimits.
\end{cor}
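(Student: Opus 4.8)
The plan is to realize $\anFMP_{X/}$ as an accessible localization of the presheaf $\infty$-category $\PSh(\anNil_{X/}) = \Fun((\anNil_{X/})\op, \cS)$, and then to invoke the standard fact that, for a small set of morphisms $W$ in a presentable $\infty$-category, the full subcategory of $W$-local objects is again presentable, cf. \cite[Proposition 5.5.4.15]{HTT}. The first point to settle is that $\anNil_{X/}$ is essentially small, so that $\PSh(\anNil_{X/})$ is presentable: this follows from the requirement that a nil-isomorphism be almost of finite presentation, which, together with the fact that the reduction $X_\red$ is fixed, bounds the equivalence classes of thickenings $X \to S$ (each Postnikov stage is obtained by a finitely presented square-zero extension, as in \cref{prop:filtered_colimit_for_nil-embeddings}).

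Next I would encode each of the three defining conditions of \cref{defin:analytic_formal_moduli_problems_under} as a locality condition against an explicit small set of maps, using the Yoneda lemma to convert the limits appearing in the definition into mapping spaces. Writing $h_S = \Map_{\anNil_{X/}}(-, S)$ for the representable presheaf, condition (1) is locality with respect to the single map $\emptyset \to h_X$, since $\Map(h_X, F) \simeq F(X)$ while $\Map(\emptyset, F) \simeq *$. Condition (3) is locality with respect to the set of canonical maps $h_T \bigsqcup_{h_S} h_{S'} \to h_{T'}$, one for each pushout square in $\anNil_{X/}$ along a square-zero extension $f \colon S \to S'$; indeed $\Map(h_T \bigsqcup_{h_S} h_{S'}, F) \simeq F(T) \times_{F(S)} F(S')$ whereas $\Map(h_{T'}, F) \simeq F(T')$, so locality is exactly the Segal-type condition (the colimit $h_T \bigsqcup_{h_S} h_{S'}$ is formed in the presheaf category, and the comparison map is nontrivial precisely because Yoneda does not preserve pushouts).

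The delicate point is condition (2), since $F^{\le n}$ is built from $F$ itself through the pushout operation of \cref{const:leq_n_construction}. For fixed $S \in \anNil_{X/}$ set $P_n \coloneqq \rt_{\le n}(S) \bigsqcup_{\rt_{\le n}(X)} X$, which exists and again lies in $\anNil_{X/}$ by \cref{prop:existence_of_pushouts_along_closed_nil-isomorphisms}; by construction $F^{\le n}(\rt_{\le n}(S)) \simeq F(P_n) \simeq \Map(h_{P_n}, F)$. The natural maps $S \to P_n$ (through the truncation $S \to \rt_{\le n}(S)$) assemble into a single map $\colim_{n} h_{P_n} \to h_S$ in $\PSh(\anNil_{X/})$, and the convergence requirement $F(S) \xrightarrow{\ \sim\ } \lim_{n} F^{\le n}(\rt_{\le n}(S))$ is precisely locality of $F$ against this map. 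Ranging over a set of representatives for $S \in \anNil_{X/}$ yields a small set of maps.

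Taking $W$ to be the union of these three small sets, one has $\anFMP_{X/} = \PSh(\anNil_{X/})$-objects that are $W$-local, an accessible (reflective) localization of the presentable $\infty$-category $\PSh(\anNil_{X/})$; hence $\anFMP_{X/}$ is presentable by \cite[Proposition 5.5.4.15]{HTT}, and in particular cocomplete, which is the assertion. I expect the main obstacle to be the bookkeeping in condition (2): checking that $F^{\le n}(\rt_{\le n}(S))$ is genuinely corepresented by $h_{P_n}$ uniformly in $n$ and that the maps $S \to P_n$ are compatible along the truncation tower, so that the one presheaf map $\colim_n h_{P_n} \to h_S$ faithfully records convergence. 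The essential smallness of $\anNil_{X/}$ underlies the whole argument and should be dispatched first.
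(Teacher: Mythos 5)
Your proof is correct, but it takes a genuinely different route from the paper's. The paper first identifies $\anFMP_{X/}$ with $\Ind(\anNil^\cl_{X/})$: representables are compact in $\anFMP_{X/}$, so the induced functor $\Ind(\anNil^\cl_{X/}) \to \anFMP_{X/}$ is fully faithful, and essential surjectivity is exactly \cref{prop:analytic_FMP_under_X_are_ind_inf_schemes}; presentability then follows from essential smallness of $\anNil^\cl_{X/}$ together with the fact that this \infcat admits finite colimits (via \cref{prop:existence_of_pushouts_along_closed_nil-isomorphisms}), so that filtered plus finite colimits give cocompleteness. You instead exhibit $\anFMP_{X/}$ as the $W$-local objects in $\PSh(\anNil_{X/})$ for a small set $W$, in the style of Lurie's treatment of formal moduli problems, and the one genuinely non-routine step — encoding the convergence condition (2) as locality against $\colim_n h_{P_n} \to h_S$ with $P_n = \rt_{\le n}(S)\sqcup_{\rt_{\le n}(X)} X$ — is handled correctly and matches \cref{const:leq_n_construction} exactly (the pushouts $P_n$ exist by \cref{prop:existence_of_pushouts_along_closed_nil-isomorphisms}, since $\rt_{\le n}(X)\to X$ is a nil-embedding and $\rt_{\le n}(X)\to\rt_{\le n}(S)$ is finite; for condition (3) you should only range over squares whose pushout exists in $\anNil_{X/}$, which is how the definition is phrased). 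As for what each approach buys: yours is self-contained relative to \cref{defin:analytic_formal_moduli_problems_under} — it needs neither the cofinality result \cref{prop:analytic_FMP_under_X_are_ind_inf_schemes} nor the compactness of representables, and it additionally produces $\anFMP_{X/}$ as a reflective, accessible localization of presheaves; the paper's argument, by contrast, yields the strictly stronger structural statement that $\anFMP_{X/}$ is compactly generated with the nil-embeddings as generators, and this explicit ind-object description is what the rest of the paper leans on (e.g.\ in \cref{const:anFMP_as_ind_inf_schemes} and the descent arguments). Both routes share the same unproved foundation — essential smallness of $\anNil_{X/}$ (respectively $\anNil^\cl_{X/}$), which the paper asserts and you at least sketch via finite presentation and Postnikov towers — so you are not worse off there.
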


\begin{proof} Consider the fully faithful functor
        \[
            \anNil_{X/ }^\cl \hookrightarrow \anNil_{X/ } \to \anFMP_{X/ }.
        \]
    It follows from the definitions that the \infcat $\anFMP_{X/ }$ admits filtered colimits. In particular, we obtain
    a well defined functor
        \[
            F \colon \Ind(\anNil_{X/ } ^\cl) \to \anFMP_{X/ },
        \]
    which is further fully faithful, since every the image of every $(X \to S) \in \anNil_{X/ }$ is compact in $\anFMP_{X/ }$.
    It follows from \cref{prop:analytic_FMP_under_X_are_ind_inf_schemes} that $F$ itself is essentially surjective. Since the
    \infcat $\anNil_{X/ }^\cl$ is essentially small we are reduced to show that $\anFMP_{X/ }$ admits all small colimits. 
    We already know that $\anFMP_{X/ }$ admits filtered colimits. We shall prove that $\anFMP_{X/ }$ admits finite colimits as well.
    As a consequence of \cref{prop:existence_of_pushouts_along_closed_nil-isomorphisms}, we deduce that
    the \infcat $\anNil_{X/ }^\cl$ admits all finite colimits. It is now clear that $\anFMP_{X/ }$ admits finite colimits as well,
    and the proof is concluded.
\end{proof}

\begin{defin}
    Let $Y \in \anFMP_{X/}$ denote an analytic formal moduli problem under $X$. The \emph{relative pro-analytic cotangent complex of $Y$ under $X$} is defined as the pro-object
        \begin{align*}
            \bbL\an_{X/Y} &  \coloneqq \{ \bbL\an_{X/Z} \}_{Z \in \anNil^\cl_{X//Y}} \\
                          &  \in \pro(\Coh^+(X)),
        \end{align*}
    where, for each $Z \in \anNil^\cl_{X//Y}$
        \[\bbL\an_{X/Z} \in \Coh^+(X),\]
    denotes the usual relative analytic cotangent complex associated to
    the structural morphism $X \to Z$ in $\anNil^\cl_{X//Y}$.
\end{defin}

\begin{rem}
    Let $Y \in \anFMP_{X/}$. For a general $Z \in \dAnk$, there exists a natural morphism
        \[
            \bbL\an_X \to \bbL\an_{X/Z} ,  
        \]
    in the \infcat $\Coh^+(X)$. Passing to the limit over $Z \in \anNil^\cl_{X//Z}$, we obtain a natural map
        \[
            \bbL\an_X \to \bbL\an_{X/Y},  
        \]
    in $\pro(\Coh^+(X))$, as well.
\end{rem}

The following result justifies our choice of terminology for the object $\bbL\an_{X/Y} \in \pro(\Coh^+(X))$:

\begin{lem} \label{lem:pro_cot_complex_classifies_nil_extensions_for_analytic_moduli_problems}
    Let $Y \in \anFMP_{X/}$. Let $X \hookrightarrow S$ be a square-zero extension associated to an analytic derivation
        \[
            d \colon \bbL\an_S \to \cF [1] ,  
        \]
    where $\cF \in \Coh^+(X)^{\ge 0}$. Then there exists a natural morphism
        \[
            \Map_{\anFMP_{X/}}(S, Y) \to \Map_{\pro(\Coh^+(X))}(\bbL\an_{X/Y}, \cF) \times_{\Map_{\Coh^+(X)}(\bbL\an, \cF)} \{ d \}
        \]
    which is furthermore an equivalence in the \infcat $\cS$.
\end{lem}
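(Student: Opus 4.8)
The plan is to reduce the assertion to the objectwise universal property of the relative analytic cotangent complex, by way of the colimit presentation of $Y$ from \cref{prop:analytic_FMP_under_X_are_ind_inf_schemes}, and then to commute a filtered colimit past a finite limit. Unwinding the definitions, Yoneda's lemma in the presheaf \infcat $\Fun((\anNil_{X/})\op, \cS)$ identifies $\Map_{\anFMP_{X/}}(S, Y) \simeq Y(S)$, where $S$ is regarded through the restricted Yoneda embedding. By \cref{prop:analytic_FMP_under_X_are_ind_inf_schemes}(2) the canonical map $\colim_{Z \in \anNil^\cl_{X//Y}} Z \to Y$ is an equivalence of presheaves, and by part (3) the indexing \infcat $\anNil^\cl_{X//Y}$ is filtered. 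Since colimits of presheaves are computed objectwise, evaluating at $S$ gives a natural equivalence
\[
    \Map_{\anFMP_{X/}}(S, Y) \simeq \colim_{Z \in \anNil^\cl_{X//Y}} \Map_{\anNil_{X/}}(S, Z),
\]
which is a filtered colimit.

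The main step is the objectwise deformation theory. For a fixed $Z \in \anNil^\cl_{X//Y}$ with structure map $u_Z \colon X \to Z$, a morphism $S \to Z$ in $\anNil_{X/}$ is precisely an extension of $u_Z$ along the square-zero extension $X \hookrightarrow S$. Invoking the universal property of the relative analytic cotangent complex (\cite{Porta_Yu_Representability}) together with the fiber sequence $u_Z^* \bbL\an_Z \to \bbL\an_X \to \bbL\an_{X/Z}$, the space of such extensions is naturally equivalent to
\[
    \Map_{\Coh^+(X)}(\bbL\an_{X/Z}, \cF) \times_{\Map_{\Coh^+(X)}(\bbL\an_X, \cF)} \{ d \},
\]
the fiber over $\{d\}$ of the restriction map induced by the natural morphism $\bbL\an_X \to \bbL\an_{X/Z}$. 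I expect this to be the crux of the proof, since it is the only place where genuine deformation theory enters; the delicate point is the naturality of this identification in $Z$, so that the transition maps $\bbL\an_{X/Z'} \to \bbL\an_{X/Z}$ (for $Z \to Z'$ in $\anNil^\cl_{X//Y}$) are compatible with the fixed derivation $d$ and assemble into the pro-structure defining $\bbL\an_{X/Y}$.

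Finally I would pass to the colimit. The base $\Map_{\Coh^+(X)}(\bbL\an_X, \cF)$ and the point $\{d\}$ are constant in $Z$, so, as filtered colimits commute with finite limits in $\cS$ (\cite{HTT}), the filtered colimit of the above pullbacks is the pullback of the colimit:
\[
    \Big( \colim_{Z \in \anNil^\cl_{X//Y}} \Map_{\Coh^+(X)}(\bbL\an_{X/Z}, \cF) \Big) \times_{\Map_{\Coh^+(X)}(\bbL\an_X, \cF)} \{ d \}.
\]
By the definition of mapping spaces out of a pro-object into a constant object, and since $\bbL\an_{X/Y} = \{ \bbL\an_{X/Z} \}_{Z \in \anNil^\cl_{X//Y}}$ is indexed by a filtered \infcat, the inner filtered colimit is identified with $\Map_{\pro(\Coh^+(X))}(\bbL\an_{X/Y}, \cF)$. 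Chaining the natural equivalences of the three paragraphs both produces the asserted natural morphism and shows it is an equivalence, completing the argument.
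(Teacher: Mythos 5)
Your proposal is correct and takes essentially the same route as the paper's own proof: both identify $\Map_{\anFMP_{X/}}(S, Y)$ with the filtered colimit $\colim_{Z \in \anNil^\cl_{X//Y}} \Map_{\anNil_{X/}}(S, Z)$ via \cref{prop:analytic_FMP_under_X_are_ind_inf_schemes} and Yoneda, apply the objectwise universal property of the relative analytic cotangent complex from \cite[\S 5.4]{Porta_Yu_Representability}, and then pass to the colimit using the formula for mapping spaces out of pro-objects. Your explicit remarks on commuting the filtered colimit past the finite limit (with constant base and point) and on naturality in $Z$ are exactly the details the paper leaves implicit.
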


\begin{proof}
    Thanks to \cref{prop:analytic_FMP_under_X_are_ind_inf_schemes} combined with the Yoneda Lemma
    we can identify the space of liftings of the map $X \to Y$ along $X \to S$ with the mapping space
        \[
            \Map_{\anFMP_{X/}}(S, Y) \simeq \colim_{Z \in \anNil_{X//Y}} \Map_{\anNil_{X/}} (S, Z).  
        \]
    Fix $Z \in \anNil_{X//Y}^\cl$. Then we have a natural identification of mapping spaces
        \begin{align} \label{eq:identification_of_universal_property_of_rel_an_cotagent_complex}
            \Map_{\anNil_{X/}} (S, Z) & \simeq \Map_{(\dAnk)_{X/}} (S, Z) \\
                                      & \simeq \Map_{\Coh^+(X)}(\bbL\an_{X/Z}, \cF) \times_{\Map_{\Coh^+(X)}(\bbL\an_X, \cF)} \{ d \},
        \end{align}
    see \cite[\S 5.4]{Porta_Yu_Representability} for a justification of the latter assertion.
    Passing to the colimit over $Z \in \anNil_{X//Y}^\cl$, we conclude thanks to the formula for mapping spaces in pro-\infcats that we have
    a natural equivalence
        \[
            \Map_{\anFMP_{X/}}(S, Y) \simeq \Map_{\pro(\Coh^+(X))}(\bbL\an_{X/Y}, \cF) \times_{\Map_{\Coh^+(X)}(\bbL\an, \cF)} \{ d \},
        \]
    as desired.
\end{proof}

\begin{construction} \label{rem:morphisms_of_AnFMP_induce_transition_morphisms_on_relative_analytic_cot_complexes}
    Let $f \colon Y \to Z$ denote a morphism in $\anFMP_{X/}$. Then, for every $S \in \anNil_{X/ /Y}^\cl$, the induced morphism
        \[
            S \to Z,   
        \]
    in $\anFMP_{X/}$ factors necessarily through some $S' \in \anNil^\cl_{X/ /Z}$. For this reason, we obtain a natural morphism
        \[
            \bbL\an_{X/S'} \to \bbL\an_{X/S},  
        \]
    in the \infcat $\Coh^+(X)$. Passing to the limit over $S \in \anNil^\cl_{X//Y}$ we obtain a canonically defined morphism
        \[
            \theta (f) \colon \bbL\an_{X/Z} \to \bbL\an_{X/Y},  
        \]
    in $\pro(\Coh^+(X))$. Moreover, this association is functorial and thus we obtain a well defined functor
        \[
            \bbL\an_{X/ \bullet} \colon \anFMP_{X/ } \to \pro(\Coh^+(X)),  
        \]
    given by the formula
        \[
            (X \to Y) \in \anFMP_{X/ } \mapsto \bbL\an_{X/ Y} \in \pro(\Coh^+(X)).  
        \]
\end{construction}

\begin{prop} \label{prop:conservativity_of_relative_an_cot_complex}
    Let $X \in \dAnk$ be a derived $k$-analytic space. Then the functor
    \[
        \bbL\an_{X/ \bullet} \colon \anFMP_{X/} \to \pro(\Coh^+(X)),
    \]
    obtained via \cref{rem:morphisms_of_AnFMP_induce_transition_morphisms_on_relative_analytic_cot_complexes,rem:morphisms_of_AnFMP_induce_transition_morphisms_on_relative_analytic_cot_complexes},
    is conservative.
\end{prop}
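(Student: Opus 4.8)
The plan is to show that a morphism $f \colon Y \to W$ in $\anFMP_{X/}$ whose image $\theta(f) \colon \bbL\an_{X/W} \to \bbL\an_{X/Y}$ in $\pro(\Coh^+(X))$ is an equivalence must itself be an equivalence. Since $\anFMP_{X/}$ is a full subcategory of $\Fun((\anNil_{X/})\op, \cS)$, it suffices to prove that $Y(S) \to W(S)$ is an equivalence in $\cS$ for every $S \in \anNil_{X/}$. By \cref{prop:analytic_FMP_under_X_are_ind_inf_schemes} (1) the inclusion $\anNil^\cl_{X//Y} \hookrightarrow \anNil_{X//Y}$ is cofinal, so I may restrict to nil-embeddings $S \in \anNil^\cl_{X/}$; and because $Y$ and $W$ are convergent (condition (2) of \cref{defin:analytic_formal_moduli_problems_under}), \cref{const:leq_n_construction} reduces the claim to the truncations $\rt_{\le n}(S)$, whose associated $\le n$-problems $Y^{\le n}, W^{\le n}$ under $\rt_{\le n}(X)$ satisfy the same hypothesis. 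By \cref{prop:filtered_colimit_for_nil-embeddings} each truncated nil-embedding is reached from $\rt_{\le n}(X)$ by a \emph{finite} tower $\rt_{\le n}(X) = S_0 \hookrightarrow S_1 \hookrightarrow \dots \hookrightarrow S_m = \rt_{\le n}(S)$ in which every arrow carries the structure of a square-zero extension. The statement is thus reduced to an induction on the length $m$.

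For the base case $S_0$ one has $Y(S_0) \simeq * \simeq W(S_0)$ by condition (1) of \cref{defin:analytic_formal_moduli_problems_under}. The heart of the argument is the inductive step: assuming $f_{S_j} \colon Y(S_j) \to W(S_j)$ is an equivalence, I must prove the same for $S_{j+1}$. Write $g_j \colon S_j \to S_{j+1}$ for the square-zero extension, classified by a derivation $d_j \colon \bbL\an_{S_j} \to \cF_j[1]$ with $\cF_j \in \Coh^+(S_j)^{\ge 0}$. Using the filtered-colimit presentation $Y \simeq \colim_{Z \in \anNil^\cl_{X//Y}} Z$ of \cref{prop:analytic_FMP_under_X_are_ind_inf_schemes}, the Yoneda lemma, and the universal property of the relative analytic cotangent complex for each morphism $S_j \to Z$ (exactly as in the proof of \cref{lem:pro_cot_complex_classifies_nil_extensions_for_analytic_moduli_problems}), I identify the fibre of $Y(S_{j+1}) \to Y(S_j)$ over a point $\phi \colon S_j \to Y$ with
\[
\Map_{\pro(\Coh^+(S_j))}(\bbL\an_{S_j/Y}, \cF_j) \times_{\Map_{\Coh^+(S_j)}(\bbL\an_{S_j}, \cF_j)} \{ d_j \},
\]
where $\bbL\an_{S_j/Y} \coloneqq \{\bbL\an_{S_j/Z}\}_{Z \in \anNil^\cl_{S_j//Y}}$ is the relative pro-cotangent complex of $Y$ regarded, via $\phi$, as a formal moduli problem under $S_j$. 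Here the filteredness from \cref{prop:analytic_FMP_under_X_are_ind_inf_schemes} (3) guarantees that the relevant fibres commute with the colimit, and the same identification holds for $W$, the two being intertwined by the transition maps of \cref{rem:morphisms_of_AnFMP_induce_transition_morphisms_on_relative_analytic_cot_complexes}.

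It remains to compare $\bbL\an_{S_j/Y}$ and $\bbL\an_{S_j/W}$. Writing $\pi \colon X \to S_j$ for the underlying nil-isomorphism, the transitivity fibre sequences $\pi^*\bbL\an_{S_j/Z} \to \bbL\an_{X/Z} \to \bbL\an_{X/S_j}$ assemble, in the pro-limit, into a fibre sequence $\pi^*\bbL\an_{S_j/Y} \to \bbL\an_{X/Y} \to \bbL\an_{X/S_j}$ in $\pro(\Coh^+(X))$, and likewise for $W$. The third term is the honest cotangent complex of the fixed space $S_j$, common to both; since $\theta(f)$ identifies $\bbL\an_{X/W}$ with $\bbL\an_{X/Y}$, I conclude that $\pi^*\bbL\an_{S_j/W} \to \pi^*\bbL\an_{S_j/Y}$ is an equivalence. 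The pullback $\pi^*$ is conservative on $\Coh^+$ — the ideal of the closed nil-immersion $\pi$ is nilpotent, so a coherent complex killed by $\pi^*$ vanishes by a Nakayama-type argument on its lowest nonvanishing homotopy sheaf — and hence conservative on the pro-categories, so $\bbL\an_{S_j/W} \to \bbL\an_{S_j/Y}$ is an equivalence. Feeding this back into the fibre identifications, the fibres of $Y(S_{j+1}) \to Y(S_j)$ and $W(S_{j+1}) \to W(S_j)$ agree, and combined with the inductive hypothesis $Y(S_j) \simeq W(S_j)$ this forces $Y(S_{j+1}) \simeq W(S_{j+1})$, completing the induction.

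I expect the main obstacle to be the inductive step, and within it the honest verification that the fibre of $Y(S_{j+1}) \to Y(S_j)$ is computed by the relative pro-cotangent complex $\bbL\an_{S_j/Y}$: this requires treating $Y$ as a formal moduli problem under the \emph{moving} base $S_j$, checking that the conditions of \cref{defin:analytic_formal_moduli_problems_under} are inherited under this base change, and commuting the pertinent fibres past the filtered colimit presenting $Y$. The subsequent descent of the equivalence from $X$ down to $S_j$ along $\pi^*$ is comparatively soft, resting only on the conservativity of pullback along a nilpotent closed immersion together with the existence of the right adjoint $\pi_*$ furnished by \cref{lem:f^*_admits_a_right_adjoint_whenever_f_is_nil-iso} and the standard transitivity sequences for the analytic cotangent complex.
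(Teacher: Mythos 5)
Your proposal follows essentially the same route as the paper's proof: reduce via the ind-presentation of \cref{prop:analytic_FMP_under_X_are_ind_inf_schemes} and the square-zero tower of \cref{prop:filtered_colimit_for_nil-embeddings} to extension problems along square-zero morphisms, then conclude from \cref{lem:pro_cot_complex_classifies_nil_extensions_for_analytic_moduli_problems} together with the hypothesis on $\theta(f)$; you in fact spell out the moving-base induction that the paper compresses into the single sentence ``we can reduce ourselves to the case where $X \to S$ has the structure of a square-zero extension.'' However, one step of your reduction is internally inconsistent as written. After invoking convergence, your tower $\rt_{\le n}(X) = S_0 \hookrightarrow \dots \hookrightarrow S_m = \rt_{\le n}(S)$ lives under $\rt_{\le n}(X)$, not under $X$, so the morphism $\pi \colon X \to S_j$ on which the whole inductive step rests does not exist, and the hypothesis --- an equivalence of pro-objects $\bbL\an_{X/W} \to \bbL\an_{X/Y}$ over $X$ --- does not directly bear on the truncated problems $Y^{\le n}$, $W^{\le n}$. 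The repair is the device of \cref{const:leq_n_construction}: replace $S_j$ by $\tS_j \coloneqq S_j \sqcup_{\rt_{\le n}(X)} X$, which exists and is again assembled from square-zero extensions by \cref{prop:existence_of_pushouts_along_closed_nil-isomorphisms} and \cref{lem:pushouts_of_square_zero_extensions_have_the_structure_of_a_square_zero_extension}, and which satisfies $Y^{\le n}(S_j) \simeq Y(\tS_j)$ by construction; running your induction on the tower $X = \tS_0 \hookrightarrow \dots \hookrightarrow \tS_m$ in $\anNil^\cl_{X/}$, the map $X \to \tS_j$ exists and your transitivity fibre sequences make sense.

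The second gap is the inference ``$\pi^*$ is conservative on $\Coh^+$ \dots and hence conservative on the pro-categories,'' which is invalid for a general conservative functor: a morphism of pro-objects is an equivalence iff its cofibre is pro-zero, and a pro-object can be pro-zero (every index admits a refinement with null transition map) without any single term vanishing, so your levelwise Nakayama argument says nothing at the pro-level. The conclusion is nevertheless true here, but for a stronger reason that must be made explicit: the ideal $\cJ$ of the nil-embedding $\pi$ is nilpotent, say $\cJ^N = 0$, so using the right adjoint $\pi_*$ furnished by \cref{lem:f^*_admits_a_right_adjoint_whenever_f_is_nil-iso} and a finite d\'evissage along the $\cJ$-adic filtration (the transition maps of a pro-object with pro-zero pullback become null after at most $N$ compositions), one checks directly that such a pro-object in $\Coh^+(\tS_j)$ is itself pro-zero. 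A related point you elide is the cofinality of $\anNil^\cl_{\tS_j//Y} \to \anNil^\cl_{X//Y}$ underlying your pro-level transitivity sequence $\pi^*\bbL\an_{\tS_j/Y} \to \bbL\an_{X/Y} \to \bbL\an_{X/\tS_j}$; this again follows from the pushout formalism of \cref{prop:existence_of_pushouts_along_closed_nil-isomorphisms} (push a nil-embedding $X \to Z$ out along the finite morphism $X \to \tS_j$, and map the result to $Y$ using condition (3) of \cref{defin:analytic_formal_moduli_problems_under} and convergence), but it is not automatic and should be recorded. With these two repairs, your argument is correct and amounts to a fully detailed version of the paper's proof.
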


\begin{proof} Let $f \colon Y \to Z $ be a morphism in $\anFMP_{X/}$.
    Thanks to \cref{prop:analytic_FMP_under_X_are_ind_inf_schemes} we are reduced to show that given any
        \[
            S \in \anNil_{X//Z}^\cl,  
        \]
    the structural morphism $g_S \colon X \to S$ admits a unique extension $S \to Y$ which factors the structural morphism $X \to Z$. Thanks to
    \cref{prop:filtered_colimit_for_nil-embeddings} we can reduce ourselves to the case where $X \to S$ has the structure of
    a square-zero extension. In this case, the result follows from \cref{lem:pro_cot_complex_classifies_nil_extensions_for_analytic_moduli_problems}
    combined with our hypothesis.
\end{proof}

Our goal now is to give an alternative description of analytic formal moduli problems under $X \in \dAnk$, in terms of derived $k$-analytic stacks: 

\begin{construction} \label{const:anFMP_as_ind_inf_schemes} Consider the \infcat of derived $k$-analytic stacks, $\dAnSt_k$.
    We have a natural functor
        \[
            h \colon \anNil_{X/} \to \dAnk \hookrightarrow \dAnSt_k.
        \]
    Therefore, given any derived $k$-analytic stack $Y$ equipped with a morphism $X \to Y$, one can consider its restriction to the \infcat
    $\anNil_{X/}$:
        \[
            Y \circ h \colon \anNil_{X/} \op \to \cS.      
        \]
    We have thus a natural restriction functor
        \[
            h_* \colon \dAnSt_k \to \Fun(\anNil_{X/} \op, \cS).  
        \]
On the other hand, \cref{prop:analytic_FMP_under_X_are_ind_inf_schemes} allows us to define a natural functor
    \[
        F \colon \anNil_{X/} \op \to \dAnSt_k
    \]
via the formula
    \[
        (X \to Y) \in \anFMP_{X/ } \mapsto \colim_{S \in \anNil_{X/ /Y}^\cl} S,  
    \]
the colimit being computed in the \infcat $\dAnSt_k$. The latter agrees with the left Kan extension of the functor 
    \[
        h \colon \anNil_{X/ } \to \dAnSt_k,  
    \]
along the natural inclusion functor $\anNil_{X/ } \hookrightarrow \anFMP_{X/ }$.
In particular, any analytic formal moduli under $X$ when regarded as a derived $k$-analytic stack can be realized
as an \emph{ind}-\emph{inf}-object, i.e. it can be written as a filtered colimit of nil-embeddings $X \to Z$.
We refer the reader to \cite[\S1]{Gaitsgory_Study_II} for a precise meaning
of the latter notion in the algebraic setting.
\end{construction}

\begin{defin}
    Let $f \colon X \to Y$ be a morphism in the \infcat $\dAnSt_k$. We shall say that $f$ has a \emph{deformation theory} if it satisfies the following conditions:
    \begin{enumerate}
        \item Both $X$ and $Y$ is \emph{nilcomplete}, c.f. \cite[Definition 7.4]{Porta_Yu_Representability};
        \item $Y$ is \emph{infinitesimally cartesian} if it satisfies \cite[Definition 7.3]{Porta_Yu_Representability};
        \item The morphism $f \colon X \to Y$ admits a relative \emph{analytic pro-cotangent complex}, i.e., if it satisfies \cite[Definition 7.6]{Porta_Yu_Representability} under the weaker assumption
        that the corresponding derivation functor is pro-corepresentable.
    \end{enumerate}
\end{defin}

\begin{prop} \label{prop:sufficient_conditions_for_a_prestack_to_be_equiv_to_an_analytic_FMP}
    Let $Y \in (\dAnSt_k)_{X/}$. Assume further that $Y$ admits a deformation theory.
    Then $Y$ is equivalent to an analytic formal moduli problem under $X$.
\end{prop}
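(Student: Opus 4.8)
The plan is to verify that the restriction $F \coloneqq h_*(Y)$, given on objects by $F(S) = \Map_{(\dAnSt_k)_{X/}}(S, Y)$ for $(X \to S) \in \anNil_{X/}$, satisfies the three conditions of \cref{defin:analytic_formal_moduli_problems_under}. Two of these are the geometric reflection of the clauses defining a deformation theory, and the work consists in translating them through the reindexing of \cref{const:leq_n_construction} and through the fact that the Yoneda embedding $\dAnk \hookrightarrow \dAnSt_k$ does \emph{not} preserve pushouts. Condition (1) is immediate, since $(X \xrightarrow{\id} X)$ is initial in $(\dAnSt_k)_{X/}$ and hence $F(X) = \Map_{(\dAnSt_k)_{X/}}(X, Y) \simeq *$.

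For the convergence condition (2), I would first unwind $F^{\le n}$. For $(X \to S) \in \anNil_{X/}$ set $W_n \coloneqq \rt_{\le n}(S) \sqcup_{\rt_{\le n}(X)} X$, so that $F^{\le n}(\rt_{\le n}(S)) = \Map_{(\dAnSt_k)_{X/}}(W_n, Y)$; this pushout exists already in $\dAnk$ by \cref{prop:existence_of_pushouts_along_closed_nil-isomorphisms}, where it is computed by the corresponding pullback of structure sheaves. Since $\rt_{\le n}(X) \to X$ factors as a finite tower of square-zero extensions (\cref{prop:filtered_colimit_for_nil-embeddings}), the infinitesimally cartesian property of $Y$ (\cite[Definition 7.3]{Porta_Yu_Representability}) lets me rewrite $\Map(W_n, Y)$ as the fibre product $\Map(\rt_{\le n}(S), Y) \times_{\Map(\rt_{\le n}(X), Y)} \Map(X, Y)$, and after passing to the under-$X$ mapping spaces this identifies $F^{\le n}(\rt_{\le n}(S))$ with $\Map_{(\dAnSt_k)_{\rt_{\le n}(X)/}}(\rt_{\le n}(S), Y)$. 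Writing each of these as a fibre and commuting the limit over $n$ with fibres, the target $\lim_n F^{\le n}(\rt_{\le n}(S))$ becomes the fibre of $\lim_n \Map(\rt_{\le n}(S), Y) \to \lim_n \Map(\rt_{\le n}(X), Y)$; the nilcompleteness of $Y$ (\cite[Definition 7.4]{Porta_Yu_Representability}) collapses these two limits to $\Map(S, Y)$ and $\Map(X, Y)$, yielding $F(S)$ and finishing (2).

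Condition (3) is the conceptual heart. Given a pushout square in $\anNil_{X/}$ whose leg $f \colon S \to S'$ is a square-zero extension, \cref{prop:existence_of_pushouts_along_closed_nil-isomorphisms} computes the pushout $T'$ in $\dAnk$ and \cref{lem:pushouts_of_square_zero_extensions_have_the_structure_of_a_square_zero_extension} shows that $T \to T'$ is again a square-zero extension. The value $F(T') = \Map_{(\dAnSt_k)_{X/}}(T', Y)$ is that of $Y$ on the \emph{geometric} pushout $T'$, and since Yoneda does not preserve this pushout it is not formally the pullback $F(T) \times_{F(S)} F(S')$; the assertion that it is so is exactly the requirement that $Y$ send a square-zero pushout to a pullback, i.e. again the infinitesimally cartesian property.

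I expect the main obstacle to be applying the infinitesimally cartesian property in the generality needed: the objects $S, S', T, T', W_n$ are in general not $k$-affinoid, whereas that property is phrased on $\dAfd_k$. I would dispatch this by reduction to the affinoid case, choosing an admissible affinoid cover of $X$ and pulling the entire square-zero situation back along it---legitimate because every morphism in $\anNil_{X/}$ is affine (\cref{lem:nil-isos_are_affine_morphisms}) and affine morphisms together with square-zero extensions are stable under Zariski localization on the base (\cref{lem:affine_morphisms_are_compatible_with_Zariski_localization_on_the_base})---and then descending the affinoid-level equivalences using that $Y$ is a sheaf. I note that clause (3) of the deformation theory (the pro-cotangent complex) is not needed for these verifications; it merely records the extra structure that the resulting formal moduli problem carries.
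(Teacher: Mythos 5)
Your handling of conditions (1) and (2) of \cref{defin:analytic_formal_moduli_problems_under} is in the right spirit, but your treatment of condition (3) --- which is the actual content of the proposition, and the only condition the paper's proof addresses in detail --- contains a genuine gap, and that gap propagates backwards into your argument for (2). You assert that the equivalence $F(T') \to F(T) \times_{F(S)} F(S')$ for a pushout $T' = T \sqcup_S S'$ along a square-zero extension $f \colon S \to S'$ is ``exactly the infinitesimally cartesian property,'' and you conclude that clause (3) of the deformation theory (the relative pro-cotangent complex) is not needed. This conflates two different conditions. Infinitesimal cartesianness in the sense of \cite[Definition 7.3]{Porta_Yu_Representability} governs only the canonical cospan $Z \leftarrow Z[\cF] \rightarrow Z$ presenting a square-zero extension of a single base $Z$: it identifies $Y(Z_d)$ with $Y(Z) \times_{Y(Z[\cF])} Y(Z)$. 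Applied to the present situation --- where $T \to T'$ is a square-zero extension by \cref{lem:pushouts_of_square_zero_extensions_have_the_structure_of_a_square_zero_extension}, classified by the derivation $\bbL\an_T \to g_*(\cF)[1]$ --- it yields $Y(T') \simeq Y(T) \times_{Y(T[g_*\cF])} Y(T)$, a fiber product of data living entirely over $T$, not over $Y(S)$. To pass from this to $Y(T) \times_{Y(S)} Y(S')$ one must identify, for each point $T \to Y$, the fiber of $Y(T') \to Y(T)$ with the fiber of $Y(S') \to Y(S)$ over the restricted point; this transport of the deformation problem from $T$ to $S$ is precisely where the pro-cotangent complex is indispensable. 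The paper's proof computes both fibers as spaces of factorizations of the respective derivations through $\bbL\an_{T/Y}$ and $\bbL\an_{S/Y}$, and compares them using the base-change diagram of relative cotangent complexes along $g$ together with the adjunction $g^* \dashv g_*$ supplied by \cref{lem:f^*_admits_a_right_adjoint_whenever_f_is_nil-iso}; this is also the shape of the algebraic argument in \cite{Gaitsgory_Study_II}. Without clause (3) of the deformation theory there is no mechanism connecting data over $T$ to data over $S$, and the statement is not a formal consequence of infinitesimal cartesianness.

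The same unproved pushout-to-pullback principle is silently invoked in your verification of convergence: the identification $\Map(W_n, Y) \simeq \Map(\rt_{\le n}(S), Y) \times_{\Map(\rt_{\le n}(X), Y)} \Map(X, Y)$ for $W_n = \rt_{\le n}(S) \sqcup_{\rt_{\le n}(X)} X$ is an instance of condition (3), iterated along the tower of square-zero extensions of \cref{prop:filtered_colimit_for_nil-embeddings} and then passed to the limit via nilcompleteness; it is not an instance of the infinitesimally cartesian property per se, so as written this step is circular. Once condition (3) is established by the cotangent-complex argument, your reduction of (2) to nilcompleteness does go through, and your closing remark about reducing from general nil-isomorphic bases to the affinoid case by admissible covers (using \cref{lem:nil-isos_are_affine_morphisms} and \cref{lem:affine_morphisms_are_compatible_with_Zariski_localization_on_the_base}) addresses a legitimate but secondary point.
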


\begin{proof}
    We must prove that given a pushout diagram
        \[
        \begin{tikzcd}
            S \ar{r}{f} \ar{d}{g} & S' \ar{d} \\
            T \ar{r} & T'  
        \end{tikzcd}
        \]
    in the \infcat $\anNil_{X/}$, where $f$ has the structure of a square-zero extension, then the natural morphism
        \[
            Y(T') \to Y(T) \times_{Y(S)} Y(S'),  
        \]
    is an equivalence in the \infcat $\cS$. Suppose further that $S \hookrightarrow S'$ is associated to some analytic derivation
        \[
            d \colon \bbL\an_S \to \cF[1],
        \]
    for some $\cF \in \Coh^+(S)^{\ge 0}$.
    Thanks to \cref{lem:pushouts_of_square_zero_extensions_have_the_structure_of_a_square_zero_extension} we deduce
    that the induced morphism $T \to T'$ admits a structure of a square-zero extension, as well.
    Then, by our assumptions that $Y$ is infinitesimally cartesian and it admits a relative pro-cotangent complex, we obtain a chain of natural equivalences of the form
        \begin{align*}
            Y(T') & \simeq \bigsqcup_{f \colon T \to Y} \Map_{T/ }(T', Y) \\
                  & \simeq \bigsqcup_{f \colon T \to Y} \Map_{\pro(\Coh^+(T))_{\bbL\an_T/ }}(\bbL\an_{T/ Y}, g_*(\cF)[1]) \\
                  & \simeq \bigsqcup_{f \colon T \to Y} \Map_{\pro(\Coh^+(S))_{g^* \bbL\an_T/ }}(g^* \bbL\an_{T/ Y}, \cF[1]) \\
                  & \simeq \bigsqcup_{f \colon T \to Y} \Map_{\pro(\Coh^+(S))_{\bbL\an_S/ }}(\bbL\an_{S/ Y}, \cF[1]) \\
                  & \simeq \bigsqcup_{f \colon T \to Y} \Map_{S/ }(S', Y) \\
                  & \simeq Y(T) \times_{Y(S)} Y(S'),
        \end{align*}
    where only the fourth equivalence requires an additional justification, namely: it follows from the existence of a commutative diagram between fiber sequences
        \[
        \begin{tikzcd}    
            g^* f^*\bbL\an_Y \ar{r} \ar{d}{=} & g^* \bbL\an_{T} \ar{r} \ar{d} &  g^* \bbL\an_{T/Y} \ar{d} \\
            (f\circ g)^*\bbL\an_Y \ar{r} & \bbL\an_{S} \ar{r} & \bbL\an_{S/Y},
        \end{tikzcd}
        \]
    in the \infcat $\pro(\Coh^+(S))$ combined with the fact that the derivation $d_T \colon \bbL\an_T \to g_*(\cF)[1]$ is induced from
        \[
            d \colon \bbL\an_S \to \cF[1],
        \]
    as in the proof of \cref{lem:pushouts_of_square_zero_extensions_have_the_structure_of_a_square_zero_extension}. The result now follows.
\end{proof}

\begin{prop} \label{prop:stacks_under_X_with_deformation_theory_are_analytic_FMP}
    Let $Z \in (\dAnSt_k)_{X/ }$ such that the structural morphism $X \to Z$ is a nil-isomorphism and assume that
    $Z$ admits a deformation theory. Then the natural morphism
        \[
            \colim_{S \in \anNil_{X/ /Z}^\cl} S \to Z,  
        \]
    in the \infcat $(\dAnSt_k)_{/X}$, is an equivalence.
\end{prop}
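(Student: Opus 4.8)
The plan is to exhibit the morphism as the counit of the adjunction between $\anFMP_{X/}$ and derived $k$-analytic stacks set up in \cref{const:anFMP_as_ind_inf_schemes}, and to prove it is an equivalence by reducing, via deformation theory, to the already established equivalence on infinitesimal thickenings. First, since $Z$ admits a deformation theory and $X \to Z$ is a nil-isomorphism, \cref{prop:sufficient_conditions_for_a_prestack_to_be_equiv_to_an_analytic_FMP} shows that the restriction of $Z$ along $h \colon \anNil_{X/} \hookrightarrow \dAnSt_k$ is an analytic formal moduli problem under $X$; we keep the symbol $Z$ for this object of $\anFMP_{X/}$. Writing $W \coloneqq \colim_{S \in \anNil_{X//Z}^\cl} S$ for the colimit formed in $\dAnSt_k$ and $\psi \colon W \to Z$ for the canonical morphism, the assertion to prove is that the counit $\psi$ is an equivalence.

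Second, I would establish that $W$ inherits a deformation theory and is itself a nil-isomorphism under $X$. The indexing \infcat $\anNil_{X//Z}^\cl$ is filtered by \cref{prop:analytic_FMP_under_X_are_ind_inf_schemes}, and its transition maps are nil-embeddings; since the reduction functor and finite limits in $\cS$ commute with filtered colimits, one gets $W_\red \simeq X_\red$, and $W$ is nilcomplete and infinitesimally cartesian. Directly from \cref{const:anFMP_as_ind_inf_schemes} the relative pro-analytic cotangent complex of $W$ under $X$ is the pro-object $\{ \bbL\an_{X/S} \}_{S \in \anNil^\cl_{X//Z}}$, which is by definition $\bbL\an_{X/Z}$. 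Hence $W$ admits a deformation theory, so \cref{prop:sufficient_conditions_for_a_prestack_to_be_equiv_to_an_analytic_FMP} again applies to identify $h_* W$ with an analytic formal moduli problem under $X$, and under this identification the transition map $\theta(\psi) \colon \bbL\an_{X/Z} \to \bbL\an_{X/W}$ of \cref{rem:morphisms_of_AnFMP_induce_transition_morphisms_on_relative_analytic_cot_complexes} is the identity, hence an equivalence. By the conservativity of \cref{prop:conservativity_of_relative_an_cot_complex}, the morphism $h_* \psi \colon h_* W \to Z$ is therefore an equivalence in $\anFMP_{X/}$; equivalently, $\psi$ becomes an equivalence after evaluation on every object of $\anNil_{X/}$.

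Third, I would upgrade this to an equivalence in $\dAnSt_k$. Both $W$ and $Z$ being nilcomplete, by Yoneda it suffices to prove that $\Map(T, W) \to \Map(T, Z)$ is an equivalence for every $T \in \dAfd_k$, and nilcompleteness reduces this to the truncations $\rt_{\le n}(T)$. For reduced test objects the comparison is immediate, both $W$ and $Z$ having reduction $X_\red$; presenting $T_\red \to \rt_{\le n}(T)$ as a finite tower of square-zero extensions through \cref{prop:filtered_colimit_for_nil-embeddings} and analyzing the lifts step by step via the obstruction-theoretic formula of \cref{lem:pro_cot_complex_classifies_nil_extensions_for_analytic_moduli_problems}, each stage is governed by the relative pro-analytic cotangent complex under $X$; since $\psi$ induces an equivalence there, every stage, and hence the whole comparison, is an equivalence. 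By Yoneda, $\psi$ is an equivalence in $(\dAnSt_k)_{/X}$, as claimed.

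The main obstacle is precisely this last upgrade from $\anNil_{X/}$ to all of $\dAnSt_k$. The equivalence produced by the formal-moduli machinery is recorded only on infinitesimal thickenings of $X$, and promoting it to an equivalence of stacks is possible only because both $W$ and $Z$ are nilcomplete nil-isomorphisms under $X$, so that any map out of an affinoid is determined, through the square-zero filtration of \cref{prop:filtered_colimit_for_nil-embeddings}, by its restrictions to the nil-embeddings $S$. The delicate technical point is verifying that the colimit $W$ genuinely inherits a full deformation theory, in particular that its relative pro-cotangent complex under $X$ is computed naively by $\{\bbL\an_{X/S}\}_S$ and not by some derived correction coming from the colimit, for it is exactly this that allows the conservativity of \cref{prop:conservativity_of_relative_an_cot_complex} to carry the argument.
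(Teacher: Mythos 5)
Your route is genuinely different from the paper's, and it contains a load-bearing gap. The paper never treats the colimit $W \coloneqq \colim_{S \in \anNil^\cl_{X//Z}} S$ as an object admitting a deformation theory at all: it directly produces, for any $T$ with compatible maps $X \to T \to Z$, a factorization $T \to S \to Z$ through a single stage, by (i) reducing to bounded $T$ using nilcompleteness of $Z$ \emph{only}, (ii) factoring $T_\red \hookrightarrow T$ into square-zero extensions via \cref{prop:filtered_colimit_for_nil-embeddings}, and (iii) applying the limit--colimit formula for mapping spaces in pro-\infcats to the induced map $\bbL\an_{X_\red/Z} \to \bbL\an_{X_\red/T_\red}$: since the source is the pro-object $\{\bbL\an_{X_\red/S}\}_S$ and the target is an honest object of $\Coh^+(X_\red)$, this map factors through a finite term $\bbL\an_{X_\red/S}$, which then solves the lifting square and yields $T \to S \to Z$. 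No conservativity statement and no deformation theory for $W$ are invoked.

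The central gap in your proposal is the claim that $W$ is nilcomplete "since the reduction functor and finite limits in $\cS$ commute with filtered colimits." Infinitesimal cartesianness does pass to filtered colimits this way (finite limits commute with filtered colimits in $\cS$), but nilcompleteness is the condition $\Map(T,W) \simeq \lim_{n} \Map(\rt_{\le n}(T), W)$, an inverse limit over $n$, and such limits do not commute with the filtered colimit defining $W$; ind-objects are not automatically convergent. Both your step 2 (so that \cref{prop:sufficient_conditions_for_a_prestack_to_be_equiv_to_an_analytic_FMP} applies to $W$) and your step 4 (reduction to truncated test objects) lean on exactly this unproved point; note also that "the essential image of $F$ consists of stacks admitting a deformation theory" is the corollary the paper deduces \emph{from} this proposition, so you are in effect assuming a statement downstream of the one you are proving. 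A second, independent defect sits in step 4: \cref{lem:pro_cot_complex_classifies_nil_extensions_for_analytic_moduli_problems} governs square-zero extensions of $X$ itself, i.e.\ objects of $\anNil_{X/}$, whereas the tower $T_\red \hookrightarrow \dots \hookrightarrow \rt_{\le n}(T)$ for a general affinoid $T$ consists of thickenings of $T_\red$ carrying no map from $X$. The equivalence you establish, $\bbL\an_{X/Z} \xrightarrow{\sim} \bbL\an_{X/W}$, lives on $X$ and does not control those lifting problems; the bootstrap would require the relative pro-cotangent complex of $\psi$ to vanish at arbitrary affinoid points, which you have not shown. The paper avoids this entirely by restricting the classifying derivation along $X_\red$ and extracting the factorization $T \to S$ in a single step, which is the real content of its argument.
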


\begin{proof}
    We shall prove that for every derived $k$-analytic space $T \in \dAnk$, any diagram of the form
        \[
        \begin{tikzcd}
            X \ar{rr}{f} \ar{rd}{g} & & Z \\
                & T \ar{ru}{h} &  
        \end{tikzcd},
        \]
    factors through an object
        \[
            X \to S \to Z,  
        \]
    in $\anNil_{X/ /Z}^\cl$. Consider the commutative diagram
        \begin{equation} \label{eq:comm_diagram_T_T_red_X_to_Z}
        \begin{tikzcd}
            X_{\red} \ar{r} \ar{d} & T_\red \ar{d} \ar{dr} & \\
            X \ar{r} & T \ar{r} & Z,
        \end{tikzcd}
        \end{equation}
    in the \infcat $\dAnSt_k$. Since $Z_\red \simeq X_\red$ we obtain that $T_\red \to Z$ factors necessarily through the colimit
        \[
            \colim_{S \in \anNil^\cl_{X/ /Z}} S \in \anPreStk.  
        \]
    Assume first that $T$ is bounded, i.e. $T \in \dAnk^{< \infty}$. Then we can construct $T$ out of $T_\red$ via a finite sequence of square-zero extensions,
    as in \cref{prop:filtered_colimit_for_nil-embeddings}. Therefore, in order to construct
    a factorization
        \[
            T \to S \to Z,  
        \]
    in $\anPreStk_{X/}$, we reduce ourselves to the case where the morphism $T_\red \to T$ is itself a square-zero extension.
    In this case, let
        \[
            d \colon \bbL\an_{T_\red} \to \cF[1],  
        \]
    where $\cF \in \Coh^+(T_\red)^{\ge 0}$ be the associated derivation. The existence of the diagram \eqref{eq:comm_diagram_T_T_red_X_to_Z} implies that
    we have a commutative diagram of the form
        \[
        \begin{tikzcd}
                g^*\bbL\an_{T_\red} \ar{r} & \bbL\an_{X_\red} \ar{r} & \bbL\an_{X_\red/ T_\red} \\
                f^* \bbL\an_{Z} \ar{r} \ar{u} & \bbL\an_{X_\red} \ar{u} \ar{r}& \bbL\an_{X_\red/ Z} \ar{u},
        \end{tikzcd}
        \]
    in the \infcat $\pro(\Coh^+(X))$. For this reason, the limit-colimit formula for mapping spaces in pro-\infcats implies that the natural morphism
        \[
            \bbL\an_{X_\red/ Z} \to \bbL\an_{X_\red/ T_\red},
        \]
    in the \infcat $\pro(\Coh^+(X))$ factors necessarily via a morphism of the form
        \[
            \bbL\an_{X_\red/ S} \to \bbL\an_{X_\red/ T_\red},  
        \]
    for some suitable $S \in \anNil^\cl_{X/ /Z}$. Thus the existence problem
        \[
        \begin{tikzcd}
            T_\red \ar{r} \ar{d} & T \arrow[d, dashrightarrow] \ar{rd} & \\
            X_\red = S_\red \ar{r} & S  \ar{r} & Z,
        \end{tikzcd}
        \]
    admits a solution $T \to S \to Z$, as desired. If $T \in \dAnk$ is a general derived $k$-analytic space, we reduce ourselves to the bounded case
    using the fact that $Z$ is nilcomplete.
\end{proof}

\begin{cor}
    The functor
        \[
            F \colon \anFMP_{X/ } \to (\dAnSt_k)_{X/ },  
        \]
    is fully faithful. Moreover, its essential image coincides with those $Z \in (\dAnSt_k)_{X/ }$ which admit deformation theory.
\end{cor}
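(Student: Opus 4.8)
The plan is to exhibit $F$ as (the restriction of) a left adjoint and to identify its unit. By \cref{const:anFMP_as_ind_inf_schemes}, $F$ is the left Kan extension of $h \colon \anNil_{X/} \to (\dAnSt_k)_{X/}$ along the inclusion $\anNil_{X/} \hookrightarrow \anFMP_{X/}$; equivalently, since $\anFMP_{X/} \simeq \Ind(\anNil_{X/}^\cl)$ (as established in the presentability corollary above), $F$ is the essentially unique filtered-colimit-preserving functor whose restriction to $\anNil_{X/}^\cl$ is the fully faithful inclusion into $(\dAnSt_k)_{X/}$. Consequently $F$ is the restriction of a colimit-preserving functor $\widehat F \colon \Fun(\anNil_{X/}\op, \cS) \to (\dAnSt_k)_{X/}$ that is left adjoint to the restriction functor $h_* \colon (\dAnSt_k)_{X/} \to \Fun(\anNil_{X/}\op, \cS)$. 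First I would reduce full faithfulness of $F$ to showing that the unit $\eta_Y \colon Y \to h_* F Y$ is an equivalence for every $Y \in \anFMP_{X/}$.

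To check the unit it suffices to evaluate it on the generators $S \in \anNil_{X/}^\cl$. On the source, \cref{prop:analytic_FMP_under_X_are_ind_inf_schemes}(2) together with Yoneda gives $Y(S) \simeq \colim_{S' \in \anNil_{X//Y}^\cl} \Map_{\anNil_{X/}}(S, S')$, while on the target $(h_* F Y)(S) \simeq \Map_{(\dAnSt_k)_{X/}}(S, \colim_{S'} S')$, the colimit now taken in $(\dAnSt_k)_{X/}$. Since $\anNil_{X/} \hookrightarrow (\dAnSt_k)_{X/}$ is fully faithful, $\eta_{Y,S}$ becomes the canonical comparison $\colim_{S'} \Map_{(\dAnSt_k)_{X/}}(S, S') \to \Map_{(\dAnSt_k)_{X/}}(S, \colim_{S'} S')$, which is an equivalence precisely when every map $S \to FY$ over $X$ factors through one of the stages $S'$. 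Granting that $FY$ admits a deformation theory and that $X \to FY$ is a nil-isomorphism, this factorization is exactly the content of \cref{prop:stacks_under_X_with_deformation_theory_are_analytic_FMP} (applied with $T = S$ and $Z = FY$), so $\eta_Y$ is an equivalence and $F$ is fully faithful.

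For the essential image I would argue by double inclusion. If $Z \in (\dAnSt_k)_{X/}$ admits a deformation theory, then \cref{prop:sufficient_conditions_for_a_prestack_to_be_equiv_to_an_analytic_FMP} shows that $h_* Z \in \anFMP_{X/}$; since $\anNil_{X// h_* Z}^\cl = \anNil_{X//Z}^\cl$, \cref{prop:stacks_under_X_with_deformation_theory_are_analytic_FMP} identifies the counit $F(h_* Z) = \colim_{S \in \anNil_{X//Z}^\cl} S \xrightarrow{\ \sim\ } Z$, so $Z$ lies in the essential image. Conversely, each $FY$ admits a deformation theory (see below), so the essential image consists exactly of the stacks admitting a deformation theory.

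The one genuinely technical point — and the step I expect to be the main obstacle — is verifying that $FY$ admits a deformation theory and that $X \to FY$ is a nil-isomorphism, a fact on which both full faithfulness and the essential-image computation rely. The structural morphism is a nil-isomorphism because $(-)_\red$ sends the defining filtered colimit of nil-embeddings $S'$ (all with $S'_\red \simeq X_\red$) to the constant colimit $X_\red$. For the deformation theory, the relative analytic pro-cotangent complex of $FY$ is $\bbL\an_{X/Y} \in \pro(\Coh^+(X))$, with the required corepresentability supplied by \cref{lem:pro_cot_complex_classifies_nil_extensions_for_analytic_moduli_problems}; nilcompleteness and infinitesimal cohesion must then be extracted from the convergence and square-zero axioms (2) and (3) of \cref{defin:analytic_formal_moduli_problems_under}, using that each stage $S'$ is itself nilcomplete and infinitesimally cartesian and that these properties survive the relevant filtered colimit. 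The delicate part is precisely this interchange of the filtered colimit defining $FY$ with the limit over the Postnikov tower and with square-zero pullbacks; once it is in place, conservativity of $\bbL\an_{X/\bullet}$ (\cref{prop:conservativity_of_relative_an_cot_complex}) furnishes a convenient cross-check that $\eta_Y$ is an equivalence.
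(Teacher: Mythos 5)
Your proof is correct and follows essentially the same route as the paper: the paper's own argument simply observes that $F$ factors through the stacks admitting a deformation theory, that restriction of such a stack lands in $\anFMP_{X/}$ by \cref{prop:sufficient_conditions_for_a_prestack_to_be_equiv_to_an_analytic_FMP}, and that \cref{prop:stacks_under_X_with_deformation_theory_are_analytic_FMP} makes $F$ and $\mathrm{res}$ mutually inverse, which is precisely your unit/counit argument in adjunction packaging. The point you flag as the main obstacle --- that $F(Y)$ itself admits a deformation theory and is nil-isomorphic to $X$ --- is dismissed in the paper as ``clear,'' so your explicit attention to it (and your sketch via the pro-cotangent complex and the colimit--Postnikov interchange) is, if anything, more careful than the published proof.
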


\begin{proof}
    Let $(\dAnSt_k)^{\mathrm{def}}_{X/ } \subseteq (\dAnSt_k)_{X/ }$ denote the full subcategory spanned by derived $k$-analytic stacks under $X$ admitting
    a deformation theory. It is clear that the natural functor
        \[
            F \colon \anFMP_{X/ } \to (\dAnSt_k)_{X/ },  
        \]
    factors through the full subcategory $(\dAnSt_k)_{X/ }^\mathrm{def}$. Moreover, the restriction functor
        \[
            \mathrm{res} \colon (\dAnSt_k)_{X/ }^\mathrm{def} \to \Fun(\anNil_{X/ }\op, \cS),  
        \]
    factors through $\anFMP_{X/ } \subseteq \Fun(\anNil_{X/ }\op, \cS)$. Moreover,
    \cref{prop:stacks_under_X_with_deformation_theory_are_analytic_FMP} implies that the restriction functor that $F$ and $\mathrm{res}$ are mutually inverse functors,
    proving the claim.
\end{proof}

\subsection{Analytic formal moduli problems over a base}
Let $X \in \dAnk$ denote a derived $k$-analytic space. In \cite[Definition 6.11]{Porta_Yu_NQK}
the authors introduced the \infcat of \emph{analytic formal moduli problems over $X$}, which we shall review:

\begin{notation}
    Let $X \in \dAnk$. We shall denote by $\anNil_{/X}$ the full subcategory of $(\dAnk)_{/X}$ spanned by nil-isomorphisms
        \[
            Z \to X,
        \]
    in the \infcat $\dAnk$.
\end{notation}

\begin{defin} \label{defin:analytic_formal_problems_over_X}
    We denote by $\anFMP_{/ X} \subseteq \Fun(\anNil_{/ X} \op, \cS)$ the full subcategory spanned by those functors $Y \colon \anNil_{/ X} \op \to \cS$
    satisfying the following:
    \begin{enumerate}
        \item $Y(X_\red) \simeq *$;
        \item The natural morphism
            \[
                Y(S) \to \lim_{n \ge 1} Y(\rt_{\le n}(S)),  
            \]
        induced by the natural inclusion morphisms $\rt_{\le n}(S) \to S$ for each $n \ge 0$, is an equivalence in $\cS$;
        \item For each pushout diagram
            \[
            \begin{tikzcd}
                S \ar{r}{f} \ar{d}{g}  & S' \ar{d} \\
                T \ar{r} & T',
            \end{tikzcd}
            \]
        in $\anNil_{/ X}$, for which $f$ has the structure of a square-zero extension, the canonical morphism
            \[
                Y(T') \to Y(T) \times_{Y(S)} Y(S'),
            \]
        is an equivalence in $\cS$.
    \end{enumerate}
\end{defin}

\begin{defin}
    We shall denote by $\anNil_{/X}^\cl \subseteq \anNil_{/X}$ the faithful subcategory in wich we allow morphisms
        \[
            i \colon S \to S'  ,
        \]
    where $i$ is a nil-embedding in $\dAnk$.
\end{defin}

We start with the analogue of \cref{prop:analytic_FMP_under_X_are_ind_inf_schemes} in the setting of analytic formal moduli problems over $X$:

\begin{prop} \label{prop:required_conditions_for_formal_moduli_problems}
    Let $Y \in \anFMP_{/X}$. The following assertions hold:
    \begin{enumerate}
        \item The inclusion functor
            \[
                (\anNil^{\cl}_{/X})_{/Y}  \to (\anNil_{/X})_{/Y},
            \]
        is cofinal.
        \item The natural morphism
            \[
                \colim_{Z \in (\anNil^\cl_{/X})_{/Y}}  Z \to Y,
            \]
        is an equivalence in the \infcat $\anFMP_{/X}$.
        \item The \infcat $\anNil_{/X}^\cl$ is filtered.
    \end{enumerate}
    We shall refer to objects in $\anFMP_{/ X}$ as formal moduli problems over $X$.
\end{prop}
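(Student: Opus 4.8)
The plan is to transpose the proof of \cref{prop:analytic_FMP_under_X_are_ind_inf_schemes} from the ``under $X$'' to the ``over $X$'' setting. The essential simplification is that the convergence axiom of \cref{defin:analytic_formal_problems_over_X} is now imposed directly on the total spaces through their Postnikov truncations $\rt_{\le n}(S)$, so one does not need the auxiliary functors $Y^{\le n}$ living over truncated bases nor the $\mathbf{res}^{\le n}/\mathbf{push}^{\le n}$ adjunction; the fixed base is $X$, and the reduced object $X_\red$ (on which $Y$ is contractible by condition (1)) plays the role of the basepoint.

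I would begin with claim (3). Using the criterion \cite[Lemma 5.3.1.12]{HTT}, it suffices to produce, for any finite boundary diagram in $\anNil_{/X}^\cl$, a compatible cocone. Given two objects $S \to X$ and $S' \to X$ --- both nil-isomorphisms, hence with common reduction $X_\red$ and finite structural maps by \cref{lem:nil-isos_are_affine_morphisms} --- I would form the pushout $S \sqcup_{X_\red} S'$. Since $X_\red \to S$ and $X_\red \to S'$ are simultaneously nil-embeddings and finite, \cref{prop:existence_of_pushouts_along_closed_nil-isomorphisms} guarantees that this pushout exists in $\dAnk$, that both legs $S, S' \to S \sqcup_{X_\red} S'$ are nil-embeddings, and that the induced map $S \sqcup_{X_\red} S' \to X$ is again a nil-isomorphism; thus the pushout lies in $\anNil_{/X}^\cl$ and provides the required cocone, and coequalizing parallel maps is handled identically. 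The compatibility of such cocones with a given $Y \in \anFMP_{/X}$ --- needed to descend filteredness to the slice $(\anNil_{/X}^\cl)_{/Y}$ --- follows by filtering the diagram through square-zero extensions via \cref{prop:filtered_colimit_for_nil-embeddings} and invoking the convergence and square-zero axioms of \cref{defin:analytic_formal_problems_over_X}, exactly as in the ``under'' case.

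For claim (1), I would establish cofinality of $(\anNil_{/X}^\cl)_{/Y} \hookrightarrow (\anNil_{/X})_{/Y}$ via \cite[Corollary 4.1.1.9]{HTT}, decomposing over truncation levels. At each level $n$, \cref{prop:filtered_colimit_for_nil-embeddings} expresses every $n$-truncated nil-isomorphism $\rt_{\le n}(S) \to X$ as built from $X_\red$ by a finite tower of square-zero extensions --- each a nil-embedding, hence an object of the closed subcategory --- which shows that nil-embeddings are cofinal among $n$-truncated objects over $Y$; here one uses \cref{lem:pushouts_of_square_zero_extensions_have_the_structure_of_a_square_zero_extension} to keep the square-zero structure compatible with the map to $X$. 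Nilcompleteness of derived $k$-analytic spaces \cite[Lemma 7.7]{Porta_Yu_Representability} together with the convergence axiom then identify $(\anNil_{/X})_{/Y}$ with the limit over $n$ of the corresponding $n$-truncated slice categories, so that \cite[Corollary 4.1.1.9]{HTT} assembles the level-wise cofinalities into the desired global cofinality. Claim (2) then follows formally: $Y$ is the colimit of the representables mapping to it (Yoneda density) over $(\anNil_{/X})_{/Y}$, and by claim (1) this colimit may be computed over $(\anNil_{/X}^\cl)_{/Y}$.

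The main obstacle will be the level-wise-to-global passage in claim (1): verifying the hypotheses of \cite[Corollary 4.1.1.9]{HTT} requires organizing the truncation functors into a genuine tower of slice categories whose limit recovers $(\anNil_{/X})_{/Y}$, and it is precisely here that nilcompleteness and the convergence axiom must be combined carefully. A secondary point demanding attention is checking throughout that the pushouts and square-zero extensions used to move objects into the closed subcategory preserve the structural morphism to $X$ --- so that one never leaves $\anNil_{/X}$ --- which is guaranteed by \cref{prop:existence_of_pushouts_along_closed_nil-isomorphisms} and \cref{lem:pushouts_of_square_zero_extensions_have_the_structure_of_a_square_zero_extension}.
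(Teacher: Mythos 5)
Your treatment of assertion (3) is essentially the paper's proof: pushout over $X_\red$ via \cref{prop:existence_of_pushouts_along_closed_nil-isomorphisms}, reduction to square-zero extensions via \cref{prop:filtered_colimit_for_nil-embeddings}, and the square-zero axiom of \cref{defin:analytic_formal_problems_over_X} to extend the cocone to $Y$. The genuine gap is in assertion (1), and it stems from a misreading of what $\anNil^\cl_{/X}$ is in the ``over'' setting. Unlike the ``under'' case, where $\anNil^\cl_{X/}$ is the \emph{full} subcategory spanned by nil-embedding objects, the paper defines $\anNil^\cl_{/X}$ as a \emph{wide (faithful, non-full)} subcategory: it has the same objects as $\anNil_{/X}$, but only nil-embeddings as morphisms. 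Consequently $(\anNil^\cl_{/X})_{/Y}$ and $(\anNil_{/X})_{/Y}$ have the same objects, and cofinality of the inclusion is entirely a statement about \emph{morphisms} — every arbitrary arrow over $Y$ must be dominated by nil-embeddings. Your proposed proof of (1) — a truncation-level decomposition, ``nil-embeddings are cofinal among $n$-truncated objects,'' nilcompleteness, and \cite[Corollary 4.1.1.9]{HTT} assembling level-wise cofinalities — targets an object-level approximation statement that is vacuous (or ill-posed) under the correct reading, and the $\mathbf{res}^{\le n}/\mathbf{push}^{\le n}$ machinery of \cref{prop:analytic_FMP_under_X_are_ind_inf_schemes} that you take as a model has no role to play here: the fixed base $X$ is never truncated, and no tower of slice categories is needed.

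Ironically, the mechanism the paper actually uses for (1) is exactly the one you deploy in (3), just aimed at arrows instead of cocones: given a morphism $S \to Z$ in $(\anNil_{/X})_{/Y}$, form $Z' \coloneqq Z \sqcup_{S_\red} S$ (pushout along $S_\red \simeq X_\red$, which exists and has both legs $S \to Z'$ and $Z \to Z'$ nil-embeddings by \cref{prop:existence_of_pushouts_along_closed_nil-isomorphisms}); then filter $S_\red \to S$ through square-zero extensions via \cref{prop:filtered_colimit_for_nil-embeddings} and use convergence together with the square-zero axiom to get $Y(Z') \simeq Y(Z) \times_{Y(S_\red)} Y(S) \simeq Y(Z) \times Y(S)$ (condition (1) of \cref{defin:analytic_formal_problems_over_X} gives $Y(S_\red) \simeq *$), so the $Y$-points of $S$ and $Z$ extend to $Z'$. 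This dominates any arrow by a cospan of nil-embeddings in the slice over $Y$, which yields the cofinality of (1) directly; assertion (2) is then formal from (1) by Yoneda, as you say. So your proposal contains all the raw ingredients but misallocates them: redirect the pushout-plus-extension argument from your (3) to (1) and discard the truncation tower entirely.
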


\begin{proof}
    We first prove assertion (1). Let $S \to Z$ be a morphism in $(\anNil^\cl_{/X})_{/Y}$. Consider the pushout diagram
        \begin{equation} \label{eq:diagram_pushout_of_nil_isomorphisms_with_ltv_being_the_reduced_subspace}
        \begin{tikzcd}
            S_\red \ar{r} \ar{d} & S \ar{d} \\
            Z \ar{r} & Z',
        \end{tikzcd}
        \end{equation}
    in the \infcat $\anNil_{/X}$ whose existence is guaranteed by \cref{prop:existence_of_pushouts_along_closed_nil-isomorphisms}. Since
    the upper horizontal morphism in \eqref{eq:diagram_pushout_of_nil_isomorphisms_with_ltv_being_the_reduced_subspace} is a nil-embedding
    combined with the fact that $Y$ is itself convergent, we can reduce
    ourselves via \cref{prop:filtered_colimit_for_nil-embeddings} to the case where the latter is an actual square-zero extension.
    Since $Y$ is assumed to be an analytic formal moduli problem over $X$ we then deduce that the canonical morphism
        \begin{align*}
            Y(Z') & \to Y(Z) \times_{Y(S_\red)} Y(S) \\
            &\simeq Y(Z) \times Y(S),  
        \end{align*}
    is an equivalence (we implicitly used above the fact that $S_\red \simeq X_\red$). As a consequence the object $(Z' \to X)$ in $\anNil_{/X}$ admits an induced morphism
    $Z' \to Y$ making the required diagram commute.
    Thanks to \cref{prop:existence_of_pushouts_along_closed_nil-isomorphisms} we deduce that both
    $S \to Z'$ and $Z \to Z'$ are nil-embeddings. Therefore, we can factor the diagram
        \[
        \begin{tikzcd}
            S \ar{rr} \ar{rd} & & Z \ar{dl} \\
                &               Y       &  
        \end{tikzcd}
        \]
    via a closed nil-isomorphism $Z \to Z'$. As a consequence, we deduce readily that the inclusion functor
        \[(\anNil^\cl_{/X})_{/Y} \to (\anNil_{/X})_{/Y},\]
    is cofinal.
    Assertion (2) is now an immediate consequence of (1). We now prove (3). Let 
        \[\theta \colon K \to (\anNil^\cl_{/X})_{/Y},\]
    be a functor where
    $K$ is a finite \infcat. We must show that $\theta$ can be extended to a functor
        \[\theta^{\rhd} \colon K^{\rhd} \to (\anNil^\cl_{/X})_{/Y}.\]
    Thanks to \cref{prop:filtered_colimit_for_nil-embeddings} we are allowed to reduce ourselves to the case where morphisms indexed by $K$
    are square-zero extensions. The result now follows from the fact that $Y$ being an analytic moduli problem sends finite colimits along square-zero extensions
    to finite limits.
\end{proof}

\begin{lem} \label{formal_moduli_under_induce_formal_moduli_over_via_base_change}
    Let $X \in \dAnk$. Given any $Y \in \anFMP_{X/ }$, then for each $i= 0, 1$ the $i$-th projection morphism
        \[
            p_i \colon X \times_Y X \to X,  
        \]
    computed in the \infcat $\dAnSt_k$ lies in the essential image of $\anFMP_{/X}$ via the canonical functor $\anFMP_{/ X} \to (\dAnSt_k)_{/ X}$.
\end{lem}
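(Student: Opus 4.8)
The strategy is to realise $X\times_Y X$ as a filtered colimit of honest derived $k$-analytic spaces, each nil-isomorphic to $X$, and then to recognise this colimit as the realisation of a formal moduli problem over $X$. By \cref{prop:analytic_FMP_under_X_are_ind_inf_schemes} together with \cref{const:anFMP_as_ind_inf_schemes} I would first write $Y \simeq \colim_{S \in \anNil^\cl_{X//Y}} S$ as a filtered colimit of nil-embeddings $X \to S$, the structural morphism $X \to Y$ being the colimit coprojection of the initial object $S = X$. Since finite limits commute with filtered colimits in $\cS$ and finite limits of prestacks are computed objectwise, the two structural copies of $X \to Y$ are compatibly exhibited as colimits of the canonical maps $X \to S$, and one obtains
\[
    X \times_Y X \simeq \colim_{S \in \anNil^\cl_{X//Y}} \big( X \times_S X \big)
\]
as prestacks over $X$. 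For each $S$ the morphism $X \to S$ is a nil-embedding, hence a nil-isomorphism, so \cref{lem:nil-isos_are_affine_morphisms} shows that it is affine and finite; consequently $X \times_S X$ is again a derived $k$-analytic space, and part (1) of the same lemma shows that the projection $X \times_S X \to X$ is a nil-isomorphism. Thus each term lies in $\anNil_{/X}$, and the displayed formula exhibits $X \times_Y X$ as an ind-inf object over $X$.

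It remains to check that the restriction $Y' \coloneqq \mathrm{res}(X \times_Y X)$ to $(\anNil_{/X})\op$ is a formal moduli problem over $X$ in the sense of \cref{defin:analytic_formal_problems_over_X}. Conditions (2) and (3) I would deduce not term by term but directly from the fiber product: nilcompleteness and the infinitesimally cartesian property are stable under limits, and $X$, $Y$ and the structural maps all enjoy a deformation theory (for $Y$ this is its defining property as an object of $\anFMP_{X/}$), so $X \times_Y X$ is itself nilcomplete and infinitesimally cartesian; this yields convergence and the square-zero excision property, the requisite square-zero pushouts in $\anNil_{/X}$ existing by \cref{prop:existence_of_pushouts_along_closed_nil-isomorphisms}. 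For condition (1) I would use that $X_\red$ is initial in $\anNil_{/X}$ and that the structural morphism $\sigma \colon X \to Y$ is an isomorphism on reductions: since any morphism out of the reduced space $X_\red$ factors through the reduction, post-composition with $\sigma$ identifies $\Map(X_\red, X)$ with $\Map(X_\red, Y)$, whence $Y'(X_\red) = \Map_{/X}(X_\red, X \times_Y X) \simeq *$.

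Granting that $Y' \in \anFMP_{/X}$, \cref{prop:required_conditions_for_formal_moduli_problems} computes its image under $\anFMP_{/X} \to (\dAnSt_k)_{/X}$ as $\colim_{Z \in (\anNil^\cl_{/X})_{/Y'}} Z$, into which the diagram $S \mapsto X \times_S X$ maps cofinally; this colimit therefore recovers $X \times_Y X$, so $p_0 \colon X \times_Y X \to X$ lies in the essential image, and the argument for $p_1$ is identical. The step I expect to be the main obstacle is the passage to the colimit in the first paragraph: one must verify that the colimit presenting $Y$ computed in $\dAnSt_k$ agrees with the corresponding colimit of prestacks, so that left-exactness of filtered colimits in $\cS$ can legitimately be applied objectwise to the fiber product; the remaining verifications are comparatively formal once this compatibility, together with the finiteness of $X \to S$ ensuring representability of each $X \times_S X$, is in place.
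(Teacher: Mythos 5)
Your proposal is correct and follows essentially the same route as the paper: write $Y \simeq \colim_{S \in \anNil^\cl_{X//Y}} S$ via \cref{prop:analytic_FMP_under_X_are_ind_inf_schemes}, use left-exactness of filtered colimits in $\dAnSt_k$ to get $X \times_Y X \simeq \colim_S X \times_S X$, and identify each term as an object of $\anNil_{/X}$ via \cref{lem:nil-isos_are_affine_morphisms}. Your second and third paragraphs merely spell out what the paper compresses into its final sentence, namely that the essential image of $\anFMP_{/X}$ is closed under such filtered colimits (which is exactly \cref{cor:formal_moduli_problems_over_X_are_ind_inf_objects}), and the ``main obstacle'' you flag is settled by the general fact that filtered colimits are left exact in the \inftopos $\dAnSt_k$, which is what the paper invokes.
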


\begin{proof}
    Consider the pullback diagram
        \[
        \begin{tikzcd}
            X \times_Y X \ar{r}{p_1} \ar{d}{p_0} & X \ar{d} \\
            X \ar{r} & Y ,
        \end{tikzcd}
        \]
    computed in the \infcat $\dAnSt_k$. Thanks to \cref{prop:analytic_FMP_under_X_are_ind_inf_schemes} together with the fact that fiber products commute with filtered colimis in the \infcat $\dAnSt_k$,
    we deduce that
        \[
            X \times_Y X \simeq \colim_{Z \in \anNil^\cl_{X//Y}} X \times_Z X, 
        \]
    in $\dAnSt_k$. It is clear that $(p_i \colon X \times_Z X \to X)$ lies in the essential image of $ \anFMP_{/X}$, for each $Z \in (\anNil_{/ X} ^\cl)_{/ Y}$ and $i = 0, 1$.
    Thus also the filtered colimit
        \[
            (p_i \colon X \times_Y X \to X) \in \anFMP_{/X}, \quad \mathrm{for \ } i = 0, 1,
        \]  
    as desired.
\end{proof}

Just as in the previous section we deduce that every analytic formal moduli problem over $X$ admits the structure of an \emph{ind}-\emph{inf}-object
in $\anPreStk_k$:

\begin{cor} \label{cor:formal_moduli_problems_over_X_are_ind_inf_objects}
    Let $Y \in (\dAnSt_k)_{/X}$. Then $Y$ is equivalent to an analytic formal moduli problem over $X$ if and only if there exists
    a presentation 
        \[Y \simeq \colim_{i \in I} Z_i,\]
    where $I$ is a filtered \infcat and for every $i \to j$ in $I$, the induced morphism
        \[
          Z_i \to Z_j,  
        \]
    is a closed embedding of derived $k$-affinoid spaces that are nil-isomorphic to $X$.
\end{cor}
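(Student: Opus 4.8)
The plan is to prove the two implications separately, deriving the forward direction directly from \cref{prop:required_conditions_for_formal_moduli_problems} and the reverse direction from a closure property of $\anFMP_{/X}$ under filtered colimits. For the forward direction, suppose $Y \in \anFMP_{/X}$. By \cref{prop:required_conditions_for_formal_moduli_problems} the slice \infcat $(\anNil^\cl_{/X})_{/Y}$ is filtered and the canonical morphism $\colim_{Z \in (\anNil^\cl_{/X})_{/Y}} Z \to Y$ is an equivalence. Setting $I \coloneqq (\anNil^\cl_{/X})_{/Y}$ and letting $Z_i$ range over its objects, each $Z_i$ is nil-isomorphic to $X$ by the very definition of $\anNil_{/X}$, and each transition morphism, being a morphism of $\anNil^\cl_{/X}$, is a nil-embedding, i.e. a closed embedding. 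When $X \in \dAfd_k$, \cref{lem:derived_k_analytic_space_whose_reduction_is_affinoid_is_also_affinoid} guarantees that each $Z_i$ is again a derived $k$-affinoid space, since its reduction coincides with $X_\red \in \Afd_k$; this yields the desired presentation.

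For the reverse direction I would argue that any $Y$ admitting such a presentation is a filtered colimit, inside $\Fun(\anNil_{/X}\op, \cS)$, of objects already lying in $\anFMP_{/X}$, and that $\anFMP_{/X}$ is closed under such colimits. First I would check that each representable $Z_i$, viewed through the restricted Yoneda embedding as a functor $\anNil_{/X}\op \to \cS$, satisfies the three conditions of \cref{defin:analytic_formal_problems_over_X}. Condition (1) holds because $X_\red$ is initial in $\anNil_{/X}$: a morphism out of the reduced space $X_\red \simeq (Z_i)_\red$ is the canonical section, so $Z_i(X_\red) \simeq *$. Condition (3) is automatic for a representable functor, as $\Map_{/X}(-, Z_i)$ carries a pushout square to a pullback square. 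Condition (2) is precisely the nilcompleteness of derived $k$-analytic spaces, c.f. \cite[Lemma 7.7]{Porta_Yu_Representability}, which gives $Z_i(S) \simeq \lim_{n \ge 1} Z_i(\rt_{\le n}(S))$ for every test object $S$.

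It then remains to show that $\anFMP_{/X} \subseteq \Fun(\anNil_{/X}\op, \cS)$ is stable under filtered colimits. Conditions (1) and (3) pass to filtered colimits immediately, since a filtered colimit of contractible spaces is contractible and filtered colimits commute with finite limits in $\cS$. The main obstacle is the convergence condition (2) for the colimit: after using that each $Z_i$ is convergent, verifying it amounts to the interchange $\colim_i \lim_n Z_i(\rt_{\le n}(S)) \simeq \lim_n \colim_i Z_i(\rt_{\le n}(S))$, and the countable limit $\lim_{n \ge 1}$ does not a priori commute with the filtered colimit $\colim_i$. I expect to overcome this by exploiting that every transition morphism $Z_i \to Z_j$ is a nil-embedding between spaces nil-isomorphic to $X$. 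Fixing $S \in \anNil_{/X}$, \cref{prop:filtered_colimit_for_nil-embeddings} presents each truncation $\rt_{\le n}(S)$ out of $S_\red$ by finitely many square-zero extensions, so that at each fixed truncation level the relevant mapping spaces are controlled and the interchange of $\colim_i$ with $\lim_n$ becomes legitimate. Once this is established, $\colim_i Z_i$ satisfies all three conditions of \cref{defin:analytic_formal_problems_over_X} and hence defines an analytic formal moduli problem over $X$, completing the equivalence.
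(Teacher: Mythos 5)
Your forward direction is, in substance, the paper's entire proof: the paper disposes of the corollary in one line by citing \cref{prop:required_conditions_for_formal_moduli_problems} (2), which supplies the presentation with $I = (\anNil^\cl_{/X})_{/Y}$, filtered by part (3) of that proposition. Your additions there are correct and in fact more careful than the paper: the transition maps are nil-embeddings by the definition of $\anNil^\cl_{/X}$, and the affinoidness of the $Z_i$ (which the corollary asserts but which only makes sense when the reduction is affinoid) is rightly justified via \cref{lem:derived_k_analytic_space_whose_reduction_is_affinoid_is_also_affinoid}. Your verification that each representable $Z_i$ lies in $\anFMP_{/X}$ -- condition (1) from the initiality of $X_\red$, condition (3) from \cref{prop:existence_of_pushouts_along_closed_nil-isomorphisms} (the pushout there is computed in $\RTop(\cTank)$, hence is a pushout in $\dAnk$, so representables carry it to a pullback), condition (2) from nilcompleteness -- is also sound.

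The gap is in the reverse direction, at exactly the step you flagged: the interchange $\colim_i \lim_n Z_i(\rt_{\le n}(S)) \simeq \lim_n \colim_i Z_i(\rt_{\le n}(S))$. Your proposed remedy via \cref{prop:filtered_colimit_for_nil-embeddings} does not engage the actual difficulty. That proposition factors a \emph{single} nil-embedding through square-zero extensions with truncation control on its source and target; it says nothing about the filtered direction $i$, whereas the obstruction to the interchange is a uniformity-in-$i$ statement. Concretely, the fiber of $Z_i(\rt_{\le n+1}(S)) \to Z_i(\rt_{\le n}(S))$ over a point $g$ is governed by a derivation space of the form $\Map(g^* \bbL\an_{Z_i}, \pi_{n+1}(\cO_S)[n+2])$, and since $\bbL\an_{Z_i}$ is merely connective (the $Z_i$ are in general singular), these fibers need not become highly connected as $n$ grows, so no Milnor-tower stabilization is available at this level of generality, and a pointwise filtered colimit of nilcomplete objects is not convergent in general. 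Saying the mapping spaces are ``controlled'' is therefore not an argument. The repair consistent with the paper is to interpret the colimit where \cref{prop:required_conditions_for_formal_moduli_problems} (2) computes it, namely in $\anFMP_{/X}$ itself: then the reverse implication is immediate from your verification that the $Z_i$ are analytic formal moduli problems, and no interchange is needed. If you insist on the pointwise colimit in $\Fun(\anNil_{/X}\op, \cS)$, you owe an actual stabilization or connectivity estimate uniform in $i$, which neither your citation nor the paper provides.
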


\begin{proof}
    It follows immediately from \cref{prop:required_conditions_for_formal_moduli_problems} (2).
\end{proof}

\begin{defin}
    Let $Y \in \anFMP_{/X}$. We define the \infcat of \emph{coherent modules on $Y$}, denoted $\Coh^+(Y)$, as the limit
        \[
            \Coh^+(Y) \coloneqq \lim_{Z \in (\dAnk)_{/Y}}  \Coh^+(Z),
        \]
    computed in the \infcat $\Catst$. We define the \infcat of \emph{pseudo-pro-coherent modules on $Y$}, denoted $\pro^{\mathrm{ps}}(\Coh^+(Y))$, as
        \[
            \pro^\mathrm{ps}(\Coh^+(Y)) \coloneqq \lim_{Z \in (\anNil_{/X}^\cl)_{/Y}} \pro(\Coh^+(Z)),  
        \]
    where the limit is computed in the \infcat $\Catst$.
\end{defin}

\begin{defin} Let $Y \in \anFMP_{/X}$, $Z \in \dAfd_k$ and let $\cF \in \Coh^+(Z)^{\ge 0}$. Suppose furthermore that we are given a morphism $f \colon Z \to Y$.
    We define the \emph{tangent space of $Y$ at $f$ twisted by $\cF$} as the fiber
        \[
            \bbT\an_{Y, Z, \cF,f} \coloneqq \mathrm{fib}_f\big( Y(Z[\cF]) \to Y(Z) \big) 
            \in \cS.  
        \]
    Whenever the morphism $f$ is clear from the context, we shall drop the subscript $f$ above and denote the tangent space
    simply by $\bbT\an_{Y, Z, \cF}$.
\end{defin}

\begin{rem} \label{rem:construction_of_analytic_cotangent_complex_of_analytic_FMP_over_X} Let $Y \in \anFMP_{/X}$.
    The equivalence of ind-objects
        \[
            Y \simeq \colim_{S \in (\anNil^\cl_{/X})_{/Y}} S,
        \]
    in the \infcat $\dAnSt_k$, implies that, for any $Z \in \dAfd_k$, one has an equivalence of mapping spaces
        \[
            \Map_{\dAnSt_k}(Z, Y) \simeq \colim_{S \in (\anNil^\cl_{/X})_{/Y}}  \Map_{\anPreStk}(Z,S ).
        \]
    For this reason, given any morphism $f \colon Z \to Y$ and any
    $\cF \in \Coh^+(Z)^{\ge 0}$, we can identify the tangent space $\bbT\an_{Y, Z, \cF}$
    with the filtered colimit of spaces
        \begin{align} \label{eq:description_of_cotangent_complex_of_a_formal_moduli_problem_in_terms_of_its_tangent_space}
            \bbT\an_{Y, Z, \cF} & \simeq \colim_{S \in (\anNil^\cl_{/X})_{Z/ /Y}} \fib_f \big( S(Z[\cF]) \to S(Z) \big) \\
                                & \simeq \colim_{S \in (\anNil^\cl_{/X})_{Z/ /Y}} \bbT\an_{S, Z , \cF } \\
                                & \simeq \colim_{S \in (\anNil^\cl_{/X})_{Z/ /Y}} \Map_{\Coh^+(Z)} (f_{S,Z}^* (\bbL\an_S), \cF),
        \end{align}
    where $f_{S, Z} \colon Z \to S$ is a transition morphism, in $(\dAnk)_{/X}$, factoring $f \colon Z \to Y$ such that
        \[(S \to X) \in \anNil^\cl_{/X}.\]
    The final equivalence in \eqref{eq:description_of_cotangent_complex_of_a_formal_moduli_problem_in_terms_of_its_tangent_space}, follows from \cite[Lemma 7.7]{Porta_Yu_Representability}.
    Therefore, we deduce that
    the analytic formal moduli problem
    $Y \in \anFMP_{/X}$ admits an \emph{absolute pro-cotangent complex} given as
        \[
           \bbL\an_Y \coloneqq \{ f^*_{S,Z}( \bbL\an_S )  \}_{Z, S \in (\anNil^\cl_{/X})_{/Y}} \in \pro^\mathrm{ps}(\Coh^+(Y)).
        \]
\end{rem}   

\begin{cor}
    Let $Y \in \anFMP_{/X}$. Then its absolute cotangent complex $\bbL\an_Y$ classifies analytic deformations on $Y$. More precisely, given any morphism $Z \to Y$
    where $Z \in \dAfd_k$ and $\cF \in \Coh^+(Z)^{\ge 0}$ one has a natural equivalence of mapping spaces
        \[
            \bbT\an_{Y, Z, \cF} \simeq \Map_{\pro(\Coh^+(Y))}(\bbL\an_Y, \cF).  
        \]
\end{cor}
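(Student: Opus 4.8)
The plan is to read the statement directly off the construction of the absolute pro-cotangent complex performed in \cref{rem:construction_of_analytic_cotangent_complex_of_analytic_FMP_over_X}, the only additional input being the standard description of mapping spaces out of a pro-object into a constant one. Indeed, the content of the corollary is essentially a repackaging of the final equivalence established in that remark.

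First I would recall the general formula: for a pro-object $\{M_i\}_{i \in I}$ in a stable \infcat $\cC$ and an honest object $\cF \in \cC$, regarded as a constant pro-object, one has a natural equivalence
\[
    \Map_{\pro(\cC)}(\{M_i\}_{i \in I}, \cF) \simeq \colim_{i \in I\op} \Map_{\cC}(M_i, \cF),
\]
the colimit being filtered whenever $I$ is cofiltered. In the situation at hand the twist $\cF$ lies in $\Coh^+(Z)^{\ge 0}$, so I would first restrict the absolute cotangent complex along the projection $\pro^{\mathrm{ps}}(\Coh^+(Y)) \to \pro(\Coh^+(Z))$ induced by $f \colon Z \to Y$. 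By the definition of $\bbL\an_Y$ in \cref{rem:construction_of_analytic_cotangent_complex_of_analytic_FMP_over_X}, this restriction is the pro-object $\{ f^*_{S,Z}(\bbL\an_S) \}_S$ of $\Coh^+(Z)$, indexed by the slice $(\anNil^\cl_{/X})_{Z//Y}$; the mapping space $\Map_{\pro(\Coh^+(Y))}(\bbL\an_Y, \cF)$ is computed precisely after this restriction, since $\cF$ lives over $Z$.

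Applying the pro-mapping-space formula then yields
\[
    \Map_{\pro(\Coh^+(Y))}(\bbL\an_Y, \cF) \simeq \colim_{S \in (\anNil^\cl_{/X})_{Z//Y}} \Map_{\Coh^+(Z)}(f^*_{S,Z}(\bbL\an_S), \cF),
\]
and the right-hand side is exactly $\bbT\an_{Y, Z, \cF}$ by the final equivalence in the chain displayed in \cref{rem:construction_of_analytic_cotangent_complex_of_analytic_FMP_over_X}. Naturality in the pair $(Z, \cF)$ is automatic, as each of the identifications above is functorial in $Z$ and in $\cF$.

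The step I expect to require the most care — and the only genuine obstacle — is the bookkeeping of the indexing categories. One must verify that the restriction of $\bbL\an_Y$ to $\pro(\Coh^+(Z))$ is indexed by the sub-slice $(\anNil^\cl_{/X})_{Z//Y}$ of those $S$ equipped with a compatible map from $Z$, rather than by the full $(\anNil^\cl_{/X})_{/Y}$, and that this sub-slice is filtered so that the pro-mapping-space formula is legitimate. The required filteredness is supplied by \cref{prop:required_conditions_for_formal_moduli_problems}(3), while the agreement of the two colimits is exactly the cofinality already exploited in the construction of $\bbL\an_Y$. Once this matching of indices is in place, the equivalence is immediate.
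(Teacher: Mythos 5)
Your proposal is correct and follows essentially the same route as the paper: the paper's proof likewise reads the statement off the chain of equivalences in \cref{rem:construction_of_analytic_cotangent_complex_of_analytic_FMP_over_X} combined with the colimit formula for mapping spaces out of pro-objects. Your extra care about restricting $\bbL\an_Y$ along $Z \to Y$ to the sub-slice $(\anNil^\cl_{/X})_{Z//Y}$ and checking its filteredness via \cref{prop:required_conditions_for_formal_moduli_problems} makes explicit a point the paper leaves implicit, but introduces no new idea.
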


\begin{proof}
    It follows immediately from the natural equivalences displayed in \eqref{eq:description_of_cotangent_complex_of_a_formal_moduli_problem_in_terms_of_its_tangent_space}
    combined with the description of mapping spaces in \infcats of pro-objects.
\end{proof}

We now introduce the notion of square-zero extensions of analytic formal moduli problems over $X$:

\begin{construction} \label{const:square_zero_extensions_of_analytic_FMP_over_X}
    Let $(f \colon Y \to X) \in \anFMP_{/X}$. Let $d \colon \bbL\an_Y \to \cF[1]$ be an \emph{analytic derivation} in $\pro(\Coh^+(Y))$, where
    $\cF \in \Coh^+(Y)^{\ge 0}$, such that
        \[  
            \cF \simeq f^*(\cF'),
        \]
    for some suitable object $\cF' \in \Coh^+(X)^{\ge 0}$. Thanks to \cref{rem:construction_of_analytic_cotangent_complex_of_analytic_FMP_over_X}
    one has the following sequence of natural equivalences of mapping spaces
        \begin{align} \label{eq:mapping_space_of_derivations}
            \Map_{\pro(\Coh^+(Y))} (\bbL\an_Y , \cF[1])  & \simeq 
            \lim_{S \in (\anNil^\cl_{/X})_{/Y}} \colim_{S' \in (\anNil^\cl_{/X})_{S//Y}} \Map_{\pro(\Coh^+(S))}(f_{S, S'}^*(\bbL\an_{S'}), g_{S}^*(\cF')[1]) \\
                                                         & \simeq
            \lim_{S \in (\anNil^\cl_{/X})_{/Y}} \colim_{S' \in (\anNil^\cl_{/X})_{S//Y}} \Map_{\pro(\Coh^+(S))}(\bbL\an_{S'}, (f_{S, S'})_{ *} g_S^*(\cF')[1]),
        \end{align}
    where $g_{S} \colon S \to X$ denotes the structural morphism in $\anNil^\cl_{/X}$ and $f_{S, S'} \colon S \to S'$ a given transition morphism
    in the \infcat $(\anNil^\cl_{/X})_{/Y}$. For this reason, we can form the filtered colimit
        \[
            Y' \coloneqq \colim_{S \in (\anNil^\cl_{/X})_{/Y}}  \colim_{S' \in (\anNil^\cl_{/X})_{S//Y}} \oS' \in \dAnSt_k,
        \]
    where $S' \to \overline{S}'$ denotes the square-zero extension induced from $d$ together with \eqref{eq:mapping_space_of_derivations}.
    By construction, one has a natural morphism $Y \hookrightarrow Y'$ in the \infcat $(\dAnSt_k)_{/ X}$.
    Moreover, thanks to \cref{prop:sufficient_conditions_for_a_prestack_to_be_equiv_to_an_analytic_FMP} it follows that $Y' \in \anFMP_{/X}$.
\end{construction}

\begin{defin}
    Let $Y \in \anFMP_{/X}$. Suppose we are given an analytic derivation
        \[
            d \colon \bbL\an_Y \to \cF [1],  
        \]
    in $\pro(\Coh^+(Y))$ where $\cF \in \Coh^+(Y)^{\ge 0}$ is such that $\cF \simeq f^*(\cF')$, for some $\cF' \in \Coh^+(X)^{\ge 0}$. We shall
    say that the induced morphism
        \[
            h \colon Y \to Y',  
        \] 
    defined in \cref{const:square_zero_extensions_of_analytic_FMP_over_X}, is a \emph{square-zero extension} of $Y$
    associated to the analytic derivation $d$. 
\end{defin}

\begin{cor} \label{cor:construction_of_square_zero_extensions_for_analytic_FMP_using_univ_property_of_cotangent_complex}
    Let $Y \in \anFMP_{/X}$. Let $h \colon X \hookrightarrow S$ denote a square-zero extension in $\dAnk$.
    Then the space of cartesian squares
        \[
        \begin{tikzcd}
            Y \ar{r}{h'} \ar{d}{f} & Y' \ar{d}{g} \\
            X \ar{r}{h} & S,
        \end{tikzcd}
        \]
    such that $h' \colon Y \to Y'$ is a square-zero extension and $g \colon Y' \to S$ exhibits the former
    as an analytic formal moduli problem over $S$ is naturally equivalent to the space of factorizations
        \[
            f^* \bbL\an_X \to \bbL\an_Y \to f^*( \cF')[1],
        \]
    in $\pro^\mathrm{ps}(\Coh^+(Y))$, of the analytic derivation $d \colon \bbL\an_X \to \cF'[1]$ associated to the morphism $h$ above.
\end{cor}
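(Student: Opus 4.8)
The plan is to deduce the equivalence from the universal property of the relative analytic cotangent complex, after reducing to the representable case via the ind-presentation of $Y$. First I would observe that, in the cartesian square of the statement, the requirement that $h'\colon Y \to Y'$ be a square-zero extension together with cartesianness exhibits $h'$ as the base change of $h\colon X \to S$ along $g\colon Y' \to S$. Consequently the conormal module of $h'$ is pulled back from that of $h$, namely it is $f^*(\cF')$, and the square-zero extension $h'$ is classified by an analytic derivation $\delta\colon \bbL\an_Y \to f^*(\cF')[1]$. The compatibility of $g$ with $h$ — equivalently, the cartesianness of the square — amounts precisely to the identity $\delta \circ \mathrm{can} = f^* d$, where $\mathrm{can}\colon f^*\bbL\an_X \to \bbL\an_Y$ is the canonical morphism; this is exactly the datum of a factorization of $f^* d$ through $\bbL\an_Y$. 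Thus the correspondence between the two spaces is, at the level of classifying derivations, essentially tautological once it is set up correctly.

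To make this rigorous I would work over the ind-presentation $Y \simeq \colim_{S_0 \in (\anNil^\cl_{/X})_{/Y}} S_0$ furnished by \cref{cor:formal_moduli_problems_over_X_are_ind_inf_objects}. On the factorization side, $\bbL\an_Y$ is the pro-object $\{ f_{S_0,Z}^*(\bbL\an_{S_0}) \}$ of \cref{rem:construction_of_analytic_cotangent_complex_of_analytic_FMP_over_X}, so by the standard formula for mapping spaces between pro-objects the space of factorizations of $f^* d$ is computed as a limit over $(\anNil^\cl_{/X})_{/Y}$ of filtered colimits of factorization spaces for the representable objects $S_0$. On the cartesian-square side, since fiber products commute with filtered colimits in $\dAnSt_k$, a cartesian square over $h$ with $Y'$ an analytic formal moduli problem over $S$ is assembled, via \cref{const:square_zero_extensions_of_analytic_FMP_over_X}, from a compatible family of square-zero extensions $S_0 \to \oS_0$ together with compatible morphisms $\oS_0 \to S$. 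Both sides therefore reduce to the same indexing diagram.

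For each representable $S_0$ the desired equivalence is the Porta--Yu universal property of the relative analytic cotangent complex for square-zero extensions of derived $k$-analytic spaces, c.f. \cite[\S 5.4]{Porta_Yu_Representability}: the space of square-zero deformations $\oS_0$ of $S_0$ lying over $S$ (equivalently, of cartesian squares for $S_0$) is naturally equivalent to the space of factorizations $f_{S_0,Z}^*\bbL\an_X \to \bbL\an_{S_0} \to f_{S_0,Z}^*(\cF')[1]$ of the pulled-back derivation. Passing to the limit-colimit over $(\anNil^\cl_{/X})_{/Y}$, and using \cref{prop:sufficient_conditions_for_a_prestack_to_be_equiv_to_an_analytic_FMP} together with \cref{cor:formal_moduli_problems_over_X_are_ind_inf_objects} to guarantee that the resulting colimit $Y'$ genuinely admits a deformation theory and hence lies in $\anFMP_{/X}$ over $S$, yields the asserted natural equivalence.

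The main obstacle I anticipate is the bookkeeping required to match the two presentations as iterated limits of colimits: the condition that the square be cartesian and that $g$ exhibit $Y'$ as a formal moduli problem \emph{over} $S$ is a priori a global condition on $Y'$, whereas \cref{const:square_zero_extensions_of_analytic_FMP_over_X} builds $Y'$ through a double colimit over $(\anNil^\cl_{/X})_{/Y}$ and $(\anNil^\cl_{/X})_{S_0//Y}$. The delicate point is to verify that cartesianness is detected level-wise — so that it corresponds exactly to the factorization constraint at each stage — and that these level-wise factorizations glue to the single pro-object map $\delta$ without introducing higher coherence obstructions. Once this level-wise comparison is in place, the equivalence of spaces and its naturality follow formally.
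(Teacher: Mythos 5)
Your proposal is correct and follows essentially the same route as the paper: the paper's own proof likewise reduces to the representable case $Y \in \anNil_{/X}$ (hence $Y' \in \anNil_{/S}$) using that filtered colimits preserve fiber products, and then invokes the universal property of the relative analytic cotangent complex, exactly as you do via \cite[\S 5.4]{Porta_Yu_Representability}. The level-wise bookkeeping you flag as a potential obstacle is precisely what the paper dispatches with the observation that filtered colimits commute with fiber products, so your expanded argument is just a more explicit version of the same proof.
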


\begin{proof}
    By the universal property of filtered colimits together with the fact that these preserve fiber products we reduce the statement to the case where $Y \in \anNil_{/X}$ and thus $Y' \in \anNil_{/S}$, in which
    case the statement follows immediately by the universal property of the relative analytic cotangent complex.
\end{proof}

\begin{cor} \label{cor:universal_property_of_relative_cotangent_complex_for_morphisms_between_analytic_FMP}
    Let $f \colon Z \to X$ be a morphism in the \infcat $\dAnk$. Suppose we are given analytic formal moduli problems
        \[
            f \colon Y \to X \quad \mathrm{and} \quad g \colon \tZ \to Z
        \]
    together with a commutative diagram
        \[
        \begin{tikzcd}
            \tZ \ar{r}{s} \ar{d} & Y \ar{d} \\
            Z \ar{r}{f} & X,  
        \end{tikzcd}
        \]
    in the \infcat $\dAnSt_k$. Let $d \colon \bbL\an_Z \to \cF[1]$, where $\cF \in \Coh^+(Z)^{\ge 0}$, be an analytic derivation corresponding to a square-zero extension morphism
    $Z \to Z'$ in the \infcat $\dAnk$. Assume that $d$ induces a square-zero extension
    $\widetilde{d} \colon \bbL\an_{\tZ} \to \cF[1]$
    and let $h \colon \tZ \hookrightarrow \tZ'$ be the induced square-zero extension in $\dAnSt_k$ such that we have a cartesian diagram
        \[
        \begin{tikzcd}
            \tZ \ar{r} \ar{d} & \tZ' \ar{d} \\
            Z \ar{r} & Z'  
        \end{tikzcd}
        \]
    in the \infcat $\dAnSt_k$. Then the space of factorizations
        \[
            s \colon \tZ \to \tZ' \to Y,  
        \]
    is naturally equivalent to the space of factorizations
        \[
            \widetilde{d} \colon \bbL\an_{\tZ} \to  \bbL\an_{\tZ/ Y} \to \cF[1]  ,
        \]
    in the \infcat $\pro(\Coh^+(\tZ))$.
\end{cor}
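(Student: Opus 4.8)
The plan is to reduce the assertion to the universal property of the usual relative analytic cotangent complex for honest morphisms of derived $k$-analytic spaces, following verbatim the strategy of \cref{cor:construction_of_square_zero_extensions_for_analytic_FMP_using_univ_property_of_cotangent_complex}, by exploiting the ind-inf presentations of the formal moduli problems involved. First I would invoke \cref{cor:formal_moduli_problems_over_X_are_ind_inf_objects} to present both $Y$ and $\tZ$ as filtered colimits of honest nil-embeddings
\[
    Y \simeq \colim_{S \in (\anNil^\cl_{/X})_{/Y}} S, \qquad \tZ \simeq \colim_{T \in (\anNil^\cl_{/Z})_{/\tZ}} T,
\]
where each $S$ (resp. $T$) is a derived $k$-analytic space nil-isomorphic to $X$ (resp.\ $Z$), and where both indexing \infcats are filtered by \cref{prop:required_conditions_for_formal_moduli_problems}(3). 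The key compatibility, which I would extract from \cref{const:square_zero_extensions_of_analytic_FMP_over_X}, is that the induced square-zero extension $h \colon \tZ \hookrightarrow \tZ'$ is the levelwise colimit of honest square-zero extensions, i.e.\ $\tZ' \simeq \colim_T \oT$, where $T \to \oT$ is the square-zero extension of derived $k$-analytic spaces classified by the restriction of $\widetilde d$ to $T$.

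Second, I would show that both sides of the claimed equivalence are computed compatibly over this indexing. For the left-hand side, the space of factorizations $s \colon \tZ \to \tZ' \to Y$ is the space of lifts of the fixed morphism $s$ along $h$; since $\tZ'$ is the filtered colimit of the $\oT$ and $Y$ is convergent (\cref{defin:analytic_formal_problems_over_X}(2)), this is computed as a limit over $T$ of the spaces of factorizations $T \to \oT \to Y$, and each such, using the ind-presentation of $Y$, as a filtered colimit over $S \in (\anNil^\cl_{/X})_{T//Y}$ of factorization spaces $T \to \oT \to S$ of honest derived $k$-analytic spaces. For the right-hand side, the definition of the relative pro-cotangent complex $\bbL\an_{\tZ/Y}$ together with the formula for mapping spaces in pro-\infcats expresses the space of factorizations of $\widetilde d$ through $\bbL\an_{\tZ/Y}$ as the same limit-then-colimit over the pairs $(T,S)$.

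Third, I would match the two descriptions at the level of a single pair $(T,S)$. There $T$ and $S$ are honest derived $k$-analytic spaces, $T \to \oT$ is an honest square-zero extension, and the statement reduces to the assertion that factorizations $T \to \oT \to S$ compatible with the structural data are naturally equivalent to factorizations $\bbL\an_{T} \to \bbL\an_{T/S} \to \cF[1]$ of the relevant derivation in $\Coh^+(T)$. This is precisely the universal property of the relative analytic cotangent complex of \cite[\S 5.4]{Porta_Yu_Representability}, which is the same input used in \cref{lem:pro_cot_complex_classifies_nil_extensions_for_analytic_moduli_problems}. Reassembling the equivalences through the filtered colimits and limits is legitimate because filtered colimits commute with the finite limits (the fiber products over the fixed derivation) appearing in the mapping-space descriptions, and this yields the desired natural equivalence.

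I expect the main obstacle to be the bookkeeping of the two indexing \infcats and the verification that the square-zero extension $\tZ \to \tZ'$ genuinely is the levelwise colimit of the honest extensions $T \to \oT$, so that the limit-colimit reorganizations on the two sides match term by term. This is exactly the point where the compatibility of \cref{const:square_zero_extensions_of_analytic_FMP_over_X} with the ind-presentation, together with convergence from \cref{defin:analytic_formal_problems_over_X}(2), must be used with care; once this is in place, the per-pair comparison is formal.
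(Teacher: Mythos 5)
Your proposal is correct and takes essentially the same route as the paper: the paper's own proof handles the base case $\tZ \in \anNil_{/Z}$, $Y \in \anNil_{/X}$ via the universal property of the relative analytic cotangent complex and then reduces the general case by a standard argument with ind-objects in $\dAnSt_k$, which is precisely the reduction you describe. Your write-up simply makes explicit the bookkeeping (the levelwise presentation of $\tZ \to \tZ'$ by honest square-zero extensions and the limit-colimit matching of the two factorization spaces) that the paper leaves implicit.
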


\begin{proof}
    The statement holds true in the case where $\tZ \in \anNil_{/Z}$ and $Y \in \anNil_{/X}$, by the universal property of the relative cotangent complex.
    The general case is reduced to the previous one by a standard argument with ind-objects in $\dAnSt_k$.
\end{proof}

\subsection{Non-archimedean nil-descent for almost perfect complexes}
In this \S, we prove that the \infcat $\Coh^+(X)$, for $X \in \dAnk$ satisfies nil-descent with respect to morphims $Y \to X$, which exhibit
the former as an analytic formal moduli problem over $X$.

\begin{prop} \label{prop:nil_descent_for_Coh^+}
    Let $f \colon Y \to X$, where $X \in \dAnk$ and $Y \in \anFMP_{/X}$. Consider the \v{C}ech nerve
    $Y^\bullet \colon \mathbf \Delta \op \to \dAnSt_k$ associated to $f$.
    Then the natural functor
        \[
            f_\bullet^* \colon \Coh^+(X) \to \lim_{\bDelta }(\Coh^+(Y^\bullet)),  
        \]
    is an equivalence of \infcats.
\end{prop}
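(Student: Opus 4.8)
The plan is to deduce the equivalence from a comonadic (Barr--Beck--Lurie) descent argument, whose decisive input is the nil-descent theorem of Halpern-Leistner and Preygel, \cite[Theorem 3.3.1]{preygel_Leistner_Mapping_stacks_properness}. The point to keep in mind is that, contrary to the ind-coherent situation of \cite[\S 7.2]{Gaitsgory_Study_I}, we must work with the full $\Coh^+$ and with the $*$-pullback functoriality, so the self-dual formalism of ind-coherent sheaves is unavailable and it is precisely the completeness framework of Halpern-Leistner--Preygel that makes the relevant totalization converge.

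First I would reduce to the case $X \in \dAfd_k$. The functor $\Coh^+$ satisfies admissible descent by \cite[Theorem 3.7]{Antonio_Porta_Nonarchimedean_Hilbert}, and the formation of the \v{C}ech nerve $Y^\bullet$ is compatible with admissible localization on $X$: since nil-isomorphisms are affine morphisms (\cref{lem:nil-isos_are_affine_morphisms}), the base change of $Y$ and of each of its fibre powers along an admissible open $U \hookrightarrow X$ is again an analytic formal moduli problem over $U$, by the compatibility of the ind-presentation with admissible localization (\cref{lem:affine_morphisms_are_compatible_with_Zariski_localization_on_the_base}) together with the stability of filtered colimits under fibre products in $\dAnSt_k$. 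Consequently both sides of the asserted equivalence satisfy admissible descent, and the general case follows from the affinoid one.

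Next, using \cref{cor:formal_moduli_problems_over_X_are_ind_inf_objects} I would present $Y \simeq \colim_{i \in I} Z_i$ as a filtered colimit of nil-isomorphisms $Z_i \to X$ of derived $k$-affinoid spaces, and identify the fibre powers $Y^{\times_X (n+1)} \simeq \colim_{i \in I} Z_i^{\times_X (n+1)}$, where each $Z_i^{\times_X (n+1)}$ is again a derived $k$-affinoid space by \cref{lem:nil-isos_are_affine_morphisms} and \cref{lem:pullbacks_of_derived_affinoid_spaces_along_finite_morphisms_are_algebraic}. Passing to coherent complexes this exhibits $\Coh^+(Y^{\times_X(n+1)}) \simeq \lim_{i \in I} \Coh^+\big(Z_i^{\times_X (n+1)}\big)$. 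With this presentation I would then verify the hypotheses of comonadic descent for $f^* \colon \Coh^+(X) \to \Coh^+(Y)$. The right adjoint $f_*$ exists by assembling over $I$ the right adjoints furnished by \cref{lem:f^*_admits_a_right_adjoint_whenever_f_is_nil-iso} for the nil-isomorphisms $Z_i \to X$. The functor $f^*$ is conservative, since its composite with each projection $\Coh^+(Y) \to \Coh^+(Z_i)$ is the pullback $Z_i^*$ along a nil-isomorphism, which is conservative by a derived Nakayama argument along the nilpotent ideal of definition, exactly as in the proof of \cref{lem:derived_k_analytic_space_whose_reduction_is_affinoid_is_also_affinoid}. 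Finally, the Beck--Chevalley right-adjointability for the squares of the \v{C}ech nerve reduces, through the ind-presentation, to the affinoid statement of \cref{lem:Beck_Chevalley_natural_transformation_equivalence}.

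With these conditions established, \cite[Theorem 3.3.1]{preygel_Leistner_Mapping_stacks_properness} applies and shows that the augmented cosimplicial diagram $\Coh^+(X) \to \Coh^+(Y^\bullet)$ is a limit diagram, which is precisely the claim. The hard part, I expect, is to match the completeness hypotheses of \cite[Theorem 3.3.1]{preygel_Leistner_Mapping_stacks_properness} to the formal morphism $f \colon Y \to X$: one must ensure that completeness is preserved in passing from the individual nil-isomorphisms $Z_i \to X$ to the filtered colimit $Y$, and that the interaction between the totalization over $\bDelta$ and the filtered colimit over $I$ is controlled, so that limit and colimit may be exchanged. Carrying this out in the $*$-functorial $\Coh^+$-setting, rather than the $!$-functorial ind-coherent one, is the genuine novelty of the statement and the step where I anticipate the most care will be required.
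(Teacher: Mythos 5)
Your overall architecture is close to the paper's: both proofs reduce to the affinoid case via admissible descent for $\Coh^+$ and use the ind-presentation of $Y$ from \cref{cor:formal_moduli_problems_over_X_are_ind_inf_objects}, and both take \cite[Theorem 3.3.1]{preygel_Leistner_Mapping_stacks_properness} as the decisive input. But your endgame has a genuine gap, and you have in fact flagged it yourself. The Halpern-Leistner--Preygel theorem is a statement about nil-isomorphisms of Noetherian derived rings (equivalently, after Tate acyclicity, of derived $k$-affinoid spaces); it is not a Barr--Beck--Lurie criterion into which you can feed the conservativity, adjoint-existence and Beck--Chevalley conditions you verify, and it does not apply as stated to the formal morphism $f \colon Y \to X$. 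If you instead ran comonadic descent yourself for the formal $f$, the step that makes the totalization converge --- precisely the ``completeness hypotheses'' you defer to the end --- is the entire content of the HLP theorem, so your plan as written would amount to reproving it in the formal setting. The comonadicity verification in your third paragraph is therefore both unjustified as a bridge to the citation and superfluous to the actual argument.

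The repair is a one-line observation that dissolves your anticipated ``hard part''. By definition $\Coh^+(Y^{[n]}) \simeq \lim_{Z \in (\anNil^\cl_{/X})_{/Y}} \Coh^+(Z^{[n]})$ is a \emph{cofiltered limit} of \infcats (the filtered colimit of spaces becomes a limit of categories), so the interchange you worry about is between $\lim_{\bDelta}$ and another limit: limits commute with limits in $\Catinf$, and there is no completeness issue to control. This is exactly the paper's move: it swaps the totalization with the cofiltered limit over the ind-presentation, reducing at once to a single nil-isomorphism $Z \to X$ of derived $k$-analytic spaces; then admissible descent together with \cref{lem:affine_morphisms_are_compatible_with_Zariski_localization_on_the_base} reduces to the case of derived $k$-affinoid spaces, where Tate acyclicity identifies $f^*$ with base change along a nil-isomorphism of Noetherian derived $k$-algebras $B \to A$, and \cite[Theorem 3.3.1]{preygel_Leistner_Mapping_stacks_properness} applies verbatim --- no comonadicity check, no extension of HLP to formal objects. (Your auxiliary claims --- conservativity of pullback along a nil-isomorphism, assembling $f_*$ over the ind-presentation via \cref{lem:f^*_admits_a_right_adjoint_whenever_f_is_nil-iso}, Beck--Chevalley from \cref{lem:Beck_Chevalley_natural_transformation_equivalence} --- are all fine in themselves; they are simply not the route the statement needs.)
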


\begin{proof}
    Consider the natural equivalence of derived $k$-analytic stacks
        \[
            Y \simeq \colim_{Z \in (\anNil^\cl_{/X})_{/Y}}  Z.
        \]
    Then, by definition one has a natural equivalence
        \[
            \Coh^+(Y) \simeq \lim_{Z \in (\anNil^\cl_{/X})_{/Y}} \Coh^+(Z),  
        \]
    of \infcats. In particular, since totalizations commute with cofiltered limits in $\Catinf$, it follows that we can suppose from
    the beginning that $Y \simeq Z$ for some $Z \in \anNil_{/X}$. In this case, the morphism $f \colon Y \to X$ is affine. In particular, the fact that
    $\Coh^+(-)$ satisfies descent along admissible open immersions, combined with \cref{lem:affine_morphisms_are_compatible_with_Zariski_localization_on_the_base} we further reduce ourselves
    to the case where both $X$ and $Y$ are derived $k$-affinoid spaces. In this case, by Tate acyclicity
    theorem it follows that letting $A \coloneqq \Gamma(X, \cO_X \alg)$ and $B \coloneqq \Gamma(Y, \cO_Y\alg),$ the pullback functor $f^*$ can be identified with
    the usual base change functor
        \[
            \Coh^+(A) \to \Coh^+(B).  
        \]
    In this case, it follows that $B$ is nil-isomophic to $A$. Moreover, since the latter are noetherian derived $k$-algebras
    the statement of the proposition follows due to \cite[Theorem 3.3.1]{preygel_Leistner_Mapping_stacks_properness}.
\end{proof}

We now deduce \emph{pseudo-pro-nil-descent} for moprhisms of the form $Y \to X$, which exhibit $Y$ as an analytic formal moduli problem over $X$:

\begin{cor} \label{cor:pseudo_nil-descent_for_pro_Coh^+}
    Let $X \in \dAnk$ and $f \colon Y \to X$ a morphism in $\dAnSt_k$ which exhibits $Y$ as an analytic formal moduli problem over $X$.
    Then the natural functor 
        \[
            f_\bullet^* \colon \pro(\Coh^+ (X)) \to \lim_{\bDelta}(\pro^\mathrm{ps}\Coh^+(Y^\bullet/X)),
        \]
    is fully faithful, where $Y^\bullet$ denotes the \v{C}ech nerve associated to the morphism $f$. Moreover, the essential image of the functor $f_\bullet^*$ identifies canonically with the full subcategory
        \[
            \lim_{\bDelta}'\pro^\mathrm{ps}(\Coh^+(X)) \subseteq \lim_{\bDelta} \pro^\mathrm{ps}(\Coh^+(Y^\bullet/ X)),
        \]
    spanned by those $\{\cF_{i, [n]} \}_{i \in I\op, [n]} \in \lim_{\bDelta}(\pro^\mathrm{ps}(\Coh^+(Y^\bullet/X)))$, for some filtered \infcat $I$, which belong to the essential image of the natural
    functor
        \[
            \lim_{\bDelta} \Fun(I\op, \Coh^+(Y^\bullet/X)) \to \lim_{\bDelta} \pro^\mathrm{ps}(\Coh^+(Y^\bullet/X)) . 
        \]
\end{cor}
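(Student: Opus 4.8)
The plan is to deduce the statement formally from the $\Coh^+$-descent of \cref{prop:nil_descent_for_Coh^+} by passing to pro-objects, keeping careful track of the order of the various (co)limits. First I would record that, by \cref{prop:nil_descent_for_Coh^+}, the augmentation induces an equivalence $\Coh^+(X)\simeq\lim_{\bDelta}\Coh^+(Y^\bullet)$, and that each term $Y^n$ of the \v{C}ech nerve is again an analytic formal moduli problem over $X$ (being a finite fibre product of such over $X$), so that $\pro^{\mathrm{ps}}(\Coh^+(Y^n))=\lim_{Z\in(\anNil^\cl_{/X})_{/Y^n}}\pro(\Coh^+(Z))$ by definition. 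Thus the target is the double limit $\lim_{\bDelta}\lim_{Z}\pro(\Coh^+(Z))$ indexed by the total category of pairs $([n],Z)$, and $f_\bullet^*$ is the functor induced by the pullbacks $\Coh^+(X)\to\Coh^+(Z)$ together with the universal property of pro-completion. Using the descent equivalence I identify the source with $\pro(\lim_{\bDelta}\Coh^+(Y^\bullet))$, so that the entire content is the comparison between $\pro$ of a totalization and the corresponding pseudo-limit of the $\pro$'s.

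For full faithfulness I would compute mapping spaces. Writing $P=\{\cF_i\}_{i\in I}$ and $Q=\{\cG_j\}_{j\in J}$ for two pro-objects of $\Coh^+(X)$, the source mapping space is $\lim_{j}\colim_{i}\Map_{\Coh^+(X)}(\cF_i,\cG_j)$, whereas the target mapping space unwinds, after commuting the three limits $\lim_j$, $\lim_{\bDelta}$ and $\lim_Z$ past one another, to $\lim_{j}\lim_{\bDelta}\lim_{Z}\colim_{i}\Map_{\Coh^+(Z)}(\cF_i|_Z,\cG_j|_Z)$. Since $\Coh^+(X)\simeq\lim_{\bDelta}\lim_Z\Coh^+(Z)$, the source rewrites as $\lim_j\colim_i\lim_{\bDelta}\lim_Z\Map_{\Coh^+(Z)}(\cF_i|_Z,\cG_j|_Z)$, so the only discrepancy between source and target is the position of the filtered colimit $\colim_i$ relative to the descent totalization $\lim_{\bDelta}\lim_Z$. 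When the source pro-object is constant this colimit disappears and full faithfulness is immediate; in general I would reduce to the case where $X$, $Y$ and all the $Z$ are derived $k$-affinoid — exactly as in the proof of \cref{prop:nil_descent_for_Coh^+}, via admissible descent of $\Coh^+$ together with \cref{lem:affine_morphisms_are_compatible_with_Zariski_localization_on_the_base} — and there invoke the noetherianity of the associated derived $k$-algebras to secure the convergence that lets $\colim_i$ commute with the totalization on these particular mapping spaces.

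The essential image is then a matter of bookkeeping. An object of $\pro(\Coh^+(X))=\pro(\lim_{\bDelta}\Coh^+(Y^\bullet))$ is a filtered system $\{\cF_i\}_{i\in I}$ of objects of $\Coh^+(X)$, i.e.\ a functor $I\op\to\lim_{\bDelta}\Coh^+(Y^\bullet)$; exchanging $\Fun(I\op,-)$ with the totalization identifies this datum with an object of $\lim_{\bDelta}\Fun(I\op,\Coh^+(Y^\bullet))$, and its image under $f_\bullet^*$ is by construction the corresponding pseudo-pro family. Hence the image lands in, and by full faithfulness is equivalent onto, the full subcategory $\lim_{\bDelta}'\pro^{\mathrm{ps}}(\Coh^+(X))$ of families uniformly indexed by a single filtered $I$, which is precisely the essential image of $\lim_{\bDelta}\Fun(I\op,\Coh^+(Y^\bullet/X))\to\lim_{\bDelta}\pro^{\mathrm{ps}}(\Coh^+(Y^\bullet/X))$. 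The main obstacle is the interchange highlighted above: filtered colimits do not commute with cosimplicial totalizations in general, which is exactly why ordinary pro-descent fails and one must pass to the pseudo-pro-categories and restrict to the uniformly-indexed essential image; isolating the convergence that rescues this interchange in the noetherian affinoid case is the one genuinely non-formal point.
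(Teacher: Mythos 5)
Your proposal is correct in outline and, on the essential-image half, takes a genuinely different and arguably cleaner route than the paper. The paper does not exchange $\Fun(I\op,-)$ with the totalization directly: it constructs an explicit right adjoint $f_{\bullet,*}$ to $f_\bullet^*$ (the pushforward exists along a nil-isomorphism by \cref{lem:f^*_admits_a_right_adjoint_whenever_f_is_nil-iso} and is extended levelwise to pro-objects), verifies by a mapping-space computation that the pair restricts to an adjunction between $\pro(\Coh^+(X))$ and the full subcategory $\lim_{\bDelta}'$, and then proves conservativity of the restricted $f_{\bullet,*}$ by descending, for each fixed index $i$, the cartesian family $\{\cG_{i,[n]}\}_{[n]}$ through \cref{prop:nil_descent_for_Coh^+}. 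Your identification $\lim_{\bDelta}\Fun(I\op,\Coh^+(S^\bullet))\simeq\Fun(I\op,\lim_{\bDelta}\Coh^+(S^\bullet))\simeq\Fun(I\op,\Coh^+(X))$ packages exactly the same levelwise descent and yields essential surjectivity onto $\lim_{\bDelta}'$ without ever producing the adjoint; what the paper's detour buys is the functor $f_{\bullet,*}$ itself, which reappears in later arguments. Your reduction steps (first to $Y=S\in\anNil_{/X}$ using the definition of $\pro^{\mathrm{ps}}$ and commutation of totalizations with cofiltered limits, then to the affinoid case via admissible descent of $\Coh^+$ and \cref{lem:affine_morphisms_are_compatible_with_Zariski_localization_on_the_base}) coincide with the paper's.

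The one place your plan is thinner than it needs to be is full faithfulness. You correctly isolate the crux --- the position of $\colim_i$ relative to the totalization $\lim_{\bDelta}$ --- but ``invoke noetherianity to secure the convergence'' is not yet an argument: filtered colimits fail to commute with totalizations even for uniformly bounded-below coherent objects over a noetherian ring, because each homotopy group of a totalization receives contributions from unboundedly many cosimplicial degrees. What actually rescues the interchange is not noetherianity per se but the convergence mechanism behind \cite[Theorem 3.3.1]{preygel_Leistner_Mapping_stacks_properness}, the input to \cref{prop:nil_descent_for_Coh^+}: for a nil-isomorphism of noetherian derived $k$-algebras the augmented \v{C}ech tower of $f_*f^*$ on $\Coh^+$ is pro-constant. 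Granting this, the unit $\{\cG_j\}_j\to\lim_{[n]}\{f_{[n],*}f_{[n]}^*\cG_j\}_j$, with the limit formed in $\pro(\Coh^+(X))$ as the cofiltered limit of partial totalizations, is an equivalence of pro-objects, and full faithfulness of the left adjoint $f_\bullet^*$ is precisely the statement that this unit is an equivalence; your $\colim_i$/$\lim_{\bDelta}$ interchange on mapping spaces is the same assertion in dual form. This pro-constancy is also what the paper is implicitly leaning on when it says full faithfulness ``follows readily'' from \cref{prop:nil_descent_for_Coh^+}, so the terseness is shared; but as written your appeal to noetherian convergence would not compile into a proof, whereas making the pro-constancy of the \v{C}ech tower explicit closes the argument.
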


\begin{proof}
    By the very definition of the \infcat $\pro^\mathrm{ps}(\Coh^+(Y))$, we reduce ourselves as in
    \cref{prop:nil_descent_for_Coh^+} to the case where $Y = S$, for some $S \in \anNil_{/X}$. In this case, it follows readily from
    \cref{prop:nil_descent_for_Coh^+} that the natural functor
        \[
            f_\bullet^* \colon \pro(\Coh^+(X)) \to \lim_{\bDelta} \pro^\mathrm{ps}(\Coh^+(S^\bullet/X)),  
        \]
    is fully faithful. We now proceed to prove the second claim of the corollary. Notice that,
    \cref{lem:f^*_admits_a_right_adjoint_whenever_f_is_nil-iso} implies that there exists a well defined right adjoint
        \[
            f_* \colon \Coh^+(S) \to \Coh^+(X),  
        \]
    to the usual pullback functor $f^* \colon \Coh^+(X) \to \Coh^+(S)$. We can extend the right adjoint $f_*$ to a well defined functor
        \[
            f_* \colon \pro(\Coh^+(S)) \to \pro(\Coh^+(X)),  
        \]
    which commutes with cofiltered limits. For this reason, we have a well defined functor
        \[
            f_{\bullet, *} \colon \lim_{\bDelta}(\pro(\Coh^+(S^\bullet/X))) \to \pro(\Coh^+(X)),  
        \]
    which further commutes with cofiltered limits. Denote by $\cD \coloneqq \lim_{\bDelta}(\pro(\Coh^+(S^\bullet/X)))$.
    We claim that $ f_{\bullet, *}$ is a right adjoint to $f_\bullet^*$ above.
    Indeed, given
    any $\{ \cF_i \}_{i \in I\op}  \in \pro(\Coh^+(X))$ and $\{ \cG_{j, [n]} \}_{j \in J_{[n]} \op, [n] \in \bDelta \op} \in \lim_{\bDelta}(\pro(\Coh^+(S^\bullet/X)))$,
    we compute
        \begin{align*}
            \Map_{\cD}( f_\bullet^*( \{ \cF_i \}_{i \in I\op}), \{ \cG_{j, [n]} \}_{j \in J\op_{[n]}, [n] \in \bDelta \op}) & \simeq \lim_{[n] \in \bDelta} \Map_{\pro^\mathrm{ps}(\Coh^+(S^{[n]}))} \bigg( \{f_{[n]}^\bullet(\cF_i) \}_{i \in I\op},  \{ \cG_{i, [n]} \}_{i \in I\op_{[n]}} \bigg) \\
            \lim_{[n] \in \bDelta \op} \lim_{j \in J\op_{[n]}} \mathrm{colim}_{i \in I} \Map_{\Coh^+(S^{[n]})} ( f_{[n]}^*(\cF_i),   \cG_{i, [n]}) & \simeq \lim_{[n] \in \bDelta} \lim_{j \in J\op_{[n]}} \colim_{i \in I} \Map_{\Coh^+(X)} (\cF_i, f_{[n], *}(\cG_{i, [n]})) \\
            \lim_{[n] \in \bDelta \op} \Map_{\pro(\Coh^+(X))}( \{\cF_i\}_{i \in I\op}, \{f_{[n], *}(\cG_{i, [n]}) \}_{i \in I_{[n]} \op}) & \simeq \Map_{\pro(\Coh^+(X))}( \{\cF_i\}_{i \in I\op}, \lim_{[n] \in \bDelta} \{f_{[n], *}(\cG_{i, [n]}) \}_{i \in I_{[n]} \op}),
        \end{align*}
    as desired. 
    It is clear that the functor $f_{\bullet
    }^*$ above factors through the full subcategory
        \[
            \lim_{\bDelta}'(\pro^\mathrm{ps}( \Coh^+(S^\bullet / X)))  \subseteq \lim_{\bDelta}(\pro(\Coh^+(S^\bullet/X))).
        \]
    For this reason, the pair $(f_\bullet^*, f_{\bullet, *})$ restricts to a well defined adjunction
        \[
            f^*_{\bullet} \colon \pro(\Coh^+(X))  \rightleftarrows  \lim_{\bDelta}'(\pro(\Coh^+(S^\bullet/ X))) \colon f_{\bullet, *}.  
        \]
    In order to conclude, we will show that the functor
        \[
            f_{\bullet, *} \colon  \lim_{\bDelta}'(\pro(\Coh^+(S^\bullet/X))) \to \pro(\Coh^+(X)) ,  
        \]
    is conservative. Since both the \infcats $\pro(\Coh^+(X))$ and $\lim_{\bDelta }'(\pro^\mathrm{ps}(\Coh^+(YS\bullet/X)))$ are stable, we are reduced to prove that given any
        \[
            \{ \cG_{i, [n]} \}_{i \in I\op} \in \lim_{\bDelta}'(\pro^\mathrm{ps}(\Coh^+(S^\bullet/X))),  
        \]
    such that 
        \begin{equation} \label{eq:conservativity_of_lim_f_bullet_*}
            \lim_{[n] \in \bDelta } f_{\bullet, *}( \{ \cG_{i, [n]} \}_{i \in I\op})  \simeq 0,
        \end{equation}
    we necessarily have
        \[
            \{ \cG_{i, [n]} \}_{i \in I\op} \simeq 0  ,
        \]
    in $\lim_{\bDelta}'(\pro^\mathrm{ps}(\Coh^+(S^\bullet / X)))$.
    Assume then \eqref{eq:conservativity_of_lim_f_bullet_*}. Under our hypothesis, for each index $i \in I$, the object
    $\{ \cG_{i, [n]} \}_{ [n] \in \bDelta}$ satisfies descent datum and thanks to \cref{prop:nil_descent_for_Coh^+} it produces a uniquely well defined object
        \[
            \cG_i \in \pro(\Coh^+(X)),  
        \]
    such that for every $[n] \in \bDelta$, one has a natural equivalence of the form
        \begin{align*}
            f_{[n]}^* (\cG_i) & \simeq \cG_{i , [n]} \\
                              & \in \Coh^+(S^{[n]}).
        \end{align*}
    We deduce then that  
        \begin{align*}
            f^*_{\bullet}(\{ \cG_i \}_{i \in I\op}) & \simeq \{\cG_{i, [n]} \}_{i \in I\op, [n]} \\
                                                    & \simeq 0,  
        \end{align*}
    in $\lim'_{\bDelta} \pro^\mathrm{ps}(\Coh^+(S^\bullet/ X))$, as desired.
\end{proof}

We now use the pseudo-pro-nil-descent for $\pro(\Coh^+(X))$ to compute relative analytic cotangent complexes of
analytic formal moduli problems over $X$:

\begin{cor} \label{cor:analytic_relative_cotangent_complex_defines_cartesian_sections_in_the_totalization}
    Let $f \colon Z \to X$ be a morphism in $\dAnk$. Suppose we are given a pullback square
        \[
        \begin{tikzcd}
            \tZ \ar{r}{h} \ar{d} & Y \ar{d} \\
            Z \ar{r} & X,
        \end{tikzcd}
        \]
    in the \infcat $\dAnSt_k$, where $( Y \to X) \in \anFMP_{/X}$ and $(g \colon\tZ \to Z) \in \anFMP_{/Z}$. Then the lax-object
        \[
            \{ \bbL\an_{\tZ^{[n]} / Y^{[n]}} \} \in \underset{\mathrm{lax}, \bDelta}{\lim}(\pro^\mathrm{ps}(\Coh^+(\tZ^\bullet/Z))),  
        \]
    defines a cartesian section
        \[
            \{ \bbL\an_{\tZ^{[n]}/ Y^{[n]}} \} \in \lim_{\bDelta}\pro(\Coh^+((\tZ)^\bullet /Z)),
        \]
    which belongs to the essential image of the natural functor
        \[
            g^*_\bullet \colon \pro(\Coh^+(Z)) \to \lim_{\bDelta}\pro(\Coh^+(\tZ^\bullet / Z)).
        \]
\end{cor}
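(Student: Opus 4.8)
The plan is to exhibit the candidate section as the image under $g^*_\bullet$ of the single relative analytic cotangent complex of $f$, using base change. Since $f\colon Z\to X$ is a morphism in $\dAnk$ it admits a relative analytic cotangent complex $\bbL\an_{Z/X}\in\Coh^+(Z)$, which we regard as a constant object of $\pro(\Coh^+(Z))$. The key geometric observation is that, because the square in the statement is cartesian and the formation of \v{C}ech nerves commutes with base change, for every $[n]\in\bDelta$ one has a natural identification $\tZ^{[n]}\simeq Z\times_X Y^{[n]}$, under which the face map $\tZ^{[n]}\to Y^{[n]}$ is the base change of $f$ along $Y^{[n]}\to X$ and the projection $p_n\colon\tZ^{[n]}\to Z$ is the base change of $Y^{[n]}\to X$ along $f$. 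I therefore want to produce a natural equivalence
\[
    \bbL\an_{\tZ^{[n]}/Y^{[n]}}\simeq p_n^*\,\bbL\an_{Z/X},
\]
compatible with the cosimplicial structure on $\tZ^\bullet$.

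To establish this base-change equivalence I would first reduce to the non-formal situation exactly as in the proofs of \cref{prop:nil_descent_for_Coh^+} and \cref{cor:pseudo_nil-descent_for_pro_Coh^+}: writing $Y\simeq\colim_{S\in(\anNil^\cl_{/X})_{/Y}}S$ and using that the pullback functors and the relative cotangent complexes of analytic formal moduli problems are assembled from those of the $S\in\anNil_{/X}$, it suffices to treat the case $Y=S$ with $(S\to X)\in\anNil_{/X}$. In that case \cref{lem:nil-isos_are_affine_morphisms} (1) guarantees that $\tZ=Z\times_X S\in\anNil_{/Z}$ and that each $\tZ^{[n]}=Z\times_X S^{[n]}$ is again an honest derived $k$-analytic space, so the objects $\bbL\an_{\tZ^{[n]}/S^{[n]}}\in\Coh^+(\tZ^{[n]})$ are genuine coherent complexes. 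The desired identification $\bbL\an_{\tZ^{[n]}/S^{[n]}}\simeq p_n^*\bbL\an_{Z/X}$ is then the standard base-change property of the relative analytic cotangent complex for the cartesian square above; concretely it follows from the universal property recorded in \cref{cor:universal_property_of_relative_cotangent_complex_for_morphisms_between_analytic_FMP}, since relative square-zero deformations of $\tZ^{[n]}$ over $Y^{[n]}$ are precisely the $p_n$-pullbacks of relative square-zero deformations of $Z$ over $X$, and hence the corepresenting objects correspond under $p_n^*$.

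Granting the equivalence, the conclusion is essentially formal. The equivalences $\bbL\an_{\tZ^{[n]}/Y^{[n]}}\simeq p_n^*\bbL\an_{Z/X}$ are natural in $[n]\in\bDelta$: for $\alpha\colon[m]\to[n]$ the induced transition map carries $\bbL\an_{\tZ^{[m]}/Y^{[m]}}$ to $\bbL\an_{\tZ^{[n]}/Y^{[n]}}$ via the canonical pullback, and under the base-change equivalences this is precisely the transitivity isomorphism for the composite $\tZ^{[n]}\to\tZ^{[m]}\to Z$. Hence the lax section $\{\bbL\an_{\tZ^{[n]}/Y^{[n]}}\}$ is identified with $g^*_\bullet(\bbL\an_{Z/X})$, the image of the single object $\bbL\an_{Z/X}\in\pro(\Coh^+(Z))$. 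As $g^*_\bullet$ factors through the cartesian sections, this shows at once that the section is cartesian and that it lies in the essential image of $g^*_\bullet$, which is what had to be proved.

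The main obstacle is the base-change step of the middle paragraph: one must know that the relative analytic cotangent complex satisfies base change along the cartesian square, and---more delicately---that this persists at the level of $\pro$-objects when $Y$ is a genuine formal moduli problem rather than an honest nil-isomorphism. I expect the reduction to $Y=S\in\anNil_{/X}$ (mirroring the earlier descent proofs) together with the universal property of \cref{cor:universal_property_of_relative_cotangent_complex_for_morphisms_between_analytic_FMP} to dispose of this, but verifying the compatibility of the base-change equivalences with the cosimplicial transition maps---so that they genuinely assemble into a morphism in $\lim_{\bDelta}\pro(\Coh^+(\tZ^\bullet/Z))$---is the point requiring the most care.
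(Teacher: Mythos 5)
Your argument is correct and rests on the same two ingredients as the paper's proof—the reduction to the case $Y=S\in\anNil_{/X}$ via the filtered colimit presentation of \cref{prop:required_conditions_for_formal_moduli_problems} (using that filtered colimits commute with finite limits in $\dAnSt_k$), and base change for the relative analytic cotangent complex—but you assemble them differently, and your assembly yields a slightly stronger conclusion. The paper proceeds in two separate steps: it first proves cartesianness by applying base change \emph{horizontally}, along the \v{C}ech transition maps, to get $p_{i,n,n+1}^*(\bbL\an_{\tZ^{[n]}/Y^{[n]}})\simeq \bbL\an_{\tZ^{[n+1]}/Y^{[n+1]}}$; it then verifies membership in the essential image abstractly, by identifying the pro-object levelwise with the family $\{\bbL\an_{\tS^{[n]}/S^{[n]}}\}_{\tS,S}$ (with $\tS=S\times_X Z$) and invoking the characterization of the essential image of $g^*_\bullet$ from \cref{cor:pseudo_nil-descent_for_pro_Coh^+} together with an inductive argument. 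You instead apply base change \emph{vertically} to the squares $\tZ^{[n]}\simeq Z\times_X Y^{[n]}$, identifying the whole lax section with $g^*_\bullet(\bbL\an_{Z/X})$ for the constant pro-object $\bbL\an_{Z/X}\in\Coh^+(Z)\subseteq\pro(\Coh^+(Z))$; this gives cartesianness and essential-image membership in one stroke, bypasses the inductive step, and moreover exhibits an explicit preimage, which the paper never names (though it is implicit: at level $[0]$, base change forces any preimage to be $\bbL\an_{Z/X}$). Two small remarks: the correct reference for base change of the relative analytic cotangent complex along cartesian squares of derived $k$-analytic spaces is \cite[Proposition 5.12]{Porta_Yu_Representability}, which is what the paper cites, rather than \cref{cor:universal_property_of_relative_cotangent_complex_for_morphisms_between_analytic_FMP} (your deformation-theoretic paraphrase is morally the content of that result, but the citation should be sharpened); and the cosimplicial coherence of the base-change equivalences, which you rightly flag as the delicate point, is treated no more carefully in the paper—both arguments check the identifications levelwise along the transition maps—so this is a point where a fully $\infty$-categorical treatment would want the base-change equivalence packaged functorially, but it is not a gap relative to the paper's own standard.
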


\begin{proof}
    We first show that the object
        \[\{ \bbL\an_{(\tZ)^{[n]}/ Y^{[n]}} \}  \in \underset{\mathrm{lax}, \bDelta}{\lim}\pro^\mathrm{ps}(\Coh^+((\tZ)^\bullet/ Z)),\]
    defines a cartesian section in
        \[
            \lim_{\bDelta} \pro(\Coh^+((\tZ)^\bullet / Z)).  
        \]
    In order to show this assertion, it is sufficient to prove for every $[n ] \in \bDelta$, that we have a natural equivalence
        \[
            p_{i, n, n+1}^*(\bbL\an_{\tZ^{[n]} / Y^{[n]}}) \simeq \bbL\an_{\tZ^{[n+1]}/ Y^{[n+1]}},  
        \]
    in the \infcat $\pro^\mathrm{ps}(\Coh^+((\tZ)^{[n+1]}))$, for each projection morphism
        \[
            p_{i, n, n+1} \colon \widetilde{Z}^{[n+1]} \to \widetilde{Z}^{[n]},
        \]
    in the \v{C}ech nerve associated to the morphism $g$.
    The latter claim is an immediate consequence of the base change property for the relative analytic
    cotangent complex whenever $Y \in \anNil_{/X}$ (and thus so do $\tZ \in \anNil_{/Z})$), c.f.
    \cite[Proposition 5.12]{Porta_Yu_Representability}. In the general case where $Y \in \anFMP_{/X}$, we reduce to the previous case
    by combining \cref{prop:required_conditions_for_formal_moduli_problems} with the observation that filtered colimits
    commute with finite limits in the \infcat $\dAnSt_k$. 

    We now prove the second assertion of the Corollary: thanks to the characterization of the essential image of natural functor
        \[
            g^*_{\bullet} \colon \pro(\Coh^+(Z)) \to \lim_{\bDelta} \pro^\mathrm{ps}(\Coh^+((\tZ)^\bullet /Z)),  
        \]
    provided in \cref{cor:pseudo_nil-descent_for_pro_Coh^+}, we are reduced to show that for each $[n] \in \bDelta$, we have a natural
    equivalence of pro-objects
        \[
            \bbL\an_{\tZ^{[n]} / Y^{[n]}} \simeq \{ \bbL\an_{\tS^{[n]}/ S^{[n]}} \}_{\tS \in (\anNil_{/ Z})_{/\tZ}, \ S \in (\anNil_{/X})_{/ Y}},
        \]
    where $\tilde{S} \coloneqq S \times_X Z$, for each $S \in (\anNil^\cl_{/ X})_{/ Y}$.The latter statement follows readily from the first part of the proof
    combined with a standard inductive argument.
\end{proof}

\subsection{Non-archimedean formal groupoids} Let $X \in \dAn_k$.
We start with the definition of the notion of \emph{analytic formal groupoids over $X$}:

\begin{defin}We denote by
    $\mathrm{AnFGrpd}(X)$ the full subcategory of the \infcat of simplicial objects
        \[
            \Fun( \bDelta \op, \anFMP_{/X}),
        \]
    spanned by those objects $F \colon \mathbf \Delta \op \to \anFMP_{/X}$ satisfiying the following requirements:
        \begin{enumerate}
            \item $F([0]) \simeq X$ ;
            \item For each $n \ge 1$, the morphism
                \[
                    F([n]) \to F([1]) \times_{F([0])} \dots \times_{F([0])} F([1])  ,
                \]
            induced by the morphisms $s^i \colon [1] \to [n]$ given by $(0,1) \mapsto (i, i+1)$, is an equivalence
            in $\anFMP_{/X}$.
        \end{enumerate}
    We shall refer to objects in $\anFGrpd(X)$ as \emph{analytic formal groupoids over $X$}.
\end{defin}

\begin{rem}
    Note that \cref{prop:required_conditions_for_formal_moduli_problems} implies that fiber products exist in $\anFMP_{/X}$. Therefore, the previous
    definition is reasonable.
\end{rem}

\begin{construction} \label{const:formal_completion_construction_Phi} Thanks to \cref{formal_moduli_under_induce_formal_moduli_over_via_base_change},
there exists a well defined functor $\Phi \colon \anFMP_{X/} \to \anFGrpd(X)$ given by the formula
    \[
        (X \xrightarrow{f} Y ) \in \anFMP_{X/ } \mapsto Y^\wedge_X \in \anFGrpd(X),
    \]
where we shall denote by $Y^\wedge_X \in \anFGrpd(X)$ the \v{C}ech nerve of the morphism $f$ computed in $\anFMP_{/ X}$.
The latter defines a formal groupoid over $X$ admitting
    \[
    \begin{tikzcd}
      \dots \ar[r, shift left=2] \ar[r, shift left=0.75] 
      \ar[r, shift left=-0.75] \ar[r, shift left=-2]
      & X \times_Y X \times_Y X \ar[r, shift left=-1] \ar[r, shift left=1] \ar[r] 
      & X \times_Y X \ar[r, shift left=-0.5ex] \ar[r, shift left=0.5ex] 
      & X 
    \end{tikzcd},
    \] 
as simplicial presentation. We shall denote the later simplicial object simply by $X^{\times_Y \bullet} \in \anFGrpd(X)$.
\end{construction}

\begin{rem}
    Let $(X \to Y) \in \anFMP_{X/ }$ and consider the corresponding analytic formal groupoid $Y^\wedge_X \in \anFMP(X)$. The diagonal morphism
        \[
            \Delta \colon X \to X \times_Y X,   
        \]
    in $\anFMP_{X/ }$ induces a well defined morphism
        \[
            X \to X^{\times_Y \bullet} ,  
        \]
    in  $\Fun(\bDelta \op, \anFMP_{X/ })$, where $X^{\times_Y \bullet}$ denotes the \v{C}ech nerve of the morphism $X \to Y$
    computed in the \infcat $\anFMP_{X/ }$.
\end{rem}

\begin{construction} \label{const:formal_classifying_stack_construction}
    Let $\cG \in \anFGrpd(X)$. Consider the \emph{classifying derived $k$-analytic stack}, $\rB_X(\cG)^{\mathrm{pre}} \in \dAnSt_k$, obtained as the geometric realization
    of the simplicial object $\cG$, regarded naturally as a functor
        \[
            \cG \colon \bDelta \op \to \dAnSt_k,
        \]
    via the natural composite $\anFMP_{/ X} \to (\dAnSt_k)_{/ X} \to \dAnSt_k$.
    Given any $Z \in \dAfd_k$, the \emph{space of $Z$-points of $\rB_X(\cG)^{\mathrm{pre}}$},
        \[
            \rB_X(\cG)^\mathrm{pre}(Z),  
        \]
    can be identified with the space whose objects correspond to the datum of:
        \begin{enumerate}
            \item A morphism $\tZ \to X$, where $\tZ \in \dAnSt_k$, such that
                \[
                    \tZ \simeq Z \times_{\rB_X(\cG)^\mathrm{pre}} X;  
                \]
            \item A morphism of groupoid-objects
                \[\tZ \times_Z \tZ \to \cG,\]
                in the \infcat $\dAnSt_k$.
        \end{enumerate}
    We now define $\rB_X(\cG) \to \rB^\mathrm{pre}_X(\cG)$ as the sub-object spanned by those connected components of $\rB_X(\cG)^\mathrm{pre}$ corresponding to
    morphisms $\tZ \to Z$ in \cref{const:formal_classifying_stack_construction} (1)
    which exhibit $\tZ \in \anFMP_{/Z}$. Denote by
        \[
            \mathrm{can} \colon \rB_X(\cG) \to \rB_X(\cG)^\mathrm{pre},  
        \]
    the canonical morphism. It follows from the construction that the natural morphism    
        \[
            X \to \rB_X(\cG)^\mathrm{pre},  
        \]
    factors as $X \to \rB_X(\cG) \xrightarrow{\mathrm{can}} \rB_X(\cG)^\mathrm{pre}$.
\end{construction}

We are able to prove that the object $\rB_X(\cG)$ admits a deformation theory:

\begin{lem} \label{lem:B_X(G)_is_formal}
    The natural morphism $X \to \rB_X(\cG)$ exhibits the latter as an object in the \infcat $\anFMP_{X/}$ of analytic formal moduli problems
    under $X$.
\end{lem}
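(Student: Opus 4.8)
The plan is to invoke \cref{prop:sufficient_conditions_for_a_prestack_to_be_equiv_to_an_analytic_FMP}, which reduces the assertion to showing that the structural morphism $X \to \rB_X(\cG)$ admits a deformation theory; that is, I must verify that $X$ and $\rB_X(\cG)$ are nilcomplete, that $\rB_X(\cG)$ is infinitesimally cartesian, and that $X \to \rB_X(\cG)$ carries a relative analytic pro-cotangent complex with pro-corepresentable derivation functor. A preliminary observation, used throughout, is that the \v{C}ech nerve of $X \to \rB_X(\cG)$ recovers $\cG$: by \cref{const:formal_classifying_stack_construction} the stack $\rB_X(\cG)$ is the geometric realization of $\cG$ along the formal locus, and since $\cG$ is a groupoid object all of whose terms already lie in $\anFMP_{/X}$, groupoid-effectivity in the ambient \inftopos of sheaves on $\dAfd_k$ identifies $X \times_{\rB_X(\cG)} \cdots \times_{\rB_X(\cG)} X \simeq \cG([n])$ for every $[n]$. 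In particular, since each $\cG([n])$ is a formal moduli problem over $X$ its reduction is $X_\red$ and the induced reduced simplicial object is essentially constant, whence $\rB_X(\cG)_\red \simeq X_\red$ and $X \to \rB_X(\cG)$ is a nil-isomorphism.

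Nilcompleteness and the infinitesimally cartesian property I would verify levelwise. The object $X$ is nilcomplete as a derived $k$-analytic space, and each $\cG([n]) \in \anFMP_{/X}$ is convergent and infinitesimally cartesian by conditions (2) and (3) of \cref{defin:analytic_formal_problems_over_X}. Evaluating $\rB_X(\cG)$ on a test object via \cref{const:formal_classifying_stack_construction} expresses its value as a limit over $\bDelta$ of the values of $\cG$ constrained to the formal descent data; passage to the formal locus is exactly what makes the relevant finite limits (the fibre over $\rt_{\le n}$ for convergence, and the square-zero pullback for the cartesian property) commute with the simplicial colimit, using that filtered colimits commute with finite limits in $\dAnSt_k$ as in the proof of \cref{formal_moduli_under_induce_formal_moduli_over_via_base_change}. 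This yields both the nilcompleteness of $\rB_X(\cG)$ and that it is infinitesimally cartesian.

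The construction of the relative pro-cotangent complex is the crux, and it is where the nil-descent machinery of \S2.5 enters. Applying \cref{cor:analytic_relative_cotangent_complex_defines_cartesian_sections_in_the_totalization} to the defining pullback squares of the simplicial levels of $\cG$, the relative analytic cotangent complexes of the levels assemble into a cartesian section of $\lim_{\bDelta} \pro^{\mathrm{ps}}(\Coh^+(\cG^\bullet/X))$. By the pseudo-pro-nil-descent of \cref{cor:pseudo_nil-descent_for_pro_Coh^+}, this cartesian section lies in the essential image of $g^*_\bullet \colon \pro(\Coh^+(X)) \to \lim_{\bDelta} \pro^{\mathrm{ps}}(\Coh^+(\cG^\bullet/X))$, and therefore descends to a well-defined object $\bbL\an_{X/\rB_X(\cG)} \in \pro(\Coh^+(X))$. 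That this pro-object pro-corepresents the derivation functor of $X \to \rB_X(\cG)$ then follows by combining \cref{lem:pro_cot_complex_classifies_nil_extensions_for_analytic_moduli_problems} levelwise with the descent and the explicit pro-presentation of \cref{rem:construction_of_analytic_cotangent_complex_of_analytic_FMP_over_X}.

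The main obstacle I anticipate is precisely this last step: one must check that the levelwise cotangent complexes are genuinely compatible, i.e.\ that the base-change comparison maps are equivalences so as to furnish a cartesian, and not merely lax, section, and that the resulting descent datum satisfies the hypotheses of \cref{cor:pseudo_nil-descent_for_pro_Coh^+} (namely that the pro-objects arise from honest $\Coh^+$-valued diagrams indexed by a single filtered \infcat). Matching these hypotheses to the cotangent-complex data requires unwinding the explicit pro-presentation of \cref{rem:construction_of_analytic_cotangent_complex_of_analytic_FMP_over_X}, after which descent along the non-representable morphism $X \to \rB_X(\cG)$ is legitimate and the deformation theory is complete.
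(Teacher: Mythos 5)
Your skeleton is the paper's: reduce via \cref{prop:sufficient_conditions_for_a_prestack_to_be_equiv_to_an_analytic_FMP}, handle nilcompleteness and the infinitesimally cartesian condition through the modular description (this part of your argument is fine and matches the paper), and build the pro-cotangent complex from the nil-descent machinery of \S 2.5. But there is a genuine gap in the crux step: you only construct a cotangent complex at the tautological point, producing $\bbL\an_{X/\rB_X(\cG)} \in \pro(\Coh^+(X))$ and checking pro-corepresentability of ``the derivation functor of $X \to \rB_X(\cG)$''. The deformation-theory hypothesis of \cref{prop:sufficient_conditions_for_a_prestack_to_be_equiv_to_an_analytic_FMP} requires a \emph{global} relative pro-cotangent complex, i.e.\ pro-corepresentability at \emph{every} point $q \colon Z \to \rB_X(\cG)$ with $Z \in \dAnk$. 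The paper's proof therefore works with an arbitrary such $q$, forms $\tZ \coloneqq Z \times_{\rB_X(\cG)} X$ with \v{C}ech nerve $\tZ^\bullet$, descends $\{\bbL\an_{\tZ^{[n]}/\cG^{[n]}}\}$ along $\tZ \to Z$, defines $\bbL\an_{Z/\rB_X(\cG)}$ as a fiber, and then verifies the universal property against square-zero extensions $Z \hookrightarrow Z'$ via \cref{cor:construction_of_square_zero_extensions_for_analytic_FMP_using_univ_property_of_cotangent_complex} and \cref{cor:universal_property_of_relative_cotangent_complex_for_morphisms_between_analytic_FMP}. Note that this is exactly where the modular carving-out of $\rB_X(\cG)$ inside $\rB_X(\cG)^{\mathrm{pre}}$ earns its keep: the condition that $\tZ \in \anFMP_{/Z}$ is what licenses \cref{cor:pseudo_nil-descent_for_pro_Coh^+} for the cover $\tZ \to Z$. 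Your proposal never uses this condition, which is a sign the global statement has been lost.

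Two further problems sit inside the step you yourself flagged as the main obstacle. First, the descent you write down does not typecheck: $\cG^\bullet$ is a \emph{groupoid} over $X$ with $\cG([0]) = X$, not the \v{C}ech nerve of a morphism to $X$, so \cref{cor:pseudo_nil-descent_for_pro_Coh^+} cannot be invoked with base $X$ and ``cover'' $\cG^\bullet$. Cartesian sections of $\lim_{\bDelta}\pro^{\mathrm{ps}}(\Coh^+(\cG^\bullet))$ are descent data over the groupoid and descend to the quotient $\rB_X(\cG)$, not to $X$ --- for the infinitesimal groupoid $X \times_{X_{\mathrm{dR}}} X$ this limit computes crystals on $X$ rather than $\Coh^+(X)$, so your claim that the section automatically lies in the essential image of $g^*_\bullet$ from $\pro(\Coh^+(X))$ is false in general. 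The legitimate cover at the tautological point is $\cG([1]) \to X$ (which does lie in $\anFMP_{/X}$), whose \v{C}ech nerve is the d\'ecalage $\cG([\bullet+1])$, and this is what the paper's argument specializes to when $Z = X$. Second, your appeal to \cref{lem:pro_cot_complex_classifies_nil_extensions_for_analytic_moduli_problems} for pro-corepresentability is circular: that lemma has $Y \in \anFMP_{X/}$ as a standing hypothesis, which is precisely the conclusion of the lemma you are proving. Both defects are repaired by running the paper's pointwise argument sketched above.
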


\begin{proof}
    Thanks to \cref{prop:sufficient_conditions_for_a_prestack_to_be_equiv_to_an_analytic_FMP} it suffices to prove that $\rB_X(\cG)$ is
    infinitesimally cartesian and it admits furthermore a pro-cotagent complex. The fact that $\rB_X(\cG)$ is infinitesimally cartesian follows from the
    modular description
    of $\rB_X(\cG)$ combined with the fact that $\cG$ is infinitesimally cartesian, as well. Similarly, $\rB_X(\cG)$ being nilcomplete follows again from its
    modular description combined with the fact that analytic formal moduli problems are nilcomplete.

    We are thus required to show that $\rB_X(\cG)$ admits
    a \emph{global} pro-cotangent complex. Let $Z \in \dAnk$ and suppose we are given an arbitrary morphism
        \[
            q \colon Z \to \rB_X(\cG),    
        \]
    in the \infcat $\dAnSt_k$.
    Thanks to \cref{cor:pseudo_nil-descent_for_pro_Coh^+} combined with \cref{cor:analytic_relative_cotangent_complex_defines_cartesian_sections_in_the_totalization}
    it follows that the object
        \[
            \{ \bbL\an_{\tZ^{[n]} / \cG^{[n]}} \}_{[n] \in \bDelta} \in \lim_{\bDelta} \pro^\mathrm{ps}(\Coh^+(\tZ^\bullet / Z)), 
        \]
    defines a well defined object $ (\bbL^{\mathrm{an}}_{Z/ \rB_X(\cG)})' \in \pro(\Coh^+(Z))$. Moreover, it is clear that there exists a natural morphism
        \[
            \theta \colon \bbL\an_Z \to (\bbL^{\mathrm{an} '}_{Z /  \rB_X(\cG)})',
        \]
    in the \infcat $\pro(\Coh^+(Z))$, as this holds in each level of the totalization. Let
        \[
            q^* (\bbL^{\mathrm{an}}_{\rB_X(\cG)})'  \coloneqq \mathrm{fib}(\theta),
        \]
    computed in the \infcat $\pro(\Coh^+(Z))$. We claim that $q^* (\bbL^{\mathrm{an} }_{\rB_X(\cG)})'$ identifies with the analytic cotangent
    complex of $\rB_X(\cG)$ at the point $q \colon Z \to \rB_X(\cG)$. Let
        \[
            Z \hookrightarrow Z',  
        \]
    denote a square-zero extension which corresponds to a certain analytic derivation
        \[
            d \colon \bbL\an_Z \to \cF[1],  
        \]
    for some $\cF \in \Coh^+(Z)^{\ge 0}$. Using \cref{cor:construction_of_square_zero_extensions_for_analytic_FMP_using_univ_property_of_cotangent_complex}
    we deduce that the space of cartesian squares of the form 
        \[
        \begin{tikzcd}
            \tZ \ar{r} \ar{d} & \tZ' \ar{d} \\
            Z \ar{r} & Z'
        \end{tikzcd}
        \]
    where the morphism $\tZ \to \tZ'$ is a square-zero extension in the \infcat $\dAnSt_k$ is equivalent to the space of factorizations
        \[
            d \colon g^*  \bbL\an_Z \to \bbL\an_{\tZ} \xrightarrow{{d'}} g^*(\cF)[1],
        \]
    in the \infcat $\pro(\Coh^+(\tZ))$. Apply the same reasoning to the each object in the \v{C}ech nerve
        \[
            \tZ^\bullet \to Z.  
        \]
    Furthermore, \cref{cor:universal_property_of_relative_cotangent_complex_for_morphisms_between_analytic_FMP}
    implies that the space of factorizations
        \[
            \tZ^\bullet \to (\tZ')^\bullet \to \cG^\bullet,  
        \]
    identifies with the space of factorizations
        \[
            d' \colon \bbL\an_{\tZ^\bullet} \to \bbL\an_{\tZ^\bullet/ \cG} \to g^*(\cF)[1],
        \]  
    in the \infcat $\pro(\Coh^+(\tZ))$. \cref{cor:analytic_relative_cotangent_complex_defines_cartesian_sections_in_the_totalization} implies that the above factorization space can be identified with the space
    of factorizations
        \[
            d \colon \bbL\an_Z \to (\bbL^{\mathrm{an}}_{Z/ \rB_X(\cG)})' \to \cF[1].
        \]
    This implies that $(\bbL^{\mathrm{an}}_{Z/ \rB_X(\cG)})'$ satisfies the universal property of the relative analytic pro-cotangent complex
    associated to $X \to \rB_X(\cG)$, as desired.
\end{proof}

\begin{thm} \label{thm:Phi_is_an_equivalence}
    The functor $\Phi \colon \anFMP_{/X} \to \anFGrpd(X)$ of \cref{const:formal_completion_construction_Phi} is an equivalence of \infcats.
\end{thm}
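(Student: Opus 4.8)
The plan is to exhibit an explicit inverse to $\Phi$ and to check that both composites are naturally equivalent to the identity. The candidate inverse is the classifying-stack functor $\Psi \coloneqq \rB_X(-) \colon \anFGrpd(X) \to \anFMP_{X/}$ of \cref{const:formal_classifying_stack_construction}, which indeed lands in $\anFMP_{X/}$ by \cref{lem:B_X(G)_is_formal}. By \cref{const:formal_completion_construction_Phi} the functor $\Phi$ sends $(X \to Y)$ to the \v{C}ech nerve $X^{\times_Y \bullet}$, while $\Psi$ sends $\cG$ to the geometric realization $\rB_X(\cG)$. There are thus evident natural transformations $\eta \colon \id \to \Phi \Psi$, comparing a groupoid $\cG$ with the \v{C}ech nerve of its atlas $X \to \rB_X(\cG)$, and $\epsilon \colon \Psi \Phi \to \id$, given by the canonical map out of the colimit $\rB_X(X^{\times_Y \bullet}) \to Y$. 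It remains to prove that both $\eta$ and $\epsilon$ are equivalences.

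For the counit $\epsilon$, the strategy is to descend the comparison to pro-cotangent complexes. Since both $\rB_X(\Phi(Y))$ and $Y$ lie in $\anFMP_{X/}$, \cref{prop:conservativity_of_relative_an_cot_complex} reduces the statement that $\epsilon_Y$ is an equivalence to the statement that it induces an equivalence on the relative analytic pro-cotangent complexes $\bbL\an_{X/-}$ over $X$. The relative pro-cotangent complex of $\rB_X(\Phi(Y))$ is computed exactly as in the proof of \cref{lem:B_X(G)_is_formal}: it is assembled from the levelwise cotangent complexes of the groupoid $X^{\times_Y \bullet}$ through pseudo-nil-descent (\cref{cor:pseudo_nil-descent_for_pro_Coh^+}) together with \cref{cor:analytic_relative_cotangent_complex_defines_cartesian_sections_in_the_totalization}. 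Unwinding the resulting cartesian section then identifies it with $\bbL\an_{X/Y}$, whence $\epsilon_Y$ is an equivalence.

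For the unit $\eta$, the task is to recover the groupoid $\cG$ from its classifying stack, i.e.\ to show that the canonical map $\cG \to X^{\times_{\rB_X(\cG)} \bullet}$ is an equivalence in $\anFGrpd(X)$. Because both sides are groupoid objects in $\anFMP_{/X}$, the Segal conditions reduce this to the single comparison at level one, namely $\cG([1]) \to X \times_{\rB_X(\cG)} X$. Here the modular description of $\rB_X(\cG)$ from \cref{const:formal_classifying_stack_construction} is decisive: for $Z \in \dAfd_k$, a $Z$-point of $X \times_{\rB_X(\cG)} X$ unwinds to the datum of a morphism $\tZ \to X$ with $\tZ \in \anFMP_{/Z}$ together with a morphism of groupoid objects $\tZ \times_Z \tZ \to \cG$, which is precisely the data classified by $\cG([1])$ over $Z$. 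Verifying that this fibre product, formed after passing to $\anFMP_{/X}$, is effective and agrees with $\cG([1])$ is where pseudo-nil-descent (\cref{prop:nil_descent_for_Coh^+} and \cref{cor:pseudo_nil-descent_for_pro_Coh^+}) is used.

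The main obstacle is the effectivity statement underlying the unit $\eta$, namely that the groupoid object $\cG$ is recovered as the \v{C}ech nerve of its atlas $X \to \rB_X(\cG)$. This is the non-archimedean analogue of \cite[\S 5, Theorem 2.3.2]{Gaitsgory_Study_II} and rests essentially on the pseudo-nil-descent results of \S 2.5, which here replace the algebraic nil-descent used in \emph{loc.\ cit.} The delicate point is that one works throughout with $\pro(\Coh^+)$ and only \emph{pseudo}-descent is available, so at each step one must check that the relevant sections are cartesian and lie in the essential image singled out in \cref{cor:pseudo_nil-descent_for_pro_Coh^+}; once this is granted, the two Segal-type reductions above go through and $\eta$ and $\epsilon$ assemble into the desired adjoint equivalence.
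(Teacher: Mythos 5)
Your proposal is correct and takes essentially the same route as the paper: both use $\rB_X(-)$ (landing in $\anFMP_{X/}$ by \cref{lem:B_X(G)_is_formal}) as the inverse, recover $\cG \simeq X \times_{\rB_X(\cG)} X$ directly from the modular description in \cref{const:formal_classifying_stack_construction}, and settle the remaining comparison $Y \to \rB_X(X\times_Y X)$ by conservativity of $\bbL\an_{X/\bullet}$ (\cref{prop:conservativity_of_relative_an_cot_complex}) together with the pseudo-nil-descent identification of the pro-cotangent complex via \cref{cor:analytic_relative_cotangent_complex_defines_cartesian_sections_in_the_totalization}. The paper's proof is simply a condensed version of your unit/counit packaging.
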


\begin{proof} Let $\cG \in \anFGrpd(X)$. Thanks to \cref{lem:B_X(G)_is_formal}, we have a well defined functor
        \[
            \rB_X(-) \colon \anFGrpd(X) \to \anFMP_{X/ },  
        \]
    given on objects by the association $\cG \in \anFGrpd(X) \mapsto \rB_X(\cG) \in \anFMP_{X/ }$.
    Thanks to (1) in \cref{const:formal_classifying_stack_construction} it follows that one has a canonical equivalence
        \[
            X \times_{\rB_X(\cG)} X \simeq \cG,
        \]
    in $\dAnSt_k$. This shows that the construction
        \[
            \rB_X(\cG) \colon \anFGrpd(X) \to \anFMP_{X/},  
        \]
    is a right inverse to $\Phi$. As a consequence the functor $\Phi$ is essentially surjective. Thanks to \cref{prop:conservativity_of_relative_an_cot_complex}
    we are reduced to show that the canonical morphism
        \[
            Y \to \rB_X(X \times_Y X),  
        \]
    induces an equivalence on the associated relative analytic cotangent complexes. The claim is an immediate consequence of
    the description of $\bbL\an_{X/ \rB_X(X \times_Y X)}$ provided in \cref{lem:B_X(G)_is_formal} together with
    \cref{cor:analytic_relative_cotangent_complex_defines_cartesian_sections_in_the_totalization}.
\end{proof}

\begin{rem}
    It follows from \cref{thm:Phi_is_an_equivalence} that given $(X \to Y) \in \anFMP_{X/ }$, we can identify the latter with the
    geometric realization of the associated \v{C}ech nerve, the latter regarded as a simplicial object in $\anFMP_{X/ }$ via the
    diagonal morphisms.
\end{rem}

\subsection{The affinoid case} Let $X \in \dAfdk$ denote a derived $k$-affinoid space.
Thanks to derived Tate aciclycity theorem, cf. \cite[Theorem 3.1]{Porta_Yu_Derived_Hom_spaces}
the \emph{global sections functor}
    \[
        \Gamma \colon \Coh^+(X) \to  \Coh^+(A),
    \]
where $A \coloneqq \Gamma(X, \cO_X \alg) \in \CAlg_k$, is an equivalence of \infcats. Since ordinary $k$-affinoid algebras are Noetherian, we deduce that
$A \in \CAlg_k$ is a Noetherian derived $k$-algebra.

\begin{notation}
    Let $X \in \dAfd_k$ and $A \coloneqq \Gamma(X, \cO_X)$. We denote by
        \[
            \mathrm{FMP}_{/\Spec A} \in \Catinf,  
        \]
    the \infcat of \emph{algebraic formal moduli problems over $\Spec A$}, (c.f. \cite[Definition 6.11]{Porta_Yu_NQK}).
\end{notation}

\begin{thm}$\mathrm{(}$\cite[Theorem 6.12]{Porta_Yu_NQK}$\mathrm )$ \label{thm:equivalence_between_analytic_and_algebraic_formal_moduli_problems_over}
    Let $X \in \dAfdk$ and $A \coloneqq \Gamma(X, \cO_X\alg) \in \CAlg_k$. Then the induced functor
        \[
            (-) \an \colon \mathrm{FMP}_{/ \Spec A} \to \anFMP_{/X},
        \]
    is an equivalence of \infcats.
\end{thm}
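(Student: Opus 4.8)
The plan is to reduce the statement to the level of nil-isomorphisms and then transport the comparison across the global sections equivalence furnished by derived Tate acyclicity. Both sides admit an \emph{ind-inf} presentation: by \cref{cor:formal_moduli_problems_over_X_are_ind_inf_objects} every object of $\anFMP_{/X}$ is a filtered colimit of nil-embeddings $Z \to X$ with $Z$ a derived $k$-affinoid space nil-isomorphic to $X$, while the algebraic counterpart (see \cite[\S 1]{Gaitsgory_Study_II}) expresses every object of $\mathrm{FMP}_{/\Spec A}$ as a filtered colimit of nil-embeddings $\Spec C \to \Spec A$. Since relative analytification preserves filtered colimits and carries closed nil-embeddings to closed nil-embeddings, it is the unique filtered-colimit-preserving extension of its restriction to the subcategories of nil-embeddings. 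I would therefore first reduce the theorem to the assertion that $(-)\an$ restricts to an equivalence
\[
    (-)\an \colon \mathrm{Nil}^{\cl}_{/\Spec A} \xrightarrow{\ \sim\ } \anNil^{\cl}_{/X}.
\]

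The bridge between the two subcategories is global sections. By \cref{lem:nil-isos_are_affine_morphisms} every nil-isomorphism is a finite affine morphism, so \cref{lem:derived_k_analytic_space_whose_reduction_is_affinoid_is_also_affinoid} shows that each object of $\anNil^{\cl}_{/X}$ is a derived $k$-affinoid space, and derived Tate acyclicity gives $\Gamma \colon \Coh^+(Z) \xrightarrow{\sim} \Coh^+(C)$ for $C \coloneqq \Gamma(Z, \cO_Z\alg)$, together with a nil-embedding $\Spec C \to \Spec A$. The candidate inverse is the assignment $Z \mapsto \Spec\,\Gamma(Z, \cO_Z\alg)$, and I would verify both the identity $(\Spec C)\an_X \simeq Z$ and the comparison of mapping spaces by induction along the square-zero filtrations of \cref{prop:filtered_colimit_for_nil-embeddings}: each object is built from the common reduction $X_\red \simeq (\Spec A_\red)\an$ by a finite tower of square-zero extensions, and one checks the claim one stage at a time.

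The inductive step rests on the compatibility of relative analytification with square-zero extensions. Such an extension in $\dAnk$ (resp. in $\dAff_k$) is classified by a derivation $\bbL\an_{Z} \to \cF[1]$ (resp. $\bbL_{\Spec C} \to \cF'[1]$) through the universal property of the analytic (resp. algebraic) cotangent complex, exactly as exploited in the construction in \cref{prop:filtered_colimit_for_nil-embeddings}. I would invoke the comparison $(\bbL_{\Spec C})\an \simeq \bbL\an_{(\Spec C)\an}$, which under the equivalence $\Coh^+(X) \simeq \Coh^+(A)$ becomes an identity of objects of $\Coh^+(A)$, to see that analytification sends the derivation $d$ classifying an algebraic square-zero extension to the derivation $d\an$ classifying the analytic one. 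Since the classifying derivations and the base stage agree, a routine induction identifies $(\Spec C)\an_X$ with $Z$ and simultaneously matches the mapping spaces computed by \cref{lem:pro_cot_complex_classifies_nil_extensions_for_analytic_moduli_problems} with their algebraic analogues, yielding full faithfulness and essential surjectivity of the restricted functor.

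The hard part will be precisely this compatibility of analytification with the cotangent-complex data: one must confirm that relative analytification carries the derivation classifying an algebraic square-zero extension to the derivation classifying the analytic one, compatibly with the pro-structure used to describe cotangent complexes of formal moduli problems. This requires the analytic-to-algebraic comparison of cotangent complexes for finite morphisms (via derived Tate acyclicity and \cref{lem:pullbacks_of_derived_affinoid_spaces_along_finite_morphisms_are_algebraic}), together with the verification that the equivalence $\Coh^+(X) \simeq \Coh^+(A)$ is compatible with pullback along the transition nil-embeddings, so that the two ind-inf presentations match term by term and the resulting equivalence on nil-embeddings promotes, by filtered-colimit extension, to the desired equivalence of $\infty$-categories.
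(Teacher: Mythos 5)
The first thing to note is that the paper contains no proof of this statement: it is imported verbatim, the ``proof'' being the citation to \cite[Theorem 6.12]{Porta_Yu_NQK}. So there is no internal argument to compare against, and what you have written is a blind reconstruction of Porta--Yu's external proof. Your skeleton is the expected one and matches the mechanism the paper relies on elsewhere: identify the index categories (analytic nil-isomorphisms over $X$ versus algebraic nil-isomorphisms over $\Spec A$) via global sections and derived Tate acyclicity, check that the equivalence respects the structures entering the definition of a formal moduli problem (truncations, square-zero extensions, pushouts), and then extend along the ind-inf presentations of both sides. Two caveats on the reduction itself: the claim that $\mathrm{FMP}_{/\Spec A}$ admits an ind-inf presentation cannot be cited directly from \cite[\S 1]{Gaitsgory_Study_II}, since $A$ is Noetherian but not almost of finite type over $k$ --- the paper itself flags repeatedly that the Gaitsgory--Rozenblyum results need a Noetherian extension, so you would need the algebraic analogue of \cref{prop:required_conditions_for_formal_moduli_problems} in that generality.

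The genuine gap is in your inductive step. You invoke the comparison $(\bbL_{\Spec C})\an \simeq \bbL\an_{(\Spec C)\an}$ ``as an identity of objects of $\Coh^+(A)$'', but for affinoid algebras the \emph{absolute} algebraic cotangent complex is not even coherent: already $\pi_0(\bbL_{k\langle T\rangle/k}) = \Omega^1_{k\langle T\rangle/k}$ is not finitely generated, and analytification does not carry $\bbL_{C/k}$ to $\bbL\an_Z$. The comparison that is actually available is the \emph{relative} one along finite (almost of finite presentation) morphisms, which is how the paper uses \cite[Corollary 5.33]{Porta_Yu_Representability} and \cite[Theorem 5.21]{Porta_Yu_Representability}. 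Consequently the algebraic derivation $\bbL_C \to \cF[1]$ classifying a square-zero extension and the analytic derivation $\bbL\an_Z \to \cF[1]$ are not the same datum --- the two spaces of derivations genuinely differ --- and your induction as written conflates the two deformation theories. What must instead be proved is that the resulting square-zero \emph{extensions} correspond bijectively: every algebraic square-zero extension of a derived $k$-affinoid by a coherent module carries a canonical, essentially unique analytic structure. That statement is precisely the nontrivial content behind \cref{lem:derived_k_analytic_space_whose_reduction_is_affinoid_is_also_affinoid} (via \cite[Proposition 6.1]{Porta_Yu_Representability}) and is the heart of Porta--Yu's Theorem 6.12; without it, the identification $(\Spec C)\an_X \simeq Z$ and the matching of mapping spaces in your final paragraph do not follow.
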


As an immediate consequence, we obtain the following result:

\begin{cor} \label{cor:equivalence_between_pointed_analytic_and_algebraic_formal_moduli_problems_over}
    Let $X \in \dAfd_k$ and $A \coloneqq \Gamma(X, \cO_X\alg)$. Then one has an equivalence of \infcats
        \[
            \mathrm{FMP}_{\Spec A/ /\Spec A} \to \anFMP_{X/ /X} ,
        \]
    of pointed algebraic formal moduli problems over $\Spec A$ and pointed analytic formal moduli problems over $X$, respectively.
\end{cor}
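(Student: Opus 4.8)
The plan is to deduce the pointed statement formally from the unpointed equivalence of \cref{thm:equivalence_between_analytic_and_algebraic_formal_moduli_problems_over} by passing to coslice \infcats. First I would unwind the double-slash notation. An object of $\anFMP_{X//X}$ consists of an analytic formal moduli problem $Y$ over $X$ together with a section of its structure map, that is a pair of morphisms $X \to Y \to X$ composing to $\id_X$. Viewing the structure map $Y \to X$ as exhibiting $Y$ as an object of $\anFMP_{/X}$, the section $X \to Y$ is exactly a morphism out of the object $(\id_X \colon X \to X)$; the triangle constraint is automatic since a morphism in $\anFMP_{/X}$ is by definition compatible with the maps to $X$. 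The latter object is the terminal object of $\anNil_{/X}$, hence, under the representable embedding $\anNil_{/X} \hookrightarrow \anFMP_{/X}$, the terminal object of $\anFMP_{/X}$. I would therefore record the identifications
\[
    \anFMP_{X//X} \simeq (\anFMP_{/X})_{X/} \qquad \mathrm{and} \qquad \mathrm{FMP}_{\Spec A//\Spec A} \simeq (\mathrm{FMP}_{/\Spec A})_{\Spec A/},
\]
where in each case the distinguished object in the subscript is the terminal one, namely the relevant identity morphism.

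Next I would invoke the equivalence $(-)\an \colon \mathrm{FMP}_{/\Spec A} \to \anFMP_{/X}$ of \cref{thm:equivalence_between_analytic_and_algebraic_formal_moduli_problems_over}. Since any equivalence of \infcats preserves terminal objects, analytification carries $(\id_{\Spec A})$ to the terminal object of $\anFMP_{/X}$; concretely $(\Spec A)\an \simeq X$ compatibly with structure maps, so the two basepoints correspond. I would then apply the standard functoriality of the coslice construction: an equivalence $F \colon \cC \to \cD$ together with an equivalence $F(c) \simeq d$ induces an equivalence on coslice \infcats $\cC_{c/} \to \cD_{d/}$, cf. \cite{HTT}. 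Taking $F = (-)\an$, $c = (\id_{\Spec A})$ and $d = (\id_X)$ yields an equivalence $(\mathrm{FMP}_{/\Spec A})_{\Spec A/} \simeq (\anFMP_{/X})_{X/}$, which under the identifications above is the asserted equivalence $\mathrm{FMP}_{\Spec A//\Spec A} \simeq \anFMP_{X//X}$.

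The single point that genuinely requires care, and the only place I expect to verify anything, is the identification of the two basepoints: one must know that analytification sends the identity structure map $\id_{\Spec A}$ to (an object equivalent to) $\id_X$, equivalently that $(\Spec A)\an \simeq X$ as objects over $X$. This is immediate in the present situation because both are terminal objects of their respective \infcats and equivalences preserve terminal objects, so no computation is needed; the remainder is the purely formal behaviour of coslice \infcats under equivalences.
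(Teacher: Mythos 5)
Your argument is correct and is essentially the paper's own proof, which likewise deduces the pointed statement from \cref{thm:equivalence_between_analytic_and_algebraic_formal_moduli_problems_over} via the observation that an equivalence of \infcats with final objects induces an equivalence on the associated \infcats of pointed objects. Your write-up merely makes explicit the identifications $\anFMP_{X//X} \simeq (\anFMP_{/X})_{X/}$ and the matching of basepoints (the terminal objects, represented by the identity morphisms) that the paper leaves implicit.
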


\begin{proof}
    It is an immediate consequence of \cref{thm:equivalence_between_analytic_and_algebraic_formal_moduli_problems_over}. Indeed, equivalences of \infcats with final objects
    induce natural equivalences on the associated \infcats of pointed objects.
\end{proof}

\begin{lem} \label{lem:Fgroupoids_to_pointed_objects_is_conservative_and_commutes_with_sifted_colimits}
    Consider the natural functor
        \[
            F \colon \anFGrpd(X) \to \anFMP_{X/ /X},  
        \]
    given on objects by the formula $\cG \in \anFGrpd(X) \mapsto \cG([1]) \in \mathrm{Ptd}(\anFMP_{/ X})$. Then $F$ is conservative and
    commutes with sifted colimits.
\end{lem}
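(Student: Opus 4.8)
The plan is to treat conservativity and preservation of sifted colimits separately, and in both cases to exploit that an analytic formal groupoid $\cG$ is, by definition, a groupoid (Segal) object in $\anFMP_{/X}$ with $\cG([0]) \simeq X$. First I would establish conservativity. Let $\phi \colon \cG \to \cG'$ be a morphism in $\anFGrpd(X)$ such that $F(\phi) = \phi_{[1]}$ is an equivalence in $\anFMP_{X/ /X}$; forgetting the base point, $\phi_{[1]} \colon \cG([1]) \to \cG'([1])$ is then an equivalence in $\anFMP_{/X}$. Since $\cG([0]) \simeq X \simeq \cG'([0])$, the map $\phi_{[0]}$ is automatically an equivalence, and the Segal condition gives $\cG([n]) \simeq \cG([1]) \times_X \dots \times_X \cG([1])$ (and likewise for $\cG'$), under which $\phi_{[n]}$ is identified with the $n$-fold fibre product of $\phi_{[1]}$ over $\phi_{[0]}$. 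Hence $\phi_{[n]}$ is an equivalence for every $n \ge 0$, so $\phi$ is a levelwise equivalence of simplicial objects, i.e. an equivalence in $\anFGrpd(X)$. This proves that $F$ is conservative.

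For sifted colimits, the crucial observation is that $X$ is a terminal object of $\anFMP_{/X}$: every formal moduli problem over $X$ carries a unique structure morphism to $X$, so the fibre products $\times_X$ appearing in the Segal condition are genuine finite products in $\anFMP_{/X}$. Granting that finite products commute with sifted colimits in $\anFMP_{/X}$ (addressed below), I would argue that the full subcategory $\anFGrpd(X) \subseteq \Fun(\bDelta \op, \anFMP_{/X})$ is closed under sifted colimits and that these are computed levelwise. Indeed, given a sifted diagram $p \colon K \to \anFGrpd(X)$, its levelwise colimit $\cG \coloneqq \colim_K p$ satisfies $\cG([0]) \simeq \colim_K X \simeq X$ (as $K$ is weakly contractible), and, using that products commute with sifted colimits, $\cG([n]) \simeq \colim_K (p([1])^{\times_X n}) \simeq (\colim_K p([1]))^{\times_X n} \simeq \cG([1])^{\times_X n}$, so $\cG$ is again a groupoid object. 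Evaluation at $[1]$ then preserves this colimit, and since the forgetful functor $\anFMP_{X/ /X} \to \anFMP_{/X}$ creates sifted (indeed connected) colimits, I would conclude $F(\colim_K p) \simeq \colim_K F(p)$.

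The hard part is the remaining input: that finite products commute with sifted colimits in $\anFMP_{/X}$, equivalently that $\anFGrpd(X)$ is closed under levelwise sifted colimits. Here I would transport the question to the algebraic setting: by \cref{thm:equivalence_between_analytic_and_algebraic_formal_moduli_problems_over} there is an equivalence $\anFMP_{/X} \simeq \mathrm{FMP}_{/\Spec A}$ with $A \coloneqq \Gamma(X, \cO_X\alg)$, under which $\anFGrpd(X) \simeq \mathrm{Grpd}(\mathrm{FMP}_{/\Spec A})$ and, via \cref{cor:equivalence_between_pointed_analytic_and_algebraic_formal_moduli_problems_over}, the functor $F$ corresponds to its algebraic avatar. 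The commutation of finite products with sifted colimits for algebraic formal moduli problems, together with presentability (so that sifted colimits exist in the first place), is part of the theory developed in \cite[\S 5]{Gaitsgory_Study_II}. Alternatively, one can attempt a direct proof from the ind-presentation of \cref{cor:formal_moduli_problems_over_X_are_ind_inf_objects}, realising $\anFMP_{/X}$ as ind-objects on closed nil-isomorphisms and checking that finite products of the generating objects are again computed among them; I expect the delicate point there to be verifying that products distribute over geometric realisations, and not merely over filtered colimits, which is precisely the content that sifted (rather than filtered) colimits demand.

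Since conservativity is transparent from the Segal condition and the preservation of sifted colimits reduces to this single structural fact about $\anFMP_{/X}$, the cleanest route overall is to deduce the entire statement from its algebraic counterpart via the equivalences above, while keeping the direct Segal argument for conservativity as an illuminating check.
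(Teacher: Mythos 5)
Your proposal is correct, and both halves hold up; the interesting point is how it differs from the paper's proof. For conservativity the paper takes a detour: it invokes the delooping equivalence $\rB_X(\bullet)$ of \cref{thm:Phi_is_an_equivalence} to write $f \colon \cG \to \cG'$ as the \v{C}ech nerve of $\rB_X(f)$, and then observes that each $f_{[n]}$ is an iterated pullback of $f_{[1]}$. Your argument performs the same computation at the source: the Segal condition is \emph{part of the definition} of $\anFGrpd(X)$, and since $X$ is terminal in $\anFMP_{/X}$ the maps $\phi_{[0]}$ are automatically equivalences, so $\phi_{[n]} \simeq \phi_{[1]} \times_X \cdots \times_X \phi_{[1]}$ directly. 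This is more elementary and self-contained -- it does not lean on \cref{thm:Phi_is_an_equivalence} at all (the paper's use of it is not circular, since that theorem precedes the lemma, but your route removes even the appearance of dependence). For sifted colimits the paper simply asserts that the proof of \cite[Corollary 5.2.2.4]{Gaitsgory_Study_II} ``applies in our setting''; your reduction correctly isolates the one nontrivial input behind that assertion, namely that finite products commute with sifted colimits in $\anFMP_{/X}$ (using that $X$ is terminal, so the Segal fiber products are genuine products, and that the forgetful functor from $\mathrm{Ptd}(\anFMP_{/X}) \simeq \anFMP_{X//X}$ creates connected colimits). Your transport of this fact through \cref{thm:equivalence_between_analytic_and_algebraic_formal_moduli_problems_over} and \cref{cor:equivalence_between_pointed_analytic_and_algebraic_formal_moduli_problems_over} is legitimate precisely because the lemma sits in the affinoid section, where $X \in \dAfd_k$ is assumed; and since an equivalence of \infcats preserves the terminal object, finite products and Segal conditions, it identifies $\anFGrpd(X)$ with groupoid objects in $\mathrm{FMP}_{/\Spec A}$ and intertwines the two evaluation functors, which is exactly what your reduction needs. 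What each approach buys: the paper's citation is shorter but leaves the analytic verification implicit, whereas your route makes the logical dependence explicit and matches the strategy the paper itself uses later (e.g.\ in \cref{prop:conservativity_and_preservation_of_sifted_colimits_of_tangent_complex}). Two small points to keep honest: the identification $\mathrm{Ptd}(\anFMP_{/X}) \simeq \anFMP_{X//X}$ should be flagged as using terminality of $X$; and for the algebraic input you should rely on what Gaitsgory--Rozenblyum actually prove (via the tangent-complex/Lie-algebra model, where finite products become direct sums and hence commute with sifted colimits), rather than your alternative ind-presentation sketch, which -- as you yourself note -- is delicate exactly where geometric realizations, not filtered colimits, are concerned.
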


\begin{proof}
    The proof of \cite[Corollary 5.2.2.4]{Gaitsgory_Study_II} applies in our setting
    to show that the functor $F \colon \anFGrpd(X) \to \mathrm{Ptd}(\anFMP_{/ X})$
    commutes with sifted colimits. We are reduced to prove that $F$ is also conservative.
    Let $f \colon \cG \to \cG'$ be a morphism in $\anFGrpd(X)$. The equivalence of \infcats 
        \[\rB_X(\bullet) \colon \anFGrpd(X) \to \anFMP_{X/ },\]
    of \cref{thm:Phi_is_an_equivalence} implies that
    the morphism $f$ can be obtained as the \v{C}ech nerve of a morphism
        \[
            \widetilde{f} \colon \rB_X(\cG) \to \rB_X(\cG'),  
        \]
    in $\anFMP_{X/ }$. For this reason, for every $[n] \in \bDelta$, the morphism
        \[f_{[n]} \colon \cG([n]) \to \cG'([n]),\]
    is obtained as (an iterated) pullback of
        \[f_{[1]} \colon \cG([1]) \to \cG'([1]).\]
    Therefore, under the assumption
    that $F(F) \simeq f_{[1]}$ is an equivalence in $\anFMP_{X/ /X}$ we deduce that $f$ itself
    must be an equivalence of analytic formal groupoids, as it is each of its components.
\end{proof}



\begin{rem} Let $(X \xrightarrow{f} Y) \in \anFMP_{X/ }$.
Consider the diagram
    \[
    \begin{tikzcd}
        X \ar{r}{\Delta}  \ar[rd, equal] & X \times_Y X \arrow[d, "p_0", shift left=2] \arrow{d}[swap]{p_1} \\
        &   X  
    \end{tikzcd}
    \]
in the \infcat $\dAnSt_k$, where $\Delta \colon X \to X \times_Y X$ denotes the usual \emph{diagonal embedding}. We then obtain a natural
fiber sequence associated to the above diagram of the form
    \begin{equation} \label{eq:fiber_sequence_for_rel_cot_complexes_along_diagonal_and_projections}
        \Delta^* \bbL\an_{X \times_Y X/ X} \to \bbL\an_{X/X} \to \bbL\an_{X/ X \times_Y X}.
    \end{equation}
Notice further that by \cite[Proposition 5.12]{Porta_Yu_Representability} one has an equivalence
    \[
        \bbL\an_{X \times_Y X/ X} \simeq p_i^* \bbL\an_{X/Y},  
    \]
for $i =0, 1$. We further deduce that
    \[\Delta^* \bbL\an_{X \times_Y X/ X} \simeq \bbL\an_{X/Y},\]
in the \infcat $\pro(\Coh^+(X))$. Moreover, since $\bbL\an_{X/ X} \simeq 0$, we obtain from the fiber sequence \eqref{eq:fiber_sequence_for_rel_cot_complexes_along_diagonal_and_projections}
a natural equivalence
    \[
        \bbL\an_{X/ X \times_Y X} \simeq \bbL\an_{X / Y} [1],    
    \]
in $\pro(\Coh^+(X))$. Moreover, we can identify the $\bbL\an_{X \times_Y X / X}$ with the pro-object  
    \[
        \bbL\an_{X/ X \times_Y X } \simeq \{ \bbL\an_{X/ X \times_S X} \}_{S \in \anNil_{X//Y}^\cl} ,
    \]
where $\bbL\an_{X/ X\times_S X} \in \Coh^+(X)$ denotes the relative analytic cotangent complex associated to the closed embedding
    \[
        X \to X \times_S X,  
    \]
for $S \in \anNil_{X/Y}^\cl$. Thanks to \cite[Corollary 5.33]{Porta_Yu_Representability} we deduce that
    \[
        \bbL\an_{X/X\times_SX} \simeq \bbL_{A/ A \otimes_{B_S} A} ,
    \]
in $\Coh^+(A)$, where $B_S = \Gamma(S, \cO\alg_S)$, where we have a natural identification
    \[
        A \widehat{\otimes}_{B_S} A \simeq A \otimes_{B_S} A,
    \]
since the morphism $A \to B_S$ is finite.
\end{rem}

\begin{lem}
    Let $X \in \dAfd_k$ be a derived $k$-affinoid space. Then the derived $k$-algebra of global sections
        \[
            A \coloneqq \Gamma(X, \cO_X\alg),  
        \]
    admits a dualizing module (see \cite[Definition 4.2.5]{DAG-XIV} for the definition of the latter notion).
\end{lem}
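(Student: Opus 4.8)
The plan is to reduce the existence of a dualizing module for $A$ to the classical existence of a dualizing complex for the ordinary commutative ring $\pi_0(A)$, and then to settle the latter using the structure theory of $k$-affinoid algebras. Recall from the discussion preceding this statement that $A \in \CAlg_k$ is a connective Noetherian derived $k$-algebra; in the terminology of \cite{DAG-XIV} it is thus a Noetherian $\mathbb{E}_\infty$-ring. By the existence criterion of \cite{DAG-XIV}, such an $A$ admits a dualizing module as soon as its underlying ordinary ring $\pi_0(A)$ admits a dualizing complex in the classical sense. Hence the entire statement follows once we exhibit a dualizing complex for $\pi_0(A)$.

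Next I would identify $\pi_0(A)$ explicitly. Since $X \in \dAfd_k$, its truncation $\trunc(X)$ is an ordinary $k$-affinoid space and, by derived Tate acyclicity, one has a natural isomorphism $\pi_0(A) \simeq \Gamma(\trunc(X), \cO_{\trunc(X)})$, which is precisely the $k$-affinoid algebra underlying $X$. By the very definition of $k$-affinoid algebras in the sense of \cite{Berkovich_Etale_1993}, the ring $\pi_0(A)$ is a quotient of a (generalized) Tate algebra $k\{r_1^{-1}T_1, \dots, r_n^{-1}T_n\}$. Such Tate algebras are regular Noetherian rings of finite Krull dimension, and in particular Gorenstein of finite Krull dimension. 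Consequently $\pi_0(A)$ is a quotient of a Gorenstein ring of finite Krull dimension, and by the classical existence theorem for dualizing complexes over such rings it admits a dualizing complex. Combining this with the reduction of the previous paragraph produces the desired dualizing module for $A$.

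The only genuinely delicate point is the first step: one must invoke the comparison, established in \cite{DAG-XIV}, between dualizing modules over a Noetherian $\mathbb{E}_\infty$-ring and classical dualizing complexes over its $\pi_0$, and verify that its hypotheses (connectivity of $A$ together with the Noetherian condition, i.e.\ $\pi_0(A)$ Noetherian and each $\pi_i(A)$ a finitely generated $\pi_0(A)$-module) are met --- all of which hold here. The remaining ingredients are standard: the regularity and finite Krull dimensionality of Tate algebras, drawn from the foundations of non-archimedean analysis, and the classical theorem that a quotient of a finite-dimensional Gorenstein ring admits a dualizing complex. No further analytic input is required beyond the identification of $\pi_0(A)$ with the affinoid algebra of $\trunc(X)$.
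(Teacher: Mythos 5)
Your proposal is correct and follows essentially the same route as the paper: reduce to the classical case via Lurie's comparison theorem (\cite[Theorem 4.3.5]{DAG-XIV}, stating that a Noetherian $\mathbb{E}_\infty$-ring admits a dualizing module if and only if $\pi_0$ does), then observe that $\pi_0(A)$ is a quotient of a (generalized) Tate algebra, which is regular Noetherian, so the classical descent of dualizing complexes to quotients applies. The only cosmetic differences are that you pass through the Gorenstein property where the paper cites regularity directly, and you are slightly more careful in allowing generalized Tate algebras $k\{r_1^{-1}T_1,\dots,r_n^{-1}T_n\}$, which is the right level of generality for Berkovich-style affinoid algebras.
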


\begin{proof}
    This is an immediate consequence of the following facts:
    \begin{enumerate}
        \item Every regular $k$-algebra $R$ admits a dualizing module, c.f. \cite[\href{https://stacks.math.columbia.edu/tag/0AWX}{Tag 0AWX}]{stacks-project};
        \item Every quotient of an algebra $R$ which admits a dualizing module admits itself a dualizing module, c.f. \cite[\href{https://stacks.math.columbia.edu/tag/0A7I}{Tag 0A7I}]{stacks-project}.
        \item The ordinary affinoid $k$-algebra $\pi_0(A)$ can be realized as a quotient of a Tate algebra on $n$-generators, $k \langle T_1, \dots, T_n \rangle$.
        The latter being regular, we deduce from the previous items that $\pi_0(A)$ itself admits a dualizing module (cf. \cite[\href{https://stacks.math.columbia.edu/tag/0AWX}{Tag 0AWX}]{stacks-project}).
        \item Thanks to \cite[Theorem 4.3.5]{DAG-XIV} it follows that $A$ itself admits a dualizing module.
    \end{enumerate}
    The proof is thus concluded.
\end{proof}

\begin{lem}
    Let $X \in \dAfd_k$ be a bounded derived $k$-affinoid space and let $\omega_A \in \Mod_A$ denote a dualizing module
    for $A \coloneqq \Gamma(X, \cO_X\alg)$. Then $\omega_A \in \Coh^b(A)$, where the latter denotes the
    full subcategory of (bounded) \emph{coherent} $A$-modules.
\end{lem}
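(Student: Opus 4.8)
The plan is to deduce boundedness from the duality functor attached to $\omega_A$, rather than trying to control the homotopy groups of $\omega_A$ directly. Recall that by \cite[Definition 4.2.5]{DAG-XIV} the hypothesis that $\omega_A$ is dualizing provides three pieces of data: the homotopy groups $\pi_n(\omega_A)$ are finitely generated over $\pi_0(A)$ (coherence), the module $\omega_A$ has finite injective dimension, and the canonical map
\[
    A \longrightarrow \RHom_A(\omega_A, \omega_A)
\]
is an equivalence. Since $\pi_0(A)$ is an ordinary $k$-affinoid algebra it is Noetherian, and the hypothesis that $X$ is a \emph{bounded} derived $k$-affinoid space guarantees that $\pi_n(A) = 0$ for $n$ outside a finite range; together with finite generation of each $\pi_n(A)$ this already gives $A \in \Coh^b(A)$.

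First I would study the duality functor $\mathbb{D} \coloneqq \RHom_A(-, \omega_A)$ and argue that it carries $\Coh^b(A)$ into itself. The key observation is then simply that
\[
    \omega_A \simeq \RHom_A(A, \omega_A) = \mathbb{D}(A),
\]
so that $\omega_A \in \Coh^b(A)$ follows at once from $A \in \Coh^b(A)$ together with the stability of $\Coh^b(A)$ under $\mathbb{D}$. Establishing this stability breaks into two points. On the one hand, since $A$ is Noetherian and $\omega_A$ has finitely generated homotopy groups, $\mathbb{D}$ sends modules with finitely generated homotopy groups to modules with finitely generated homotopy groups, because the relevant $\Ext$-modules over a Noetherian ring remain finitely generated. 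On the other hand, finite injective dimension of $\omega_A$ is precisely an amplitude bound for $\mathbb{D}$ on the heart, so $\mathbb{D}$ carries a discrete coherent module to a bounded coherent one; a d\'evissage along the finite Postnikov tower of a general object of $\Coh^b(A)$, using exactness of $\mathbb{D}$ on the stable category, then propagates boundedness to all of $\Coh^b(A)$.

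The main obstacle is extracting \emph{both} boundedness directions cleanly. Boundedness above of $\omega_A$ comes from the finite injective dimension (equivalently, the amplitude estimate for $\mathbb{D}$), whereas a naive attempt to bound the negative homotopy of $\omega_A$ directly is circular, since $\RHom_A(A,\omega_A)$ is tautologically $\omega_A$. This is exactly why I would route the argument through $\omega_A = \mathbb{D}(A)$: boundedness below is then imported from boundedness of $A$ (which is where the \emph{bounded} hypothesis on $X$ is genuinely used) via the d\'evissage, instead of being verified on $\omega_A$ in isolation. Making the finite-injective-dimension amplitude estimate precise in the spectral setting — that is, turning \cite[Definition 4.2.5]{DAG-XIV} into an honest bound on $\pi_n\,\mathbb{D}(M)$ for $M$ discrete and coherent — is the one step that requires care; everything else is a formal consequence of $\Coh^b(A)$ being a stable subcategory closed under the operations involved.
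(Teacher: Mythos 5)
Your proposal is correct and follows essentially the same route as the paper: the paper likewise writes $\omega_A \simeq \underline{\Map}_{\Mod_A}(A,\omega_A)$, uses the finite injective dimension of $\omega_A$ to bound $\underline{\Map}_{\Mod_A}(\pi_j(A),\omega_A)$ for each (coherent, discrete) homotopy group of $A$, and then runs a d\'evissage along the finite Postnikov tower of $A$ --- which is exactly where the boundedness hypothesis on $X$ enters. Your explicit remark on the apparent circularity of $\RHom_A(A,\omega_A)\simeq\omega_A$, resolved by importing boundedness from $A$ through the tower, is precisely the content of the paper's ``standard argument on Postnikov towers.''
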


\begin{proof}
    By definition it follows that for every $i \in \bbZ$, $\pi_i(\omega_A)$ is finitely generated as a discrete $\pi_0(A)$-module.
    We are thus reduced to show that $\omega_A$ is bounded. Thanks to \cite[Theorem 4.2.7]{DAG-XIV} it follows that
        \[
            \omega_A \simeq \underline{\Map}_{\Mod_A}(A, \omega_A),  
        \]
    is truncated. Moreover, by definition it follows that $\omega_A \in \Mod_A$ is of finite injective dimension. In particular, for every
    $j \ge 0$, we have that 
        \[
            \underline{\Map}_{\Mod_A}(\pi_j(A), \omega_A),
        \]
    is bounded. Under our assumption that $X \in \dAfd_k$ is bounded it follows that $A$ itself is a bounded derived $k$-algebra and
    therefore by a standard argument on Postnikov towers, we deduce that
        \[
            \underline{\Map}_{\Mod_A}(A, \omega_A) \simeq \omega_A,
        \]
    is bounded as well.
\end{proof}

\begin{rem} \label{rem:dualizing_module_on_truncations}
    Let $X \in \dAfd_k$ be a derived $k$-affinoid algebra. If $X$ is not bounded then neither it is $A \in \CAlg_k$. For this reason,
    a dualizing module $\omega_A \in \Mod_A$ is not in general bounded. Nonetheless, the latter is always truncated.
    Let $m \in \bbZ$ be such that $\omega_A \in \Mod_A$ is $m$-truncated. Thanks to the proof of \cite[Theorem 4.3.5]{DAG-XIV},
    we deduce that we have a natural
    equivalence
        \[
            \omega_A \simeq \colim_{n \ge 0} \omega_{\tau_{\le n}(A)},  
        \]
    in the \infcat $\Mod_A$, where for each $n \ge 0$, $\omega_{\tau_{\le n}(A)} \in \Mod_{\tau_{\le n}(A)}$ is a suitable $m$-truncated dualizing module for
    the $n$-th truncation $\tau_{\le n}(A)$.
\end{rem}

\begin{defin}
    Let $X \in \dAfd_k$ we define its \infcat of \emph{ind-coherent sheaves} as
        \[
            \Ind\Coh(X) \coloneqq \Ind(\Coh^b(A)).  
        \]
\end{defin}

\begin{defin}
    Let $X \in \dAfd_k$ be a derived $k$-affinoid space. Whenever $X$ is bounded we shall denote
    $\omega_X \in \Ind\Coh(X)$ the image of a fixed dualizing module for $A \coloneqq \Gamma(X, \cO_X\alg)$ under the natural inclusion
        \[
            \Coh^b(X) \subseteq \Ind\Coh(X).  
        \]
    In the unbounded case, we shall define 
        \[
            \omega_X \coloneqq \colim_{n \ge 0} \omega_{\rt_{\le n}(X)} \in \Ind\Coh(X),  
        \]
    where the $\{ \omega_{\rt_{\le n}(X)} \}_{n \ge 0}$ denotes a compatible sequence of
    dualizing modules for the consecutive truncations of $X$.
\end{defin}

\begin{notation}Let $X \in \dAfd_k$ be a derived $k$-affinoid space together with a dualizing module $\omega_X \in \Ind\Coh(X)$.
    We shall denote by 
        \[
            \bD^\mathrm{Serre}_{X} \colon \Ind(\Coh^b(X)\op) \to \Ind\Coh(X),
        \]
    the associated \emph{Serre duality functor}, see \cite[\S 2.4]{Antonio_Koszul}.
\end{notation}

\begin{rem}
    Let $(X \to Y) \in \anFMP_{X/ }$. The pro-cotangent complex $\bbL_{X/ Y} \an \in \pro(\Coh^+(X))$ can be naturally
    regarded as an object in the full subcategory $\pro(\Coh^b(X))$ of pro-objects of bounded almost perfect $\cO_X$-modules,
    c.f. \cite[Lemma 4.6]{Antonio_Koszul}.
\end{rem}

Notice that we have an equivalence of \infcats $\pro(\Coh^b(X)) \op \simeq \Ind(\Coh^b(X) \op)$. We now introduce the central notion
of the \emph{Serre tangent complex}:

\begin{defin}[Serre Tangent complex] Let $Y \in \anFMP_{X/}$. We define the \emph{relative analytic Serre tangent complex} of $X \to Y$ as the object
    \begin{align*}
        \bbT\an_{X/ Y}  & \coloneqq \bD^\mathrm{Serre}_{X}(\bbL\an_{X/Y}) \\
                        & \simeq \colim_{S \in \anNil^\cl_{X/ /Y}} \bD^\mathrm{Serre}_X(\bbL\an_{X/S}),
    \end{align*}
    in $\Ind\Coh(X)$.
\end{defin}

\begin{rem}
    The previous definition depends on the choice of a dualizing module for $X$. Nonetheless, given two different choices, these
    differ only
    by an invertible $A$-module, c.f. \cite[Proposition 2.4]{Antonio_Koszul}.
\end{rem}

The following result will play a major role in the study of the deformation to the normal bundle:

\begin{prop} \label{prop:conservativity_and_preservation_of_sifted_colimits_of_tangent_complex}
    The functor $\bbT\an_{X/ \bullet} \colon \anFMP_{X/} \to \QCoh(X)$ given on objects by the formula
        \[
            (X \to Y) \in \anFMP_{X/} \mapsto \bbT\an_{X/Y} \in \Ind\Coh(X), 
        \]
    is conservative and commutes with sifted colimits.
\end{prop}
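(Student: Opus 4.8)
The plan is to reduce both assertions to already established facts by using two structural inputs: that the Serre tangent complex is obtained from the relative pro-cotangent complex by applying the Serre duality equivalence $\bD^\mathrm{Serre}_X$, and that, by \cref{thm:Phi_is_an_equivalence}, every analytic formal moduli problem under $X$ is the classifying object of its associated formal groupoid, whose $[1]$-level records the pointed formal moduli problem $X \times_Y X$. Conservativity is the easy half. By construction $\bbT\an_{X/\bullet}$ factors as
\[
\bbT\an_{X/\bullet}\colon \anFMP_{X/}\xrightarrow{\ \bbL\an_{X/\bullet}\ }\pro(\Coh^b(X))\xrightarrow{\ \bD^\mathrm{Serre}_X\ }\Ind\Coh(X),
\]
where, under the identification $\pro(\Coh^b(X))\op\simeq\Ind(\Coh^b(X)\op)$ recorded above, the second functor is the Serre duality \emph{equivalence}. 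Since \cref{prop:conservativity_of_relative_an_cot_complex} asserts that $\bbL\an_{X/\bullet}$ is conservative, and conservativity is insensitive both to composing with equivalences and to passing to opposite \infcats, the composite $\bbT\an_{X/\bullet}$ is conservative.

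For the preservation of sifted colimits I would first exploit the equivalence $\bbL\an_{X/ X \times_Y X} \simeq \bbL\an_{X/Y}[1]$ in $\pro(\Coh^+(X))$ established above (obtained from the cofiber sequence along the diagonal of $X \to X \times_Y X$ together with $\bbL\an_{X/X}\simeq 0$). Applying $\bD^\mathrm{Serre}_X$, which intertwines the shift $[1]$ with $[-1]$, yields a natural equivalence $\bbT\an_{X/Y}\simeq \bbT\an_{X/(X\times_Y X)}[1]$, functorial in $Y$, where $X\times_Y X$ is viewed as a pointed formal moduli problem over $X$ via its diagonal section. Denoting by $\bbT^{\mathrm{ptd}}\colon \anFMP_{X//X}\to\Ind\Coh(X)$ the functor sending a pointed analytic formal moduli problem over $X$ to the relative Serre tangent complex of its section, and by $F\colon \anFGrpd(X)\to\anFMP_{X//X}$ the $[1]$-evaluation functor, this exhibits an equivalence
\[
\bbT\an_{X/\bullet}\simeq [1]\circ \bbT^{\mathrm{ptd}}\circ F\circ \Phi .
\]
Here $\Phi$ is an equivalence by \cref{thm:Phi_is_an_equivalence}, the shift $[1]$ is an equivalence, and $F$ is conservative and commutes with sifted colimits by \cref{lem:Fgroupoids_to_pointed_objects_is_conservative_and_commutes_with_sifted_colimits}; hence the entire statement is reduced to establishing conservativity and preservation of sifted colimits for $\bbT^{\mathrm{ptd}}$ on $\anFMP_{X//X}$.

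To treat this pointed case I would invoke \cref{cor:equivalence_between_pointed_analytic_and_algebraic_formal_moduli_problems_over}, which identifies pointed analytic formal moduli problems over $X$ with pointed algebraic formal moduli problems over $\Spec A$, where $A\coloneqq\Gamma(X,\cO_X\alg)$. Under this equivalence the analytic pointed tangent functor should correspond to its algebraic counterpart, whose conservativity and preservation of sifted colimits is the familiar statement of the Koszul-duality picture in \cite[\S 5]{Gaitsgory_Study_II}, so the result would transfer. The main obstacle is precisely this last compatibility: one must check that the relative analytic cotangent complexes and the chosen dualizing module are transported correctly through the analytification comparison, so that $\bbT^{\mathrm{ptd}}$ matches the algebraic tangent functor and the algebraic well-behavedness can be imported verbatim. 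Everything else in the argument is formal, being a composition of equivalences with the already-proved properties of $F$ and of $\bbL\an_{X/\bullet}$.
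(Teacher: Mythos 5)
Your proposal is correct and essentially reproduces the paper's own argument: the paper likewise uses \cref{lem:Fgroupoids_to_pointed_objects_is_conservative_and_commutes_with_sifted_colimits} together with the diagonal fiber sequence \eqref{eq:fiber_sequence_for_rel_cot_complexes_along_diagonal_and_projections} to reduce to the pointed tangent functor on $\anFMP_{X//X}$, then transfers to $\mathrm{FMP}_{\Spec A//\Spec A}$ via \cref{cor:equivalence_between_pointed_analytic_and_algebraic_formal_moduli_problems_over} and imports conservativity and preservation of sifted colimits from the algebraic Koszul-duality statement (citing \cite[Corollary 4.31]{Antonio_Koszul} rather than \cite{Gaitsgory_Study_II}). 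The compatibility you flag as the remaining obstacle is exactly what the paper settles by citation: commutativity of the comparison square follows from \cite[Lemma 6.9 (2)]{Porta_Yu_NQK} together with the definition of Serre duality, and the functor $\Ind\Coh(A)\to\Ind\Coh(X)$ is an equivalence by \cite[Theorem 4.5]{Porta_Yu_NQK}.
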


\begin{proof}
    Observe that
    \cref{lem:Fgroupoids_to_pointed_objects_is_conservative_and_commutes_with_sifted_colimits} combined with
    \eqref{eq:fiber_sequence_for_rel_cot_complexes_along_diagonal_and_projections} imply that it suffices to prove that
    the right vertical functor in the diagram
        \begin{equation} \label{eq:comm_pointed_formal_moduli_problems_Tangent_and_analytification}
        \begin{tikzcd}
            \mathrm{FMP}_{\Spec A/ / \Spec A} \ar{r}{(-)\an_X}\ar{d}{\bbT_{\Spec A/ \bullet}} & \anFMP_{X/ / X} \ar{d}{\bbT\an_{X/ \bullet}} \\
            \Ind\Coh(A) \ar{r}{(-)\an} & \Ind\Coh(X),
        \end{tikzcd}
        \end{equation}
    commutes with sifted colimits and is further conservative. The diagram is commutative as a consequence of \cite[Lemma 6.9 (2)]{Porta_Yu_NQK}
    combined with the definition of Serre duality in $\Ind\Coh(X)$. Moreover, \cite[Corollary 4.31]{Antonio_Koszul} implies that
        \[
            \bbT_{\Spec A/ \bullet} \colon     \mathrm{FMP}_{\Spec A/ / \Spec A} \to \Ind\Coh(A),
        \]
    is both conservative and commutes with sifted colimits. Furthermore, the bottom horizontal functor in \eqref{eq:comm_pointed_formal_moduli_problems_Tangent_and_analytification}
    is an equivalence of \infcats, see for instance \cite[Theorem 4.5]{Porta_Yu_NQK}. Thanks to \cref{cor:equivalence_between_pointed_analytic_and_algebraic_formal_moduli_problems_over}, we further deduce
    that the upper horizontal functor in \eqref{eq:comm_pointed_formal_moduli_problems_Tangent_and_analytification} is an equivalence, and the result follows.
\end{proof}

\section{Non-archimedean Deformation to the normal bundle}

In this \S, we introduce the construction of the deformation to the normal cone in the setting of derived $k$-analytic geometry. This construction has
already been performed in the literature in the particular case of the natural inclusion morphism
    \[
        i \colon \trunc(X) \to X,  \quad X \in \dAnk,
    \]
c.f \cite{Porta_Yu_NQK}.
On the other hand, the algebraic situation is largely understood mainly due to \cite{Gaitsgory_Study_II}.

\subsection{General construction in the algebraic case} We start by recalling the definition of the simplicial object
    \[
        \mathrm{B}^\bullet_{\mathrm{scaled}} \in \Fun(\bDelta \op, \mathrm{dSt}_k),
    \]
introduced in
\cite[\S 9.2.2]{Gaitsgory_Study_II}. Namely, for each $[n] \in \bDelta$, the object $\mathrm{B}^{n}_\mathrm{scaled}$ is obtained by gluing $n+1$ copies of $\bbA^1_k$ together along
$0 \in \bbA^1_k$. Moreover, the transition morphisms
    \[
        p_{i, n} \colon  \mathrm{B}^{n+1}_\mathrm{scaled} \to \mathrm{B}^n_{\mathrm{scaled}}, \quad \mathrm{for} \ i \in \{0, \dots, n\} ,
    \]
collapse two given different irreducible components of $\rB^{n+1}_\mathrm{scaled}$ into $\rB^n_{\mathrm{scaled}}$. More explicitly, for $n=0$, we have
    \[
        \rB_\mathrm{scaled}^0 = \bbA^1_k,  
    \]
and, for $n = 1$, $\rB_\mathrm{scaled}^1 = \Spec k[x, y]/ (x^2 - y^2)$.

\begin{construction} \label{construction:def_to_the_normal_bundle}
    Let $f \colon X \to Y$ denote a morphism between locally geometric derived $k$-stacks in $\mathrm{dSt}^\laft_k$. Consider the formal completion of $Y$ on $X$ along
    the morphism $f$
        \[
            Y^\wedge_X \in \dSt_k  
        \]
    In \cite[\S 9.3]{Gaitsgory_Study_II}, the authors introduced the \emph{parametrized deformation to the normal bundle associated to $f \colon X \to Y$}, as the pullback
        \begin{equation} \label{eq:parametrized_deformation}
        \begin{tikzcd}
            \cD^\bullet_{X/Y} \ar{r} \ar{d} &   Y^\wedge_X \times \bbA^1_k \ar{d} \\
            \Map_{/ \bbA^1_k}(\mathrm{B}^\bullet_{\mathrm{scaled}}, X \times \bbA^1_k) \ar{r} & \Map_{/ \bbA^1_k}(\mathrm{B}^\bullet_{\mathrm{scaled}}, Y^\wedge_X \times \bbA^1_k),
        \end{tikzcd}
        \end{equation}
    where both $X \times \bbA^1_k$ and $Y^\wedge_X \times \bbA^1_k$ are considered as constant simplicial objects in the \infcat $\dSt_k$.
    As a consequence of \cite[Theorem 9.2.3.4]{Gaitsgory_Study_II}, it follows that
    each component of the simplicial object
    $\cD_{X/Y}^\bullet$ admits a deformation theory relative to $X$.
    
    In particular, the object $\cD^\bullet_{X/ Y}$ defines a formal groupoid over the stack
    $X \times \bbA^1_k$. Moreover, Theorem 5.2.3.4 in loc. cit. allows us to associate to $\cD^\bullet_{X/Y}$ a
    formal moduli problem under $X \times \bbA^1_k$
           \[
               \cD_{X/Y} \in \mathrm{FMP}_{X \times \bbA^1_k/ / Y^\wedge_X \times \bbA^1_k},
           \]
    obtained via the construction provided in \S 5.2.4 in loc. cit.
    More explicitly, the object $\cD_{X/ Y} \in \mathrm{FMP}_{X \times \bbA^1_k/ }$ is computed as the sifted colimit of the simplicial diagram
        \[
            \cD_{X/ Y}^\bullet \in \Fun(\bDelta \op, \mathrm{FMP}_{X \times \bbA^1_k/ })  ,
        \]
    the colimit being computed in $\mathrm{FMP}_{X \times \bbA^1_k/ }$. We can furthermore consider $\cD_{X/Y}$
    naturally as an object in $(\dSt^\laft_k)_{X \times \bbA^1_k/ / Y^\wedge_X \times \bbA^1_k}$.
\end{construction}

\begin{construction}
    Let $f \colon X \to Y$ a morphism of locally geometric derived $k$-stacks in $\dSt_k^\laft$. In \cite[\S 9.2.5]{Gaitsgory_Study_II} the authors
    constructed an explicit left-lax action of the monoid-scheme object $\bbA^1_k$ (with respect to \emph{multiplication})
    on the deformation to the normal bundle $\cD_{X/ Y}$. The above action
    implies that the structure sheaf of $\cD_{X/ Y}$ is equipped with a (negatively indexed) filtration. 
\end{construction}

\begin{notation}
    We shall denote by $p \colon \cD_{X/ Y} \to \bbA^1_k$ the natural composite morphism 
        \[
            \cD_{X/ Y} \to Y^\wedge_X \times \bbA^1_k \to \bbA^1_k,  
        \]
    in the \infcat $\dSt_k^\laft$.
\end{notation}

We shall now describe formal geometric properties of the deformation to the normal bundle $\cD_{X/Y} \in (\dSt^\laft_k)_{X \times \bbA^1_k/ /Y^\wedge_X \times \bbA^1_k}$:

\begin{prop} \label{prop:algebraic_properties_of_deformation} Let $f \colon X \to Y$ denote a morphism of locally geometric derived $k$-stacks in the \infcat $\dSt_k^\laft$.
    The following assertions hold:
    \begin{enumerate}
        \item The fiber of the morphism $p \colon \cD_{X/Y} \to \bbA^1_k$ at the fiber $\{0 \} \subseteq \bbA^1_k$ identifies naturally with
        the formal completion 
            \[\rT_{X/Y}[1]^\wedge \in \mathrm{FMP}_{X/ },\]
        of the shifted tangent bundle $\rT_{X/Y}[1] \to X$ along the
        zero section
            \[
                s_0 \colon X \to \rT_{X/Y}[1].  
            \]
        \item For $\lambda \in \bbA^1_k$ with $\lambda \neq 0$, the fiber $(\cD_{X/Y})_{\lambda}$ canonically identifies with the formal completion
            \[
                Y^\wedge_X \in \dSt^\laft_k,  
            \]
        of $X$ in $Y$ along the morphism $f$.
        \item (Hodge Filtration) There exists a natural sequence of morphisms
            \[
                X \times \bbA^1_k = X^{(0)} \to X^{(1)}  \to  \dots \to X^{(n)} \to \dots \to Y,
            \]
        admitting a deformation theory, in $(\dSt_k^\laft)_{X\times\bbA^1_k/ /Y\times\bbA^1_k}$. For each $n \ge 0$,
        the relative tangent complex
            \[
                \bbT_{X^{(n)} / Y }   \in \Ind\Coh(X^{(n)}),
            \]
        has first non-zero associated graded piece in degree $n$ and the latter identifies naturally with
            \[
                i_{n, *}^{\Ind\Coh} ( \Sym^{n+1} (\bbT_{X/ Y} [1])[-1])) \in \Ind\Coh(X^{(n)}).
            \]
        Dually, the pro-relative cotangent complex $\bbL_{X/ Y} \in \pro(\Coh(X^{(n)}))$ is equipped with a decreasing filtration, starting in degree $n$,
        and the corresponding associated $n$-th graded piece identifies naturally with
            \[
                i_{n, *}(\Sym^{n+1}(\bbL_{X/ Y}[-1])[1])  \in \pro(\Coh^b(X^{(n)})).
            \]
        \item Assume that $f \colon X \to Y$
        is a closed immersion of derived schemes locally almost of finite presentation then, for each $n \ge 0$, the morphism
            \[
                X^{(n)} \to X^{(n+1)},  
            \]
        has the structure of a square-zero extension associated to a canonical morphism
            \[d \colon \bbL_{X^{(n)}} \to \bbL_{X^{(n)}/ Y} \to i_{n, *} (\Sym^{n+1}(\bbL_{X/ Y}[-1])[1]),\]
        in the \infcat $\Coh^+(X^{(n)})$, where 
            \[
                i_n \colon X \times \bbA^1_k \to X^{(n)}
            \]
        denotes the structural morphism.
        In particular, if $X$ is a derived scheme almost of finite presentation, then so are the $X^{(n)}$, for each $n \ge 0$.
        \item There exists a natural morphism
            \[\colim_{n} X^{(n)} \to \cD_{X/Y},\]
        in the \infcat $\mathrm{FMP}_{X \times \bbA^1_k/ /Y^\wedge_X \times\bbA^1_k}$. Moreover, the latter is an equivalence
        of derived $k$-stacks almost of finite presentation.
    \end{enumerate}
\end{prop}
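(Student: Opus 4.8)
The plan is to deduce all five assertions from \cite[\S 9]{Gaitsgory_Study_II}, the point being that \cref{construction:def_to_the_normal_bundle} makes sense verbatim for a morphism of locally geometric derived $k$-stacks in $\dSt_k^\laft$. Since $\cD_{X/Y}$ depends only on the formal completion $Y^\wedge_X$, and since formal completions, shifted tangent bundles and the passage $\cD^\bullet_{X/Y}\mapsto \cD_{X/Y}$ to formal moduli problems are compatible with smooth descent on $X$, I would first reduce each statement to the case in which $X$ and $Y$ are affine and $f$ is schematic, where the assertions are exactly those established in loc. cit. It is worth isolating at the outset that every claim below is really a statement about the simplicial object $\cD^\bullet_{X/Y}$ together with the multiplicative $\bbA^1_k$-action of \cite[\S 9.2.5]{Gaitsgory_Study_II}, so the two reductions (to the affine case, and from the groupoid $\cD^\bullet_{X/Y}$ to $\cD_{X/Y}$ via \cite[Theorem 9.2.3.4]{Gaitsgory_Study_II}) are used uniformly.

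For assertions (1) and (2) I would analyze the fibers of $\cD^\bullet_{X/Y}$ along $p$. Over $\lambda \neq 0$ the left-lax multiplicative action rescales the $\bbA^1_k$-coordinate, and $\Gm$ acts simply transitively on the nonzero locus; since the defining pullback \eqref{eq:parametrized_deformation} reads off $Y^\wedge_X$ at $\lambda = 1$, every nonzero fiber is identified with $Y^\wedge_X$. Over $\{0\}$, the scaled object $\rB^n_\mathrm{scaled}$ degenerates to the wedge of $n+1$ copies of $\bbA^1_k$ glued at the origin, and the mapping-stack construction computes the formal groupoid which is the \v{C}ech nerve of the zero section $s_0 \colon X \to \rT_{X/Y}[1]$; its classifying object is precisely $\rT_{X/Y}[1]^\wedge$. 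Both identifications are \cite[\S 9.3]{Gaitsgory_Study_II} and are stable under the smooth localization on $X$ of the previous paragraph.

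For assertions (3) and (4), the filtration $\{X^{(n)}\}$ is the increasing filtration by formal moduli problems under $X \times \bbA^1_k$ induced by the negative filtration on $\cO_{\cD_{X/Y}}$ coming from the multiplicative $\bbA^1_k$-action, exactly as in \cite[\S 9.5]{Gaitsgory_Study_II}. Each $X^{(n)}$ admits a deformation theory by \cite[Theorem 9.2.3.4]{Gaitsgory_Study_II}, and the computation of the first associated graded of $\bbT_{X^{(n)}/Y}$ as $i_{n,*}^{\Ind\Coh}(\Sym^{n+1}(\bbT_{X/Y}[1])[-1])$ follows from the identification of the graded pieces of the normal-cone filtration with symmetric powers, transported to $\Ind\Coh$ via Serre duality; the dual statement for $\bbL_{X/Y} \in \pro(\Coh^b(X^{(n)}))$ is then formal. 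When $f$ is a closed immersion of schemes laft the relevant graded piece is concentrated in a single degree, so the universal property of the relative cotangent complex promotes $X^{(n)} \to X^{(n+1)}$ to a square-zero extension classified by the derivation $d$ of the statement, and finiteness of the $X^{(n)}$ propagates by induction along this tower.

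The heart of the argument is assertion (5). The transition maps assemble into a canonical morphism $\colim_n X^{(n)} \to \cD_{X/Y}$ in $\mathrm{FMP}_{X\times\bbA^1_k/ /Y^\wedge_X\times\bbA^1_k}$, and I would prove it is an equivalence by applying the algebraic analogue of the conservative, sifted-colimit-preserving tangent functor $\bbT_{X/\bullet}$ from \cite[\S 9.5]{Gaitsgory_Study_II} (the algebraic counterpart of \cref{prop:conservativity_and_preservation_of_sifted_colimits_of_tangent_complex}). Since the graded pieces computed in (3) exhaust the symmetric algebra $\Sym(\bbT_{X/Y}[1])$, and the tangent complex of $\cD_{X/Y}$ carries exactly this filtration with the same graded pieces, the map induces an equivalence on tangent complexes at $X$, whence conservativity forces it to be an equivalence; that it is then an equivalence of stacks laft is automatic. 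The main obstacle I anticipate is precisely this graded-piece bookkeeping: one must verify that the filtration indexing matches degree-for-degree on the two sides after Serre duality and that no completion defect obstructs the exhaustion, which is where the Noetherian and laft hypotheses are used essentially.
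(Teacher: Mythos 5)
Your overall strategy coincides with the paper's: every assertion is deduced from \cite[\S 9]{Gaitsgory_Study_II}, with (1) and (2) read off from the fibers of the simplicial object $\cD^\bullet_{X/Y}$, (3) from Theorem 9.5.1.3 together with Serre duality to pass from $\bbT$ to $\bbL$, and (4) from one-degree concentration of the graded piece plus the universal property of the cotangent complex when $f$ is a closed immersion. Two points deserve comment. First, your opening reduction --- ``reduce each statement to the case in which $X$ and $Y$ are affine and $f$ is schematic'' via smooth descent on $X$ --- is both unjustified and unnecessary. You nowhere establish that the formation of $\cD^\bullet_{X/Y}$, of $Y^\wedge_X$, or of the passage to formal moduli problems satisfies descent along a smooth atlas of $X$ (and for a general morphism of locally geometric stacks it is not even clear one can arrange $f$ schematic after such a cover); the paper simply never performs this reduction, because the results of loc.\ cit.\ are stated for laft prestacks admitting deformation theory, a class containing locally geometric derived $k$-stacks, so they apply verbatim. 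You should delete this step rather than try to repair it. Second, for (5) the paper cites \cite[\S 9, Proposition 5.2.2]{Gaitsgory_Study_II} directly, whereas you re-derive it: you apply the conservative, sifted-colimit-preserving tangent functor and match the exhaustive filtration on $\bbT_{X\times\bbA^1_k/\cD_{X/Y}}$ against the graded pieces from (3). This is a legitimate alternative and is in fact the mechanism behind the cited proposition; it also mirrors how the paper itself argues in the non-archimedean analogue (\cref{cor:commutation_between_limit_and_colimit_on_Hodge_filtration}, via conservativity of the cotangent complex as in \cref{prop:conservativity_of_relative_an_cot_complex}). What the citation buys is precisely the ``completion defect'' worry you flag --- exhaustiveness of the filtration is part of GR's statement, so you would otherwise have to verify it by hand. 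Minor remarks: for (2) your $\bbG_m$-transitivity argument needs the observation that the left-lax $\bbA^1_k$-action becomes strict over $\bbG_m$; the paper avoids this by identifying the fiber of $\cD^\bullet_{X/Y}$ at any $\lambda\neq 0$ with the \v{C}ech nerve $X^{\times_{Y^\wedge_X}\bullet}$ and taking the colimit in $\mathrm{FMP}$. And in (4), for general $f$ the paper constructs the transition morphisms $X^{(n)}\to X^{(n+1)}$ through the equivalence between $\mathrm{FMP}_{X/}$ and Lie algebroids in $\Ind\Coh(X)$ (\cite[\S 8.5]{Gaitsgory_Study_II}), a step your sketch omits, though your cotangent-complex argument does cover the closed-immersion case that (4) actually asserts.
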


\begin{proof}
    Assertion (1) follows formally from \cite[\S 9, Proposition 2.3.6]{Gaitsgory_Study_II}. Assertion (2) follows from the observation that
    the fiber of the simplicial object $\cD_{X/ Y}^\bullet$ at $\lambda \neq 0$ can be identified with the \v{C}ech nerve of the (completion) of the morphism $f$
        \[
            X^{\times_{Y^\wedge_X} \bullet} \in \Fun(\bDelta \op, \dSt_k^\laft).
        \]
    Therefore, its colimit taken in $\mathrm{FMP}_{X \times \bbA^1_k/}$ agrees with the formal completion $Y^\wedge_X \in \mathrm{FMP}_{X \times \bbA^1_k/ }$. The first part of assertion (3) follows
    formally from \cite[\S 9, Theorem 5.1.3]{Gaitsgory_Study_II}. For each $n \ge 0$, denote by 
    \[i_n \colon X \times \bbA^1_k \to X^{(n)},\]
    the structural morphism. The latter is a proper morphism due to \cite[\href{https://stacks.math.columbia.edu/tag/0CYK}{Tag 0CYK}]{stacks-project}.

    The statement concerning the relative cotangent complex follows formally from
    \cite[\S 9, Theorem 5.1.3]{Gaitsgory_Study_II} by applying the Serre duality functor and combining
    \cite[Corollary 9.5.9 (b)]{Gaitsgory_IndCoh} with \cite[Proposition 3.1.3]{Gaitsgory_IndCoh} and \cite[Corollary 1.4.4.2]{Gaitsgory_Study_II}
    to produce a natural identification
        \[
            \bD^\mathrm{Serre}_X \big((i_{n, *}^{\Ind\Coh}(\Sym(\bbT_{X/ Y}[1])[-1]) \big)   \simeq i_{n, *} \big(\Sym(\bbL_{X/ Y}[-1])[1] \big),
        \]
    in the \infcat $\pro(\Coh^b(X))$.
    Assertion (5) is an immediate consequence of \cite[\S 9, Proposition 5.2.2]{Gaitsgory_Study_II}.
    We shall now deduce claim (4) of the Proposition:
    it follows from
    Theorem 9.5.1.3, in loc. cit.,
    that the (Serre) tangent complex
        \[
            \bbT_{X^{(n)}/ Y } \in \Ind\Coh(X^{(n)}),  
        \]
    admits a natural (increasing) filtration whose $n$-th piece identifies with 
        \[i_{n, *}^\mathrm{IndCoh}(\Sym^{n+1}(\bbT_{X/Y}[1]))[-1] \in \Ind\Coh(X^{(n)}).\]
    For this reason, we have a structural morphism
        \[
            i_{n, *}^\mathrm{IndCoh}(\Sym^{n+1}(\bbT_{X/Y}[1]))[-1] \to \bbT_{X^{(n)}/ Y},
        \]
    in the \infcat $\Ind\Coh(X^{(n)})$. Consider then the natural composite
        \[
            i_{n, *}^\mathrm{IndCoh}(  \Sym^{n+1}(\bbT_{X/ Y}[1])[-1])) \to \bbT_{X^{(n)}/ Y} \to \bbT_{X^{(n)}},
        \]
    in the \infcat $\Ind\Coh(X)$. Using the equivalence of \infcats between Lie algebroids in $\Ind\Coh(X)$ and the \infcat
    $\mathrm{FMP}_{X/ }$, c.f. \cite[\S 8.5]{Gaitsgory_Study_II}, one produces then a canonical morphism
        \[
            X^{(n)} \to X^{(n+1)},
        \]
    in the \infcat $\mathrm{FMP}_{X \times \bbA^1_k/ }$.
    
    In the case where $f \colon X \to Y$ is a closed immersion
    of derived schemes locally almost of finite presentation the latter construction can be adapted in terms of the
    (usual) cotangent complex
    formalism: thanks to \cite[Corollary 8.4.3.2]{Lurie_Higher_algebra}, the relative cotangent complex
        \[
            \bbL_{X/ Y} \in \Coh^+(X),  
        \]
    is $1$-connective. In particular, the shift $\bbL_{X/ Y}[-1]$ is $0$-connective.
    By applying the usual Serre duality functor (c.f. \cite[\S 9]{Gaitsgory_IndCoh}) we obtain a well defined fiber sequence
        \begin{equation} \label{eq:fiber_seq_cotangent_complex_for_Hodge_filtration2}
            \bbL_{X^{(n)}} \to \bbL_{X^{(n)}/ Y} \to i_{n,*}\big(  \Sym^{n+1}(\bbL_{X/ Y}[1])[-1]) \big) ,
        \end{equation}
    in the \infcat $\pro(\Coh^b(X))$. Moreover, under our assumptions, it follows that \eqref{eq:fiber_seq_cotangent_complex_for_Hodge_filtration}
    lies in the full subcategory 
        \[\Coh^+(X) \subseteq \pro(\Coh^b(X)),\]
    via the equivalence of \infcats provided in \cite[Corollary 1.4.4.2]{Gaitsgory_Study_II}.
    Since the right hand side of \eqref{eq:fiber_seq_cotangent_complex_for_Hodge_filtration2}
    is $1$-connective, it follows from \cite[\S 8, 5.5]{Gaitsgory_Study_II} that the extension
        \[
            X^{(n)} \to X^{(n+1)}, \quad \mathrm{for} \ n \ge 0,  
        \]
    can be identified with the (usual) square-zero extension of $X^{(n)}$ by means of the derivation \eqref{eq:fiber_seq_cotangent_complex_for_Hodge_filtration2}. This proves the second part
    of claim (3) and the proof of the proposition is thus concluded.
\end{proof}

\begin{rem} \label{rem:simplification_of_deformation_in_the_closed_immersion}
    Assume that $f \colon X \to Y$ is a closed immersion of derived schemes locally almost of finite presentation.
    Then \cref{construction:def_to_the_normal_bundle} can be simplified by taking directly $Y$ instead of the formal completion $Y^\wedge_X$
    in the defining pullback square for \eqref{eq:parametrized_deformation}. Indeed, we claim that the simplicial object
    obtained as the fiber product
        \[
        \begin{tikzcd}
            (\cD_{X/ Y}^\bullet) ' \ar{r} \ar{d} & Y \times \bbA^1_k \ar{d}\\
            \Map_{/ \bbA^1_k}(\rB^\bullet_\mathrm{scaled}, X\times \bbA^1_k) \ar{r} & \Map_{/ \bbA^1_k} (\rB^\bullet_\mathrm{scaled}, Y \times \bbA^1_k),
        \end{tikzcd}
        \]
    in $\Fun(\bDelta \op, \dSt_k^\laft)$, agrees with $\cD^\bullet_{X/ Y}$. Observe that we have a natural identification
        \[
            (\cD_{X/ Y}^\bullet)' \times_{\bbA^1_k} \bbG_m \simeq X^{\times_Y \bullet} ,
        \]
    and the latter agrees with the formal groupoid associated with $(X \to Y^\wedge_X) \in \mathrm{FMP}_{X \times \bbA^1_k}$. Moreover, we
    have a natural identification
        \[
            \cD_{X/ Y}' \times_{\bbA^1_k} \{0 \} \simeq \rT^\bullet_{X/ Y},
        \]
    where $\rT^\bullet_{X/ Y}$ denotes the groupoid object associated to the usual shifted relative tangent bundle $\rT_{X/ Y}[1]$.
    Under our assumption that $f \colon X \to Y$ is a closed immersion it follows from \cite[Corollary 8.4.3.2]{Lurie_Higher_algebra} that the relative cotangent complex
        \[  
            \bbL_{X/ Y} \in \Coh^+(X),
        \]
    is $1$-connective. As a consequence, we deduce that the natural morphism $X \to \rT^\bullet_{X/Y}$ is a termwise nil-isomorphism. As a consequence,
    we conclude immediately that $\rT^\bullet_{X/ Y}$
    agrees with the formal groupoid,
    over $X$, associated to the formal moduli problem
        \[  
            (X \to \bbT_{X/ Y}[1]^\wedge) \in \mathrm{FMP}_{X/ }.
        \]
\end{rem}

\begin{rem}
    Consider the sequence of deformations in \cref{prop:algebraic_properties_of_deformation}, $\{ X^{(n)} \}_{n \ge 0}$.
    The latter defines a filtration on the global sections of the formal completion $Y^\wedge_X$, which we shall
    refer to as the \emph{Hodge filtration associated to the morphism $f$}. Therefore, \cref{prop:algebraic_properties_of_deformation}
    can be interpreted as a spreading out result of the Hodge filtration
    from de Rham cohomology to the global sections of the formal completion.
\end{rem}

\begin{rem}
    The left-lax action of the multiplicative monoid $\bbA^1_k$ can be made explicit at the fiber of the morphism $p \colon \cD_{X/ Y} \to \bbA^1_k$
    at $\{ 0\} \subseteq \bbA^1_k$. Indeed, under the natural identification
        \[
            (\cD_{X/ Y})_0 \simeq \rT_{X/ Y}^\wedge[1] ,
        \]
    we obtain that the filtration on $\cD_{X/ Y}$ induces the natural filtration on $\rT_{X/ Y}^\wedge[1]$ which is obtained by considering
        \[
            \bbT_{X/ Y}  \in \Ind\Coh(X)^{\mathrm{gr}, =1} \subseteq \Ind\Coh(X)^{\mathrm{gr}, \ge 0},
        \]
    using notations as in \cite[\S 9.2.5.2]{Gaitsgory_Study_II}. Moreover, the construction of the Hodge filtration in \cref{prop:algebraic_properties_of_deformation} (3)
    is naturally $\bbA^1_k$ left-lax equivariant and the equivalence in \cref{prop:algebraic_properties_of_deformation} (4) is $\bbA^1_k$ left-lax equivariant, as well.
\end{rem}

Our goal now is to identify the Hodge filtration on $Y^\wedge_X$, in the case where $f \colon X \to Y$ is a closed immersion:

\begin{prop} \label{lem:identification_of_Hodge_and_adic_filtrations}
    Let $f \colon X \to Y$ be a morphism of affine derived schemes. Denote by $A = \Gamma(X, \cO_X)$ and $B = \Gamma(Y, \cO_Y)$, the corresponding derived
    global sections and let $I \coloneqq \fib(B \to A)$ denote the fiber of the induced morphism of derived $k$-algebras $B \to A$. The following assertions hold:
    \begin{enumerate}
        \item If $f \colon X \to Y$ be a locally of complete intersection closed morphism in the \infcat $(\dAff_k)^\laft$,
        then the Hodge filtration on the formal completion $Y^\wedge_X$ constructed in
        \cref{prop:algebraic_properties_of_deformation} identifies with the usual $I$-adic filtration on $B^\wedge_I$, by taking derived global sections.
        \item Assume that $f \colon X \to Y$ is a closed immersion. Then the Hodge filtration, $\{ \mathrm{Fil}^n_H \}$,
        on the derived global sections 
            \[\Gamma(Y^\wedge_X,\cO_{Y^\wedge_X}) \simeq B^\wedge_I,\]
        produces, for each $n \ge 1$, natural equivalences
            \[
                B/ \mathrm{Fil}_H^n \simeq \dR_{A/ B}/ \mathrm{Fil}_H^n,
            \]
        of derived $k$-algebras.
        In particular, we obtain a natural equivalence of derived $k$-algebras
            \[
                B^\wedge_I \simeq \lim_{n \ge 1} \dR_{A/ B} / \mathrm{Fil}_H^n.  
            \]
    \end{enumerate}
\end{prop}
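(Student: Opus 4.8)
The plan is to handle the two assertions separately, but in both cases the engine is the square-zero tower $X\times\bbA^1_k = X^{(0)}\to X^{(1)}\to\cdots$ of \cref{prop:algebraic_properties_of_deformation}, together with the identification of the fibres of $X^{(n)}\to\bbA^1_k$ over $\{1\}$ and $\{0\}$ with the underlying object and the associated graded of the Hodge filtration, respectively. Concretely, taking derived global sections realises $B/\mathrm{Fil}^{n+1}_H\simeq\Gamma(X^{(n)}_1,\cO)$, while the $\{0\}$-fibre of $\cD_{X/Y}$ is $\rT_{X/Y}[1]^\wedge$ by \cref{prop:algebraic_properties_of_deformation}(1), so $\Gamma(X^{(n)}_0,\cO)$ computes the truncation of the completed symmetric algebra $\Sym(\bbL_{A/B}[-1])^\wedge=\dR_{A/B}$, i.e. $\dR_{A/B}/\mathrm{Fil}^{n+1}_H$. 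In both parts the Hodge filtration is complete by the colimit presentation $\colim_n X^{(n)}\simeq\cD_{X/Y}$ of \cref{prop:algebraic_properties_of_deformation}(5).

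For part (1), I would first record that since $f$ is a locally complete intersection closed morphism, $\bbL_{X/Y}$ is $1$-connective by \cite[Corollary 8.4.3.2]{Lurie_Higher_algebra}, and the lci hypothesis forces $\bbL_{X/Y}\simeq N^\vee[1]$ with $N^\vee\simeq I/I^2$ a finite projective $A$-module placed in degree $0$, where $I\coloneqq\fib(B\to A)$. Hence $\bbL_{X/Y}[-1]\simeq I/I^2$ is discrete and $\Sym^{n+1}(\bbL_{X/Y}[-1])\simeq\Sym^{n+1}(I/I^2)$ is the ordinary symmetric power. By \cref{prop:algebraic_properties_of_deformation}(4) the map $X^{(n)}\to X^{(n+1)}$ is a square-zero extension classified by the conormal derivation with values in $\Sym^{n+1}(I/I^2)$, so on global sections at $\lambda=1$ it presents $B/\mathrm{Fil}^{n+2}_H$ as a square-zero extension of $B/\mathrm{Fil}^{n+1}_H$ by $\Sym^{n+1}(I/I^2)$. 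On the $I$-adic side the regularity of the sequence cutting out $X$ gives the classical identification $I^{n+1}/I^{n+2}\simeq\Sym^{n+1}(I/I^2)$, so the $I$-adic tower $B/I^{n+2}\to B/I^{n+1}$ has the same graded pieces and the same classifying (conormal) derivation. An induction on $n$ along the resulting fibre sequences then upgrades this graded identification to a filtered equivalence $B^\wedge_I\simeq B^\wedge_I$ matching the Hodge and the $I$-adic filtrations.

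For part (2) I would argue by induction on $n$, the base case being the tautology $B/\mathrm{Fil}^1_H\simeq A\simeq\dR_{A/B}/\mathrm{Fil}^1_H$. By the discussion above the claim $B/\mathrm{Fil}^{n+1}_H\simeq\dR_{A/B}/\mathrm{Fil}^{n+1}_H$ is exactly the statement that the two fibres $X^{(n)}_1$ and $X^{(n)}_0$ of the \emph{finite} stage $X^{(n)}\to\bbA^1_k$ agree compatibly with the structural maps from $X$. Granting the inductive identification of $X^{(n-1)}_1$ with $X^{(n-1)}_0$, both fibres of $X^{(n)}$ are square-zero extensions of this common object by $\Sym^n(\bbL_{A/B}[-1])$ (no longer discrete, but connective since $\bbL_{X/Y}[-1]$ is connective), so it suffices to show that the two classifying derivations coincide.

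The hard part is precisely this matching of derivations. The derivation classifying $X^{(n)}\to X^{(n+1)}$ in \cref{prop:algebraic_properties_of_deformation}(4) is canonical and $\bbA^1_k$-left-lax equivariant for the multiplicative action of \cref{construction:def_to_the_normal_bundle}; I would trace the $\bbG_m$-weights, following \cite[\S 9.2.5]{Gaitsgory_Study_II}, to show that after the inductive identification its restrictions to the fibres over $\{1\}$ and $\{0\}$ become the same map, namely the de Rham differential on $\Sym^\bullet(\bbL_{A/B}[-1])$. Equivalently, this is the assertion that the finite Hodge quotients of $B^\wedge_I$ do not jump as $\lambda$ degenerates to $0$, and it is exactly the content of the comparison between the canonical Gaitsgory–Rozenblyum construction and Bhatt's left Kan extended filtration. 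Once the weight analysis is in place, the inductive step gives $B/\mathrm{Fil}^{n+1}_H\simeq\dR_{A/B}/\mathrm{Fil}^{n+1}_H$ for all $n$, and passing to the limit over $n$ — using completeness of $B^\wedge_I$ and Hodge-completeness of $\dR_{A/B}$ — yields the final equivalence $B^\wedge_I\simeq\lim_{n\ge 1}\dR_{A/B}/\mathrm{Fil}^n_H$.
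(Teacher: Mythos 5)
Your proposal follows the same overall strategy as the paper's proof: both arguments run entirely on the square-zero tower of \cref{prop:algebraic_properties_of_deformation}, identify $B/\mathrm{Fil}^{n+1}_H$ with the global sections of the fibre of $X^{(n)}$ over $\lambda \neq 0$ (with $B^\wedge_I \simeq \lim_n \Gamma((X^{(n)})_\lambda, \cO)$ supplying completeness, via \cite[Corollary 9.5.2.5]{Gaitsgory_Study_II}), and proceed by induction on the tower; in part (1) both use $\bbL_{A/B} \simeq I/I^2[1]$ together with the regular-sequence identification $I^{n+1}/I^{n+2} \simeq \Sym^{n+1}(I/I^2)$ to match the two towers stage by stage. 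The paper proves (2) first and feeds the resulting identification of the $\lambda \neq 0$ tower with the $\Sym^{\le n}(\bbL_{A/B}[-1])$-tower into (1); your order is reversed, but this is not circular, since your part (1) only uses the canonical derivation of \cref{prop:algebraic_properties_of_deformation}(4).

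The one place you genuinely diverge is your treatment of the acknowledged ``hard part'' of (2). The paper does not trace $\bbG_m$-weights: it inductively identifies $\bbL_{A^{(n)}/B}$, where $A^{(n)} \coloneqq \Gamma(X^{(n)}, \cO_{X^{(n)}})$, with the relative cotangent complex of $B \to \Sym^{\le n}(\bbL_{A/B}[-1])$, and then identifies the canonical derivation of stage $n$ with the one classifying the extension $\Sym^{n+1}(\bbL_{A/B}[-1]) \to \Sym^{\le n+1}(\bbL_{A/B}[-1]) \to \Sym^{\le n}(\bbL_{A/B}[-1])$, invoking the naturality of the construction of \cite[\S 9.5.1]{Gaitsgory_Study_II} and the reduction to the case of vector groups of \cite[\S 9.5.5]{Gaitsgory_Study_II}. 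You should be cautious that left-lax equivariance and weight bookkeeping \emph{alone} cannot deliver your claim that the restrictions of the classifying derivation over $\{1\}$ and $\{0\}$ agree after the inductive identification: the full deformation $\cD_{X/Y}$ carries the very same left-lax $\bbA^1_k$-action, and yet its fibres $Y^\wedge_X$ and $\rT_{X/Y}[1]^\wedge$ are genuinely inequivalent, so equivariance by itself never forces constancy along the family. What makes the \emph{finite} stages work is the explicit computation in the vector-group case — this is where the de Rham differential is actually produced as the extension class — so your weight analysis must be supplemented by, and in effect reduces to, exactly the vector-group reduction the paper cites. With that step made explicit, your outline coincides with the paper's argument.
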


\begin{proof} We first prove claim (2) of the Proposition.
    Since $f \colon X \to Y$ is a closed immersion between affines we deduce that the relative cotangent complex $\bbL_{X/ Y} \in \Coh^+(X)$ is $1$-connective.
    For this reason, the sequence morphisms
        \[
            X \times \bbA^1_k = X^{(0)} \hookrightarrow X^{(1)} \hookrightarrow \dots \hookrightarrow X^{(n)} \hookrightarrow \dots,  
        \]
    of \cref{prop:algebraic_properties_of_deformation} (4) correspond to successive square-zero extensions.
    Moreover,
    it follows from \cite[Corollary 9.5.2.5]{Gaitsgory_Study_II} that, after applying the Serre duality functor on $\Ind\Coh$, one has an equivalence
        \[
            \cO_{Y^\wedge_X} \simeq \lim_{n \ge 0} \cO_{(X^{(n)})_\lambda},  
        \]
    in $\CAlg_k$, for any
    for $\lambda \neq 0$. Henceforth,
    at the level of derived global sections, one obtains a natural identification   
        \[
            B^\wedge_I \simeq \lim_{n \ge 0} \Gamma((X^{(n)})_\lambda, (\cO_{X^{(n)}})_\lambda),  
        \]
    in the \infcat $\CAlg_k$. Similarly, thanks to
    the proof of \cite[\S 9, Theorem 5.1.3]{Gaitsgory_Study_II} in the case of vector groups (c.f. \cite[\S 9.5.5]{Gaitsgory_Study_II})
    we deduce a natural equivalence
        \[
            \dR_{A/ B} \simeq \lim_{n \ge 0}\Gamma((X^{(n)})_0, \cO_{(X^{(n)})_0}),
        \]
    in $\CAlg_k$. By the naturality of the construction of \cite[\S 9.5.1]{Gaitsgory_Study_II} combined with the argument provided in \cite[\S 9.5.5]{Gaitsgory_Study_II} to the reduction to the case of vector groups,
    we deduce that
        \[
            p \colon X^{(n)} \to   \bbA^1_k,
        \]
    has constant fibers. Indeed, for $n = 0$, the natural morphism
        \[
            X^{(0)} \to X^{(1)},  
        \]
    corresponds, by construction, to the natural square-zero extension
        \[
            \bbL_A \to \bbL_{A/ B} ,  
        \]
    classifying the extension
        \[
            \bbL_{A/ B}[-1] \to \Sym^{\le 1}(\bbL_{A/ B}[-1])  \to A,
        \]
    of $A$-modules.
    Assuming the result for $n \ge 0$, we have that
        \[
            \bbL_{A^{(n)}/ B} \in \Coh^+(A^{(n)}),    
        \]
    where $A^{(n)} \coloneqq \Gamma(X^{(n)}, \cO_{X^{(n)}})$,
    identifies with the relative cotangent complex of the natural morphism
        \[
            B \to \Sym^{\le n}(\bbL_{A/ B}[-1]),  
        \]
    produced by induction and the naturality of the construction. Moreover, thanks to \cite[\S 9.5.5]{Gaitsgory_Study_II}
    the relative cotangent complex
        \[
            \bbL_{\Sym^{\le n}(\bbL_{A/ B}[-1])/ B} \in \Coh^+(\Sym^{\le n}(\bbL_{A/ B}[-1])),
        \]  
    is equipped with a decreasing filtration, starting in degree $n$, whose $n$-th piece identifies with
        \[
            \Sym^{n+1}(\bbL_{A/ B}[-1])[1] \in \Coh^+(\Sym^{\le n}(\bbL_{A/ B}[-1])).  
        \]
    For this reason, the derivation
        \[
            \bbL_{A^{(n)}} \to \bbL_{A^{(n)}/ B} \to \Sym^{n+1}(\bbL_{A/ B}[-1])[1] ,
        \]
    identifies with the natural derivation
        \[
            \bbL_{\Sym^{\le n}(\bbL_{A/ B}[-1])[1])/ B} \to \Sym^{n+1}(\bbL_{A/ B} [1])  ,
        \]
    classifying the extension
        \[
            \Sym^{n+1}(\bbL_{A/ B}[-1]) \to \Sym^{\le n+1}(\bbL_{A/ B}[-1])   \to \Sym^{\le n}(\bbL_{A/ B}[-1]).
        \]
    The claim now follows by induction on $n \ge 0$.
    As a consequence of the previous considerations we deduce that
        \[
            \Gamma((X^{(n)})_\lambda, (\cO_{X^{(n)}})_\lambda) \simeq \Gamma((X^{(n)})_0, (\cO_{X^{(n)}})_0),
        \]
    for every $\lambda \in \bbA^1_k$, and therefore we have natural equivalences
        \[
            B^\wedge_I \simeq \dR_{A/ B} \quad \mathrm{and} \quad B/ \mathrm{Fil}_H^n \simeq \dR_{A/ B}/ \mathrm{Fil}^n_H,
        \]
    in $\CAlg_k$, as desired.
    We now prove statement (1) of the Proposition.
    Assume first that $A$ and $B$ are ordinary $k$-algebras and the (classical) ideal $I \subseteq B$
    is generated by a regular sequence.
    In this case, we have an identification
        \[
            \bbL_{A/ B} \simeq I/ I^2 [1],
        \]
    see for instance \cite[\href{https://stacks.math.columbia.edu/tag/08SJ}{Tag 08SJ}]{stacks-project}.
    In particular, the $n$-th graded piece of the Hodge filtration in \cref{prop:algebraic_properties_of_deformation} (3) can be identified with
        \[
            \Sym^n(\bbL_{A/ B}[-1]) \simeq I^n/ I^{n+1}, 
        \]
    where the latter equivalence follows again by the fact that $I$ is generated by a regular sequence in $\pi_0(B)$. In particular, we deduce, by induction on $n \ge 1$, a natural identification between
    the fiber sequence
        \[
            \Sym^{n+1}(\bbL_{A/ B}[-1]) \to \Sym^{\le n+1}(\bbL_{A/ B}[-1]) \to \Sym^{\le n}(\bbL_{A/ B}[-1]),
        \]
    and the the $I$-adic one
        \[  
            I^n/ I^{n+1} \to B/ I^{n+1} \to B/ I^n,
        \]
    as $B$-modules. By naturality of \cite[Theorem 9.5.1.3]{Gaitsgory_Study_II} and induction on $n \ge 1$ it follows that we have natural identifications
        \[
            X^{(n)} \simeq \Spec (A/ I^{n+1}) ,
        \]
     of affine derived schemes, as desired. The assertion in general follows along the same reasoning noticing that
        \[
            \bbL_{A/ B} \in \Mod_A,  
        \]
    is concentrated in homological degrees $[1, 0]$.
%
\end{proof}     

\begin{rem} It follows from \cite[\S 9, Theorem 5.5.4]{Gaitsgory_Study_II}, that in the situation of \cref{lem:identification_of_Hodge_and_adic_filtrations}, the natural composite morphism
        \[
            \Sym^n(\bbL_{A/ B} [-1]) \to \Sym^{ \le n}(\bbL_{A/ B}[-1]) \to \Sym^{n+1}(\bbL_{A/ B}[-1]), 
        \]
    corresponds to the usual de Rham differential,
    where $\Sym^{ \le n}(\bbL_{A/ B}[-1]) \to \Sym^{n+1}(\bbL_{A/ B}[-1])$ classifies the natural extension
        \[
            \Sym^{n+1}(\bbL_{A/ B} [-1]) \to \Sym^{ \le n +1 }(\bbL_{A/ B}[-1]) \to \Sym^{\le n}(\bbL_{A/ B}[-1]),
        \]
    associated to the $n$-th piece of the Hodge filtration,
\end{rem}

\begin{rem}
    Let $ f\colon X \to Y$ be a closed immersion of affine derived schemes. Let $A \coloneqq \Gamma(X, \cO_X)$ and $B \coloneqq \Gamma(Y, \cO_Y)$.
    In \cite[Proposition 4.16]{Bhatt_Derived_Completions}, the author proves a similar statement to \cref{lem:identification_of_Hodge_and_adic_filtrations}.
    It turns out that both filtrations on $B^\wedge_I$ agree in the case of \cite{Bhatt_Derived_Completions} or in the situation of \cite[\S 9]{Gaitsgory_Study_II}.
    A comparison can be stated as follows: it is immediate that both the filtrations of Bhatt and Gaitsgory-Rozemblyum on $B^\wedge_I$, denoted respectively
        \[
            \mathrm{Fil}^n_B \quad \mathrm{and} \quad \mathrm{Fil}^n_H,  
        \]
    agree in the case where $f$ is a local complete intersection morphism.
    Moreover, it follows from the fact that the cotangent complex commutes with sifted colimits that, for each $n \ge 0$, the natural morphisms
        \begin{equation} \label{eq:de_Rham_differential}
            \Sym^{\le n}(\bbL_{A/ B}[-1]) \to \Sym^{n+1} (\bbL_{A/ B}[-1])  [1]
        \end{equation}
    are stable under sifted colimits as well. Consider the simplicial model structure in the simplicial category of pairs $(A, I)$
    where $A$ is a simplicial $k$-algebra and $I \subseteq A$ a simplicial ideal given by the description:
    a pair $(A, I)$ is a fibration (resp., trivial fibration) if and only if both $A$ and $I$ are simultaneously fibrations (resp., trivial fibrations)
    of simplicial sets. Moreover, the cofibrant objects correspond to pairs $(P, J)$ where, for each $[n] \in \bDelta$, $P_n$ is a polynomial $k$-algebra on a set $X_n$
    and $J_n$ an ideal defined by a subset $Y_n \subseteq X_n$, both preserved by degeneracies and the relative situation. The $\infty$-categorical
    localization at the class of weak equivalences agrees furthermore with the \infcat $\Fun_\mathrm{closed}(\Delta^1, \CAlg_k)$. In particular,
    considering a simplicial resolution $(P_\bullet, I_\bullet)$ for the morphism $B \to A$ consisting of polynomial algebras and regular ideals
    we deduce that
        \[
            \mathrm{Fil}^n_H \simeq \mathrm{Fil}^n_B,  \quad \mathrm{for} \ n \ge 0,
        \]  
    in the general case, as well.
\end{rem}

\subsection{The construction of the deformation in the affinoid case} Consider the object
\[
    \mathbf{B}^{\mathrm{an}, \bullet}_{\mathrm{scaled}} \colon \bDelta \op \to (\dAnSt_k)_{/ \mathbf A^1_k},
\]
obtained as the analytification of the simplicial object $\mathrm{B}^\bullet_{\mathrm{scaled}} \in \Fun(\bDelta \op, \mathrm{dSt}^\laft_k)$ described in the previous section.

Our goal in this section, is to construct the deformation to the normal cone in the non-archimedean
setting, in the case where $Y$ is derived $k$-affinoid. We shall further assume that
    \[
        f \colon X \to Y,
    \]  
exhibits $X$ as an analytic formal moduli problem over $Y$. In particular, $X$ admits a deformation theory and we can
consider the relative cotangent complex
    \[
        \bbL\an_{X/ Y} \in \pro^\mathrm{ps}(\Coh^+(X)).  
    \]

\begin{defin}
    We define the \emph{parametrized non-archimedean deformation to the normal bundle} via the pullback diagram
    \[
    \begin{tikzcd}
        \cD_{X/Y}^{\mathrm{an}, \bullet} \ar{r} \ar{d} & Y \times \bA^1_k \ar{d} \\
        \bfMap_{/ \bA^1_k}(\mathbf{B}_\mathrm{scaled}^{\mathrm{an}, \bullet}, X \times \bA^1_k) \ar{r} & \bfMap_{/ \bA^1_k}(\mathbf{B}_{\mathrm{scaled}}^{\mathrm{an, \bullet}}, Y \times \bA^1_k)
    \end{tikzcd},
    \]
    computed in the \infcat $\mathrm{Fun}(\bDelta \op, \dAnSt_k)$.
\end{defin}

\begin{rem}
    As in the algebraic case, the object $\mathbf{B}^{\mathrm{an}, \bullet}_{\mathrm{scaled}}$ admits a left-lax action of the multplicative monoid $\bA^1_k \in \dAnSt_k$.
    This action induces a natural left-lax action of $\bA^1_k$ on the deformation $\cD_{X/ Y}^{\mathrm{an}, \bullet}$, as well.
\end{rem}

\begin{rem} \label{rem:reduction_step_closed_ims}
    It follows immediately from the definition of $\cD_{X/ Y}^{\mathrm{an}, \bullet}$ that the latter commutes with filtered colimits in $X \in \dAnSt_k$.
    Moreover, \cref{prop:required_conditions_for_formal_moduli_problems} implies that the natural morphism
        \[  
            \colim_{S \in (\anNil^\cl_{/ X})_{/ Y}} \cD_{S/ Y}^{\mathrm{an}, \bullet} \to \cD_{X/ Y}^{\mathrm{an}, \bullet},
        \]
    is an equivalence in $\dAnSt_k$. For this reason, we will often reduce the construction to the case where $f \colon X \to Y$ is a nil-isomorphism of derived $k$-affinoid
    spaces.
\end{rem}

\begin{rem}
    Let $f\colon X \to Y$ be a nil-isomorphism in the \infcat $\dAfd_k$. Then we can consider the diagram
        \[
            \mathrm{id}_Y \colon Y \to X \bigsqcup_{X_\red} Y \to Y,  
        \]
    which exhibits $(X \xrightarrow{f} Y)$ as a retrat of the induced morphism $(X \xrightarrow{g} X \underset{X_\red}{\sqcup}Y)$, both regarded as objects in $\anFMP_{X/ }$.
    In this case, it follows by the naturality of the parametrized Hodge filtration that we have a retract diagram
        \[
            \cD^\bullet_{X/ Y} \to \cD^\bullet_{X/ X \underset{X_\red}{\bigsqcup} Y} \to \cD^\bullet_{X/ Y},   
        \]
    of simplicial objects in $\anFMP_{X \times \bA^1_k/ }$. This observation will allow us to perform reduction arguments by replacing the nil-isomorphism
    $f\colon X \to Y$ by the nil-embedding $g \colon X \to X \underset{X_\red}{\sqcup} Y$.
\end{rem}

Let $f \colon X \to Y$ be a nil-isomorphism in the \infcat $\dAfd_k$.
Thanks to \cite[Lemma 6.9.]{Porta_Yu_NQK}, we deduce that the induced morphism
    \[
        f\alg \colon X\alg \to Y\alg,  
    \]
is again a nil-isomorphism of affine derived schemes. We shall need a few preparations:

\begin{lem} \label{lem:relative_anlaytification_of_closed_immersions_are_compatible_with_alg_construction} Let $Y \in \dAfd_k$ and
    $(f \colon X \to Y) \in \anFMP_{/Y}$. Then one has a natural equivalence
        \[
            (f \alg)\an_Y \simeq f,  
        \]
    in the \infcat $\Fun(\bDelta\op, \dAfd_k)$.
\end{lem}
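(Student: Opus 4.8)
The plan is to identify the composite $(-)\an_Y \circ (-)\alg$ with the identity functor on $\anFMP_{/Y}$, read off levelwise on simplicial objects. The central input is \cref{thm:equivalence_between_analytic_and_algebraic_formal_moduli_problems_over}, which asserts that the relative analytification
\[
    (-)\an_Y \colon \mathrm{FMP}_{/\Spec B} \to \anFMP_{/Y}, \qquad B \coloneqq \Gamma(Y, \cO_Y\alg),
\]
is an equivalence of \infcats. The functor $(-)\alg$, obtained by passing to derived global sections, is the natural candidate for its inverse, and by \cite[Lemma 6.9]{Porta_Yu_NQK} it sends the nil-isomorphism $f$ to a nil-isomorphism $f\alg \colon X\alg \to Y\alg$, so that $f\alg$ is a well-defined object of $\mathrm{FMP}_{/\Spec B}$. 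It therefore suffices to produce a natural equivalence $(f\alg)\an_Y \simeq f$, functorially in $f$.

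First I would reduce to the case where $f$ is a closed nil-isomorphism of derived $k$-affinoid spaces. By \cref{cor:formal_moduli_problems_over_X_are_ind_inf_objects} we may write $X \simeq \colim_{S \in (\anNil^\cl_{/Y})_{/X}} S$ as a filtered colimit of affinoids $S$ that are nil-isomorphic to $Y$. Both functors in play are compatible with this presentation: the algebraization $(-)\alg$ is computed from derived global sections and carries the colimit to $X\alg \simeq \colim_S S\alg$ in $\mathrm{FMP}_{/\Spec B}$, while $(-)\an_Y$, being part of an equivalence, commutes with filtered colimits. Hence it is enough to produce, naturally in $S$, an equivalence $(S\alg)\an_Y \simeq S$.

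For such an affinoid building block, $S\alg = \Spec A_S$ with $A_S \coloneqq \Gamma(S, \cO_S\alg)$, and the desired equivalence $(\Spec A_S)\an_Y \simeq S$ is the basic compatibility of relative analytification on affinoids: on derived $k$-affinoid spaces the relative analytification of the algebraization recovers the original space, which follows from the construction of $(-)\an_Y$ together with the derived Tate acyclicity equivalence $\Coh^+(S) \simeq \Coh^+(A_S)$ of \cite[Theorem 3.1]{Porta_Yu_Derived_Hom_spaces}. Assembling these equivalences over the filtered diagram, and then levelwise over $\bDelta\op$ (every construction above being functorial in the simplicial variable), yields the asserted equivalence in $\Fun(\bDelta\op, \dAfd_k)$.

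The main obstacle I expect is not the object-level identification, which is essentially forced by \cref{thm:equivalence_between_analytic_and_algebraic_formal_moduli_problems_over}, but rather checking that $(-)\alg$ genuinely implements the inverse of the equivalence \emph{functorially and compatibly with the colimit and simplicial presentations}, that is, that the relevant counit is an equivalence on these diagrams and that all identifications are natural in $\bDelta\op$. Here one uses \cref{prop:required_conditions_for_formal_moduli_problems} to ensure the ind-presentation is preserved under formation of the \v{C}ech nerve, together with the fact that filtered colimits commute with the finite limits appearing in the groupoid structure, so that the equivalence obtained termwise upgrades to an equivalence of simplicial objects.
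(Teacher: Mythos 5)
Your proposal is correct and takes essentially the same route as the paper: the paper's own proof is a one-line appeal to \cite[Theorem 6.12]{Porta_Yu_NQK} (restated as \cref{thm:equivalence_between_analytic_and_algebraic_formal_moduli_problems_over}), which is exactly the key input you identify. Your reduction to affinoid nil-isomorphisms via the ind-presentation and the Tate-acyclicity check on affinoid building blocks simply unpacks why $(-)\alg$ implements the inverse of that equivalence, which the paper leaves implicit.
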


\begin{proof} 
    The statement of the Lemma is a direct consequence of \cite[Theorem 6.12]{Porta_Yu_NQK}.
\end{proof}

\begin{lem} \label{lem:Noetherian_approximation_of_nil_isomorphisms}
    Let $f\alg \colon X\alg \to Y\alg$ be a closed embedding of derived schemes. There exists a filtered \infcat $\cI$ and a diagram
        \[
            F \colon \cI \op \to \Fun(\Delta^1, \dAff^\laft_k),  
        \]
   such that, for each $\alpha \in \cI$, we have that
        \[
            F(\alpha) \coloneqq (f_\alpha \colon X_\alpha \to Y_\alpha),  
        \]
    is a closed immersion in the \infcat $\dAff_k^\laft$ and such that
        \[
            f \simeq \lim_{\cI\op} F,  
       \]
    in the \infcat $\Fun(\Delta^1, \dAff_k)$.
\end{lem}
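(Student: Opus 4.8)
The plan is to pass to commutative algebra and run Noetherian approximation in the arrow category, the only delicate point being that the approximating morphisms can be chosen to be closed immersions. Writing $A \coloneqq \Gamma(X\alg,\cO)$ and $B \coloneqq \Gamma(Y\alg,\cO)$, a morphism in $\dAff_k$ is the same datum as a morphism of connective $k$-algebras $B \to A$, and $f\alg$ being a closed immersion means precisely that $\pi_0(B) \to \pi_0(A)$ is surjective, equivalently that $I \coloneqq \fib(B \to A)$ is connective. Recall that $\dAff_k^\laft$ consists of the spectra of almost-of-finite-presentation $k$-algebras, and that, since $k$ is a field, such algebras are Noetherian. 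The \infcat $\CAlg_k^{cn}$ of connective $k$-algebras is compactly generated by its finitely presented objects (which are in particular almost of finite presentation), and since $\Delta^1$ is finite the functor category $\Fun(\Delta^1,\CAlg_k^{cn})$ is again compactly generated, with compact objects the arrows between finitely presented algebras. Hence $(B \to A)$ is a filtered colimit of arrows $(B_\alpha \to A_\alpha)$ between finitely presented, hence laft, $k$-algebras; dually $f\alg$ is a cofiltered limit of morphisms in $\dAff_k^\laft$. It therefore only remains to arrange that each approximating arrow is a closed immersion.

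First I would approximate the target, writing $B \simeq \colim_{\beta} B_\beta$ with each $B_\beta$ finitely presented over $k$, hence Noetherian. The key use of the field hypothesis is then the following: for a map out of a Noetherian ring the kernel of $\pi_0(B_\beta) \to \pi_0(A)$ is finitely generated, so its image $R_\beta \coloneqq \operatorname{im}\big(\pi_0(B_\beta) \to \pi_0(A)\big)$ is a finitely presented (Noetherian) $k$-algebra and $\colim_\beta R_\beta \simeq \pi_0(A)$. Thus the scheme-theoretic image of a morphism into a Noetherian affine derived scheme stays laft, and the surjectivity on $\pi_0$ that defines a closed immersion can be forced on the nose. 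Regarding $A$ as a connective $B_\beta$-algebra and combining the compact generation of $\CAlg_{B_\beta}^{cn}$ with this image construction, I would produce, cofinally, connective $B_\beta$-algebras $A_\nu$ that are almost of finite presentation over $B_\beta$, for which $\pi_0(B_\beta) \to \pi_0(A_\nu)$ is surjective, and whose filtered colimit recovers $A$ over $B$. Transitivity of almost finite presentation, applied to $k \to B_\beta \to A_\nu$, guarantees that each $A_\nu$ is laft over $k$, while $B_\beta \to A_\nu$ is a closed immersion by construction. Assembling the target index $\beta$ and the source index $\nu$ into a single filtered \infcat $\cI$ by a Grothendieck construction, one obtains the desired diagram $F \colon \cI\op \to \Fun(\Delta^1,\dAff_k^\laft)$ landing in closed immersions, and $f\alg \simeq \lim_{\cI\op} F$ follows because $\pi_*$ commutes with filtered colimits of algebras.

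The main obstacle, and the step deserving the most care, is simultaneously keeping the approximants almost of finite presentation and making each of them a closed immersion while controlling the higher homotopy, so that the colimit reproduces all of $A$ rather than merely its truncation $\pi_0(A)$. The naive \enquote{scheme-theoretic image} modification $A_\nu \times_{\pi_0(A_\nu)} R_\beta$ corrects $\pi_0$ but can destroy finite presentation, since the homotopy groups $\pi_i(A_\nu)$, finitely generated over $\pi_0(A_\nu)$, need not remain finitely generated after restricting scalars along $R_\beta \hookrightarrow \pi_0(A_\nu)$. The way around this is to build the higher cells relatively over the Noetherian base $B_\beta$: using that $\bbL_{A/B}$ is $1$-connective for a closed immersion (so that $A$ is obtained from $B$ by attaching cells in degrees $\ge 1$), one approximates these cells by almost perfect modules spread out from $B_\beta$ and realizes the successive stages as almost-of-finite-presentation square-zero extensions, exactly as in the proof of \cref{prop:filtered_colimit_for_nil-embeddings}. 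Each such stage is automatically a closed immersion over $B_\beta$, and passing to the colimit over the cells and over $\beta$ yields the required presentation. I expect the verification that this relative cellular approximation is cofinal, and hence has colimit $A$, to be the technical heart of the argument.
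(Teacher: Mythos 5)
Your proof is correct in substance but takes a genuinely different route from the paper. The paper handles the closed-immersion constraint in one stroke: it identifies $\Fun_{\mathrm{closed}}(\Delta^1, \CAlg_k)$ (arrows surjective on $\pi_0$) with the $\infty$-categorical localization of Quillen's simplicial model category of pairs $(C, I)$ of a simplicial $k$-algebra with a simplicial ideal, invokes Quillen's description of the finite projective generators as pairs (polynomial algebra, ideal generated by a regular sequence), and then argues as in Lurie's compact-generation results that every object of $\Fun_{\mathrm{closed}}(\Delta^1,\CAlg_k)$ is a filtered colimit of compact objects, which are precisely finite-presentation closed immersions between finite-presentation $k$-algebras --- so the whole arrow $B \to A$ is approximated at once, with no separate treatment of source and target. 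You instead run a two-step approximation: first Noetherian approximation of the base $B \simeq \colim_\beta B_\beta$, then a \emph{relative} cellular approximation of $A$ over each $B_\beta$, using that $1$-connectivity of $\bbL_{A/B}$ lets you build $A$ from $B$ by attaching cells in degrees $\ge 1$ only, so that every finite relative cell stage is automatically surjective on $\pi_0$ and finitely presented over $B_\beta$ (hence laft over $k$ by transitivity), with a Grothendieck construction assembling the two indices. Your route is more elementary and self-contained --- it avoids the model-categorical input entirely and correctly diagnoses why the naive $\pi_0$-correction by fiber product destroys finite presentation --- at the cost of the cofinality verification for the relative cell approximation, which you only sketch (it is standard: attaching data lives in homotopy groups, which commute with filtered colimits, so every finite cell complex descends to some $B_\beta$ and the finite stages are cofinal). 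What the paper's approach buys beyond the lemma itself is the structural statement of compact generation of the closed-arrow category with explicitly identified generators (complete intersections out of polynomial algebras), which it reuses later, e.g.\ in the comparison with Bhatt's filtration. Two minor inaccuracies in your write-up, neither fatal: cell attachments in degrees $\ge 1$ are pushouts along free algebra maps, not square-zero extensions (the analogy with \cref{prop:filtered_colimit_for_nil-embeddings} is only heuristic, since the latter factors a fixed nil-embedding rather than approximating an arbitrary closed immersion), and the displayed modification $A_\nu \times_{\pi_0(A_\nu)} R_\beta$ is not well formed as written since $R_\beta$ maps to $\pi_0(A)$ rather than to $\pi_0(A_\nu)$ --- though you discard that construction anyway.
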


\begin{proof}
    Consider the full subcategory $\Fun_\mathrm{closed}(\Delta^1, \CAlg_k) \subseteq \Fun (\Delta^1, \CAlg_k)$ spanned by morphisms $A \to B$
    such that the induced homomorphism of ordinary rings
        \[
            \pi_0(A) \to \pi_0(B),  
        \]
    is surjective. As in the proof of \cref{prop:algebraic_properties_of_deformation} the \infcat $\Fun_\mathrm{closed}(\Delta^1, \CAlg_k)$
    is obtained as an $\infty$-categorical localization at weak equivalences of the simplicial model category whose objects correspond to pairs $(C, I)$ where $C$ is a simplicial $k$-algebra
    and $I \subseteq C$ a simplicial ring. Moreover, thanks to \cite[\S 2 IV, Theorem 4]{Quillen_Homotopical_1967} it follows that
    finite projective generators in the latter correspond to pairs of the form $(P, I)$ where $P$ is some polynomial algebra in a finite number of generators
    (concentrated in non-negatively homologically degrees)
    and $I$ an ideal spanned by a regular sequence. 
    
    In particular, finite projective generators in $\Fun_\mathrm{closed}(\Delta^1, \CAlg_k)$ correspond
    to morphisms $g \colon P \to Q$ where $P$ is a finite free derived $k$-algebra and the morphism $g$ is a complete intersection morphism.
    As in the proof of \cite[Proposition 8.2.5.27]{Lurie_Higher_algebra} we then deduce that the \infcat $\mathrm{Fun}_\mathrm{closed}(\Delta^1, \CAlg_k)$ is compactly generated by
    morphisms $g \colon P \to Q$ which can be obtained from finite projective generators via a finite sequence of retracts and finite colimits.

    Arguing as in \cite[Proposition 8.2.5.27]{Lurie_Higher_algebra}, we are able to describe the set of compact generators of $\Fun_\mathrm{closed}(\Delta^1, \CAlg_k)$: they correspond to
    locally of finite presentation morphisms between locally finite presentation derived $k$-algebras. In particular, we can find a presentation
    of
        \[
            f\alg \colon X\alg \to Y\alg,  
        \]
    by closed immersions between almost finite presentation affine derived schemes
        \[
            f_\alpha \colon X_\alpha \to Y_\alpha, \quad \mathrm{for \ } \alpha \in \cI\op,  
        \]
    where $\cI$ is a filtered \infcat, as desired.
\end{proof}

\begin{rem} Let $f \colon X \to Y$ be a nil-embedding in the \infcat $\dAfd_k$. Find a presentation of $f\alg$ by closed immersions between almost of finite presentation affine derived schemes of the form
        \[
            \lim_{\alpha \in I\op}(f_\alpha \colon X_\alpha \to Y_\alpha).  
        \]
    Consider the object $\cD_{X\alg/ Y\alg}^\bullet \in \Fun(\bDelta \op, \dAff_k)$ as in \cref{construction:def_to_the_normal_bundle}.
    It is clear from the descritption
    of $\cD_{X\alg/Y\alg}^\bullet$ given in \cref{construction:def_to_the_normal_bundle} combined with \cref{rem:simplification_of_deformation_in_the_closed_immersion}
    that one has a natural equivalence of derived stacks
        \[
            \cD_{X\alg/Y\alg}^\bullet \simeq \lim_{\alpha \in I\op} \cD_{X_\alpha\alg/ Y_\alpha\alg}^\bullet,
        \]
    in the \infcat $\mathrm{Fun}(\bDelta \op, \dSt_k)$.
\end{rem}

\begin{lem} \label{lem:analytification_of_simplicial_object_deformation}
    Let $Y \in \dAfd_k$. Consider a morphism $f \colon X \to Y$ in $\dAnSt_k$ which exhibits $X$ as an analytic formal moduli problem
    over $Y$. Then one has a natural equivalence
        \[
            \theta_{X/ Y} \colon \cD_{X/Y}^{\mathrm{an}, \bullet} \simeq (\cD_{X\alg/ Y\alg}^\bullet)\an_Y  ,
        \]
    where $(-)_Y\an$ denotes the relative analytification with respect to $Y$.
\end{lem}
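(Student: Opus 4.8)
The plan is to obtain $\theta_{X/Y}$ by applying the relative analytification functor $(-)\an_Y$ to the algebraic pullback square defining $\cD_{X\alg/ Y\alg}^\bullet$ and recognizing the output as the analytic pullback square defining $\cD_{X/ Y}^{\mathrm{an}, \bullet}$; Noetherian approximation enters only to license the two compatibilities of $(-)\an_Y$ that make this identification work in the affinoid (non-laft) setting. First I would reduce the statement. By \cref{rem:reduction_step_closed_ims} the functor $\cD_{X/ Y}^{\mathrm{an}, \bullet}$ commutes with filtered colimits in $X$ and is the colimit over $(\anNil^\cl_{/ X})_{/ Y}$ of the terms $\cD_{S/ Y}^{\mathrm{an}, \bullet}$; since $(-)\an_Y$ together with the algebraic deformation commute with the corresponding filtered colimits, it suffices to treat the case where $f$ is a nil-isomorphism of derived $k$-affinoid spaces. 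Using the retract remark I would further replace $f$ by the associated nil-embedding, so that $f\alg \colon X\alg \to Y\alg$ is a closed immersion of affine derived schemes by \cite[Lemma 6.9]{Porta_Yu_NQK}.

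The core of the argument rests on two properties of $(-)\an_Y$: that it preserves finite limits, hence commutes with the defining pullback square, and that it commutes with the relative mapping-stack construction out of the scaled bundle, namely $\big(\Map_{/ \bbA^1_k}(\mathrm{B}^\bullet_{\mathrm{scaled}}, Z)\big)\an_Y \simeq \bfMap_{/ \bA^1_k}(\mathbf{B}^{\mathrm{an}, \bullet}_{\mathrm{scaled}}, Z\an_Y)$, where $\mathbf{B}^{\mathrm{an}, \bullet}_{\mathrm{scaled}}$ is by construction the analytification of $\mathrm{B}^\bullet_{\mathrm{scaled}}$; this is the relative form of the analytification-of-mapping-stacks comparison of \cite{Holstein_Analytification_of_mapping_stacks} (see also \cite[\S 6]{Porta_Yu_NQK}), available in the laft setting. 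These compatibilities are therefore at hand only after passing to terms almost of finite presentation, which is exactly why I would invoke \cref{lem:Noetherian_approximation_of_nil_isomorphisms} to write $f\alg \simeq \lim_{\alpha \in \cI\op}(f_\alpha\colon X_\alpha \to Y_\alpha)$ with each $f_\alpha$ a laft closed immersion, together with the preceding remark giving $\cD_{X\alg/ Y\alg}^\bullet \simeq \lim_\alpha \cD_{X_\alpha\alg/ Y_\alpha\alg}^\bullet$. Combining the two compatibilities with \cref{lem:relative_anlaytification_of_closed_immersions_are_compatible_with_alg_construction}, which yields $(f\alg)\an_Y \simeq f$ and in particular $(X\alg)\an_Y \simeq X$, $(Y\alg)\an_Y \simeq Y$, applying $(-)\an_Y$ to the algebraic pullback and substituting the analytified corners reproduces precisely the pullback defining $\cD_{X/ Y}^{\mathrm{an}, \bullet}$.

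The main obstacle I expect is the passage to the limit. One must verify that $(-)\an_Y$ commutes with the cofiltered limit of the Noetherian approximation, which is exactly its compatibility with the filtered-colimit presentation of the affinoid algebra underlying $Y$ as developed in \cite[\S 6]{Porta_Yu_NQK}, and that the resulting equivalences are coherent over the simplicial index $\bDelta$ rather than merely levelwise. Concretely, I would organize the argument so that the pullback-preservation and mapping-stack compatibilities are first established for the whole cofiltered system $\{f_\alpha\}$ viewed as a diagram in $\Fun(\Delta^1, \dAff^\laft_k)$, then transported through $(-)\an_Y$ using that relative analytification is determined by its laft values, and only at the end specialized to recover $\theta_{X/ Y}$. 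Naturality in $\bDelta$, and hence the fact that the comparison is an equivalence of simplicial objects, is then inherited from the functoriality of the pullback and mapping-stack constructions, while the finiteness hypotheses needed for the Holstein-type comparison hold term-by-term because the $f_\alpha$ are laft.
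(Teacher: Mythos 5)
Your proposal is correct and follows essentially the same route as the paper's proof: reduction via filtered colimits to a nil-isomorphism of derived $k$-affinoid spaces, passage to a nil-embedding by the retract trick, Noetherian approximation of $f\alg$ by laft closed immersions, the Holstein mapping-stack comparison (whose key hypothesis, properness of $\mathrm{B}^\bullet_{\mathrm{scaled}} \to \bbA^1_k$, you invoke implicitly), and the compatibility of analytification with the cofiltered limit via its right-Kan-extension definition. The only cosmetic difference is that you phrase the conclusion as applying $(-)\an_Y$ to the defining pullback square, where the paper assembles the same data into a commutative cube and deduces the back face is cartesian from the top and front faces.
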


\begin{proof}
    We first observe that both $\cD_{X/Y}^{\mathrm{an}, \bullet}$ and $(\cD_{X\alg/ Y\alg}^\bullet)\an_Y$ are stable under filtered
    colimits in $X$. For this reason, we reduce ourselves to the case where $f \colon X \to Y$ itself is a nil-isomorphism in the \infcat
    $\dAfd_k$. Similarly, $\theta_{X/ Y}$ is stable under retracts. Therefore, we are allowed to replace the morphism $X \to Y$
    by
        \[  
            X \to Y \underset{X_\red}{\bigsqcup} X,
        \]
    and therefore assume that $f \colon X \to Y$ is a nil-embedding.
    As in \cref{lem:Noetherian_approximation_of_nil_isomorphisms}, write the nil-embedding between affine derived schemes
        \[
            f \colon X\alg \to Y\alg,  
        \]
    as a cofiltered limit of closed immersions
        \[\lim_{\alpha \in I\op} f_\alpha \colon \lim_{\alpha \in I\op} X_\alpha\alg \to \lim_{\alpha \in I\op} Y_\alpha\alg,\]
    where for each index $\alpha \in I$, both $X_\alpha$
    and $Y_\alpha$ are affine derived schemes almost of finite presentation. Since the natural projection morphism
        \[
            \mathrm{B}_\mathrm{scaled}^\bullet \to \bbA^1_k, 
        \]
    where $\bbA^1_k$ is considered as a constant simplicial object in $\dSt_k$, is a proper morphism, we deduce from \cite[Theorem 6.13]{Holstein_Analytification_of_mapping_stacks}
    that for every $\alpha$, the natural morphisms
        \begin{align*}
            \Map_{/\bbA^1_k}(\mathrm{B}^\bullet_\mathrm{scaled}, X_\alpha\alg \times \bbA^1_k)\an & \to \bfMap_{/ \bA^1_k}(\mathbf{B}^{\bullet, \mathrm{an}}_\mathrm{scaled} , (X_\alpha\alg)\an \times \bA^1_k) \\
            \Map_{/\bbA^1_k}(\mathrm{B}^\bullet_\mathrm{scaled}, Y_\alpha\alg \times \bbA^1_k)\an & \to \bfMap_{/ \bA^1_k}(\mathbf{B}^{\bullet, \mathrm{an}}_\mathrm{scaled} , (Y_\alpha\alg)\an \times \bA^1_k)
        \end{align*}
    are equivalences in the \infcat $\dAnSt_k$. Therefore, the natural morphism
        \[
            (\cD_{X\alg_\alpha/Y\alg_\alpha}^\bullet )\an  \to \cD_{(X_\alpha \alg)\an/ (Y_\alpha \alg)\an}^{\bullet, \mathrm{an}},
        \]
    is an equivalence in the \infcat $\dAnSt_k$. Observe further that for every $\alpha \in A$ the mapping stacks
        \[
            \Map_{/ \bbA^1_k}(\mathrm{B}^\bullet_\mathrm{scaled}, X_\alpha\alg \times \bbA^1_k) \quad \mathrm{and} \quad \Map_{/ \bbA^1_k}(\mathrm{B}^\bullet_\mathrm{scaled}, Y_\alpha\alg \times \bbA^1_k)  
        \]
    are affine schemes and therefore it follows by the construction of the analytification functor
        \[
            (-)\an \colon \dAff_k \to \dAnSt_k,  
        \]
    as a right Kan extension of the usual analytification functor
        \[
            (-)\an \colon \dAff^\laft_k \to \dAn_k,  
        \]
    that we have natural equivalences
        \begin{align*}
           ( \lim_\alpha \Map_{/\bbA^1_k}(\mathrm{B}^\bullet_\mathrm{scaled}, X_\alpha\alg \times \bbA^1_k)) \an & \simeq \bfMap_{/\bbA^1_k}(\mathrm{B}^{\bullet, \mathrm{an}}_\mathrm{scaled}, (X\alg)\an_Y \times \bA^1_k) \\
           ( \lim_\alpha \Map_{/\bbA^1_k}(\mathrm{B}^\bullet_\mathrm{scaled}, Y_\alpha\alg \times \bbA^1_k)) \an & \simeq \bfMap_{/\bbA^1_k}(\mathrm{B}^{\bullet, \mathrm{an}}_\mathrm{scaled}, (Y\alg)\an_Y \times \bA^1_k).
        \end{align*}
    in the \infcat $(\dAnSt_k)_{X \times \bbA^1/ /Y \times \bbA^1_k}$.
    The result now follows from the existence of a commutative cube
        \[
        \begin{tikzcd}[column sep=0.125in,row sep=0.125in]
            (\cD_{X\alg/ Y\alg}^\bullet)\an_Y \ar{rr} \ar{dd} \ar{rd} &  & Y \times \bA^1_k \ar{rd} \ar{dd} & \\
            & (\cD_{X\alg/ Y\alg}^\bullet)\an \ar{rr} \ar{dd} &  & (Y\alg \times \bbA^1_k)\an \ar{dd} \\
            \bfMap_{/\bA^1_k}(\mathbf{B}^\bullet, X \times \bA^1_k) \ar{rr} \ar{rd} & & \bfMap_{/\bA^1_k}(\mathbf{B}^\bullet, Y \times \bA^1_k) \ar{rd} \\
            & \bfMap_{/\bA^1_k}(\mathbf{B}^\bullet, (X\alg)\an \times \bA^1_k) \ar{rr} &  & \bfMap_{/\bA^1_k}(\mathbf{B}^\bullet, (Y\alg)\an \times \bA^1_k)
        \end{tikzcd}
        \]
    whose top and front squares are fiber products in $\dAnSt_k$, and thus so it is the back square, as desired.
\end{proof}

\begin{cor} \label{cor:analytic_def_fiber_at_0_is_tangent} With notations as above, one has a canonical equivalence
        \[
            (\cD^{\bullet, \mathrm{an}}_{X/Y})_0 \simeq (\bT^{\mathrm{an}, \bullet}_{X/Y})^\wedge ,
        \]
    where the latter denotes the commutative group object associated to the formal completion of the tangent bundle of $f$ along the zero section $s_0 \colon
    X \to \bT^{\mathrm{an}, \bullet}_{X/Y}$.
\end{cor}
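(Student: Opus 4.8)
The plan is to transport the statement to the algebraic setting through the relative analytification functor and then invoke the known description of the fiber at $0$ of the algebraic deformation. First I would use \cref{lem:analytification_of_simplicial_object_deformation}, which supplies a natural equivalence $\cD^{\mathrm{an}, \bullet}_{X/Y} \simeq (\cD^\bullet_{X\alg/Y\alg})\an_Y$ of simplicial objects over $\bA^1_k$. Taking the fiber at $\{0\} \subseteq \bA^1_k$ amounts to base change along the closed point $\{0\} \hookrightarrow \bA^1_k$, so the first key step is to check that relative analytification commutes with this base change, namely that
\[
    \big( (\cD^\bullet_{X\alg/Y\alg})\an_Y \big)_0 \simeq \big( (\cD^\bullet_{X\alg/Y\alg})_0 \big)\an_Y .
\]
Since the analytification is taken relative to $Y$ while the fiber is taken in the orthogonal $\bA^1_k$-direction, this reduces to the compatibility of $(-)\an_Y$ with finite limits (here a pullback) together with the observation that the analytification of $\{0\} \hookrightarrow \bbA^1_k$ is the point $\{0\} \hookrightarrow \bA^1_k$; I would cite the relevant properties of the relative analytification functor from \cite{Holstein_Analytification_of_mapping_stacks} and \cite{Porta_Yu_NQK}.

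Next I would identify the algebraic fiber at $0$. Exactly as in the proof of \cref{lem:analytification_of_simplicial_object_deformation}, the reductions of \cref{rem:reduction_step_closed_ims} combined with \cref{lem:Noetherian_approximation_of_nil_isomorphisms} let me assume that $f\alg \colon X\alg \to Y\alg$ is a closed immersion of derived schemes almost of finite presentation, the general case following since both the fiber at $0$ (a finite limit) and $(-)\an_Y$ commute with the relevant cofiltered limits. In the almost-of-finite-presentation closed immersion case, \cref{rem:simplification_of_deformation_in_the_closed_immersion} (which itself rests on \cref{prop:algebraic_properties_of_deformation}) provides a natural equivalence $(\cD^\bullet_{X\alg/Y\alg})_0 \simeq \rT^\bullet_{X\alg/Y\alg}$, where $\rT^\bullet_{X\alg/Y\alg}$ denotes the formal groupoid over $X\alg$ associated to the formal completion $\bbT_{X\alg/Y\alg}[1]^\wedge$ of the shifted tangent bundle along its zero section $s_0$.

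Finally I would apply $(-)\an_Y$ to this identification. The remaining point is that relative analytification carries the algebraic tangent formal groupoid to its analytic counterpart: using the compatibility of $(-)\an_Y$ with the cotangent complex and with the symmetric algebra, the shifted tangent bundle $\rT_{X\alg/Y\alg}[1]$ analytifies to $\bT^\an_{X/Y}[1]$, and, since the completion along the zero section is the analytic formal moduli problem under $X$ presented as a filtered colimit of its nilpotent thickenings, analytification commutes with this completion. This last compatibility is where the matching of algebraic and analytic formal moduli problems (\cref{thm:equivalence_between_analytic_and_algebraic_formal_moduli_problems_over} together with the groupoid equivalence \cref{thm:Phi_is_an_equivalence}) enters. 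Combining these yields
\[
    (\rT^\bullet_{X\alg/Y\alg})\an_Y \simeq (\bT^{\mathrm{an}, \bullet}_{X/Y})^\wedge,
\]
which completes the argument.

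The main obstacle I anticipate is the first key step, namely verifying that relative analytification is compatible with the base change to the fiber at $0$ and, in tandem, that it commutes with the formal completion along the zero section. Both require controlling $(-)\an_Y$ on limits and on the ind-presentations underlying formal completions rather than on honest finite-type objects; I expect the cleanest route is to record these compatibilities once at the level of the relevant mapping stacks and then propagate them through the defining pullback squares, as is done in \cref{lem:analytification_of_simplicial_object_deformation}.
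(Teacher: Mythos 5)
Your proposal is correct and follows essentially the same route as the paper: the paper's (much terser) proof likewise combines the algebraic identification of the fiber at $0$ from \cite[Proposition 9.2.3.6]{Gaitsgory_Study_II} with the fact that relative analytification commutes with tangent bundles and formal completions, citing \cite[Lemma 5.31, Corollary 5.20]{Holstein_Analytification_of_mapping_stacks}. Your more detailed intermediate steps --- passing through \cref{lem:analytification_of_simplicial_object_deformation}, checking compatibility of $(-)\an_Y$ with the base change along $\{0\} \hookrightarrow \bA^1_k$, and the Noetherian/closed-immersion reductions --- are exactly the elaborations the paper leaves implicit.
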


\begin{proof}
    The result follows from \cite[Proposition 9.2.3.6]{Gaitsgory_Study_II} combined with the fact that relative analytification
    commutes with tangent bundles and formal completions, see the proof of \cite[Lemma 5.31]{Holstein_Analytification_of_mapping_stacks}
    and \cite[Corollary 5.20]{Holstein_Analytification_of_mapping_stacks}.
\end{proof}

The following result is an immediate consequence of \cref{lem:analytification_of_simplicial_object_deformation}:

\begin{lem} \label{lem:deformation_theory_for_D^an_bullet_X/Y}
    For each $[n] \in \bDelta$, the object $\cD_{X/Y}^{\mathrm{an}, [n]} \in (\dAnSt_k)_{X \times \bA^1_k/ /Y \times \bA^1_k}$ admits a deformation theory.
\end{lem}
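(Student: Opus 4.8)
The plan is to transport the deformation theory from the algebraic side across the levelwise equivalence
\[
    \cD_{X/Y}^{\mathrm{an}, [n]} \simeq (\cD_{X\alg/Y\alg}^{[n]})\an_Y
\]
supplied by \cref{lem:analytification_of_simplicial_object_deformation}. Thus it suffices to show that the relative analytification $(\cD_{X\alg/Y\alg}^{[n]})\an_Y$ admits a deformation theory relative to $X \times \bA^1_k$. First I would secure the algebraic input. By \cref{rem:reduction_step_closed_ims} I may reduce to the case where $f \colon X \to Y$ is a nil-embedding of derived $k$-affinoid spaces, so that $f\alg \colon X\alg \to Y\alg$ is a closed immersion of affine derived schemes. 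When $f\alg$ is moreover locally almost of finite presentation, \cref{construction:def_to_the_normal_bundle} already records that each component $\cD_{X\alg/Y\alg}^{[n]}$ admits a deformation theory relative to $X\alg \times \bbA^1_k$. In the general Noetherian case I would use the presentation $\cD_{X\alg/Y\alg}^{[n]} \simeq \lim_\alpha \cD_{X_\alpha\alg/Y_\alpha\alg}^{[n]}$ coming from the Noetherian approximation of \cref{lem:Noetherian_approximation_of_nil_isomorphisms}, whose terms are deformations attached to closed immersions of affine derived schemes almost of finite presentation; the relative pro-cotangent complex of the limit is then assembled, as a pro-object, from the pullbacks of those of the $\cD_{X_\alpha\alg/Y_\alpha\alg}^{[n]}$, while nilcompleteness and infinitesimal cartesianness are inherited from the terms.

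Next I would verify that $(-)\an_Y$ carries an object with an algebraic deformation theory to one with an analytic deformation theory, by checking the three defining conditions in turn. Nilcompleteness and infinitesimal cartesianness survive relative analytification because $(-)\an_Y$ is computed by a right Kan extension and hence commutes with the Postnikov-tower limits and with the square-zero pullback squares occurring in those two conditions; this is exactly the kind of compatibility already exploited in \cref{cor:analytic_def_fiber_at_0_is_tangent}, where relative analytification was seen to commute with formal completions and with tangent bundles. For the third condition I would identify the relative analytic pro-cotangent complex of $(\cD_{X\alg/Y\alg}^{[n]})\an_Y$ with the analytification of the algebraic relative pro-cotangent complex of $\cD_{X\alg/Y\alg}^{[n]}$, relying on the compatibility of relative analytification with cotangent complexes established in the proof of \cite[Lemma 5.31]{Holstein_Analytification_of_mapping_stacks} together with \cite[Corollary 5.20]{Holstein_Analytification_of_mapping_stacks}; pro-corepresentability of the analytic derivation functor then follows from that of its algebraic counterpart. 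Combining the two steps yields a deformation theory on $(\cD_{X\alg/Y\alg}^{[n]})\an_Y$, hence on $\cD_{X/Y}^{\mathrm{an}, [n]}$, which \cref{prop:sufficient_conditions_for_a_prestack_to_be_equiv_to_an_analytic_FMP} would in turn let us read off as membership in $\anFMP_{X \times \bA^1_k/}$.

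The step I expect to be the main obstacle is the identification of the relative analytic cotangent complex in the third condition: one must check that $(-)\an_Y$ sends the algebraic relative pro-cotangent complex to the analytic one not merely objectwise but compatibly with the pro-structure and with the derivations classifying the infinitesimal square-zero extensions, so that \emph{pro}-corepresentability, and not just the bare existence of a cotangent complex, is genuinely transported. By contrast, nilcompleteness and infinitesimal cartesianness are formal once one knows that $(-)\an_Y$ preserves the relevant limits, and the Noetherian-approximation bookkeeping of the first step is routine given \cref{lem:Noetherian_approximation_of_nil_isomorphisms}.
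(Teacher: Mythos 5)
Your overall route coincides with the paper's: both arguments transport the deformation theory of $\cD^{[n]}_{X\alg/Y\alg}$ --- supplied on the algebraic side by \cite[Lemma 2.3.2]{Gaitsgory_Study_II} --- across the levelwise equivalence of \cref{lem:analytification_of_simplicial_object_deformation}. The paper, however, finishes in one stroke by citing \cite[Proposition 6.10]{Porta_Yu_NQK}, which says precisely that the relative analytification $(-)\an_Y$ carries objects with an algebraic deformation theory to objects with an analytic one; behind it sits the equivalence $\mathrm{FMP}_{/\Spec A} \simeq \anFMP_{/X}$ of \cref{thm:equivalence_between_analytic_and_algebraic_formal_moduli_problems_over}.

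The gap is in your middle step, where you replace that citation by a hand verification: you justify preservation of nilcompleteness and infinitesimal cartesianness by asserting that $(-)\an_Y$ is computed by a right Kan extension and hence commutes with the Postnikov-tower limits and with the square-zero pullback squares. This is not correct as stated: the relative analytification of stacks is a \emph{left} Kan extension and commutes with colimits --- the paper says so explicitly in the proof of \cref{cor:Hodge_filtration_in_the_non-archimedean_setting} --- while the right-Kan-extension remark in the proof of \cref{lem:analytification_of_simplicial_object_deformation} concerns only the extension of the \emph{absolute} analytification from $\dAff_k^\laft$ to $\dAff_k$, not $(-)\an_Y$ on stacks. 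A left Kan extension has no reason to preserve the limits appearing in conditions (1) and (2) of the definition of a deformation theory, so your mechanism for these conditions fails as written; and your treatment of condition (3), transporting pro-corepresentability via the compatibilities of \cite{Holstein_Analytification_of_mapping_stacks}, is in effect an attempted re-derivation of the content of \cite[Proposition 6.10]{Porta_Yu_NQK} --- you correctly flag this as the main obstacle, but you do not actually prove it. The repair is simply to invoke that proposition (equivalently, \cref{thm:equivalence_between_analytic_and_algebraic_formal_moduli_problems_over}, under which the deformation-theoretic conditions match up on the nose); with that in place, your Noetherian-approximation bookkeeping for the algebraic input, which is consistent with the paper's general methodology though absent from its two-line proof, is harmless but unnecessary.
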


\begin{proof}
    We shall prove that each component of the simplicial object
        \[\cD^{ \mathrm{an}, \bullet}_{X/ Y} \in \Fun(\bDelta \op, \anFMP_{X \times \bA^1_k/ /Y\times\bA^1_k}),\]
    admits a deformation theory.
    By \cite[Lemma 2.3.2]{Gaitsgory_Study_II}, it follows
    that $\cD^\bullet_{X\alg/ /Y\alg}$ is an object in $\mathrm{FMP}_{X\alg/ /Y\alg}$. The result now follows by applying the relative analytification functor $(-)_Y\an$ together with
    \cite[Proposition 6.10]{Porta_Yu_NQK}.
\end{proof}

\begin{rem} \label{rem:D_X/Y_lives_over_and_under_XxA^1_as_anFMP} Let $(X \xrightarrow{f} Y) \in \anFMP_{/ Y}$, with $Y \in \dAfd_k$.
    It follows immediately from \cref{lem:deformation_theory_for_D^an_bullet_X/Y} that the datum
        \[
            (X \times \bA^1_k \to \cD^{\bullet, \mathrm{an}}_{X/ Y} \to Y \times \bA^1_k) \in (\dAnSt_k)_{X \times \bA^1_k/ /Y\times\bA^1_k},  
        \]
    belongs to the full subcategory $\anFMP_{X\times\bA^1_k/ /Y\times\bA^1_k}$.
\end{rem}

\begin{construction}
    The \infcat $\anFMP_{X\times\bA^1_k/ /Y\times\bA^1_k}$ is presentable and in particular it
    admits sifted colimits. Thanks to \cref{lem:deformation_theory_for_D^an_bullet_X/Y},
    we can consider the object
        \[
            \cD\an_{X/Y} \coloneqq \colim_{\bDelta \op} \cD^{\mathrm{an}, \bullet}_{X/Y} \in \anFMP_{X\times\bA^1_k/ /Y\times\bA^1_k}.  
        \]
    Similarly, we consider the sifted colimit
        \[
            \cD_{X\alg/ Y\alg} \coloneqq \colim_{\bDelta \op} \cD^\bullet_{X\alg/ Y\alg} \in \mathrm{FMP}_{X\alg \times \bbA^1_k/ /Y\alg \times \bbA^1_k} ,
        \]
    computed in the \infcat $\mathrm{FMP}_{X\alg\times\bbA^1_k/ /Y\alg\times\bbA^1_k}$. We can consider the latter as an object in
        \[
            \cD_{X\alg/ Y\alg} \in \dSt_{X\alg \times \bbA^1_k/ /Y\alg \times \bbA^1_k},  
        \]
    and therefore consider its relative analytification $(\cD_{X\alg/ Y\alg})\an_Y \in \dAnSt_{X\alg \times \bA^1_k/ /Y\alg \times \bA^1_k}$. Thanks to
    \cref{lem:analytification_of_simplicial_object_deformation} it follows that we have a natural morphism
        \[
            \theta_{X/ Y} \colon \cD\an_{X/ Y} \to (\cD_{X\alg/ Y\alg})\an_Y,  
        \]
    in the \infcat $(\dAnSt_k)_{X \times \bA^1_k/ /Y \times \bA^1_k}$.
\end{construction}

Our next goal is to prove that the morphism $\theta_{X/ Y}$ is an equivalence of deformations. In order to prove the latter statement, we shall
need a preliminary lemma:

\begin{lem} \label{lem:Noetherian_approximation_of_the_deformation_to_the_normal_bundle}
    Let $f \colon X \to Y$ be a nil-embedding in the \infcat $\dAfd_k$. Consider approximation of the nil-embedding morphism $f\alg$
        \[
            \lim_{\alpha \in I\op} f_\alpha \colon \lim_{\alpha \in I\op} X_\alpha \to \lim_{\alpha \in I\op} Y_\alpha,
        \]
    as in \cref{lem:Noetherian_approximation_of_nil_isomorphisms}. Then, we have a natural morphism of the form
        \begin{equation} \label{eq:morphism_theta}
            \theta \colon \cD_{X\alg/ Y\alg}  \to \lim_{\alpha \in I\op} \cD_{X_\alpha/ Y_\alpha},
        \end{equation}
    which is furthermore an equivalence in the \infcat $\anFMP_{X \times \bbA^1_k / Y \alg \times \bbA^1_k}$
\end{lem}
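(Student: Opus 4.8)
The plan is to realize $\theta$ as the canonical exchange transformation between a geometric realization and a cofiltered limit, and then to verify it is an equivalence by testing on relative cotangent complexes. First I recall from the remark immediately preceding the statement that the defining simplicial objects satisfy $\cD^\bullet_{X\alg/Y\alg}\simeq\lim_{\alpha\in I\op}\cD^\bullet_{X_\alpha/Y_\alpha}$ in $\Fun(\bDelta\op,\dSt_k)$, after pulling back along the projections $X\alg\times\bbA^1_k\to X_\alpha\times\bbA^1_k$ (which I suppress from the notation). Since $\cD_{X\alg/Y\alg}$ and each $\cD_{X_\alpha/Y_\alpha}$ are geometric realizations of these simplicial objects in the relevant categories of formal moduli problems under the base, forming $\colim_{\bDelta\op}$ and invoking the universal property of the colimit produces the natural morphism
\[
    \theta\colon \cD_{X\alg/Y\alg}=\colim_{\bDelta\op}\lim_{\alpha}\cD^\bullet_{X_\alpha/Y_\alpha}\longrightarrow \lim_{\alpha}\colim_{\bDelta\op}\cD^\bullet_{X_\alpha/Y_\alpha}=\lim_{\alpha}\cD_{X_\alpha/Y_\alpha},
\]
which is the asserted comparison map, with both source and target lying in $\mathrm{FMP}_{X\alg\times\bbA^1_k/ /Y\alg\times\bbA^1_k}$.

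To prove $\theta$ is an equivalence I would invoke the conservativity of the relative cotangent complex functor $\bbL_{X\alg\times\bbA^1_k/\bullet}$ (the algebraic counterpart of \cref{prop:conservativity_of_relative_an_cot_complex}; equivalently one may use \cref{prop:conservativity_and_preservation_of_sifted_colimits_of_tangent_complex} after Serre duality). This reduces the claim to showing that $\theta$ induces an equivalence on relative pro-cotangent complexes in $\pro(\Coh^b(X\alg\times\bbA^1_k))$. For the source, \cref{prop:algebraic_properties_of_deformation} equips $\bbL_{X\alg\times\bbA^1_k/\cD_{X\alg/Y\alg}}$ with a decreasing filtration whose $n$-th graded piece is $i_{n,*}(\Sym^{n+1}(\bbL_{X\alg/Y\alg}[-1])[1])$. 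Because the cotangent complex commutes with the filtered colimit of derived $k$-algebras dual to the cofiltered limits $X\alg=\lim_\alpha X_\alpha$ and $Y\alg=\lim_\alpha Y_\alpha$, one has $\bbL_{X\alg/Y\alg}\simeq\colim_{\alpha}\pi_\alpha^{*}\bbL_{X_\alpha/Y_\alpha}$, and since $\Sym^{n+1}(-)$ and $i_{n,*}$ preserve filtered colimits, each graded piece is the filtered colimit over $\alpha$ of the corresponding piece of $\cD_{X_\alpha/Y_\alpha}$.

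For the target I would use that a cofiltered limit of formal moduli problems over a fixed base has relative cotangent complex computed as the filtered colimit of the individual relative cotangent complexes: the mapping space $\Map_{\pro(\Coh^+)}(\bbL_{X\alg\times\bbA^1_k/\lim_\alpha\cD_{X_\alpha/Y_\alpha}},\cF)$ computes the twisted tangent space, which by the limit description equals $\lim_\alpha$ of the corresponding tangent spaces, so that $\bbL_{X\alg\times\bbA^1_k/\lim_\alpha\cD_{X_\alpha/Y_\alpha}}\simeq\colim_\alpha\bbL_{X\alg\times\bbA^1_k/\cD_{X_\alpha/Y_\alpha}}$ in $\pro(\Coh^b)$. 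Comparing the two filtered-colimit expressions graded piece by graded piece shows that $\bbL(\theta)$ is an equivalence, whence $\theta$ is by conservativity.

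The main obstacle is precisely that $\theta$ compares a geometric realization (a sifted colimit over $\bDelta\op$) with a cofiltered limit over $I\op$, and these do not commute formally. The resolution is that passing to relative cotangent complexes converts the cofiltered limit over $I\op$ into a \emph{filtered} colimit, which does commute with the sifted colimit over $\bDelta\op$; the delicate bookkeeping is then the interaction of the varying base $X_\alpha$ and the pushforwards $i_{n,*}$ across the approximating system, together with checking that the $\Sym^{n+1}$-filtration of \cref{prop:algebraic_properties_of_deformation} is natural in the approximation $\{f_\alpha\}$ of \cref{lem:Noetherian_approximation_of_nil_isomorphisms}, which I would extract from the naturality of the construction in \cite[\S 9.5]{Gaitsgory_Study_II}.
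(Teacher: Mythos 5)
Your construction of $\theta$ as the exchange map $\colim_{\bDelta\op}\lim_{\alpha}\to\lim_{\alpha}\colim_{\bDelta\op}$ is fine, and your conservativity strategy is in principle viable, but the verification step as written has a genuine gap: you invoke \cref{prop:algebraic_properties_of_deformation} to equip $\bbL_{X\alg\times\bbA^1_k/\cD_{X\alg/Y\alg}}$ with the $\Sym^{n+1}$-filtration, yet that proposition is stated (and, via \cite[\S 9]{Gaitsgory_Study_II}, proved) only for morphisms in $\dSt_k^\laft$. Here $X\alg=\Spec A$ and $Y\alg=\Spec B$ with $A,B$ global sections of derived $k$-affinoid spaces; these are Noetherian but \emph{not} almost of finite presentation over $k$, which is the entire reason the paper passes through Noetherian approximation in the first place. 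Worse, the extension of the Hodge filtration to this non-laft setting is carried out only \emph{after} the present lemma (\cref{defin:algebraic_Hodge_filtration} and \cref{cor:commutation_between_limit_and_colimit_on_Hodge_filtration}), and the comparison map there is built using the identification $\lim_{\alpha}\cD_{X_\alpha/Y_\alpha}\simeq\cD_{X\alg/Y\alg}$ — i.e., using the lemma you are trying to prove. So your source-side computation is circular at this stage of the development. (A smaller point in the same spirit: you should also check that $\lim_{\alpha}\cD_{X_\alpha/Y_\alpha}$ admits a deformation theory so that it actually lies in $\mathrm{FMP}_{X\alg\times\bbA^1_k/}$ and the conservativity principle applies to the map; the paper notes this explicitly.)

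The paper's proof avoids cotangent complexes altogether and is worth contrasting with your route. Having observed that $\lim_{\alpha}\cD_{X_\alpha/Y_\alpha}$ is a formal moduli problem under $X\alg\times\bbA^1_k$, it invokes the algebraic analogue of \cref{thm:Phi_is_an_equivalence}: a formal moduli problem under the base is equivalent to its \v{C}ech-nerve groupoid via $\rB_X(-)$. This converts the problematic sifted colimit into a \emph{limit} construction — the \v{C}ech nerve — which commutes with the cofiltered limit over $I\op$ on the nose; one then identifies levelwise $X_\alpha^{\times_{\cD_{X_\alpha/Y_\alpha}}\bullet}\simeq\cD^\bullet_{X_\alpha/Y_\alpha}$ (the unit of the groupoid/FMP equivalence, since $\cD_{X_\alpha/Y_\alpha}\simeq\rB_{X_\alpha}(\cD^\bullet_{X_\alpha/Y_\alpha})$ by construction) and concludes by the remark you quoted. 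If you want to salvage your conservativity approach instead, the fix is to drop the filtration entirely: compute the source via preservation of sifted colimits by the tangent complex (the algebraic analogue of \cref{prop:conservativity_and_preservation_of_sifted_colimits_of_tangent_complex}), so that $\bbL_{X\alg\times\bbA^1_k/\cD_{X\alg/Y\alg}}$ is the realization of the levelwise relative cotangent complexes of $\cD^{\bullet}_{X\alg/Y\alg}$, and then use that each level is a cofiltered limit of \emph{laft} affines, whence its cotangent complex is a genuine filtered colimit over $\alpha$; commuting the two colimits then matches your target-side computation without ever invoking the not-yet-available Hodge filtration on the non-laft deformation.
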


\begin{proof}
    It is clear that the natural morphisms $X_\alpha \to \cD_{X_\alpha/ Y_\alpha}$ assemble into a morphism
        \[
            X \alg \simeq \lim_{\alpha \in I\op} X_\alpha \to \lim_{\alpha \in I\op} \cD_{X_\alpha/ Y_\alpha},
        \]
    which admits a deformation theory, therefore the morphism 
        \[X\alg \to \lim_{\alpha \in I\op} \cD_{X_\alpha/ Y_\alpha},\]
    exhibits $\lim_{\alpha \in I\op} \cD_{X_\alpha/ Y_\alpha}$ as an object
    in the \infcat $\mathrm{FMP}_{X\alg \times \bbA^1_k/ }$. Notice that \cref{thm:Phi_is_an_equivalence} holds true in this setting, proved in
    an analogous way, to provide us
    with an equivalence of \infcats
        \[  
            \rB_X(\bullet) \colon \mathrm{FGrpd}(X\alg \times \bbA^1_k) \to \mathrm{FMP}_{X\alg \times \bbA^1_k/ },
        \]
    where $\mathrm{FGrp}(X\alg \times \bbA^1_k)$ denotes the \infcat of formal groupoids over $X\alg \times \bbA^1_k \in \dSt_k$. For this reason, we are reduced to show
    the existence of a natural equivalence between formal groupoids over $X\alg \times \bbA^1_k$,
        \[
            \cD_{X\alg/ Y\alg}^\bullet \simeq X^{\times_{\lim_\alpha \cD_{X_\alpha/ Y_\alpha}} \bullet},
        \]
    where the right hand side denotes the \v{C}ech nerve of the morphism $X \to \lim_\alpha \cD_{X_\alpha/ Y_\alpha}$.
    Since limits
    commute with limits in the \infcat $\dAnSt_k$, we obtain a chain of natural equivalences of the form
        \begin{align*}
            (X\alg)^{\times_{\lim_{\alpha} \cD_{X_\alpha/ Y_\alpha}}\bullet}    & \simeq \lim_{\alpha } \big(X_\alpha^{\times_{\cD_{X_\alpha/ Y_\alpha}} \bullet} \big) \\
                                                                                & \simeq \lim_{\alpha } \cD_{X_\alpha/ Y_\alpha}^\bullet \\
                                                                                & \simeq \cD_{X\alg/ Y\alg}^\bullet,
        \end{align*}
    in the \infcat $\mathrm{FGrp}(X)$. Only the the second equivalence needs a justification:
    for each $\alpha \in I$, we have natural identifications
        \[
            \cD_{X_\alpha/ Y_\alpha} \simeq \rB_{X_\alpha}( \cD_{X_\alpha/ Y_\alpha}^\bullet) , 
        \]
    by construction, where $\rB_{X_\alpha} (-)$ is as in \cite[\S 5.2]{Gaitsgory_Study_II}. Therefore, we deduce by adjunction that
        \[
            X^{\times_{\cD_{X_\alpha/ Y_\alpha} }\bullet} \simeq \cD_{X_\alpha/ Y_\alpha}^\bullet,
        \]
    is an equivalence in the \infcat $\mathrm{FGrpd}(X_\alpha)$, as well. The proof is thus concluded.
\end{proof}

The following result implies that the relative analytification commutes with the deformation to the normal bundle:

\begin{prop} \label{prop:rel_analytification_preserves_the_deformation}
    The natural morphism of \eqref{eq:morphism_theta},
        \[
            \theta_{X/ Y}\colon\cD\an_{X/Y}\to(\cD_{X\alg/ Y\alg})\an_Y,
        \]
    is an equivalence of derived $k$-analytic stacks.
\end{prop}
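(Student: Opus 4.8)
The plan is to check that $\theta_{X/Y}$ is an equivalence by testing it against the Serre tangent complex functor, which is conservative and commutes with sifted colimits by \cref{prop:conservativity_and_preservation_of_sifted_colimits_of_tangent_complex}. First I would record the reductions already at our disposal: by the filtered-colimit stability of \cref{rem:reduction_step_closed_ims}, together with the retract argument replacing a nil-isomorphism $f \colon X \to Y$ by the nil-embedding $X \to X \sqcup_{X_\red} Y$, it suffices to treat the case where $f$ is a nil-embedding of derived $k$-affinoid spaces. In this situation \cref{lem:analytification_of_simplicial_object_deformation} provides a level-wise equivalence of simplicial objects
\[
    \cD^{\mathrm{an}, \bullet}_{X/ Y} \simeq (\cD^\bullet_{X\alg/ Y\alg})\an_Y,
\]
in $\Fun(\bDelta \op, \anFMP_{X \times \bA^1_k/ /Y \times \bA^1_k})$, under which $\theta_{X/ Y}$ is identified with the canonical comparison morphism
\[
    \colim_{\bDelta \op} (\cD^\bullet_{X\alg/ Y\alg})\an_Y \to \Big( \colim_{\bDelta \op} \cD^\bullet_{X\alg/ Y\alg} \Big)\an_Y
\]
between the geometric realization of the analytified groupoid and the relative analytification of the algebraic geometric realization. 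The whole content of the Proposition is therefore that relative analytification commutes with this specific geometric realization, which is not formal, since $(-)\an_Y$ is only a right adjoint in the unbounded case.

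To prove this I would apply the functor $\bbT\an_{X \times \bA^1_k/ \bullet}$. Since the parametrizing base $X \times \bA^1_k$ is not affinoid, I would first cover $\bA^1_k$ by closed disks $\oD_r$, so that each $X \times \oD_r$ is a derived $k$-affinoid space; being an equivalence is local on $\bA^1_k$ and compatible with restriction of the base, so \cref{prop:conservativity_and_preservation_of_sifted_colimits_of_tangent_complex} applies over each $X \times \oD_r$ and reduces the claim to showing that $\bbT\an(\theta_{X/ Y})$ is an equivalence in $\Ind\Coh(X \times \oD_r)$. On the source the preservation of sifted colimits gives directly
\[
    \bbT\an\big( \cD\an_{X/ Y} \big) \simeq \colim_{\bDelta \op} \bbT\an\big( \cD^{\mathrm{an}, \bullet}_{X/ Y} \big).
\]

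On the target I would use the relative version of the commutative square appearing in the proof of \cref{prop:conservativity_and_preservation_of_sifted_colimits_of_tangent_complex}, namely the compatibility $\bbT\an \circ (-)\an_Y \simeq (-)\an \circ \bbT$ between the analytic and algebraic Serre tangent complexes under relative analytification, together with the fact that analytification on ind-coherent sheaves is a left adjoint and hence commutes with all colimits. Combined with the algebraic analogue of \cref{prop:conservativity_and_preservation_of_sifted_colimits_of_tangent_complex}, namely that $\bbT_{X\alg \times \bbA^1_k/ \bullet}$ commutes with sifted colimits, which in the present non-aft situation is supplied by \cref{lem:Noetherian_approximation_of_the_deformation_to_the_normal_bundle}, this yields
\[
    \bbT\an\Big( (\cD_{X\alg/ Y\alg})\an_Y \Big) \simeq \Big( \colim_{\bDelta \op} \bbT\big( \cD^\bullet_{X\alg/ Y\alg} \big) \Big)\an \simeq \colim_{\bDelta \op} \bbT\an\big( \cD^{\mathrm{an}, \bullet}_{X/ Y} \big),
\]
where the last identification again uses \cref{lem:analytification_of_simplicial_object_deformation} level-wise. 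Hence $\bbT\an(\theta_{X/ Y})$ is an equivalence, and conservativity concludes the argument.

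I expect the principal difficulty to lie in establishing the compatibility of relative analytification with the Serre tangent complex in the relative setting over $X \times \bA^1_k$ and outside the almost of finite presentation range: the square of \cref{prop:conservativity_and_preservation_of_sifted_colimits_of_tangent_complex} is proved over an affinoid base and for almost of finite presentation inputs, so the delicate point is to propagate it through the Noetherian approximation of \cref{lem:Noetherian_approximation_of_the_deformation_to_the_normal_bundle} and through the passage to the non-affinoid parametrizing base, where one must control the behaviour of the dualizing modules and of Serre duality along the truncation procedure of \cref{rem:dualizing_module_on_truncations}.
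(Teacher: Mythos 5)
Your reductions (stability under filtered colimits in $X$, then the retract trick replacing $Y$ by $X \sqcup_{X_\red} Y$ to reach a nil-embedding of derived $k$-affinoid spaces) and your appeal to \cref{lem:analytification_of_simplicial_object_deformation} for the level-wise identification match the paper exactly. From there, however, your diagnosis of the difficulty is mistaken: you assert that commuting $(-)\an_Y$ with the geometric realization is ``not formal'', whereas in the paper it is precisely formal. By \cite[Theorem 6.12]{Porta_Yu_NQK} (recalled as \cref{thm:equivalence_between_analytic_and_algebraic_formal_moduli_problems_over}), relative analytification is an \emph{equivalence} of \infcats $\mathrm{FMP}_{/ Y\alg \times \bbA^1_k} \to \anFMP_{/ Y \times \bA^1_k}$, and the paper observes that it induces an equivalence $\mathrm{FMP}_{X\alg \times \bbA^1_k/ /Y\alg \times \bbA^1_k} \to \anFMP_{X \times \bA^1_k/ /Y \times \bA^1_k}$. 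An equivalence commutes with all small colimits, so $(\cD_{X\alg/ Y\alg})\an_Y \simeq \colim_{\bDelta\op} (\cD^\bullet_{X\alg/ Y\alg})\an_Y \simeq \colim_{\bDelta\op} \cD^{\mathrm{an}, \bullet}_{X/ Y}$, and one concludes because the forgetful functor $\anFMP_{X \times \bA^1_k/ /Y \times \bA^1_k} \to \anFMP_{X \times \bA^1_k/ }$ is conservative and colimit-preserving, hence reflects sifted colimits. No tangent complexes are needed at all.

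Your alternative route through the Serre tangent complex is in the spirit of the paper's proof of \cref{prop:gluing_the_deformation} (which does test a comparison map on tangent complexes, but in the algebraic setting where \cite[\S 5, Theorem 2.3.5]{Gaitsgory_Study_II} is available), and it might be made to work, but as written it has a genuine gap: \cref{prop:conservativity_and_preservation_of_sifted_colimits_of_tangent_complex} is established only over a derived $k$-affinoid base, via Serre duality for the Noetherian algebra of global sections, while your base $X \times \bA^1_k$ is not affinoid. The covering of $\bA^1_k$ by closed disks that you propose is exactly the nontrivial content you would have to supply: you would need (i) that restriction along $X \times \oD_r \hookrightarrow X \times \bA^1_k$ carries $\anFMP_{X \times \bA^1_k/ }$ to $\anFMP_{X \times \oD_r/ }$ and commutes with the geometric realization in question --- not obvious, since that colimit is computed in the formal-moduli category rather than pointwise --- and (ii) that the family of such restrictions is jointly conservative; neither appears in the paper. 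There are also two misattributions. The sifted-colimit preservation and conservativity of the algebraic tangent complex functor in the non-aft Noetherian setting come from \cite[Corollary 4.31]{Antonio_Koszul} via the square in \cref{prop:conservativity_and_preservation_of_sifted_colimits_of_tangent_complex} (note that $X\alg \times \bbA^1_k$ is affine with Noetherian global sections, so no covering is needed on the algebraic side), not from \cref{lem:Noetherian_approximation_of_the_deformation_to_the_normal_bundle}, which only provides a limit presentation of $\cD_{X\alg/ Y\alg}$. And the compatibility $\bbT\an \circ (-)\an_Y \simeq (-)\an \circ \bbT$ is stated in the paper without any almost-of-finite-presentation hypothesis over an affinoid base, so the difficulty you anticipate there is not where the problem actually lies.
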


\begin{proof} Since the morphism $\theta_{X/ Y}$ commutes with filtered colimits in $X$, we reduce the statement of the Proposition to the
    case where $X \in \anNil_{/ Y}$.
    In this case, we have that $Y \times \bA^1_k \in \anFMP_{X \times \bA^1_k/ }$. By naturality of the morphism $\theta_{X/ Y}$
    we are allowed to prove the statement of the Proposition up to a retract. For this reason,
    we can replace $Y$ by $Y \underset{X_\red}{\sqcup} X$ and therefore assume that $f \colon X\to Y$ is a nil-embedding.
    Consider now the relative analytification functor
        \[
            (-)\an_Y \colon \mathrm{FMP}_{/ Y\alg \times \bbA^1_k}   \to \anFMP_{/ Y \times \bA^1_k}, 
        \]
    introduced in \cite[\S 6.1]{Porta_Yu_NQK}. Thanks to \cite[Theorem 6.12]{Porta_Yu_NQK} the natural functor displayed above is an equivalence of \infcats.
    Observe further that we have a well defined functor
        \begin{equation} \label{eq:rel_analytification_functor_as}
            (-)\an_Y \colon \mathrm{FMP}_{X\alg \times \bbA^1_k/ /Y\alg \times \bbA^1_k} \to \anFMP_{X \times \bA^1_k/ /Y \times \bA^1_k},
        \end{equation}
    induced by the usual relative analytification functor. Indeed, we have that 
        \[
            X \times \bA^1_k \simeq (X\alg)\an_Y \times \bA^1_k,   
        \]
    as the pair$((-)\alg, (-)\an_Y)$ forms an equivalence itself. It is further clear that \eqref{eq:rel_analytification_functor_as}
    is an equivalence of \infcats as well. In particular, it commutes with all small colimits. Therefore, we have a chain of natural equivalences
        \begin{align*}
            (\cD_{X\alg/ Y\alg})\an_Y & \simeq (\colim_{\bDelta \op} \cD_{X\alg/Y\alg}^\bullet) \an_Y \\
                                      & \simeq \colim_{\bDelta \op}(\cD_{X\alg/ Y\alg}^\bullet)\an_Y \\
                                      & \simeq \colim_{\bDelta \op} \cD_{X/ Y}^{\mathrm{an}, \bullet},
        \end{align*}
    in $\anFMP_{X \times \bA^1_k/ / Y\times \bA^1_k}$, thanks to \cref{prop:rel_analytification_preserves_the_deformation}. The conclusion now follows from the observation that the forgetful functor
        \[
            \anFMP_{X \times \bA^1_k/ /Y \times \bA^1_k} \to \anFMP_{X \times \bA^1_k/ }  ,
        \]
    commutes with colimits and it is moreover conservative, thus it reflects sifted colimits.
\end{proof}

\begin{lem} \label{lem:identification_of_non-archimedean_fibers}
    Consider the natural projection morphism
        \[
            q \colon \cD_{X/ Y}\an \to \bA^1_k.
        \]
    Then its fiber at $\lambda \neq 0$ coincides with the formal completion
        \[
            Y^\wedge_X,  
        \]
    and its fiber at $\{ 0 \} \subseteq \bA^1_k$ with the completion of the shifted tangent bundle $\bT\an_{X/ Y}[1]$ along the zero section $s_0 \colon X \to \bT\an_{X/ Y}[1]$.
\end{lem}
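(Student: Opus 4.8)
The plan is to reduce the whole statement to the algebraic situation via the equivalence $\theta_{X/Y}$ of \cref{prop:rel_analytification_preserves_the_deformation} and then import the known description of the fibers of the algebraic deformation, using throughout the compatibility of the relative analytification functor with base change over $\bA^1_k$. As in \cref{rem:reduction_step_closed_ims}, both $q$ and the formation of its fibers commute with filtered colimits in $X$, so I may first assume $f$ is a nil-isomorphism of derived $k$-affinoid spaces, and then by the usual retract argument that $f$ is in fact a nil-embedding; the general case is recovered by passing to the colimit over $(\anNil^\cl_{/X})_{/Y}$.

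First I would treat the fiber at a point $\lambda \neq 0$. By \cref{prop:rel_analytification_preserves_the_deformation} we have $\cD\an_{X/Y} \simeq (\cD_{X\alg/Y\alg})\an_Y$, compatibly with the projection to $\bA^1_k$. The point $\{\lambda\}$ lies over the base factor $\bA^1_k$, so taking the fiber at $\lambda$ is computed by a base change that is preserved by relative analytification. By \cref{prop:algebraic_properties_of_deformation}~(2) the algebraic fiber $(\cD_{X\alg/Y\alg})_\lambda$ is the algebraic formal completion $(Y\alg)^\wedge_{X\alg}$, and since relative analytification commutes with formal completions, c.f. the proof of \cite[Corollary 5.20]{Holstein_Analytification_of_mapping_stacks}, its relative analytification is precisely the analytic formal completion $Y^\wedge_X$, as desired.

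For the fiber at $\{0\}$ I would argue at the level of the simplicial objects. \cref{cor:analytic_def_fiber_at_0_is_tangent} provides a termwise equivalence $(\cD^{\mathrm{an},\bullet}_{X/Y})_0 \simeq (\bT^{\mathrm{an},\bullet}_{X/Y})^\wedge$. Since $\cD\an_{X/Y}$ is by construction the sifted colimit of $\cD^{\mathrm{an},\bullet}_{X/Y}$ over $\bDelta\op$, and the fiber at $\{0\}$ is a pullback along $\{0\}\hookrightarrow \bA^1_k$ which commutes with sifted colimits in $\dAnSt_k$, I obtain $(\cD\an_{X/Y})_0 \simeq \colim_{\bDelta\op}(\bT^{\mathrm{an},\bullet}_{X/Y})^\wedge$. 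It then remains to identify this geometric realization with $\bT\an_{X/Y}[1]^\wedge$: the simplicial object $(\bT^{\mathrm{an},\bullet}_{X/Y})^\wedge$ is the analytic formal groupoid over $X$ attached to the formal moduli problem $(X \to \bT\an_{X/Y}[1]^\wedge)$, so by \cref{thm:Phi_is_an_equivalence} its realization recovers $\bT\an_{X/Y}[1]^\wedge$. Alternatively one transports through $\theta_{X/Y}$ and applies \cref{prop:algebraic_properties_of_deformation}~(1) together with the fact that relative analytification commutes with shifted tangent bundles and their completions along the zero section, c.f. the proof of \cite[Lemma 5.31]{Holstein_Analytification_of_mapping_stacks}.

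The main obstacle I anticipate is the simultaneous interchange of the fiber — a finite limit in $\dAnSt_k$ — with the sifted colimit defining $\cD\an_{X/Y}$ and with the relative analytification functor. Both commutations should hold, since fiber products commute with sifted colimits in $\dAnSt_k$ and since $(-)\an_Y$ is an equivalence onto $\anFMP_{/Y\times\bA^1_k}$ by \cite[Theorem 6.12]{Porta_Yu_NQK}, hence preserves base changes over the base factor. The delicate point is to verify, by unwinding the construction of $\cD^{\mathrm{an},\bullet}_{X/Y}$ as a pullback of mapping stacks, that $\{\lambda\}$ (resp. $\{0\}$) genuinely lives over the base $\bA^1_k$ so that the fiber is an honest pullback preserved under $(-)\an_Y$; this is where the care must be concentrated.
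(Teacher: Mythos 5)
Your proposal is correct and follows essentially the same route as the paper: the paper's own proof consists precisely of citing \cref{prop:rel_analytification_preserves_the_deformation}, \cref{cor:analytic_def_fiber_at_0_is_tangent} and \cref{prop:algebraic_properties_of_deformation}, which are exactly the three ingredients you combine. Your additional care about the reduction to nil-embeddings and the interchange of fibers with the sifted colimit and with $(-)\an_Y$ simply makes explicit what the paper leaves implicit in its one-line argument.
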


\begin{proof}
    The above assertions follow immediately from \cref{prop:rel_analytification_preserves_the_deformation}, \cref{cor:analytic_def_fiber_at_0_is_tangent} and \cref{prop:algebraic_properties_of_deformation}.
\end{proof}

\begin{construction} \label{const:pullbacks_of_deformations} Let $Y \in \dAfd_k$ and $(X \xrightarrow{f} Y) \in \anFMP_{/ Y}$.
    Denote by $g \colon U \to Y$ a morphism in $\dAfd_k$. Consider the pullback diagram
        \[
        \begin{tikzcd}
            X_U \ar{r} \ar{d} & U \ar{d} \\
            X \ar{r}{f} & Y,  
        \end{tikzcd}
        \]
    computed in the \infcat $\dAfd_k$. It follows from the definitions that the morphism $X_U \to U$ admits a deformation theory and
    that we have a natural pullback square of simplicial objects
        \[
        \begin{tikzcd}
            \cD_{X_U/ U}^{\mathrm{an}, \bullet} \ar{r} \ar{d} & \cD_{X/Y}^{\mathrm{an}, \bullet} \ar{d} \\
            U \ar{r} & Y,
        \end{tikzcd}
        \]
    in the \infcat $\dAnSt_k$. For this reason, we obtain a natural commutative diagram
        \[
        \begin{tikzcd}
            \cD_{X_U/ U}\an \ar{r} \ar{d} & \cD_{X/ Y} \an \ar{d} \\
            U \ar{r} & Y,
        \end{tikzcd}
        \]
    in the \infcat $\dAnSt_k$. Similarly, \cite[Proposition 3.17]{Porta_Yu_Derived_non-archimedean_analytic_spaces}
    implies that we have a pullback diagram
        \[
        \begin{tikzcd}
            X_U\alg \ar{r} \ar{d} & U \alg \ar{d} \\
            X\alg \ar{r}{f\alg} & Y\alg
        \end{tikzcd},
        \]
    in the \infcat $\dAff_k$. Reasoning as above, we have a natural commutative square
      \[
        \begin{tikzcd}
        \cD_{X_U\alg/ U\alg} \ar{r} \ar{d} & \cD_{X\alg/ Y\alg} \ar{d} \\
        U\alg \ar{r} & Y \alg,  
       \end{tikzcd}
      \]
    in the \infcat $\dSt_k$.
\end{construction}

\begin{prop} \label{prop:gluing_the_deformation}
    The commutative square
        \[
        \begin{tikzcd}
            \cD_{X_U/ U}\an \ar{r} \ar{d} & \cD_{X/ Y} \an \ar{d} \\
            U \ar{r} & Y, 
        \end{tikzcd}
        \]
    of \cref{const:pullbacks_of_deformations}, is a pullback square in the \infcat $\dAnSt_k$.
\end{prop}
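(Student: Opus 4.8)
The plan is to realise both sides as geometric realisations of the simplicial deformations and to reduce the claim to the statement that base change along $g\colon U\to Y$ commutes with the sifted colimit defining $\cD\an_{X/Y}$. First I would put $f$ into a convenient shape. As noted in \cref{rem:reduction_step_closed_ims}, the construction $\cD_{X/Y}^{\mathrm{an},\bullet}$ commutes with filtered colimits in $X$; since fiber products commute with filtered colimits in $\dAnSt_k$ and $X_U\simeq X\times_Y U$, the functor $(-)\times_Y U$ applied to it does as well, and by \cref{lem:nil-isos_are_affine_morphisms}(1) the base change $S\times_Y U\to U$ of a nil-isomorphism remains a nil-isomorphism, so that $\cD_{X_U/U}^{\mathrm{an},\bullet}$ is defined. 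Invoking the presentation of \cref{prop:required_conditions_for_formal_moduli_problems} we thus reduce to the case $X\in\anNil_{/Y}$, and then, by the retract argument replacing $Y$ with $Y\sqcup_{X_\red}X$, we may further assume that $f\colon X\to Y$ is a nil-embedding.

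By \cref{const:pullbacks_of_deformations}, for each $[n]\in\bDelta$ the square
\[
\begin{tikzcd}
\cD_{X_U/U}^{\mathrm{an},[n]} \ar{r} \ar{d} & \cD_{X/Y}^{\mathrm{an},[n]} \ar{d} \\
U \ar{r} & Y
\end{tikzcd}
\]
is cartesian in $\dAnSt_k$. Passing to the geometric realisation over $\bDelta\op$, the proposition becomes equivalent to the base-change identity
\[
\big(\colim_{[n]\in\bDelta\op}\cD_{X/Y}^{\mathrm{an},[n]}\big)\times_Y U \simeq \colim_{[n]\in\bDelta\op}\big(\cD_{X/Y}^{\mathrm{an},[n]}\times_Y U\big) \simeq \colim_{[n]\in\bDelta\op}\cD_{X_U/U}^{\mathrm{an},[n]} \simeq \cD\an_{X_U/U},
\]
that is, the pullback functor $(-)\times_Y U$ must preserve the colimit computing $\cD\an_{X/Y}$.

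To secure this I would transport the problem to the algebraic side through \cref{prop:rel_analytification_preserves_the_deformation}, which supplies equivalences $\cD\an_{X/Y}\simeq(\cD_{X\alg/Y\alg})\an_Y$ and $\cD\an_{X_U/U}\simeq(\cD_{X_U\alg/U\alg})\an_U$. There, the levelwise cartesian squares of \cref{const:pullbacks_of_deformations} together with universality of colimits in the \inftopos $\dSt_k$ give $\cD_{X_U\alg/U\alg}\simeq\cD_{X\alg/Y\alg}\times_{Y\alg}U\alg$, and it remains to observe that the relative analytification functor is compatible with base change, $(W\times_{Y\alg}U\alg)\an_U\simeq W\an_Y\times_Y U$ for $W\in(\dSt_k)_{/Y\alg}$, which follows from the construction of $(-)\an_Y$ in \cite[\S 6.1]{Porta_Yu_NQK} together with the compatibility of analytification with pullbacks established in \cite{Holstein_Analytification_of_mapping_stacks}. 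Chaining these identifications yields $(\cD_{X_U\alg/U\alg})\an_U\simeq(\cD_{X\alg/Y\alg})\an_Y\times_Y U$, which is exactly the desired cartesianness.

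The main obstacle is precisely this interchange of a sifted colimit with a pullback: colimits do not commute with arbitrary fibre products, so the whole argument hinges on universality of the geometric realisation. The delicate point is that the colimits defining $\cD\an_{X/Y}$ and $\cD_{X\alg/Y\alg}$ are computed in the respective $\anFMP$ and $\mathrm{FMP}$ categories rather than directly in $\dAnSt_k$ and $\dSt_k$; matching them with the ambient geometric realisations along which universality applies is what the equivalence $\rB_X(-)$ of \cref{thm:Phi_is_an_equivalence} and the identification of \cref{prop:rel_analytification_preserves_the_deformation} are there to guarantee, and I expect verifying this compatibility carefully to be the technical heart of the proof.
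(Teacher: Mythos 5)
There is a genuine gap, and it sits exactly where you flagged it: the interchange of the formal-moduli-problem colimit with base change. Universality of colimits in the \inftopos $\dSt_k$ applies to colimits \emph{computed in $\dSt_k$}, but the colimits defining $\cD_{X\alg/Y\alg}$ and $\cD\an_{X/Y}$ are computed in $\mathrm{FMP}_{X\alg\times\bbA^1_k/}$ and $\anFMP_{X\times\bA^1_k/}$ respectively, and the inclusion of formal moduli problems into derived stacks does \emph{not} preserve geometric realizations: by \cref{const:formal_classifying_stack_construction}, the classifying object $\rB_X(\cG)$ is a proper sub-object of the genuine realization $\rB_X(\cG)^{\mathrm{pre}}$ in $\dAnSt_k$, carved out by the condition that $\tZ \to Z$ be a formal moduli problem over $Z$. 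So your chain of equivalences $(\colim_n \cD^{[n]})\times_Y U \simeq \colim_n(\cD^{[n]}\times_Y U)$ is precisely the statement to be proved, not a consequence of topos-theoretic universality, and deferring it as ``the technical heart'' leaves the proof without its main step. The paper closes this gap by a different mechanism: it never commutes the realization with the pullback directly, but instead uses that the (Serre) tangent complex functor on formal moduli problems is conservative and commutes with sifted colimits (\cite[\S 5, Theorem 2.3.5]{Gaitsgory_Study_II}, and \cref{prop:conservativity_and_preservation_of_sifted_colimits_of_tangent_complex} analytically), together with the \v{C}ech-nerve computation of relative tangent complexes (\cite[\S 5, Corollary 2.3.6]{Gaitsgory_Study_II}). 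This reduces the comparison of the two FMP-colimits to a \emph{levelwise} comparison of simplicial tangent complexes, hence to the levelwise cartesianness of the parametrized deformations, which is immediate from the mapping-stack description since $\bfMap$ commutes with fiber products. That substitution — tangent-complex conservativity in place of universality of colimits — is the idea your proposal is missing.

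A second, smaller omission: after passing to the algebraic side, the Gaitsgory--Rozenblyum results you need are stated in the almost-of-finite-presentation setting, while $X\alg$ and $Y\alg$ coming from derived $k$-affinoid spaces are Noetherian but not laft over $k$. The paper handles this via Noetherian approximation (\cref{lem:Noetherian_approximation_of_nil_isomorphisms,lem:Noetherian_approximation_of_the_deformation_to_the_normal_bundle}), writing $f\alg$ and $U\alg \to Y\alg$ as cofiltered limits over a filtered category $\cK$ of pairs of laft closed immersions, and proving the base-change equivalence $\cD_{X_{U_{\gamma_1}}/U_{\gamma_1}} \simeq \cD_{X_{\gamma_2}/Y_{\gamma_2}}\times_{Y_{\gamma_2}} U_{\gamma_1}$ at each finite level before passing to the limit. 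Your reductions at the start (filtered colimits in $X$, the retract replacing $Y$ by $Y\sqcup_{X_\red}X$, the appeal to \cref{prop:rel_analytification_preserves_the_deformation} and to compatibility of relative analytification with pullbacks) all match the paper and are fine; it is the two points above — replacing the universality argument by the tangent-complex argument, and inserting the approximation step — that must be supplied for the proof to go through.
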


\begin{proof} Since the pullback diagram above is preserved under filtered colimits in $X$, we can assume without loss of generality that
    $f \colon X\to Y$ is a nil-isomorphism. Similarly, we are allowed to replace the retract $ \colon X \to Y$ by the nil-embedding
        \[
            g \colon \to X \to Y \underset{X_\red}{\bigsqcup} X.  
        \]
    In order to show the assertion of the proposition, we are reduced to show that the natural morphism
        \[
            \cD_{X_U/ U}\an \to \cD\an_{X/ Y} \times_Y U,  
        \]
    is an equivalence in the \infcat $\anFMP_{X_U/ /U}$. Moreover, \cref{prop:rel_analytification_preserves_the_deformation} reduce us
    to show that the natural morphism
        \[
            \cD_{X_U\alg/ U\alg} \to \cD_{X\alg/ Y\alg} \times_{Y\alg}U\alg,
        \]
    is an equivalence in $\dSt_k$ (see also \cref{prop:rel_analytification_preserves_the_deformation}). 
    Moreover, \cref{lem:Noetherian_approximation_of_the_deformation_to_the_normal_bundle} allows us to perform Noetherian approximation on the
    nil-isomorphism $f \alg \colon X\alg \to Y\alg$.
    For this reason, we can find an approximation of the morphism $f \colon X \alg \to Y\alg$
    an approximation of the form
        \[
            \lim_{\alpha \in I\op} ( f_\alpha \colon X_\alpha \to Y_\alpha ),  
        \]
    where $I$ is a filtered \infcat and for each $\alpha \in I$, the morphism $f_\alpha$ is a closed immersion of affine derived schemes
    almost of finite presentation over $k$. Similarly, find an almost of finite presentation approximation
        \[
            U \alg \simeq \lim_{\beta \in J\op} U_\beta.  
        \]
    By almost finite presentation we deduce that for every $\alpha \in I$ there exists an index $\beta(\alpha) \in J$ such that the composite
        \begin{equation} \label{eq:factorization}
             U\alg \to Y\alg \to Y_\alpha,
        \end{equation}
    factors as 
        \[U\alg \to U_{\beta(\alpha)} \to Y_\alpha,\]
    for a well defined morphism $U_{\beta(\alpha)} \to Y_\alpha$ in $\dAff_k^\laft$.
    Let $\cK$ denote the filtered subcategory of the product $I \times J$ spanned by those pairs $(\alpha, \beta) \in I \times J$
    such that the composite displayed in \eqref{eq:factorization} factors as $U_\beta \to Y_\alpha$. We thus have that the morphism
        \[
            U \alg \to Y\alg ,  
        \]
    can be written as $\lim_{(\gamma_1, \gamma_2) \in \cK} (U_{\gamma_1} \to Y_{\gamma_2})$. For each $(\gamma_1, \gamma_2) \in \cK$ consider the deformations to the normal bundle
        \[
            \cD_{X_{\gamma_2}/ Y_{\gamma_2}} \quad \mathrm{and} \quad \cD_{X_{U_{\gamma_2}}/ U_{\gamma_2}},
        \]
    where $X_{U_{\gamma_2}}$ is defined as the pullback of the diagram
        \[
            U_{\gamma_1} \to Y_{\gamma_2} \leftarrow X_{\gamma_2}.  
        \]
    We claim that for each $(\gamma_1, \gamma_2) \in \cK$, we have a natural equivalence
        \[
            \cD_{X_{U_{\gamma_1}}/ U_{\gamma_1}} \simeq \cD_{X_{\gamma_2}/ Y_{\gamma_2}} \times_{Y_{\gamma_2}} U_{\gamma_1},  
        \]
    of derived $k$-stacks almost of finite presentation. Indeed, by conservativity of the relative tangent complex, c.f. \cite[\S 5, Theorem 2.3.5]{Gaitsgory_Study_II}, it suffices to show that 
        \[
            \bbT_{X_{U_{\gamma_1}}/ \cD_{X_{U_{\gamma_1}}/ U_{\gamma_1}}} \to \bbT_{X_{U_{\gamma_1}}/ \cD_{X_{\gamma_2}/ Y_{\gamma_2}} \times_{Y_{\gamma_2}} U_{\gamma_1}},
        \]
    is an equivalence in the \infcat $\Ind\Coh(X_{U_{\gamma_1}})$. Thanks to \cite[\S 5, Corollary 2.3.6]{Gaitsgory_Study_II}, we are reduced to show that
    the natural morphism of simplicial objects
        \[
            \{ \bbT_{X_{U_{\gamma_1}}/ \cD_{X_{U_{\gamma_1}}/ U_{\gamma_1}}^{\bullet}} \} \to \{ \bbT_{X_{U_{\gamma_1}}/ \cD_{X_{\gamma_2}/ Y_{\gamma_2}}^{\bullet} \times_{Y_{\gamma_2}} U_{\gamma_1}} \},
        \]
    is an equivalence in $\Fun(\bDelta\op, \Ind\Coh(X_{U_{\gamma_1}}))$. The latter assertion further reduce us to show that the commutative
    diagram
        \[
        \begin{tikzcd}
            \cD_{X_{U_{\gamma_1}}/ U_{\gamma_1}}^{ \bullet} \ar{r} \ar{d} & \cD_{X_{\gamma_2}/ Y_{\gamma_2}}^{\bullet} \ar{d} \\
            U_{\gamma_1} \ar{r} & Y_{\gamma_2} 
        \end{tikzcd}
        \]
    is a pullback diagram component-wise in $\dAff_k^\laft$. By unwinding the definitions, it suffices to prove that the commutative diagram
    of simplicial objects
        \[
        \begin{tikzcd}
            \bfMap_{/ \bA^1_k}(\mathbf B_\mathrm{scaled}^\bullet,  X_{U_{\gamma_1}} \times \bA^1_k) \ar{r} \ar{d} & \bfMap_{/ \bA^1_k}(\mathbf B_\mathrm{scaled}^\bullet, U_{\gamma_1} \times \bA^1_k) )  \ar{d} \\
            \bfMap_{/ \bA^1_k}(\mathbf B_\mathrm{scaled}^\bullet, X_{{\gamma_2}} \times \bA^1_k) \ar{r} & \bfMap_{/ \bA^1_k}(\mathbf B_\mathrm{scaled}^\bullet, Y_{\gamma_2} \times \bA^1_k) )
        \end{tikzcd}
        \]
    is a pullback square. But the latter assertion is obvious as derived $k$-analytic mapping stacks commute with fiber products in $\dAnSt_k$.
\end{proof}

\subsection{Gluing the Deformation} In this \S, we globalize the results proved so far in \S 3.2.
Let $f \colon X \to Y$ be a morphism of locally geometric derived $k$-analytic stacks.
Consider as before the \emph{deformation to the normal bundle of the morphism $f$} constructed via the pullback diagram
    \[
    \begin{tikzcd}
        \cD^{\mathrm{an}, \bullet}_{X/ Y} \ar{r} \ar{d} & Y^\wedge_X \times \bA^1_k \ar{d} \\
        \bfMap_{/ \bA^1_k} (\bB^{\mathrm{an}, \bullet}_\mathrm{scaled}, X \times \bA^1_k) \ar{r} & \bfMap_{/ \bA^1_k}(\bB^{\mathrm{an},\bullet}_{\mathrm{scaled}}, Y^\wedge_X \times \bA^1_k),
    \end{tikzcd}
    \]
in the \infcat $\dAnSt_k$. As in the previous \S, one can show that the simplicial object 
    \[
        \cD^{\mathrm{an}, \bullet}_{X/ Y} \in \anFMP_{X \times \bA^1_k/ /Y^\wedge_X \times \bA^1_k}.
    \]
Let now $g \colon U \to Y^\wedge_X$ be a morphism in $\dAnSt_k$, where $U \in \dAfd_k$. Consider the pullback diagram
    \[
    \begin{tikzcd}
        X_U \ar{r}{g'} \ar{d} & X \ar{d}{f} \\
        U \ar{r}{g} & Y^\wedge_X,
    \end{tikzcd}
    \]
in the \infcat $\dAnSt_k$. We have:

\begin{lem}
    The morphism $X_U \to U$ admits a deformation theory, and thus exhibits $X_U$ as an object in $\anFMP_{/U}$.
\end{lem}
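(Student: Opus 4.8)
The plan is to exhibit $X_U \to U$ as a base change of the structure morphism of the formal moduli problem $Y^\wedge_X$ and to transport the deformation-theoretic data along the pullback. Recall first that, as in the examples following \cref{defin:analytic_formal_moduli_problems_under}, the formal completion $Y^\wedge_X$ is an analytic formal moduli problem under $X$; in particular the canonical morphism $f' \colon X \to Y^\wedge_X$ is a nil-isomorphism with $(Y^\wedge_X)_\red \simeq X_\red$, it admits a deformation theory, and it carries a relative analytic pro-cotangent complex $\bbL\an_{X/Y^\wedge_X} \in \pro(\Coh^+(X))$. Since $X_U \to U$ is by definition the pullback of $f'$ along $g \colon U \to Y^\wedge_X$, it suffices to check that the three defining conditions of a deformation theory descend along this base change, and then to invoke the criterion identifying a nil-isomorphism with a deformation theory as an analytic formal moduli problem over the base.

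First I would verify that $X_U \to U$ is again a nil-isomorphism. As the functor $(-)_\red$ commutes with finite limits (as established in the proof of \cref{lem:nil-isos_are_affine_morphisms}), the pullback square defining $X_U$ gives
\[
    (X_U)_\red \simeq X_\red \times_{(Y^\wedge_X)_\red} U_\red \simeq U_\red,
\]
where the second equivalence uses $(Y^\wedge_X)_\red \simeq X_\red$. Hence the reduction of $X_U \to U$ is an isomorphism. Likewise $X_U$ is nilcomplete, being a fiber product of the nilcomplete objects $X$, $Y^\wedge_X$ and $U$ (nilcompleteness is preserved by fiber products since limits commute with limits), and the base $U \in \dAfd_k$ is nilcomplete and infinitesimally cartesian.

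The remaining, and main, point is to produce the relative analytic pro-cotangent complex of $X_U \to U$. I would obtain it as the pullback $(g')^*\bbL\an_{X/Y^\wedge_X} \in \pro(\Coh^+(X_U))$, performed level-wise on the defining pro-system, using the base-change property of the relative analytic cotangent complex (\cite[Proposition 5.12]{Porta_Yu_Representability}). The subtlety here is that $Y^\wedge_X$ is a genuinely formal (non-representable) object presented as a filtered colimit of nil-embeddings of $X$, so the base-change formula has to be checked compatibly with this ind-inf presentation; this is where the compactness of the affinoid $U$ against the presenting ind-system is used, allowing $g$ to be analyzed on finite stages and the pro-cotangent complex to be computed as a cofiltered limit. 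I expect this to be the hard part.

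With these three conditions in hand, $X_U \to U$ admits a deformation theory. Finally I would conclude $X_U \in \anFMP_{/U}$ by the evident analogue, in the \enquote{over} setting, of \cref{prop:sufficient_conditions_for_a_prestack_to_be_equiv_to_an_analytic_FMP}: conditions (1) and (2) of \cref{defin:analytic_formal_problems_over_X} follow respectively from the nil-isomorphism property (which forces $X_U(U_\red) \simeq *$) and from nilcompleteness (convergence), while the square-zero gluing condition (3) follows from infinitesimal cartesianness of $U$ together with the universal property of the relative pro-cotangent complex recorded in \cref{cor:construction_of_square_zero_extensions_for_analytic_FMP_using_univ_property_of_cotangent_complex}, exactly as in the proof of \cref{prop:sufficient_conditions_for_a_prestack_to_be_equiv_to_an_analytic_FMP}. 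Alternatively, one may present $X_U$ directly as a filtered colimit of affinoid nil-embeddings over $U$ and appeal to \cref{cor:formal_moduli_problems_over_X_are_ind_inf_objects}.
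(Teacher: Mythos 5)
Your proposal is correct, and its skeleton agrees with the paper's proof: nilcompleteness and infinitesimal cartesianness pass to the fiber product, the relative cotangent complex of $X_U \to U$ is obtained by pulling back along $g' \colon X_U \to X$, and the membership $X_U \in \anFMP_{/U}$ then follows from the over-base analogue of \cref{prop:sufficient_conditions_for_a_prestack_to_be_equiv_to_an_analytic_FMP}. Where you diverge is in the justification of the base-change step, and the step you flag as ``the hard part'' is in fact unnecessary. You propose to pull back the pro-object $\bbL\an_{X/Y^\wedge_X} \in \pro(\Coh^+(X))$ level-wise along the ind-inf presentation of $Y^\wedge_X$, factoring $g$ through finite stages of the presentation via compactness of the affinoid $U$. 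The paper sidesteps the ind-system entirely: since $X$ and $Y$ are locally geometric, the genuine relative cotangent complex $\bbL_{X/Y} \in \Coh^+(X)$ exists (and computes derivations relative to $Y^\wedge_X$ as well, since $Y^\wedge_X = Y \times_{Y_\dR} X_\dR$ has the same derivation functor as $Y$, the de Rham factors being insensitive to square-zero extensions), and for any $h \colon Z \to X_U$ with $Z \in \dAfd_k$ one has, purely formally,
\[
    \mathrm{Der}_{X_U}(Z, -) \simeq \mathrm{Der}_{X}(Z,-) \times_{\mathrm{Der}_{Y^\wedge_X}(Z,-)} \mathrm{Der}_U(Z,-),
\]
because fiber products commute with fibers; taking fibers over $\mathrm{Der}_U(Z,-)$ identifies $\mathrm{Der}_{X_U/U}(Z,-)$ with $\mathrm{Der}_{X/Y}(Z,-)$, which is corepresented by $h^*(g')^*\bbL_{X/Y}$. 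Thus base change holds at the level of corepresented functors, no compactness of $U$ against the presentation is invoked, and one even obtains a genuine rather than merely pro-corepresenting object. One small caveat on your nil-isomorphism check: the commutation of $(-)_\red$ with finite limits is established in the paper only for derived $k$-analytic spaces, whereas $Y^\wedge_X$ is a formal stack; to get $(X_U)_\red \simeq U_\red$ you should instead evaluate on reduced affinoid test objects, where $X(Z_\red) \to Y^\wedge_X(Z_\red)$ is an equivalence directly from the definition of the formal completion — the conclusion is the same. On the other hand, your explicit verification of the conditions of \cref{defin:analytic_formal_problems_over_X} at the end is more complete than the paper's treatment, which only checks the deformation theory inside the proof and asserts the formal moduli problem structure afterwards.
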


\begin{proof}
    It is clear that $X_U$ is both nilcomplete and infinitesimally cartesian as a derived $k$-analytic stack. It suffices thus to show that
    the structural morphism $X_U \to U$ admits a cotangent complex. We claim that $\bbL_{X_U/ U}$ identifies with
        \[
            (g')^* \bbL_{X/ Y} \in \Coh^+(X_U).
        \]
    Indeed, let $Z \to X_U$ be any morphism with $h \colon Z \in \dAfd_k$. Then we obtain that
        \[
            h^* (g')^* \bbL_{X/ Y} \simeq (g' \circ h)^* (\bbL_{X/ Y}),  
        \]
    in $\Coh^+(Z)$, which corepresents the functor
        \[
            \mathrm{Der}_{X/ Y}(Z, -) \colon \Coh^+(Z) \to \cS,  
        \]
    see \cite[Definition 7.6]{Porta_Yu_Representability} for the definition of the latter. Since fiber products commute with fibers, we have a natural identification
        \[
            \mathrm{Der}_{X_U}(Z, -) \simeq \mathrm{Der}_{X} (Z, -) \times_{\mathrm{Der}_{Y}(Z, -)} \mathrm{Der}_U(Z, -).
        \]
    We thus conclude that 
        \[
            \mathrm{Der}_{X_U/ U}(Z, -) \simeq \fib \big(\mathrm{Der}_{X_U}(Z, -) \to \mathrm{Der}_{U}(Z, -)\big)  ,
        \]
    identifies naturally with
        \[
            \mathrm{Der}_{X/ Y}(Z, -) \simeq \fib \big(\mathrm{Der}_{X}(Z, -) \to \mathrm{Der}_{Y}(Z, -)\big)  .          
        \]
    The claim of the lemma now follows from the fact that $\mathrm{Der}_{X/ Y}(Z, -) \colon \Coh^+(Z) \to \cS$ is corepresentable
    by $h^* (g')^*\bbL_{X/ Y}$, as desired. 
\end{proof}

Thanks to the above lemma we conclude that the canonical map $X_U \to U$ exhibits $X_U$ as an analytic formal moduli problem
over $U$. We are thus in the case of the previous section. The following result implies that the deformation to the normal bundle in the non-archimedean
setting glues:

\begin{prop} \label{prop:gluing_def}
    The simplicial object
        \[
            \cD_{X/ Y}^{\mathrm{an}, \bullet} \colon \bDelta \op \to \anFMP_{X/ /Y},  
        \]
    admits a colimit $\cD_{X/ Y}\an \in \anFMP_{X/ }$.
\end{prop}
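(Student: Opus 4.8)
The plan is to realize $\cD_{X/Y}\an$ as the geometric realization $\colim_{\bDelta\op}\cD_{X/Y}^{\mathrm{an},\bullet}$ computed in $\dAnSt_k$, and then to check that this colimit carries a deformation theory relative to $X\times\bA^1_k$, so that \cref{prop:sufficient_conditions_for_a_prestack_to_be_equiv_to_an_analytic_FMP} identifies it with an object of $\anFMP_{X/}$. The colimit exists unconditionally, since $\dAnSt_k$ is cocomplete; the content of the statement is that the result is again an analytic formal moduli problem, which amounts to nilcompleteness, infinitesimal cartesianness, and the existence of a relative pro-cotangent complex.

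First I would reduce to the affinoid base already treated in \S 3.2. Since geometric realizations in the \inftopos{} $\dAnSt_k$ are universal, for any morphism $g\colon U\to Y^\wedge_X$ with $U\in\dAfd_k$ the formation of $\colim_{\bDelta\op}$ commutes with pullback along $g$. Combining this with \cref{prop:gluing_the_deformation} and the compatibility of the deformation with relative analytification (\cref{prop:rel_analytification_preserves_the_deformation}), the pullback of $\cD_{X/Y}^{\mathrm{an},\bullet}$ along $g$ is identified with $\cD_{X_U/U}^{\mathrm{an},\bullet}$, whose colimit exists and lies in $\anFMP_{X_U/}$ by the affinoid construction. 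In particular $U\mapsto\cD_{X_U/U}\an$ is a cartesian family: for $V\to U$ over $Y^\wedge_X$ in $\dAfd_k$, \cref{prop:gluing_the_deformation} yields $\cD_{X_V/V}\an\simeq\cD_{X_U/U}\an\times_U V$.

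Next I would glue these local models. Choosing an atlas of $Y^\wedge_X$ by derived $k$-affinoid spaces and invoking descent for $\dAnSt_k$, the cartesian family $\{\cD_{X_U/U}\an\}$ descends to a well-defined object over $Y^\wedge_X$ which, by universality of the geometric realization, coincides with $\colim_{\bDelta\op}\cD_{X/Y}^{\mathrm{an},\bullet}$. The deformation-theoretic conditions are then verified locally: nilcompleteness and infinitesimal cartesianness are stable under the relevant limits and are inherited from the terms $\cD_{X/Y}^{\mathrm{an},[n]}$, each of which admits a deformation theory by the global analogue of \cref{lem:deformation_theory_for_D^an_bullet_X/Y}; the relative pro-cotangent complex is obtained by gluing the local complexes $\bbL\an_{X_U/\cD_{X_U/U}\an}$, using that the relative Serre tangent complex functor is conservative and commutes with sifted colimits (\cref{prop:conservativity_and_preservation_of_sifted_colimits_of_tangent_complex}), so that the term-wise cotangent complexes assemble into a single pro-coherent object over $X\times\bA^1_k$.

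The main obstacle I expect is precisely this last verification: that the geometric realization retains a relative pro-cotangent complex over a non-affinoid, merely locally geometric base. This is where the descent machinery of \S 2 is essential, since one must check that the locally defined cotangent complexes are genuinely compatible under the base-change maps of the atlas and glue to a single global pro-coherent object, rather than merely existing locally; pseudo-nil-descent (\cref{cor:pseudo_nil-descent_for_pro_Coh^+}) together with \cref{cor:analytic_relative_cotangent_complex_defines_cartesian_sections_in_the_totalization} provides exactly this compatibility. Once it is secured, \cref{prop:sufficient_conditions_for_a_prestack_to_be_equiv_to_an_analytic_FMP} completes the identification $\cD_{X/Y}\an\in\anFMP_{X/}$.
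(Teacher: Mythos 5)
There is a genuine gap, and it sits in your very first move. You propose to compute $\colim_{\bDelta\op}\cD^{\mathrm{an},\bullet}_{X/Y}$ in $\dAnSt_k$ ("the colimit exists unconditionally, since $\dAnSt_k$ is cocomplete") and then check deformation theory; but colimits in the \infcats of analytic formal moduli problems are \emph{not} computed in $\dAnSt_k$, and the proposition is asserting the existence of the colimit of the diagram valued in $\anFMP_{X//Y}$, not of the underlying diagram of stacks. Concretely, the geometric realization of such a simplicial object in the \inftopos $\dAnSt_k$ is the object $\rB_X(\cG)^{\mathrm{pre}}$ of \cref{const:formal_classifying_stack_construction}, whereas the formal-moduli colimit --- the object produced by the affinoid construction of \S 3.2 and the one the proposition is about --- is the proper subobject $\rB_X(\cG)\subseteq\rB_X(\cG)^{\mathrm{pre}}$ spanned by the components where the fiber $\tZ\to Z$ lies in $\anFMP_{/Z}$. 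It is only for this subobject that \cref{lem:B_X(G)_is_formal} establishes a deformation theory; there is no reason for $\rB_X(\cG)^{\mathrm{pre}}$ to be infinitesimally cartesian or to admit a pro-cotangent complex, so the verification you defer to \cref{prop:sufficient_conditions_for_a_prestack_to_be_equiv_to_an_analytic_FMP} would fail for the object you actually built. The same conflation infects your bridging step: universality of colimits in the topos identifies the pullback of the $\dAnSt_k$-realization along $g\colon U\to Y^\wedge_X$ with the $\dAnSt_k$-realization of $\cD^{\mathrm{an},\bullet}_{X_U/U}$, \emph{not} with $\cD_{X_U/U}\an$, which is by construction the colimit in the presentable \infcat $\anFMP_{X_U\times\bA^1_k//U\times\bA^1_k}$; the embedding of formal moduli problems into stacks preserves filtered colimits but not geometric realizations (this is exactly the content of \cref{thm:Phi_is_an_equivalence} and the $\rB$ versus $\rB^{\mathrm{pre}}$ distinction).

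The paper avoids this by never forming a global colimit in $\dAnSt_k$ at all: it uses the equivalence $\Psi\colon(\dAnSt_k)_{/Y}\simeq\lim_{U\in(Y^\wedge_X)^{\mathrm{afd}}}(\dAnSt_k)_{/U}$, takes the colimits \emph{locally} in the presentable \infcats $\anFMP_{X_U//U}$, glues the resulting family --- cartesian precisely by \cref{prop:gluing_the_deformation} --- into a single object $\cD_{X/Y}\an$, and then verifies the universal property of the colimit in $\anFMP_{X//Y}$ directly, checking it after base change along each $U\to Y^\wedge_X$. Your second half (the gluing of local models via \cref{prop:gluing_the_deformation}, and the descent of cotangent data via \cref{cor:pseudo_nil-descent_for_pro_Coh^+} and \cref{cor:analytic_relative_cotangent_complex_defines_cartesian_sections_in_the_totalization}) is in the right spirit and close to the paper's argument; the repair is to replace your first step by these local FMP-colimits, so that the glued object comes with the correct universal property from the start rather than having to be extracted from the too-large topos realization.
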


\begin{proof} Let $(Y^\wedge_X)^\mathrm{afd}$ denote the \infcat consisting of morphisms
        \[
            U \to Y^\wedge_X,
        \] 
    where $U$ is a derived $k$-affinoid space. We have a natural equivalence of \infcats
        \[
            \Psi \colon (\dAnSt_k)_{/ Y}  \to \lim_{U \in (Y^\wedge_X)^\mathrm{afd}} (\dAnSt_k)_{/U}.
        \]
    We deduce from the construction of $\cD^{\mathrm{an}, \bullet}_{X/ Y}$ that the simplicial object $\cD^{\mathrm{an}, \bullet}_{X/ Y}$ satisfies
        \begin{align*}
            \Psi(\cD^{\mathrm{an}, \bullet}_{X/ Y}) & \simeq \{ \cD^{\mathrm{an}, \bullet}_{X_{U}/ U} \} \\
                                                    & \in \lim_{U \in (Y^\wedge_X)^\mathrm{afd}} (\dAnSt_k)_{/ U}.
        \end{align*}
    For each $U \in (Y^\wedge_X)^\mathrm{afd}$ the \infcats $\anFMP_{X_U/ /U}$ are presentable. For this reason, we can consider the geometric realization
        \[
            \cD_{X_U/ U} \an \in \anFMP_{X_U / U},
        \]
    for every $[n] \in \bDelta$. Thanks to
    \cref{prop:gluing_the_deformation} we deduce that the above deformations glue to form a uniquely defined object
        \[
            \cD\an_{X/ Y} \in \dAnSt_k,  
        \]
    satisfiying the relation 
        \begin{align*}
            \Psi(\cD\an_{X/ Y}) & \simeq \{ \cD\an_{X_{U}/ U} \} \\
                                & \in \lim_{U \in (Y^\wedge_X)^\mathrm{afd}} (\dAnSt_k)_{/ U}.
        \end{align*}
    It is clear from the definitions that $\cD\an_{X/ Y} \in \anFMP_{X/ /Y}$. We claim that the latter is a colimit of the diagram
        \[  
            \cD^{\mathrm{an}, \bullet}_{X/ Y}.
        \]
    We need to show that for every $Z \in \anFMP_{X/ /Y}$ together with a morphism
        \[
            \cD^{\mathrm{an}, \bullet}_{X/ Y} \to Z,   
        \]
    then there exists a uniquely defined (up to a contractible space of choices) morphism
        \[
            \cD\an_{X/ Y} \to Z,  
        \]
    in the \infcat $\anFMP_{X/ /Y}$. Moreover, we are reduced to check this property after base change along any $U \to Y^\wedge_X$, in which case the assertion follows immediately
    from the construction.
\end{proof}

\begin{defin}
    Let $f\colon X \to Y$ be a morphism of locally geometric derived $k$-analytic stacks. Then the \emph{deformation to the normal bundle associated to $f$}
    is by definition the object $\cD_{X/ Y} \in \anFMP_{X\times \bA^1_k/ /Y\times\bA^1_k}$, as in \cref{prop:gluing_def}. 
\end{defin}

\subsection{The Hodge filtration} Let $f \colon X \to Y$ be a morphism between locally geometric derived $k$-analytic stacks.
In this \S, we will describe the construction of the Hodge filtration on the deformation to the normal bundle $\cD_{X/ Y}\an$ associated to $f$.
We first deal with the $k$-affinoid case:

\begin{construction} \label{const:construction_of_Hodge_filtration_in_the_lafp_case}
    Let $f \colon X \to Y$ be a nil-embedding in the \infcat $\dAfd_k$.
    Consider the induced morphism
        \[
            f \alg \colon X\alg \to Y\alg,   
        \]
    where $X \alg = \Spec A$ and $Y\alg = \Spec B$, with
    \[A \coloneqq \Gamma(X, \cO_X\alg) \quad \mathrm{and} \quad B \coloneqq \Gamma(Y, \cO_Y \alg).\]
    The morphism $f\alg$ is a nil-embedding of affine derived schemes (combined \cite[Lemma 6.9]{Porta_Yu_NQK} and
    \cite[Proposition 3.17]{Porta_Yu_Derived_non-archimedean_analytic_spaces}). Consider then the nil-embedding
        \[
            g \coloneqq f \alg \times \id_{\bbA^1_k} \colon X \times \bbA^1_k \to Y \times \bbA^1_k,  
        \]
    in $\dAff_k$. By Noetherian approximation, we can write $g$ as an inverse limit of the form
        \[
            \lim_{\alpha \in I \op}  g_\alpha \colon \lim_{\alpha \in I \op} X_\alpha \times \bbA^1_k \to \lim_{\alpha \in I \op} Y_\alpha \times \bbA^1_k,  
        \]
    where $I$ is a filtered \infcat and for each index $\alpha \in I$, we have that
        \[
            g_\alpha \colon X_\alpha \times \bbA^1_k \to Y_\alpha \times \bbA^1_k,  
        \]
    is a closed immersion in the \infcat $\dAff_k^\laft$. Fix some $\alpha \in I$. Thanks to \cref{prop:algebraic_properties_of_deformation},
    there exists a sequence of square-zero extensions of the form
        \[
            X_\alpha \times \bbA^1_k = X^{(0)}_\alpha \hookrightarrow X_\alpha^{(1)} \hookrightarrow \dots \hookrightarrow X_\alpha^{(n)} \hookrightarrow \dots \to Y_\alpha \times \bbA^1_k,  
        \]
    such that each term comes equipped with a natural left-lax action of multiplicative monoid $\bbA^1_k \in \dSt_k$.
\end{construction}

\begin{lem} \label{lem:naturality_of_Hodge_filtration_w.r.t_Noetherian_approximation}
    Let  $n \ge 0$, and let $\alpha \to \beta$ be a morphism in $I \op$. Then the transition morphism
        \[
            X_\alpha \times \bbA^1_k \to X_\beta \times \bbA^1_k,  
        \]
    lifts to a well defined induced morphism
        \[
            X_\alpha^{(n)} \to X_\beta^{(n)},
        \]
    in $\dAff_k^\laft$.
\end{lem}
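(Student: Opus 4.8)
The plan is to induct on $n \ge 0$, transporting at each stage the transition morphism through the square-zero extensions produced by \cref{prop:algebraic_properties_of_deformation}~(4) by means of the universal property of the (algebraic) cotangent complex, exactly in the spirit of the arguments for \cref{prop:filtered_colimit_for_nil-embeddings} and \cref{lem:pushouts_of_square_zero_extensions_have_the_structure_of_a_square_zero_extension}. By construction of the approximating diagram $F \colon \cI\op \to \Fun(\Delta^1, \dAff^\laft_k)$ of \cref{lem:Noetherian_approximation_of_nil_isomorphisms}, a morphism $\alpha \to \beta$ in $I\op$ is precisely a morphism $F(\alpha) \to F(\beta)$ in the arrow category, i.e. a commutative square of closed immersions
\[
\begin{tikzcd}
    X_\alpha \times \bbA^1_k \ar{r}{g_\alpha} \ar{d}{u} & Y_\alpha \times \bbA^1_k \ar{d}{v} \\
    X_\beta \times \bbA^1_k \ar{r}{g_\beta} & Y_\beta \times \bbA^1_k
\end{tikzcd}
\]
in $\dAff^\laft_k$. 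The base case $n = 0$ is the tautology $X^{(0)}_\alpha = X_\alpha \times \bbA^1_k \xrightarrow{u} X_\beta \times \bbA^1_k = X^{(0)}_\beta$.

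For the inductive step I would assume given $\phi_n \colon X^{(n)}_\alpha \to X^{(n)}_\beta$ lying over $v$ and compatible with the structural maps $i_n^\gamma \colon X_\gamma \times \bbA^1_k \to X^{(n)}_\gamma$. Recall from \cref{prop:algebraic_properties_of_deformation}~(4) that, since $g_\gamma$ is a closed immersion, the step $X^{(n)}_\gamma \hookrightarrow X^{(n+1)}_\gamma$ is the square-zero extension classified by the canonical derivation
\[
    d_n^\gamma \colon \bbL_{X^{(n)}_\gamma} \to \bbL_{X^{(n)}_\gamma / Y_\gamma} \to i_{n,*}^\gamma\big(\Sym^{n+1}(\bbL_{X_\gamma/ Y_\gamma}[-1])[1]\big).
\]
Each of the three constituents of $d_n^\gamma$ — the absolute cotangent complex $\bbL_{X^{(n)}_\gamma}$, the relative cotangent complex over $Y_\gamma \times \bbA^1_k$, and the pushed-forward graded piece — is functorial in $F(\alpha) \to F(\beta)$, the last one via base change for $i_{n,*}$ together with the map $u^*\Sym^{n+1}(\bbL_{X_\beta/Y_\beta}[-1]) \to \Sym^{n+1}(\bbL_{X_\alpha/Y_\alpha}[-1])$ induced by the square. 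Combining these with $\phi_n$ yields a commutative square of derivations exhibiting $d_n^\alpha$ as compatible with $\phi_n^*(d_n^\beta)$; the universal property of the cotangent complex — used just as in the proof of \cref{lem:pushouts_of_square_zero_extensions_have_the_structure_of_a_square_zero_extension} — then produces a unique $\phi_{n+1} \colon X^{(n+1)}_\alpha \to X^{(n+1)}_\beta$ extending $\phi_n$, lying over $v$, and compatible with the $i_{n+1}^\gamma$. Running the induction through all $n$ gives the compatible tower of lifts.

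The main obstacle is establishing the commutativity of the square of derivations, i.e. the genuine naturality (up to coherent homotopy) of $d_n^\gamma$ in $\gamma$, rather than a merely pointwise statement. It rests on the fact that the identification of the first nonzero graded piece of $\bbT_{X^{(n)}_\gamma / Y_\gamma}$ with $i_{n,*}^{\Ind\Coh}\big(\Sym^{n+1}(\bbT_{X_\gamma/Y_\gamma}[1])[-1]\big)$, furnished by \cite[Theorem 9.5.1.3]{Gaitsgory_Study_II}, is natural in the closed immersion $g_\gamma$. Granting this — which is intrinsic to the Gaitsgory--Rozemblyum formalism, since the whole filtration is produced functorially through the equivalence between Lie algebroids in $\Ind\Coh(X_\gamma \times \bbA^1_k)$ and $\mathrm{FMP}_{X_\gamma \times \bbA^1_k/}$ — the Serre-dual reformulation of \cref{prop:algebraic_properties_of_deformation} transports the naturality to the stated cotangent-complex derivations, and the inductive step closes. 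In fact one may phrase the entire lemma as the assertion that the Gaitsgory--Rozemblyum Hodge filtration is a functor from the arrow category of closed immersions in $\dAff^\laft_k$ to towers of square-zero extensions, evaluated on $F(\alpha) \to F(\beta)$.
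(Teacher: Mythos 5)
Your proposal is correct and, at its core, coincides with the paper's own proof: the paper disposes of this lemma in one line, citing precisely the naturality of the construction in \cite[\S 9.5.1]{Gaitsgory_Study_II}, which is exactly the functoriality statement your final paragraph identifies as the real content (the Hodge filtration as a functor out of the arrow category of closed immersions in $\dAff_k^\laft$). The inductive scaffolding you build around it is sound but not an independent argument --- as you yourself flag, the ``commutative square of derivations'' needed at each stage can only be supplied as a coherent datum by that same global naturality of the Gaitsgory--Rozemblyum construction (run literally one step at a time in the $\infty$-categorical setting, the induction would produce only a non-coherent tower of lifts), so your explicit unwinding adds transparency rather than a genuinely different route.
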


\begin{proof}
    The result follows immediately from the naturality of the construction in \cite[\S 9.5.1]{Gaitsgory_Study_II}.
\end{proof}

We now introduce the (algebraic) Hodge filtration associated with the closed immersion $f\alg \colon X\alg \to Y\alg$:

\begin{defin} \label{defin:algebraic_Hodge_filtration} Let $f \colon X \to Y$ be a nil-isomorphism in the \infcat $\dAfd_k$.
    Consider the morphism $g \coloneqq f \times \mathrm{id} \colon X\alg \times \bbA^1_k \to Y\alg \times \bbA^1_k$ above. \cref{lem:naturality_of_Hodge_filtration_w.r.t_Noetherian_approximation}
    implies that for each $n \ge 0$, we have a well defined object
        \[
            X^{\mathrm{alg}, (n)} \coloneqq \lim_{\alpha \in I\op} X_\alpha^{(n)} \in \dAff_k,  
        \]
    which fits into a sequence of square-zero extensions
        \[
            X\times \bbA^1_k = X^{ \mathrm{alg}, (0)} \hookrightarrow X^{\mathrm{alg}, (1)}  \hookrightarrow \dots \hookrightarrow X^{\mathrm{alg}, (n)} \hookrightarrow \dots \to Y\alg,
        \]
    in $\dAff_k$. We shall refer to the sequence of the $X^{\mathrm{alg}, (n)}$ as the \emph{algebraic Hodge filtration} associated to the morphism $f\alg$. 
    In the more general case, where $f \colon X \to Y$ exhibits $X$ as an analytic formal moduli problem we define, for each $n \ge 0$,
    the \emph{$n$-th piece of the Hodge filtration} as 
        \[
            X^{(n)} \coloneqq \colim_{S \in (\anNil^\cl_{X/ /Y})} S^{\mathrm{alg}, (n)}, 
        \]
    the filtered colimit being computed in the \infcat $\anFMP_{X \times \bA^1_k/ }$.
\end{defin}

\begin{construction} Let $f \colon X \to Y$ be a nil-embedding in the \infcat $\dAfd_k$. 
    It follows from \cref{prop:algebraic_properties_of_deformation} (4) that we have a natural morphism
        \begin{align} \label{eq:map_Hodge_filtration_to_deformation}
            \theta_{X/ Y} \colon \colim_{n \ge 0} X^{\mathrm{alg}, (n)} & \to \lim_{\alpha \in I\op} \cD_{X_\alpha/ Y_\alpha} \\
                                                                        & \simeq \cD_{X\alg/ Y\alg},
        \end{align}
    in the \infcat $\mathrm{FMP}_{X \times \bbA^1_k/ /Y \times \bbA^1_k}$. Passing to filtered colimits we produce a natural
    morphism as in \eqref{eq:map_Hodge_filtration_to_deformation} in the case where $(X \xrightarrow{f} Y) \in \anFMP_{/ Y}$ and $Y \in \dAfd_k$.
\end{construction}

The following result implies that the Hodge filtration associated to $f\alg$ does not depend on choices:

\begin{prop} \label{cor:commutation_between_limit_and_colimit_on_Hodge_filtration}
    Let $(X \xrightarrow{f} Y) \in \anFMP_{/ Y}$ and $Y \in \dAfd_k$. Then the natural morphism
        \[
            \theta_{X/ Y} \colon \colim_{n \ge 0} X^{\mathrm{alg}, (n)} \to \cD_{X\alg/ Y\alg},
        \] 
    is an equivalence of derived $k$-stacks.
\end{prop}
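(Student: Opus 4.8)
The plan is to reduce the statement to the case of a nil-embedding of derived $k$-affinoid spaces and then to a homological interchange of a filtered colimit with a cofiltered limit. First I would use that both sides of $\theta_{X/Y}$ commute with filtered colimits in $X$ (\cref{rem:reduction_step_closed_ims}) together with the usual retract trick replacing a nil-isomorphism by the nil-embedding $X \to Y\sqcup_{X_\red}X$, to reduce to the case where $f\colon X \to Y$ is a nil-embedding in $\dAfd_k$; passing to $f\alg\colon X\alg \to Y\alg$ via \cite[Lemma 6.9]{Porta_Yu_NQK} and \cite[Proposition 3.17]{Porta_Yu_Derived_non-archimedean_analytic_spaces} gives a nil-embedding of affine derived schemes. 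Fixing the Noetherian approximation $f\alg \simeq \lim_{\alpha\in I\op} f_\alpha$ by closed immersions $f_\alpha\colon X_\alpha \to Y_\alpha$ in $\dAff_k^\laft$ of \cref{lem:Noetherian_approximation_of_nil_isomorphisms}, \cref{defin:algebraic_Hodge_filtration} gives $X^{\mathrm{alg}, (n)} = \lim_{\alpha\in I\op} X_\alpha^{(n)}$, \cref{prop:algebraic_properties_of_deformation} (5) gives $\colim_{n}X_\alpha^{(n)} \simeq \cD_{X_\alpha/Y_\alpha}$ for each $\alpha$, and \cref{lem:Noetherian_approximation_of_the_deformation_to_the_normal_bundle} gives $\cD_{X\alg/Y\alg} \simeq \lim_{\alpha\in I\op}\cD_{X_\alpha/Y_\alpha}$. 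Under these identifications $\theta_{X/Y}$ becomes the canonical comparison morphism
\[
\colim_{n\ge0}\ \lim_{\alpha\in I\op} X_\alpha^{(n)} \longrightarrow \lim_{\alpha\in I\op}\ \colim_{n\ge0} X_\alpha^{(n)}
\]
in $\mathrm{FMP}_{X\alg\times\bbA^1_k/}$, and the content of the proposition is that this interchange map is an equivalence.

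To prove this I would apply the relative (Serre) tangent complex functor $\bbT_{W/\bullet}$, where $W \coloneqq X\alg\times\bbA^1_k$, which by the algebraic counterpart of \cref{prop:conservativity_and_preservation_of_sifted_colimits_of_tangent_complex} --- namely \cite[\S 5, Theorem 2.3.5]{Gaitsgory_Study_II} and \cite[\S 5, Corollary 2.3.6]{Gaitsgory_Study_II} --- is conservative and commutes with sifted, hence filtered, colimits. It therefore suffices to show that $\theta_{X/Y}$ induces an equivalence of relative tangent complexes in $\Ind\Coh(W)$. Commutation with the colimit over $n$ gives $\bbT_{W/\colim_n X^{\mathrm{alg}, (n)}} \simeq \colim_n \bbT_{W/X^{\mathrm{alg}, (n)}}$, so the whole weight of the argument falls on identifying the right-hand tangent complex $\bbT_{W/\cD_{X\alg/Y\alg}}$ with this colimit; conservativity transports, but does not by itself dissolve, the interchange.

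This identification I would carry out through the Hodge filtration recorded in \cref{prop:algebraic_properties_of_deformation} (3). For each $\alpha$ the relative tangent complex of $X_\alpha^{(n)}$ carries a filtration whose $n$-th associated graded piece is $\Sym^{n+1}(\bbT_{X_\alpha/Y_\alpha}[1])[-1]$, with first nonzero piece in degree $n$; as $f_\alpha$ is a closed immersion, $\bbL_{X_\alpha/Y_\alpha}$ is $1$-connective and these pieces are supported in increasing degrees as $n$ grows. Both $\Sym^{n+1}(-)$ and the relative cotangent complex commute with the filtered colimits of rings underlying the cofiltered limits $\lim_\alpha X_\alpha^{(n)}$ and $\lim_\alpha\cD_{X_\alpha/Y_\alpha}$, and by the naturality of \cref{lem:naturality_of_Hodge_filtration_w.r.t_Noetherian_approximation} the construction is functorial in $\alpha$; consequently both $\colim_n\bbT_{W/X^{\mathrm{alg}, (n)}}$ and $\bbT_{W/\cD_{X\alg/Y\alg}}$ acquire exhaustive filtrations sharing the common $n$-th graded piece $\Sym^{n+1}(\bbT_{X\alg/Y\alg}[1])[-1]$. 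Because these pieces sit in increasing degrees, in each fixed degree only finitely many contribute, so the colimit over $n$ is degreewise stationary uniformly in $\alpha$ and the comparison of the two filtered objects may be checked degreewise. A degreewise equivalence then follows, and conservativity of $\bbT_{W/\bullet}$ upgrades it to an equivalence of formal moduli problems, hence of the underlying derived $k$-stacks.

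The main obstacle is precisely this interchange of the filtered Hodge colimit with the cofiltered Noetherian limit, which fails for formal reasons in general and holds here only because of the explicit graded description of the Hodge filtration. Two technical points require care: (i) the compatibility of Serre duality and the pro--ind formalism with the filtered colimits coming from Noetherian approximation, so that the graded pieces are genuinely identified with $\Sym^{n+1}(\bbT_{X\alg/Y\alg}[1])[-1]$ rather than with some uncontrolled limit; and (ii) the verification that the connectivity forced by $f\alg$ being a closed immersion makes the filtrations on both sides exhaustive and the colimit over $n$ degreewise stationary, which is exactly what licenses the exchange with $\lim_\alpha$. Once these are in place, the residual commutation of two essentially filtered colimits is formal, and the proof concludes.
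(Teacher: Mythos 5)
Your outer scaffolding coincides with the paper's proof: the same reduction via stability under filtered colimits in $X$, the same retract trick through $X \to X \underset{X_\red}{\sqcup} Y$ to reach a nil-embedding, the same Noetherian approximation from \cref{lem:Noetherian_approximation_of_nil_isomorphisms} and \cref{lem:Noetherian_approximation_of_the_deformation_to_the_normal_bundle}, and the same endgame by conservativity of the tangent/cotangent complex functor. The gap is in your middle step, where you recast the statement as the interchange $\colim_n \lim_\alpha X_\alpha^{(n)} \to \lim_\alpha \colim_n X_\alpha^{(n)}$ and propose to resolve it by ``degreewise stationarity''. The claim that the graded pieces $\Sym^{n+1}(\bbT_{X\alg/Y\alg}[1])[-1]$ ``sit in increasing degrees as $n$ grows'' is false if read homologically: for a closed immersion $\bbL_{X\alg/Y\alg}$ is $1$-connective, so $\bbL_{X\alg/Y\alg}[-1]$ is merely connective, and $\Sym^{n+1}$ does not raise connectivity; in the lci case $\bbL_{X\alg/Y\alg}[-1]$ is the discrete module $I/I^2$ and \emph{every} graded piece $\Sym^{n+1}(I/I^2)$ sits in the same homological degree, so no fixed degree ever becomes stationary. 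If instead you read ``degree'' as filtration weight (the paper's ``first non-zero graded piece in degree $n$''), then the finiteness-per-weight holds only for the associated graded, and upgrading an equivalence on graded pieces to an equivalence of the filtered objects requires exhaustiveness of the filtration on $\bbT_{W/\cD_{X\alg/Y\alg}}$ \emph{after} passing to $\lim_\alpha$ --- which is exactly the colim/lim interchange you set out to prove, not an available input. A second, related defect: your appeal to ``cotangent complexes commute with filtered colimits of rings'' to carry the filtration through $\lim_\alpha$ is legitimate only on the affine stages $X^{\mathrm{alg},(n)} = \lim_\alpha X_\alpha^{(n)}$; the objects $\cD_{X_\alpha/Y_\alpha}$ are formal moduli problems, not affine schemes, so no ring-level statement identifies $\bbT_{W/\lim_\alpha \cD_{X_\alpha/Y_\alpha}}$ with a filtered colimit of pullbacks.

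The paper avoids the interchange altogether. Using the proof of \cref{lem:identification_of_Hodge_and_adic_filtrations}, it identifies at each finite level $\alpha$ the derivation $\bbL_{X_\alpha^{(n)}} \to i_{n,*}\Sym^{n+1}(\bbL_{X_\alpha/Y_\alpha}[-1])[1]$ classifying $X_\alpha^{(n)} \hookrightarrow X_\alpha^{(n+1)}$ with the explicit extension $\Sym^{n+1} \to \Sym^{\le n+1} \to \Sym^{\le n}$; since these classifying derivations are themselves stable under the filtered colimit of rings underlying $\lim_\alpha$ (via \cite[Corollary 4.4.1.3, Corollary 4.5.1.3]{Lurie_SAG}), the limit tower $X^{\mathrm{alg},(n)}$ is again the explicit $\Sym^{\le n}$-tower. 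This lets one compute $\bbL$ of $\colim_n X^{\mathrm{alg},(n)}$ directly, fiberwise over $\bbA^1_k$ (invoking the vector-group case \cite[\S 9.5.5]{Gaitsgory_Study_II} at the fiber $\{0\}$), compare it with $\bbL_{X\alg/Y\alg}$, and conclude by conservativity --- no interchange of $\colim_n$ with $\lim_\alpha$ is ever performed on the deformation itself. To repair your argument you would need to substitute this explicit identification of the limit-level square-zero tower (or a genuine weight-completeness argument valid after $\lim_\alpha$) for the degreewise-stationarity claim, which as written is the load-bearing step and does not hold.
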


\begin{proof}
    Since both sides of $\theta_{X/ Y}$ are stable under filtered colimits it suffices to treat the case where $f \colon X \to Y$
    is a nil-isomorphism of derived $k$-affinoid spaces. We observe that $X \to Y$ exhibits $Y$ as a retract of the morphism
        \[
            X \to X \underset{X_\red}{\bigsqcup} Y.
        \]
    Since $\theta_{X/ Y}$ is stable under retracts, we are further reduced to the case where $f$ is a nil-embedding. In this case,
    the relative cotangent complex
        \[
            \bbL_{X\alg/ Y\alg} \simeq \bbL\an_{X/ Y},  
        \]
    is $1$-connective, see \cite[Lemma 6.9]{Porta_Yu_NQK}. The proof of \cref{lem:Noetherian_approximation_of_nil_isomorphisms} allow us to produce a presentation
        \[
            \lim_{\alpha \in I\op} (f_\alpha \colon X_\alpha \to Y_\alpha),  
        \]
    of the morphism $f\alg$ by closed immersions $f_\alpha \colon X_\alpha \to Y_\alpha$ of affine
    derived schemes almost of finite presentation parametrized by a filtered \infcat $I$. The proof of \cref{lem:identification_of_Hodge_and_adic_filtrations},
    implies that, for each $\alpha \in I$, the natural morphism
        \[
            X_\alpha^{(n)} \to X_\alpha^{(n+1)},
        \]
    exhibits $X_\alpha^{(n+1)}$ as a square-zero extension of $X_\alpha^{(n)}$ via a natural morphism
        \[
            \bbL_{X_\alpha^{(n)}} \to i_{n, *} \Sym^{n+1}(\bbL_{X_\alpha/ Y_\alpha}[-1])[1],  
        \]
    in the \infcat $\Coh^+(X_\alpha^{(n)})$. In particular, we deduce that, for each $n \ge 0$, the natural morphisms
        \[
            X_\alpha \times \bbA^1_k \to X_\alpha^{(n)},  
        \]
    are nil-embeddings of affine derived schemes and so are the natural morphisms
        \[
            X \alg \times \bbA^1_k \to X^{\mathrm{alg}, (n)}.  
        \]
    We are now able to show that $\theta_{X/ Y}$ is an equivalence in the \infcat $\dSt_k$.
    Thanks to \cite[Corollary 4.4.1.3]{Lurie_SAG} combined with \cite[Corollary 4.5.1.3]{Lurie_SAG}, the relative cotangent complex satisfies
        \[
            \bbL_{X\alg \times \bbA^1_k/ X^{(n)}} \simeq \colim_{\alpha \in I} h_\alpha^* \bbL_{X_\alpha \times \bbA^1_k/ X^{(n)}_\alpha} ,
        \]
    in the \infcat $\QCoh(X\alg \times \bbA^1_k)$, where $h_\alpha \colon X \to X_\alpha$ denotes the corresponding transition morphisms. Similarly, we have that
        \[
            \bbL_{X\alg \times \bbA^1_k/ Y \times \bbA^1_k} \simeq \colim_{\alpha \in I} h_\alpha^* \bbL_{X_\alpha \times \bbA^1_k / Y_\alpha \times \bbA^1_k} ,
        \]
    in the \infcat $\QCoh(X\alg \times \bbA^1_k)$. Moreover, the latter equivalences are compatible with the left-lax actions of the multiplicative monoid $\bbA^1_k \in \dSt_k$
    on
        \[
            X^{\mathrm{alg}, (n)} \quad \mathrm{and} \quad X^{(n)}_\alpha, \quad \mathrm{for \ each} \ \alpha \in I,  
        \]
    over $X\alg \times \bbA^1_k$ (resp. $X_\alpha \times \bbA^1_k$). Let $\lambda \in \bbA^1_k$ be such that $\lambda \neq 0$.
    By passing to Serre duals, we deduce from \cref{lem:identification_of_Hodge_and_adic_filtrations}
    that, for each $\alpha \in I$ and $n \ge 0$, the morphism
        \begin{equation} \label{eq:cot_complex_for_alpha}
            \bbL_{ (X_\alpha^{(n)})_\lambda} \to \Sym^{n+1}(\bbL_{X_\alpha / Y_\alpha}[-1])[1],
        \end{equation}
    classifies the square-zero extension associated to the fiber sequence
        \[
            \Sym^{n+1}(\bbL_{X_\alpha/ Y_\alpha}[-1]) \to \Sym^{\le n+1} (\bbL_{X_\alpha/ Y_\alpha}[-1]) \to \Sym^{\le n} ( \bbL_{X_\alpha/ Y_\alpha}[-1]).
        \]
    Since \eqref{eq:cot_complex_for_alpha} are stable under filtered colimits, we deduce, for each $n \ge 0$, that the morphism
        \[
            \bbL_{X^{\mathrm{alg}, (n)}} \to \Sym^{n+1}(\bbL_{X\alg/ Y\alg}[-1])[1],
        \]
    classifies the square-zero extension given by
        \[
            \Sym^{n+1}(\bbL_{X\alg/ Y\alg}[-1]) \to \Sym^{\le n+1}(\bbL_{X\alg/ Y\alg}[-1]) \to \Sym^{\le n}(\bbL_{X\alg/ Y\alg} [-1]).
        \]
    Thanks to \cite[\S 9.5.5.2]{Gaitsgory_Study_II} combined with an analogous Noetherian approximation reasoning as the one employed
    before we obtain a natural equivalence
        \[
            \bbL_{\colim_{n \ge 0}X^{\mathrm{alg}, (n)}_\lambda} \simeq \bbL_{X\alg/ Y\alg}.
        \]
    In particular, thanks to the algebraic version of \cref{prop:conservativity_of_relative_an_cot_complex} we deduce that the natural morphism
        \[
            \colim_{n \ge 0} X^{\mathrm{alg}, (n)}  \to \cD_{X\alg/ Y\alg}, 
        \]
    is an equivalence in $\mathrm{FMP}_{X \alg}$, for any $\lambda \bbA^1_k$ such that $\lambda \neq 0$. For $\lambda = 0$, the
    precise same reasoning applies using \cite[\S 9, Theorem 5.5.4]{Gaitsgory_Study_II} for the case of vector groups. The proof is thus concluded.
\end{proof}

We now introduce the \emph{non-archimedean Hodge filtration:}

\begin{defin}
    Let $Y \in \dAfd_k$ and $(X \xrightarrow{f} Y) \in \anFMP_{/Y}$. For each $n \ge 0$, we define the square-zero extension
        \[
            X \times \bA^1_k \hookrightarrow X^{(n)},  
        \]
    as the relative analytification, $(-) \an_Y$, of the square-zero extension
        \[
            X \alg \times \bbA^1_k \hookrightarrow X^{\mathrm{alg}, (n)},
        \]
    introduced in \cref{defin:algebraic_Hodge_filtration}.
    By construction, for each $n \ge 0$, we have structural morphisms
        \[
            X^{(n)} \to Y \times \bA^1_k.  
        \]
    We shall refer to the sequence
        \[
            X \times \bA^1_k \coloneqq X^{(0)} \hookrightarrow X^{(1)} \hookrightarrow \dots \hookrightarrow X^{(n)} \hookrightarrow \dots \to Y \times \bA^1_k,  
        \]
    as the \emph{non-archimedean Hodge filtration associated to the morphism $f$}.
\end{defin}

Putting together the above results we can easily deduce:

\begin{cor} \label{cor:Hodge_filtration_in_the_non-archimedean_setting}
    There exists a natural morphism
        \[
            \colim_{n \ge 0} X^{(n)} \to \cD\an_{X/ Y},  
        \]
    which is furthermore an equivalence in the \infcat $\anFMP_{X \times \bA^1_k/ /Y \times \bA^1_k}$.
\end{cor}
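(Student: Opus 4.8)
The plan is to reduce the statement to the $k$-affinoid situation treated in \cref{cor:commutation_between_limit_and_colimit_on_Hodge_filtration} and \cref{prop:rel_analytification_preserves_the_deformation}, and then to glue. I would begin by constructing the comparison morphism in the local case: suppose $Y \in \dAfd_k$ and $(X \xrightarrow{f} Y) \in \anFMP_{/Y}$. By definition each $X^{(n)}$ is the relative analytification $(X^{\mathrm{alg},(n)})\an_Y$ of the $n$-th piece of the algebraic Hodge filtration of \cref{defin:algebraic_Hodge_filtration}. Applying the relative analytification functor $(-)\an_Y$ to the morphism of \cref{cor:commutation_between_limit_and_colimit_on_Hodge_filtration} produces the desired map, once I observe that $(-)\an_Y$ commutes with the colimit over $n$. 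This last point is immediate, since $(-)\an_Y \colon \mathrm{FMP}_{/Y\alg \times \bbA^1_k} \to \anFMP_{/Y \times \bA^1_k}$ is an equivalence of \infcats by \cite[Theorem 6.12]{Porta_Yu_NQK}, and equivalences preserve all colimits; hence $\colim_{n \ge 0} X^{(n)} \simeq \big(\colim_{n \ge 0} X^{\mathrm{alg},(n)}\big)\an_Y$.

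Next I would verify that the resulting morphism is an equivalence in the affinoid case. It factors as the composite
\[
    \colim_{n \ge 0} X^{(n)} \simeq \big(\colim_{n \ge 0} X^{\mathrm{alg},(n)}\big)\an_Y \xrightarrow{(\theta_{X/Y})\an_Y} (\cD_{X\alg/Y\alg})\an_Y \simeq \cD\an_{X/Y},
\]
where the outer equivalences are the one just recorded and \cref{prop:rel_analytification_preserves_the_deformation}, respectively. The middle arrow is the image under the (colimit-preserving) equivalence $(-)\an_Y$ of the morphism $\theta_{X/Y} \colon \colim_{n} X^{\mathrm{alg},(n)} \to \cD_{X\alg/Y\alg}$, which is an equivalence by \cref{cor:commutation_between_limit_and_colimit_on_Hodge_filtration}. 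Thus all three arrows are equivalences and so is their composite. One should also check that this map lands in the subcategory $\anFMP_{X \times \bA^1_k/ /Y \times \bA^1_k}$, which follows from \cref{lem:deformation_theory_for_D^an_bullet_X/Y} together with the fact that each $X^{(n)}$, being a finite iteration of square-zero extensions over $X \times \bA^1_k$ mapping to $Y \times \bA^1_k$, admits a deformation theory.

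Finally I would globalize to an arbitrary morphism $f \colon X \to Y$ of locally geometric derived $k$-analytic stacks. Using the equivalence $\Psi \colon (\dAnSt_k)_{/Y^\wedge_X} \xrightarrow{\sim} \lim_{U \in (Y^\wedge_X)^\mathrm{afd}} (\dAnSt_k)_{/U}$ of \cref{prop:gluing_def}, both $\cD\an_{X/Y}$ and the filtration pieces are determined by their restrictions $\cD\an_{X_U/U}$ and $X^{(n)}_U$ along affinoid charts $g \colon U \to Y^\wedge_X$. The base-change compatibility furnished by \cref{prop:gluing_the_deformation}, combined with the naturality of the algebraic construction recorded in \cref{lem:naturality_of_Hodge_filtration_w.r.t_Noetherian_approximation}, shows that the local comparison maps are compatible with restriction, hence assemble to a morphism $\colim_{n} X^{(n)} \to \cD\an_{X/Y}$ over $Y^\wedge_X$. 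Since it is an equivalence on each chart by the previous paragraph, and $\Psi$ detects equivalences, it is an equivalence in $\anFMP_{X \times \bA^1_k/ /Y \times \bA^1_k}$.

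The step I expect to be the main obstacle is precisely the last one: one must know that forming the colimit $\colim_{n \ge 0} X^{(n)}$ commutes with the gluing $\lim_{U}$ over affinoid charts, i.e. that the assignment $U \mapsto \colim_n X^{(n)}_U$ is a cartesian section for the fibration $\Psi$. This amounts to checking that the square-zero extensions defining the Hodge filtration are stable under base change along $U \to Y^\wedge_X$, which I would deduce from the base-change property of the relative analytic cotangent complex (\cite[Proposition 5.12]{Porta_Yu_Representability}) together with the Noetherian-approximation description of $X^{\mathrm{alg},(n)}$; the interchange of the filtered colimit over $n$ with the affinoid limit is harmless because filtered colimits commute with finite limits in $\dAnSt_k$, but one must still track the left-lax $\bA^1_k$-equivariance through both constructions.
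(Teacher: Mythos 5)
Your local argument is essentially the paper's own proof: the paper likewise reduces to $(\dAnSt_k)_{X \times \bA^1_k/ /Y \times \bA^1_k}$ and obtains the equivalence by combining \cref{cor:commutation_between_limit_and_colimit_on_Hodge_filtration} with \cref{prop:rel_analytification_preserves_the_deformation} and the commutation of relative analytification with the filtered colimit over $n$ (the paper justifies this via the left Kan extension description, where you invoke the equivalence of \cite[Theorem 6.12]{Porta_Yu_NQK}; both justifications appear in the paper and are interchangeable here). Your last two paragraphs on gluing over affinoid charts of $Y^\wedge_X$ go beyond the scope of this corollary, which is stated in the affinoid setting of the immediately preceding definition; that globalization is carried out separately in the paper's subsequent construction via \cref{lem:preservation_of_Hodge_filtration_under_pullback}, and your sketch of it — including the identification of the cartesian-section issue resolved by base change of the Hodge filtration pieces — matches the paper's treatment.
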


\begin{proof}
    Since the natural functor
        \[
            \anFMP_{X \times \bA^1_k/ /Y \times \bA^1_k} \to (\dAnSt_k)_{X \times \bA^1_k/ /Y \times \bA^1_k},
        \]
    commutes with filtered colimits, we are reduced to prove the statement of the Corollary in the \infcat $(\dAnSt_k)_{X \times \bA^1_k/ /Y \times \bA^1_k}$.
    The result is now an immediate consequence of \cref{prop:rel_analytification_preserves_the_deformation}
    and the fact that relative analytification commutes with filtered colimits (as it is defined as left Kan extension).
\end{proof}

\begin{cor}
    Let $f \colon X \to Y$ be a closed immersion of derived $k$-affinoid spaces. Denote by $A \coloneqq \Gamma(X, \cO_X\alg)$
    and $B \coloneqq \Gamma(Y, \cO_Y\alg)$ and $I \coloneqq \fib(B \to A)$. Then the Hodge filtration on $\cD_{X/ Y}\an$
    induces a natural filtration on $B^\wedge_I$, \{$\mathrm{Fil}_H^{n, \mathrm{an}} \}_{n \ge 0}$, together with natural equivalences
        \[
            B/ \mathrm{Fil}^n_H  \simeq \Sym^{\le n, \mathrm{an}}(\bbL_{X/ Y}\an[-1]),  
        \]
    in the \infcat $\CAlg_k$.
\end{cor}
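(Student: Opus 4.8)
The plan is to transport the algebraic identification of \cref{lem:identification_of_Hodge_and_adic_filtrations} to the analytic setting via the relative analytification functor $(-)\an_Y$, and then to pass to derived global sections. Since $f \colon X \to Y$ is a closed immersion of derived $k$-affinoid spaces it is, in particular, a nil-embedding, so the non-archimedean Hodge filtration of \cref{defin:algebraic_Hodge_filtration} is available: each piece $X^{(n)} = (X^{\mathrm{alg},(n)})\an_Y$ is the relative analytification of the corresponding algebraic piece, and by \cref{cor:Hodge_filtration_in_the_non-archimedean_setting} one has $\colim_{n} X^{(n)} \simeq \cD\an_{X/Y}$ in $\anFMP_{X \times \bA^1_k/ /Y \times \bA^1_k}$. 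First I would restrict the entire tower to the fiber over some $\lambda \in \bA^1_k$ with $\lambda \neq 0$; by \cref{lem:identification_of_non-archimedean_fibers} this fiber of $\cD\an_{X/Y}$ is the formal completion $Y^\wedge_X$, whose derived global sections are $B^\wedge_I$. The tower $\{(X^{(n)})_\lambda\}_{n \ge 0}$ then furnishes the desired filtration $\{\mathrm{Fil}^{n,\mathrm{an}}_H\}$ on $B^\wedge_I$.

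Next I would compute the global sections of each piece. By derived Tate acyclicity the functor $\Gamma \colon \Coh^+(-) \to \Coh^+(\Gamma(-, \cO\alg))$ is an equivalence, so the computation reduces to the algebraic side. From \cref{lem:identification_of_Hodge_and_adic_filtrations}, together with the constancy of the fibers of $X^{\mathrm{alg},(n)} \to \bbA^1_k$ established in its proof, one has for $\lambda \neq 0$ natural equivalences $\Gamma((X^{\mathrm{alg},(n)})_\lambda, \cO) \simeq \Sym^{\le n}(\bbL_{A/B}[-1])$, compatible with the fiber sequences relating consecutive $n$. Applying $(-)\an_Y$ and using that relative analytification is symmetric monoidal and colimit-preserving (hence commutes with $\Sym$) and that the algebraic and analytic cotangent complexes agree, $(\bbL_{A/B})\an \simeq \bbL\an_{X/Y}$ (\cite[Lemma 6.9]{Porta_Yu_NQK}, \cite[Corollary 5.33]{Porta_Yu_Representability}), I obtain $\Gamma((X^{(n)})_\lambda, \cO) \simeq \Sym^{\le n, \mathrm{an}}(\bbL\an_{X/Y}[-1])$. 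The transition maps and their square-zero structure persist because relative analytification carries square-zero extensions to square-zero extensions (\cite[Proposition 6.10]{Porta_Yu_NQK}). Setting $\mathrm{Fil}^{n,\mathrm{an}}_H \coloneqq \fib\big(B^\wedge_I \to \Gamma((X^{(n)})_\lambda, \cO)\big)$ then yields the equivalences $B/\mathrm{Fil}^n_H \simeq \Sym^{\le n, \mathrm{an}}(\bbL\an_{X/Y}[-1])$ in $\CAlg_k$.

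The main obstacle is verifying that these quotients genuinely assemble into a filtration of $B^\wedge_I$, i.e.\ that $\Gamma(Y^\wedge_X, \cO) \simeq \lim_{n} \Gamma((X^{(n)})_\lambda, \cO)$. This is the analytic analogue of \cite[Corollary 9.5.2.5]{Gaitsgory_Study_II}, and the delicate point is that derived global sections is a limit whereas $\cD\an_{X/Y}$ is built as a sifted colimit in $\anFMP$; one must check the two are compatible. I would settle this by descent from the algebraic statement: using \cref{prop:rel_analytification_preserves_the_deformation} to write $\cD\an_{X/Y} \simeq (\cD_{X\alg/Y\alg})\an_Y$, the identification $\colim_n X^{\mathrm{alg},(n)} \simeq \cD_{X\alg/Y\alg}$ of \cref{cor:commutation_between_limit_and_colimit_on_Hodge_filtration}, and the fact that, under Tate acyclicity, derived global sections on the affinoid side corresponds to algebraic global sections of the relative analytification. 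The required limit presentation of $B^\wedge_I$ then descends from the algebraic one of \cite[Corollary 9.5.2.5]{Gaitsgory_Study_II}, after which the remaining identifications are formal.
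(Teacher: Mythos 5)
Your overall route coincides with the paper's: the published proof of this corollary is precisely ``\cref{lem:identification_of_Hodge_and_adic_filtrations}\,(2), plus Noetherian approximation, plus the fact that relative analytification carries $\bbL_{X/Y}$ to $\bbL\an_{X/Y}$'', and your second and third paragraphs expand exactly those three ingredients — including the constant-fibers computation $\Gamma((X^{\mathrm{alg},(n)})_\lambda,\cO)\simeq \Sym^{\le n}(\bbL_{A/B}[-1])$ and the appeal to \cite[Corollary 9.5.2.5]{Gaitsgory_Study_II} for the limit presentation of $B^\wedge_I$, both of which occur inside the proof of \cref{lem:identification_of_Hodge_and_adic_filtrations}. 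So the substance of your argument is the paper's, with the implicit steps made explicit.

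There is, however, one genuinely false assertion at your entry point: a closed immersion of derived $k$-affinoid spaces is \emph{not} in general a nil-embedding. By the paper's definition a nil-embedding requires $\trunc(X)\to\trunc(Y)$ to be a closed immersion with \emph{nilpotent} ideal of definition, which already fails for a $k$-point inside the closed unit disc. Hence you cannot directly invoke \cref{defin:algebraic_Hodge_filtration} or \cref{cor:Hodge_filtration_in_the_non-archimedean_setting}, whose hypotheses require nil-embeddings or $(X\to Y)\in\anFMP_{/Y}$. The repair is available and localized: either replace $Y$ by the formal completion $Y^\wedge_X$ (this is how \S 3.3--3.4 make the Hodge filtration available for arbitrary morphisms, and the fiber at $\lambda\neq 0$ is $Y^\wedge_X$ in any case), or note that the algebraic ingredients you actually use are stated for honest closed immersions — \cref{lem:Noetherian_approximation_of_nil_isomorphisms} approximates an arbitrary closed embedding of affine derived schemes by closed immersions almost of finite presentation, and \cref{lem:identification_of_Hodge_and_adic_filtrations}\,(2) is stated for closed immersions of affines — so the tower $X^{\mathrm{alg},(n)}$ and its relative analytification exist without any nil hypothesis, and your paragraphs two and three go through unchanged. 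A smaller caveat: your claim that relative analytification ``is symmetric monoidal, hence commutes with $\Sym$'' is not established in the paper; the correct justification is that each $\Sym^{\le n}(\bbL_{A/B}[-1])$ is a nilpotent extension of $A$ by coherent modules (its positive part has vanishing $(n{+}1)$-fold products), so its relative analytification is again affinoid with the same derived global sections by \cite[Theorem 6.12]{Porta_Yu_NQK} and Tate acyclicity, which yields $\Gamma((X^{(n)})_\lambda,\cO)\simeq \Sym^{\le n,\mathrm{an}}(\bbL\an_{X/Y}[-1])$ as desired.
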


\begin{proof}
    The claim of the proof follows from \cref{lem:identification_of_Hodge_and_adic_filtrations} (2) combined with Noetherian approximation
    and the fact that the relative analytification functor sends the relative cotangent complex $\bbL_{X/ Y}$ to $\bbL_{X/ Y}\an$, c.f. \cite[Theorem 5.21]{Porta_Yu_Representability}.
\end{proof}
We now globalize the Hodge filtration to geometric derived $k$-analytic stacks:

\begin{construction}
    Let $f \colon X \to Y$ be a morphism of locally geometric derived $k$-analytic stacks. Consider the formal completion
        \[
            X \to Y^{\wedge}_X \to Y,  
        \] 
    of the morphism $f$. Let 
        \[
            g \colon U \to Y^\wedge_X,  
        \]
    be a morphism in the \infcat $\dAnSt_k$, such that $U \in \dAfd_k$. Form the pullback diagram
        \[
        \begin{tikzcd}
            X_U \ar{r} \ar{d} & U \ar{d} \\
            X \ar{r}{f} & Y^\wedge_X
        \end{tikzcd}
        \]
    in $\dAnSt_k$. Let $V \to U$ be a morphism of derived $k$-affinoid spaces. Consider the pullback diagram
        \[
        \begin{tikzcd}
            X_V \ar{r} \ar{d} & X_U \ar{d} \\
            V \ar{r} & U,  
        \end{tikzcd}
        \]
    in the \infcat $\dAfd_k$. By naturality of the Hodge filtration we deduce that, for each $n \ge 0$, we have natural commutative diagrams
        \begin{equation} \label{eq:gluing_diagram_of_Hodge_filtrations}
        \begin{tikzcd}
            X_V^{(n)} \ar{r} \ar{d} & X_U^{(n)} \ar{d} \\
            V \ar{r} & U,
        \end{tikzcd}
        \end{equation}
    of the $n$-th pieces of the Hodge filtrations on $\cD_{X_U/ U}$ and $\cD_{X_V/ V}$, respectively.
\end{construction}

\begin{lem} \label{lem:preservation_of_Hodge_filtration_under_pullback}
    The commutative square in \eqref{eq:gluing_diagram_of_Hodge_filtrations} is a pullback square in the \infcat $\dAfd_k$.
\end{lem}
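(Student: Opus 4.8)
The plan is to reduce the statement, by relative analytification and Noetherian approximation, to the base change property of the algebraic Hodge filtration for closed immersions of affine derived schemes almost of finite presentation, where it follows by induction from the compatibility of square-zero extensions with base change. First I would unwind the definition of the pieces of the non-archimedean Hodge filtration: by construction $X_U^{(n)} \simeq (X_U^{\mathrm{alg},(n)})\an_U$ and $X_V^{(n)} \simeq (X_V^{\mathrm{alg},(n)})\an_V$, where the algebraic pieces are those of \cref{defin:algebraic_Hodge_filtration}. Since the relative analytification functor is compatible with base change along $V \to U$ (cf. the proof of \cref{prop:gluing_the_deformation} and \cite[\S 6.1]{Porta_Yu_NQK}), the square \eqref{eq:gluing_diagram_of_Hodge_filtrations} is the relative analytification over $V$ of the corresponding algebraic square. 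Hence it suffices to prove that
\[
\begin{tikzcd}
X_V^{\mathrm{alg},(n)} \ar{r} \ar{d} & X_U^{\mathrm{alg},(n)} \ar{d} \\
V\alg \ar{r} & U\alg
\end{tikzcd}
\]
is a pullback square in $\dAff_k$.

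Next I would reduce to the locally almost of finite presentation case. By \cref{defin:algebraic_Hodge_filtration} each algebraic piece is a filtered colimit over $\anNil^\cl$ of Noetherian approximations $\lim_{\alpha} X_\alpha^{(n)}$, and filtered colimits commute with finite limits in $\dSt_k$ while cofiltered limits commute with pullbacks. Arguing exactly as in the proof of \cref{prop:gluing_the_deformation}, I would choose simultaneous almost of finite presentation approximations of $X\alg \to Y\alg$ and of $U\alg$, $V\alg$, indexed over a common filtered $\infty$-category, so that at each stage one deals with a closed immersion $f_\alpha \colon X_\alpha \to Y_\alpha$ of affine derived schemes almost of finite presentation together with lafp base change morphisms $V_\gamma \to U_\beta \to Y_\alpha$. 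It then suffices to prove that, for each such stage, the square built from $(X_\alpha)_{V_\gamma}^{(n)}$ and $(X_\alpha)_{U_\beta}^{(n)}$ is a pullback in $\dAff_k^\laft$.

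Finally, in the lafp case I would argue by induction on $n$. The base case $n = 0$ is the square with $X^{(0)} = X \times \bbA^1_k$, which is a pullback since it is obtained by composing the defining pullbacks of $X_{V}$ and $X_{U}$. For the inductive step, \cref{prop:algebraic_properties_of_deformation}~(4) exhibits $X_\alpha^{(n)} \hookrightarrow X_\alpha^{(n+1)}$ as a square-zero extension classified by a derivation valued in $i_{n,*}\Sym^{n+1}(\bbL_{X_\alpha/Y_\alpha}[-1])[1]$. By the naturality of the construction of \cite[\S 9.5.1]{Gaitsgory_Study_II} (already exploited in \cref{lem:naturality_of_Hodge_filtration_w.r.t_Noetherian_approximation}), together with the base change property of the relative cotangent complex and the fact that pullback is symmetric monoidal, hence commutes with $\Sym^{n+1}$, the derivation classifying the $V$-extension is the pullback of the one classifying the $U$-extension. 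Since the pullback of a square-zero extension along a base change, taken with respect to the pulled-back derivation, yields a pullback square (cf. \cite[\S 5]{Porta_Yu_Representability} and \cref{cor:construction_of_square_zero_extensions_for_analytic_FMP_using_univ_property_of_cotangent_complex}), the inductive hypothesis for $X^{(n)}$ propagates to $X^{(n+1)}$, completing the induction.

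The hard part will be the bookkeeping required to make the derivations genuinely compatible with base change: one must verify that the Gaitsgory--Rozemblyum construction of the successive square-zero extensions is natural with respect to the base change $V \to U$, and that the symmetric powers $\Sym^{n+1}(\bbL_{X_\alpha/Y_\alpha}[-1])$ satisfy base change, so that the two towers of square-zero extensions are identified through a coherent system of pullback squares rather than merely through levelwise equivalences. Once these compatibilities are established, the inductive argument is formal.
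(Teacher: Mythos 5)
Your proposal is correct and follows essentially the same route as the paper's proof: reduce via relative analytification (\cref{lem:relative_anlaytification_of_closed_immersions_are_compatible_with_alg_construction}) to the algebraic square, pass by Noetherian approximation to closed immersions of affine derived schemes almost of finite presentation, and conclude from base-change compatibility of the graded pieces $\Sym^{n+1}(\bbL_{X/Y}[-1])$. The only difference is the finishing move: you run an explicit induction on $n$ through the tower of square-zero extensions of \cref{prop:algebraic_properties_of_deformation}~(4), whereas the paper obtains the same compatibility by comparing the left-lax $\bbA^1_k$-equivariant filtered Serre tangent complexes in $\Ind\Coh$ via \cite[Theorem 9.5.1.3]{Gaitsgory_Study_II} together with \cite[Proposition 5.12]{Porta_Yu_Representability} and \cite[Proposition 2.5.4.5]{Lurie_SAG} --- which is precisely the naturality-of-derivations bookkeeping you correctly flag as the hard part.
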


\begin{proof} As in the proof of \cref{cor:commutation_between_limit_and_colimit_on_Hodge_filtration}, we can reduce the statement to the case where $X_U \to U$ is a nil-embedding in the \infcat $\dAfd_k$
    in which case so it is $X_V \to V$.
    Thanks to \cref{lem:relative_anlaytification_of_closed_immersions_are_compatible_with_alg_construction}
    it suffices to prove that for each $n \ge 0$, the induced diagram
        \[
        \begin{tikzcd}
            X_V^{\mathrm{alg}, (n)} \ar{r} \ar{d} & X_U^{\mathrm{alg}, (n)} \ar{d} \\
            V \alg \ar{r} & U\alg  
        \end{tikzcd},
        \]
    is a pullback square in $\dAff_k$.
    By a standard argument of Noetherian approximation, we might assume that $U' \coloneqq U\alg $ and $V' \coloneqq V\alg$ are
    both affine derived schemes almost of finite presentation. We observe that for each $n \ge 0$, the morphisms
        \[
            g_n \colon X_{V'}^{(n)} \to X_{U'}^{(n)},
        \]
    are left-lax $\bbA^1_k$-equivariant. For this reason, they induce morphisms
        \[
            g_{n, *}^\mathrm{IndCoh}( \bbT_{X^{(n)}_{V'}/ V'})) \to \bbT_{X^{(n)}_{U'} / U'}  ,
        \]
    in the \infcat $\Ind\Coh(X_{V'}^{(n)})^\mathrm{fil}$. In particular, we have an induced morphism
    at the $n$-th piece of the filtration
        \[
                g_{n, *}^\mathrm{IndCoh}\big( \mathrm{Sym}^{n+1}(\bbT_{X^{(n)}_{U'}/ U'}[-1])[1]\big) \to  \Sym^{n+1}\big( \bbT_{X^{(n)}_{V'} / V'}[-1]\big)[1],
        \]
    (c.f. \cite[Theorem 9.5.1.3]{Gaitsgory_Study_II}). 
    The result is now a direct consequence of \cref{prop:algebraic_properties_of_deformation} (3)
    combined with \cite[Proposition 5.12]{Porta_Yu_Representability} and \cite[Proposition 2.5.4.5]{Lurie_SAG}.
\end{proof}

\begin{construction} Let $f \colon X \to Y$ be a locally geometric derived $k$-analytic stack. Denote by $(Y^\wedge_X)^\mathrm{afd}$ the \infcat
    of morphisms in $\dAnSt_k$
        \[
            U \to Y^\wedge_X,  
        \]
    where $U$ is derived $k$-affinoid.
    Consider the relative analytification functor
        \begin{align*}
            \lim_{U \in (Y^\wedge_X)^\mathrm{afd}} (-)\an_U \colon\lim_{U \in (Y^\wedge_X)^{\mathrm{afd}}} (\dSt_k)_{/ U\alg \times \bbA^1_k} & \to \lim_{U \in (Y^\wedge_X)^{\mathrm{afd}}} (\dAnSt_k)_{/ U \times \bA^1_k}  \\
                                                                                                                                                                            & \simeq (\dAnSt_k)_{/ Y^\wedge_X \times \bA^1_k}.
        \end{align*}
    Thanks to \cref{lem:preservation_of_Hodge_filtration_under_pullback}, for each $n \ge 0$, the algebraic Hodge filtration defines a well defined object in the limit
        \[
            \{ X^{\mathrm{alg}, (n)}_{U} \}_{U \in (Y^\wedge_X)^\mathrm{afd}}  \in \lim_{n \ge 0} (\dSt_k)_{/ U\alg}.
        \]
    Therefore, after taking its relative analytification we produce well defined objects 
        \[X^{(n)} \in (\dAnSt_k)_{Y^\wedge_X \times \bA^1_k},\]
    which we shall refer to as the \emph{Hodge filtration associated to the morphism $f$}.
    Moreover, 
    it follows essentially by construction that the latter restricts to the usual
    Hodge filtration, for each
        \[
            g \colon U \to Y^\wedge_X,  
        \]
    in $(Y^\wedge_X)^\mathrm{afd}$. Moreover, by construction we have a natural morphism
        \[
            \colim_{n \ge 0} X^{(n)} \to \cD_{X/ Y} \an,  
        \]
    which is an equivalence in the \infcat $\dAnSt_k$.
\end{construction}

We can thus summarize the previous results in the form of a main theorem:

\begin{thm} \label{thm:main_thm}
    Let $f \colon X \to Y$ be a morphism of locally geometric derived $k$-analytic stacks. Then the following assertions hold:
    \begin{enumerate}
        \item There exists a deformation to the normal bundle $\cD_{X/ Y}\an \in (\dAnSt_k)_{\bA^1_k}$ such that its fiber at $\{0 \} \subseteq \bA^1_k$
        coincides with the shifted $k$-analytic tangent bundle
            \[
                \bbT_{X/Y}\an[1]^\wedge  ,
            \]
        completed at the zero section $s_0 \colon X \to \bbT\an_{X/ Y}[1]$. Moreover, its fiber at $\lambda \neq 0$ coincides with the formal completion $Y^\wedge_X$ of $Y$ at $X$ along $f$;
        \item The object $\cD_{X/ Y}\an$ admits a left-lax equivariant action of the (multiplicative) monoid-object $\bA^1_k$ in $\dAnSt_k$;
        \item There exists a sequence of left-lax $\bA^1_k$-equivariant morphisms admitting a deformation theory
            \[
                X \times \bA^1_k = X^{(0)} \hookrightarrow X^{(1)} \hookrightarrow \dots \hookrightarrow X^{(n)} \hookrightarrow \dots \to Y \times \bA^1_k.
            \]
        Assume further that $f\colon X \to Y$ is a closed immersion of derived $k$-analytic spaces then, for each $n \ge 0$,
        the relative $k$-analytic cotangent complex
            \[
                \bbL\an_{X^{(n)}/ Y} \in \Coh^+(X^{(n)}),  
            \]
        is equipped with a (decreasing) filtration such that its $n$-th piece identifies canonically with $\Sym^{n+1}(\bbL\an_{X/ Y}[-1])[1]$.
        Moreover, for each $n \ge 0$, the morphisms
        $X^{(n)} \to X^{(n+1)}$ are square-zero extensions obtained via a canonical composite
            \[
                \bbL\an_{X^{(n)}} \to \bbL\an_{X^{(n)}/ Y} \to   \Sym^{n+1}(\bbL_{X/ Y}\an[-1])[1],
            \]
        in $\Coh^+(X^{(n)})$;
        \item We have a natural morphism
            \[
                \colim_{n \ge 0} X^{(n)} \to \cD_{X/ Y}\an,   
            \]
        which is furthermore an equivalence in the \infcat $\dAnSt_k$;
        \item Assume that $f \colon X \to Y$ is a locally complete intersection morphism of derived $k$-affinoid spaces with $A \coloneqq \Gamma(X, \cO_X\alg)$
        and $B \coloneqq \Gamma(Y, \cO_Y\alg)$. Then the corresponding filtration on (derived)
        global sections of the formal completion $Y^{\wedge}_X$ (via points (1) and (3)) identifies canonically with the $I$-adic filtration on
        $B$, where $I \coloneqq \fib(B \to A)$.
    \end{enumerate}
\end{thm}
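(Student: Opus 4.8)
The plan is to prove the five assertions one at a time, in each case reducing to the derived $k$-affinoid situation treated in \S 3.2 and \S 3.4 and then globalizing by means of the gluing results. The existence of $\cD_{X/ Y}\an \in \anFMP_{X \times \bA^1_k/ /Y \times \bA^1_k}$ asserted in (1) is precisely \cref{prop:gluing_def}. To identify its fibers I would first restrict along an arbitrary $g \colon U \to Y^\wedge_X$ with $U \in \dAfd_k$, where \cref{lem:identification_of_non-archimedean_fibers} (itself deduced from \cref{prop:rel_analytification_preserves_the_deformation}, \cref{cor:analytic_def_fiber_at_0_is_tangent} and \cref{prop:algebraic_properties_of_deformation}) computes the fiber at $\lambda \neq 0$ as $Y^\wedge_X$ and the fiber at $0$ as the completed shifted tangent bundle $\bbT\an_{X/ Y}[1]^\wedge$. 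Since forming fibers commutes with the limit $\Psi$ over $(Y^\wedge_X)^\mathrm{afd}$ employed in the proof of \cref{prop:gluing_def}, and since \cref{prop:gluing_the_deformation} guarantees the local pieces are mutually compatible, these local identifications glue to the asserted global statement.

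Assertion (2) I would obtain by descent of the left-lax $\bA^1_k$-action. The action on $\mathbf{B}^{\mathrm{an}, \bullet}_\mathrm{scaled}$ is the relative analytification of the multiplicative left-lax action of \cite[\S 9.2.5]{Gaitsgory_Study_II}, and hence induces an action on each $\cD^{\mathrm{an}, \bullet}_{X/ Y}$; passing to the sifted colimit over $\bDelta\op$ and then gluing over $(Y^\wedge_X)^\mathrm{afd}$ transports it to $\cD_{X/ Y}\an$, exactly as the deformation itself is assembled.

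For (3) and (4) the backbone is the non-archimedean Hodge filtration of \S 3.4. Locally, \cref{defin:algebraic_Hodge_filtration} produces the algebraic sequence $X^{\mathrm{alg}, (n)}$ via Noetherian approximation of $f\alg$, and relative analytification yields the $X^{(n)}$; that the colimit recovers the deformation is \cref{cor:Hodge_filtration_in_the_non-archimedean_setting}, resting on \cref{cor:commutation_between_limit_and_colimit_on_Hodge_filtration} and \cref{prop:rel_analytification_preserves_the_deformation}. The square-zero structure of $X^{(n)} \to X^{(n+1)}$ and the identification of the $n$-th piece of the filtration on $\bbL\an_{X^{(n)}/ Y}$ with $\Sym^{n+1}(\bbL\an_{X/ Y}[-1])[1]$ follow from the corresponding algebraic statements in \cref{prop:algebraic_properties_of_deformation} (3)--(4), transported along relative analytification, which preserves cotangent complexes by \cite[Theorem 5.21]{Porta_Yu_Representability} and sends square-zero extensions to square-zero extensions by \cite[Proposition 6.10]{Porta_Yu_NQK}. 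The globalization of both the filtration and the colimit identification is then handled by \cref{lem:preservation_of_Hodge_filtration_under_pullback}, which shows the pieces $X^{(n)}_U$ are stable under pullback along $V \to U$ and hence glue over $(Y^\wedge_X)^\mathrm{afd}$.

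Finally, (5) is the comparison with the $I$-adic filtration. For a locally complete intersection $f$ of derived $k$-affinoid spaces, \cref{lem:identification_of_Hodge_and_adic_filtrations} (1) already identifies the algebraic Hodge filtration with the $I$-adic one on $B^\wedge_I$; since relative analytification fixes the underlying algebras (via \cref{thm:equivalence_between_analytic_and_algebraic_formal_moduli_problems_over}) and carries $\bbL_{X\alg/ Y\alg}$ to $\bbL\an_{X/ Y}$, the non-archimedean filtration on $B^\wedge_I$ coincides with the $I$-adic one. The main obstacle I anticipate is bookkeeping rather than conceptual: ensuring that the $\bA^1_k$-equivariance, the filtration on the cotangent complex, and the descent data are all \emph{simultaneously} compatible, i.e. that the four reductions used throughout (filtered colimit in $X$, retract from a nil-isomorphism to a nil-embedding, Noetherian approximation of $f\alg$, and descent over $(Y^\wedge_X)^\mathrm{afd}$) can be carried out coherently. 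Each reduction is justified in the preceding sections in isolation, but their interaction in the lci statement (5), where one must track the $I$-adic graded pieces through all four steps at once, is where I would exercise the most care.
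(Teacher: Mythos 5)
Your proposal is correct and follows essentially the same route as the paper's own proof, which likewise dispatches (1) via \cref{lem:identification_of_non-archimedean_fibers}, (2) via \cref{prop:rel_analytification_preserves_the_deformation} and the algebraic left-lax action, (3)--(4) via the non-archimedean Hodge filtration together with \cref{cor:Hodge_filtration_in_the_non-archimedean_setting} and \cref{prop:algebraic_properties_of_deformation}, and (5) via \cref{lem:identification_of_Hodge_and_adic_filtrations} and compatibility with relative analytification. If anything, your write-up is more explicit than the paper's terse proof, in particular in flagging the gluing over $(Y^\wedge_X)^\mathrm{afd}$ through \cref{prop:gluing_the_deformation} and \cref{lem:preservation_of_Hodge_filtration_under_pullback} and the need for coherence among the four reduction steps.
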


\begin{proof}
    Claim (1) follows immediately from \cref{lem:identification_of_non-archimedean_fibers}. Assertion (2) follows immediately from \cref{prop:rel_analytification_preserves_the_deformation}, as
    in the algebraic case, the deformation $\cD_{X\alg/ Y}$ admits a natural left-lax $\bbA^1_k$-equivariant action. The statement in point (4) is essentially the content
    of \cref{cor:Hodge_filtration_in_the_non-archimedean_setting} and point (3) follows from the construction of the Hodge filtration in the non-archimedean setting combined with Noetherian approximation (for the relative cotangent complex) and
    \cref{prop:algebraic_properties_of_deformation} (3). Finally, claim (4) follows readily from \cref{lem:identification_of_Hodge_and_adic_filtrations} and the fact that
    both the Hodge filtration and the $I$-adic filtrations are compatible via relative analytification.
\end{proof}

\bibliographystyle{alpha}
\bibliography{dahema}

\end{document}